\DeclareFontFamily{U}{mathx}{\hyphenchar\font45}
\DeclareFontShape{U}{mathx}{m}{n}{
	<5> <6> <7> <8> <9> <10>
	<10.95> <12> <14.4> <17.28> <20.74> <24.88>
	mathx10
}{}
\DeclareSymbolFont{mathx}{U}{mathx}{m}{n}
\DeclareMathAccent{\widecheck}{0}{mathx}{"71}
\DeclareMathAccent{\wideparen}{0}{mathx}{"75}
\newcommand{\R}{\mathds{R}}
\newcommand{\B}{\mathds{B}}
\newcommand{\Pw}{\mathds{P}}
\newcommand{\Z}{\mathds{Z}}
\newcommand{\E}{\mathds{E}}
\newcommand{\N}{\mathds{N}}
\newcommand{\IS}{\mathbb{S}}
\newcommand{\cA}{\mathcal{A}}
\newcommand{\cB}{\mathcal{B}}
\newcommand{\cD}{\mathcal{D}}
\newcommand{\cF}{\mathcal{F}}
\newcommand{\cG}{\mathcal{G}}
\newcommand{\cM}{\mathcal{M}}
\newcommand{\cN}{\mathcal{N}}
\newcommand{\cP}{\mathcal{P}}
\newcommand{\cS}{\mathcal{S}}
\newcommand{\cR}{\mathcal{R}}
\newcommand{\bfB}{\mathbf{B}}
\newcommand{\bfC}{\mathbf{C}}
\newcommand{\bfH}{\mathbf{H}}
\newcommand{\bfK}{\mathbf{K}}
\newcommand{\bfX}{\mathbf{X}}
\newcommand{\bfY}{\mathbf{Y}}
\newcommand{\symdiff}{\mathbin{\triangle}}
\newcommand{\zero}{\mathbf{0}}
\newcommand{\1}{\mathds{1}}
\DeclareMathOperator{\inte}{Int}
\theoremstyle{definition}
\newtheorem{definition}{Definition}[section]
\newtheorem{example}[definition]{Example}
\newtheorem{remark}[definition]{Remark}
\newtheorem{problem}{Problem}
\newtheorem{conjecture}[problem]{Conjecture}
\theoremstyle{plain}
\newtheorem{theorem}[definition]{Theorem}
\newtheorem{lemma}[definition]{Lemma}
\newtheorem{corollary}[definition]{Corollary}
\newtheorem{proposition}[definition]{Proposition}
\newtheorem{condition}[definition]{Condition}
\newtheorem{assumption}[definition]{Assumption}
\renewenvironment{proof}[1][\proofname]{%
	\par\pushQED{\qed}\normalfont%
	\topsep6\p@\@plus6\p@\relax
	\trivlist\item[\hskip\labelsep\bfseries#1\@addpunct{.}]%
	\ignorespaces
}{%
	\popQED\endtrivlist\@endpefalse
}
\providecommand{\keywords}[1]
{
	\small	
	\textbf{\textit{Keywords---}} #1
}
\providecommand{\MSC}[1]
{
	\small	
	\textbf{\textit{MSC2020 subject classifications.---}} #1
}
\begin{document}

\title{Contact process in an evolving random environment}	
\author{M. Seiler\footnote{Frankfurt Institute for Advanced Studies, Ruth-Moufang-Straße 1, 60438
		Frankfurt am Main, Germany	E-mail: seiler@fias.uni-frankfurt.de}  \and  A. Sturm\footnote{Institute for Mathematical Stochastics, Georg-August-Universit\"at G\"ottingen, Goldschmidtstr. 7, 37077
		G\"ottingen, Germany E-mail: anja.sturm@mathematik.uni-goettingen.de}}
\maketitle
\begin{abstract}
	In this paper we introduce a contact process in an evolving random environment (CPERE) on a  connected and transitive graph with bounded degree, where we assume that this environment is described through an ergodic spin systems with finite range. We show that under a certain growth condition the phase transition of survival is independent of the initial configuration of the process. We study the invariant laws of the CPERE and show that under aforementioned growth condition the phase transition for survival coincides with the phase transition of non-triviality of the upper invariant law. Furthermore, we prove continuity properties for the survival probability and derive equivalent conditions for complete convergence, in an analogous way as for the classical contact process. We then focus on the special case, where the evolving random environment is described through a dynamical percolation. We show that the contact process on a dynamical percolation on the $d$-dimensional integer dies out almost surely at criticality and complete convergence holds for all parameter choices. In the end we derive some comparison results between a dynamical percolation and ergodic spin systems with finite range such that we get bounds on the survival probability of a contact process in an evolving random environment and we determine in this case that complete convergence holds in a certain parameter regime.
\end{abstract}
\keywords{Contact process, evolving random environment, dynamical random graphs, interacting particle systems, phase transition.}\\
\MSC{Primary 60K35; Secondary 05C80, 82C22.}

\section{Introduction}
The contact process (CP) is a particularly simple example of an epidemiological model, which was first introduced by Harris \cite{harris1974contact}. 
This process models the spread of an infection over time in a spatially structured population, whose structure is given through a graph $G=(V,E)$. The vertex set $V$ labels the individuals and two individuals $x,y\in V$ are considered neighbours, i.e.~they have physical contact, if there exists an edge $\{x,y\}\in E$ between them. If an individual is infected it can pass on its infection to its neighbours. In addition, it can recover spontaneously from the infection.

Many variations of the classical contact process exist which try to incorporate more realistic assumptions. One aspect is for example that the underlying spatial structure given by $G$ is in applications often not explicitly known and may also change dynamically in time. This is for example the case in large social networks that may also describe the possibility of physical contact. But changes in $G$ have a drastic impact on the long-term behaviour of the model.

One way to model uncertainty in the spatial structure is to introduce a random environment. Probably the first to consider a contact process in a random environment were Bramson, Durrett and Schonmann \cite{bramson1991contact}. Since then there has been a lot of effort in this direction, see for example \cite{liggett1992survival}, \cite{klein1994extinction}, \cite{yao2012complete}, \cite{xue2014upper} and \cite{garet2012asymptotic}. They all consider contact processes in a \textit{static} random environment, i.e.~the random environment is random but fixed for the whole time horizon.

More recently, the contact processes has also been considered in a dynamical or time evolving random environments. One of the first to explicitly study a contact process with dynamical rates was Broman \cite{broman2007stochastic}. Other works have been for example \cite{steif2007critical}, \cite{linker2019contact} and \cite{hilario2021results}. A related variation are multi type contact process see for example \cite{durrett1991there}, \cite{durrett1991complete}, \cite{remenik2008contact} and \cite{kuoch2016phase}.

The general observation is that a (random dynamical) change of the underlying spatial structure can have a drastic impact on the system's long-term behavior such as survival of the infection. This has also been the observation in applications, be it from real data or simulations, most recently in the global Covid-$19$ pandemic, see for example \cite{dehning2020inferring}.

\smallskip
In this article we study \emph{contact processes in an evolving random environment} (CPERE) in some generality. We assume that the graph $G=(V,E)$ is a connected and transitive graph with bounded degree. The CPERE $(\bfC,\bfB)=(\bfC_t,\bfB_t)_{t\geq 0}$ is a Feller process on $\cP(V)\times \cP(E)$, where $\cP(V)$ and $\cP(E)$ are the power sets of $V$ and $E$. We call the process $\bfC$  with values in $\cP(V)$ the \emph{infection process}.  If $x\in\bfC_{t}$, then we call $x$ \textit{infected} at time $t$. We call the process $\bfB$  with values in $\cP(E)$ the \emph{background process}, since it describes the evolving random environment. If $e\in \bfB_t$ we call $e$ \emph{open} at time $t$ and \emph{closed} otherwise. We assume that $\bfB$ is an autonomous Feller process.
Given $\bfB$ currently in state $B$ the transitions of the infection process $\bfC$ currently in state $C$ are for all $x\in V$, 
\begin{align}\label{InfectionRatesWithBackground}
\begin{aligned}
C&\to C\cup \{x\}	\quad \text{ at rate } \lambda \cdot|\{y\in C:\{x,y\}\in 
B\}| \text{ and }\\
C&\to C\backslash \{x\}	\quad\; \; \text{ at rate } r,
\end{aligned}
\end{align}
where $\lambda>0$ denotes the \emph{infection rate} and $r>0$ the \emph{recovery rate}.

We equip $\cP(V)\times \cP(E)$ with the topology which induces point wise convergence. This means if $\big((C_n,B_n)\big)_{n\in \N}$ is a sequence in  $\cP(V)\times \cP(E)$, then $(C_n,B_n)\to(C,B)$ as $n\to \infty$ if and only if $\1_{\{(x,e)\in (C_n,B_n)\}}\to \1_{\{(x,e)\in (C,B)\}}$ as $n\to \infty$ for every $(x,e)\in V\times E$. Furthermore, we denote by $``\Rightarrow"$ the weak convergence of probability measures on $\cP(V)\times \cP(E)$.

We will indicate the initial configuration $(C,B)$ by adding it as a superscript to the process, i.e.~$(\bfC^{C,B},\bfB^{B})$ or, if more convenient, to the law $\Pw$, i.e.~$\Pw^{(C,B)}$. If the initial configuration is a distribution $\mu$ then we will also write $\Pw^{\mu}$. Note that if $\mu=\delta_{C}\otimes\mu_2$, where $\mu_2$ is a probability measure on $\cP(E)$, then we abuse the notation slightly and write $\Pw^{(C,\mu_2)}$. Furthermore we add the model parameter as subscripts to the law $\Pw$, i.e.~$\Pw_{\lambda,r}$ whenever needed for clarity.

The CP can be constructed via the so called graphical representation, where one draws infection and recovery events according to a Poisson point process, which are respectively depicted by arrows pointing from an individual $x$ to a neighbour $y$ and by crosses at a vertex $x$.
Now if $x$ is infected the arrow causes the infection of $y$. On the other hand a cross at $x$ leads to the recovery of $x$, see Figure~\ref{fig:CPGraphRepVis:a}.  
The CPERE is essentially constructed in the same way as the CP with the difference that we incorporate the background into the graphical representation, see  Figure~\ref{fig:CPGraphRepVis:b}. This means that 
an infection arrow from $x$ to $y$ can only transmit an infection at a time $t$ if the edge is open, i.e.\ $\{x,y\}\in\bfB_t$. 
For the formal description of the graphical representation we refer to Section~\ref{GraphRepresentationCPERE}.
\begin{figure}[t]
	\centering 
	\subfigure[The arrows 	correspond to a possible transmission of an infection 
	and the crosses correspond to possible recovery of the respective site. The red lines indicate the infection paths. ]{\label{fig:CPGraphRepVis:a}\includegraphics[width=68.5mm]{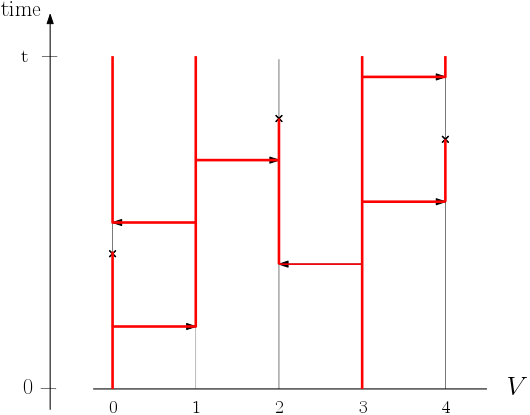}}\hfill
	\subfigure[Grey areas indicate that an edge is closed with respect to the background. Infection arrows in a grey area are ignored. The red lines again indicate the infection paths.  ]{\label{fig:CPGraphRepVis:b}\includegraphics[width=68.5mm]{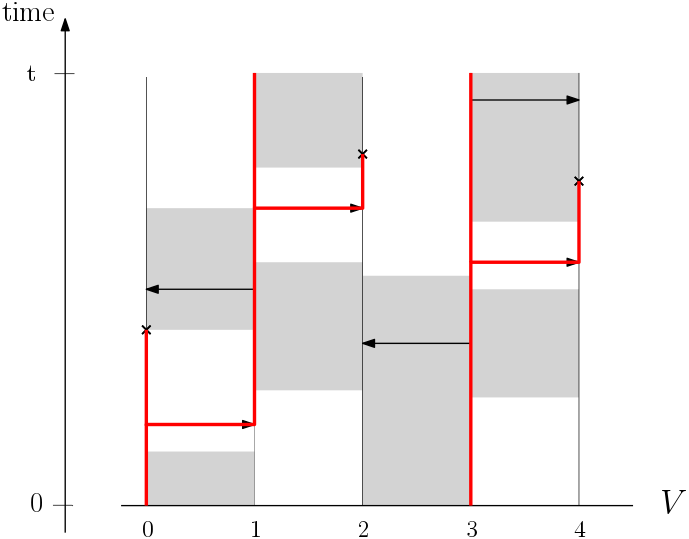}}
	\caption{Graphical representation of the CP in (a) and the CPERE in (b).}
	\label{fig:CPGraphRepVis}
\end{figure}

One of the key quantities for models describing the spread of infections is the \emph{survival probability} of the infection process $\bfC$, which we denote by
\begin{align*}
\theta(C,B):=\theta(\lambda,r,C,B):=\Pw^{(C,B)}_{\lambda,r}\big(\bfC_t\neq \emptyset\,\, \forall t\geq 0\big)
\end{align*}
for $C\subset V$ and $B\subset E$. We are interested in a phase transition dependent on the parameters $(\lambda,r)$ from a zero to positive survival probability. 
Since the survival probability is monotone in  $\lambda$ we can define the \emph{critical infection rate for survival} by
\begin{align*}
\lambda_c(r,C,B)&:=\inf\{\lambda>0:\theta(\lambda,r,C,B)>0\}.
\end{align*}

The CPERE is well defined for a fairly general class of interacting particles systems acting as the background process $\bfB$. We focus on the case where the background is a \emph{spin system} on $\cP(E)$
with generator of the form
\begin{align*}
\cA_{\text{Spin}} f(B)=\sum_{e\in E} q(e,B)\big(f(B\symdiff\{e\})-f(B)\big),
\end{align*} 
where $q(e,B)$ is the flip rate of $e$ with respect to the current configuration $B\subset E$ and $\symdiff$ is the symmetric difference of sets. 
Note that this is a spin system on the edge set $E$. In order to use the usual notation for interacting particle systems in this setting 
we additionally equip  $E$ with a spatial structure by considering the line graph $L(G)$. Here, the original edge set $E$ is considered to be the vertex set and edges $e_1,e_2\in E$ are defined to be adjacent if they have a vertex in common, i.e.~there exists $x\in V$ such that $x\in e_1,e_2$. Along with the original graph the line graphs is also a connected and transitive graph with bounded degree.
Let $\B^L_n(e)$ denote the ball with center $e\in E$ of radius $n$ with respect to the graph distance of $L(G)$. We assume that the spin system satisfies the following three properties.
\begin{enumerate}
	\item It is \emph{attractive}, i.e.~the spin rate $q(\cdot,\cdot)$ satisfies that if $B_1\subset B_2$, then
	\begin{align*}
	q(e,B_1)\leq q(e,B_2) \text{ if }  e\notin B_2 \quad\text{ and }\quad
	q(e,B_1)\geq q(e,B_2)  \text{ if }  e\in B_1.
	\end{align*}
	\item It is \emph{translation invariant}, i.e.~if $\sigma$ is a graph automorphism then
	\begin{align*}
	q(e,B)=q(\sigma(e),\sigma(B))\qquad \text{ for all } B\subset E.
	\end{align*} 
	\item The spin system is of \emph{finite range}, i.e.~there exists a constant $R\in \N$ such that
	\begin{align*}
	q(e,B)=q(e,B\cap \B^L_R(e))
	\end{align*}
	for all $e\in E$ and $B\subset E$. We call such a spin system of range $R$.
\end{enumerate}
In order to formulate further assumption on the background process $\bfB$ we introduce the \emph{exponential growth} of the graph $G$ by
\begin{equation*}
\rho:=\lim_{n\to \infty}\frac{1}{n}\log(|\B_n(x)|),
\end{equation*}
where $\B_n(x)$ denotes the ball of radius $n$ with $x\in V$ as centre with respect to the graph distance $d(\cdot,\cdot)$. Note that the existence of the limit follows by subadditivity and since $G$ is transitive $\rho$ does not depend on the choice of $x$. If $\rho=0$ we call $G$ of \emph{subexponential} growth. Next we define the permanently coupled region of the background $\bfB$ at time $t$ through
\begin{equation}\label{DefinitionPermanetlyCoupledRegion}
\Psi'_{t}:=\{e\in E: e\notin \bfB_s^{B_1}\symdiff\bfB_s^{B_2}\enskip \forall B_1,B_2\subset E,\forall s\geq t\},
\end{equation}
where $t\geq 0$. In order to define $\Psi'_{t}$ the processes $\bfB^{B_1}$ and $\bfB^{B_2}$ need to be defined on the same probability space for all initial backgrounds $B_1,B_2\subset E$. This is indeed possible since all finite range spin systems can be constructed via a graphical representation, see Section~\ref{GraphRepSpinSystemCose}, that couples $\bfB_s^{B}$ for different initial backgrounds $B$.
\begin{assumption}\label{AssumptionBackground}
	The background $\bfB$ satisfies the following assumptions:
	\begin{enumerate}
		\item[$(i)$] $\bfB$ is ergodic, i.e there exists a unique invariant law $\pi$ such that $\bfB^B_t\Rightarrow \pi$ as $t\to \infty$ for all $B\subset E$.
		\item[$(ii)$] There exist constants $T,K,\kappa>0$ such that $\Pw(e\notin \Psi'_t)<K\exp(-\kappa t)$ for every $e\in E$ and for all $t\geq T$.
		\item[$(iii)$] $\bfB$ is a reversible Feller process.
	\end{enumerate}
\end{assumption}
Loosely speaking since we assume that $\bfB$ is ergodic in (i) 
then the expansion speed of the permanently coupled region in $(ii)$ gives us a rough insight on how fast the background process convergences to the invariant law $\pi$.

We list now some examples of spin systems which satisfy these assumptions. Note that with the trivial example $\bfB_t\equiv E$ for all $t\geq 0$ we recover the CP from the CPERE. 
\begin{example}\label{ThreeParticularSpinSystems}
    Let $\cN_e^L$ denote the set of neighbours of $e$ in the line graph $L(G)$.
	\begin{enumerate}
		\item[$(i)$] One 
		of the simplest choices is \emph{dynamical percolation}, which 
		will be our main example. Dynamical percolation updates every edge independently. 
		Hence, the background $\bfB$ is a Feller process with transition
		\begin{align*}
		B&\to B\cup \{e\}	\quad \text{ at rate } \alpha \text{ and } \\
		B&\to B\backslash \{e\}	\quad \,\,\,\text{ at rate } \beta,
		\end{align*}
		where $\alpha,\beta>0$.
		\item[$(ii)$] Next we consider a \emph{noisy voter model} on $G=(V,E)$ with
		\begin{align*}
		V=\Z \quad \text{ and } \quad E=\{\{x,y\}\subset \Z: |x-y|=1\}.
		\end{align*}
		In this case $L(G)$ is again a $1$-dimensional nearest neighbour integer lattice just like $G$. The background $\bfB$ has transitions
		\begin{align*}
		B&\to B\cup \{e\}	\quad \text{ at rate } \frac{\alpha}{2}+\beta|B\cap \cN^L_e| \text{ and } \\
		B&\to B\backslash \{e\}	\quad \,\,\,\text{ at rate } \frac{\alpha}{2}+\beta|B^{c}\cap \cN^L_e|,
		\end{align*}
		where $\alpha,\beta>0$.
		\item[$(iii)$] The third example is the \emph{ferromagnetic stochastic Ising model} with inverse temperature $\beta$. Here, the transitions of $\bfB$ are
		\begin{align*}
		B&\to B\cup \{e\}	\quad \text{ at rate } 1-\tanh\big(\beta\big( |\cN_e^L|-2|B\cap \cN_e^L|\big)\big) \text{ and }\\
		B&\to B\backslash \{e\}	\quad \,\,\,\,\text{ at rate } 1-\tanh\big(\beta\big( |\cN_e^L|-2|B^c\cap \cN_e^L|\big)\big),
		\end{align*}
		where $0<\beta<\tfrac{1}{4}\log\big(\frac{|\cN_e^{L}|+2-\rho}{|\cN_e^{L}|-2+\rho})$.
	\end{enumerate}
\end{example}
\begin{remark}\label{SufficientCond}
    It is by no means trivial to determine if a given spin systems satisfies any of the three Assumptions~\ref{AssumptionBackground} $(i)-(iii)$. Define the two constants
    \begin{align*}
	    M:=\sum_{a\in \cN^{L}_e}\sup_{B\subset E}|q(e,B)-q(e,B\symdiff\{a\})|\,\, \text{ and }\,\,		\varepsilon:=\inf_{B \subset E} |q(e,B)+q(e,B\symdiff\{e\})|.
	\end{align*}
	One can show that a sufficient condition for $(i)$ and $(ii)$ to be satisfied is that $\varepsilon-M>\rho$, see \cite[Corollary~1.4.3]{seiler2021}. On the other hand, the class of \emph{stochastic Ising models} is a natural choice in our setting since models of this type satisfy reversibility by definition, see \cite[Section IV.2]{liggett2012interacting} for the general definition of a stochastic Ising model.
\end{remark}

\section{Main results}\label{Sec:MainResults}
Our first aim is to deduce some sufficient conditions such that the critical infection rate for survival does not depend on the initial configuration $(C,B)$ as long as we have finitely many initially infected vertices, i.e. $C$ is finite and non-empty. We often use the case where the background is started stationary, i.e.~$\bfB_0\sim\pi$, as a reference, and thus denote the survival probability in this case by
\begin{equation*}
\theta^{\pi}(\lambda,r,C):=\Pw^{(C,\pi)}_{\lambda,r}\big(\bfC_t\neq \emptyset\,\, \forall t\geq 0\big).
\end{equation*}
Furthermore we denote the associated critical infection rate by
\begin{equation*}
\lambda_c^{\pi}(r):=\inf\{\lambda> 0:\theta^{\pi}(\lambda,r,\{x\})>0\}.
\end{equation*}
Observe that $\lambda_c^{\pi}(r)$ is independent of the choice of $x\in V$ by translation invariance.
Furthermore, we note that by additivity of the contact process (conditional on any given background) we have that $\theta^{\pi}(\lambda,r,\{x\})>0$ if and only if $\theta^{\pi}(\lambda,r,C)>0$
for any finite and non-empty set $C$. Thus, the critical value  $\lambda_c^{\pi}(r)$ is the same for any such finite and non-empty initial configuration $C$. Note that the same argument applies for any initial background that is translation invariant, in particular when starting the background in $\emptyset$ or $E$.

We denote by $\cN_x$ the set of neighbours of $x\in V$. Let $c_1(\lambda,\rho)$
be the unique solution of
\begin{align}\label{UniqueSolution2}
c\lambda-1-\log(c\lambda |\cN_x|))=\rho
\end{align}
which satisfies $0<c_1(\lambda,\rho)\leq \frac{1}{\lambda}$, where $\lambda>0$ and $x\in V$.
We will see that the constant $c_1(\lambda,\rho)^{-1}$ is an upper bound of the asymptotic expansion speed of the set of all infections by showing (see Lemma \ref{LinearGrowth}) that for all $c<c_1(\lambda,\rho)$ and $x\in V$
		\begin{equation*}
			\Pw(\exists s\geq0: \widetilde{\bfC}^{\{x\}}_{t}\subset \B_{\lfloor \frac{t}{c}\rfloor}(x)\,\, \forall t\geq s)=1,
		\end{equation*}
where $\widetilde{\bfC}$ is the CP without recovery which always upper bounds the (coupled) CPERE ${\bfC}$. Note that $\widetilde{\bfC}$ is also called the SI-Model or alternatively the Richardson-Model, and that the first part of our proof follows the ideas of \cite{durrett1988lecture} for this model.

On the other hand $\kappa\rho^{-1}$ is a lower bound of the asymptotic expansion speed of the permanently coupled region around any edge $e$. Proposition \ref{ExpansionSpeedPerCouRegion} essentially states that
for all $c>\kappa^{-1}\rho$ and  $e\in E$
		\begin{equation*}
			\Pw(\exists s\geq 0: \B^L_{\lfloor \frac{t}{c} \rfloor}(e)\subset\Psi_{t}' \,\,\forall t\geq s)=1.
		\end{equation*}
Intuitively, this can be understood by observing that $\kappa$ is a measure for the speed of coupling while $\rho$ is a measure for the number of edges in a ball (both in an exponential sense). 		
	Thus, the inequality $c_1(\lambda,\rho)>\kappa^{-1}\rho$ implies that asymptotically the growth speed of the infection $\bfC$ is slower than the expansion of the permanently coupled region $\Psi'$
since in this case the two statements above can be combined into
	\begin{equation*}
			 \Pw(\exists s\geq 0: \widetilde{\bfC}^{C}_t\subseteq\Psi'_t\,\, \forall t\geq s )= 1,
		\end{equation*}
for any finite and non-empty $C$, see Theorem \ref{InfectionVSBackground}. We refer to Section \ref{Sec:Independece} for more discussion of the growth condition and also an illustration in Figure \ref{fig:Growth}.

\smallskip
Our result now states that if this growth condition holds the critical infection rate is independent of the initial configuration.

\begin{theorem}\label{IndependencyCirticalRateSurvival}
	Suppose that Assumptions~\ref{AssumptionBackground} $(i)$ and $(ii)$ are satisfied. If there exists a non-empty and finite set $C'\subset V$ and a $B'\subset E$ such that $c_1(\lambda_c(r,C',B'),\rho)>\kappa^{-1}\rho$, then it follows that $\lambda_c(r,C,B)=\lambda^{\pi}_c(r)$ for all non-empty and finite $C\subset V$ and all $B\subset E$. In this case we denote the critical infection rate simply by $\lambda_c(r)$.
\end{theorem}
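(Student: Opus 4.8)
The plan is to show that the growth condition at one configuration $(C',B')$ propagates to all finite non-empty $C$ and all $B$, and then to sandwich $\theta(\lambda,r,C,B)$ between survival probabilities that share the critical value $\lambda_c^\pi(r)$. First I would observe that the hypothesis $c_1(\lambda_c(r,C',B'),\rho)>\kappa^{-1}\rho$, together with monotonicity of $c_1(\cdot,\rho)$ in its first argument and monotonicity of $\lambda_c$, lets me pass to a larger value: for every $\lambda>\lambda_c(r,C',B')$ we still have (possibly after shrinking) that the relevant inequality holds, and more importantly the conclusion of Theorem~\ref{InfectionVSBackground} applies, namely $\Pw(\exists s\geq 0:\widetilde{\bfC}^{C}_t\subseteq\Psi'_t\ \forall t\geq s)=1$ for any finite non-empty $C$. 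The key consequence is that once the infection is contained in the permanently coupled region, the background seen by the infection no longer depends on $\bfB_0$, so for survival purposes the initial background is irrelevant up to the almost surely finite time $s$.

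The main step is then the following comparison. Fix $\lambda$ in the regime where the growth condition of Theorem~\ref{InfectionVSBackground} is available (in particular any $\lambda>\lambda_c(r,C',B')$, and I will argue below this is all of $(\lambda_c^\pi(r),\infty)$). On the event $\{\widetilde{\bfC}^{\{x\}}_t\subseteq\Psi'_t\ \forall t\geq s\}$, couple the CPERE started from $(\{x\},B)$ with the CPERE started from $(\{x\},\pi)$ using the common graphical representation of the finite-range spin system: the two background processes agree on $\Psi'_t$ for all $t\geq s$, hence on the (smaller) set occupied by $\widetilde\bfC^{\{x\}}$, and therefore the two infection processes can be coupled to have the same law of survival after time $s$. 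Since $s<\infty$ almost surely and the infection is still alive at time $s$ on a set of positive probability whenever it survives, one gets $\theta(\lambda,r,\{x\},B)>0 \iff \theta^\pi(\lambda,r,\{x\})>0$ for such $\lambda$; the additivity remark in the text upgrades $\{x\}$ to arbitrary finite non-empty $C$. Combined with monotonicity in $\lambda$ this yields $\lambda_c(r,C,B)=\lambda_c^\pi(r)$ provided $\lambda_c(r,C,B)\geq\lambda_c^\pi(r)$ is not already forced the other way — which is handled by noting $\theta^\pi\leq\theta(\cdot,\cdot,\{x\},E)$ and the trivial comparison $\bfB\equiv E$ giving the classical CP as an upper bound, so $\lambda_c^\pi(r)\geq\lambda_c(r,C,E)$ and symmetrically.

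Concretely I would argue in two inequalities. For $\lambda>\lambda_c(r,C',B')$ the growth condition holds, so by the coupling above $\theta(\lambda,r,C,B)>0$ is equivalent to $\theta^\pi(\lambda,r,\{x\})>0$; taking infima gives $\lambda_c(r,C,B)=\lambda_c^\pi(r)$ as soon as we know $\lambda_c^\pi(r)\leq\lambda_c(r,C',B')$. That last inequality is where I would use the comparison with the CP: starting the background from $E$ dominates any stationary start only if $\pi$ is dominated by $\delta_E$, which holds by attractiveness, so $\theta^\pi(\lambda,r,\{x\})\leq\theta(\lambda,r,\{x\},E)$ and thus the growth condition is in fact available on all of $(\lambda_c^\pi(r),\infty)$ once it holds at some $\lambda_c(r,C',B')\geq\lambda_c^\pi(r)$; if instead $\lambda_c(r,C',B')<\lambda_c^\pi(r)$ I would derive a contradiction, since for $\lambda\in(\lambda_c(r,C',B'),\lambda_c^\pi(r))$ the equivalence would force $\theta^\pi(\lambda,r,\{x\})>0$, contradicting $\lambda<\lambda_c^\pi(r)$. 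Either way the two critical values coincide, and since the argument never used anything special about $C$ or $B$ beyond finiteness and non-emptiness of $C$, the conclusion holds uniformly.

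The main obstacle I anticipate is the coupling bookkeeping around the random time $s$: one must make sure that ``containment in $\Psi'_t$ for all $t\geq s$'' really does make the post-$s$ evolution of $\bfC^{C,B}$ measurable with respect to a $\sigma$-field on which $\bfC^{C,\pi}$ has the same conditional law, and that the event of survival is not destroyed by conditioning on the finite-time behaviour up to $s$. This is morally immediate from Theorem~\ref{InfectionVSBackground} and the graphical construction, but writing it cleanly — in particular handling the fact that $\widetilde\bfC$ (not $\bfC$) is what is controlled, and that $\bfC\subseteq\widetilde\bfC$ under the standard coupling — is the part that requires care rather than the monotonicity juggling with $c_1$ and $\lambda_c$.
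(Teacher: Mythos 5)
The proposal identifies the right high-level ingredients (Theorem~\ref{InfectionVSBackground}, monotonicity of $c_1$, reduction to a comparison of survival under different initial backgrounds), but the central coupling step has a genuine gap that the paper's proof works hard to avoid.

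You write: ``On the event $\{\widetilde{\bfC}^{\{x\}}_t\subseteq\Psi'_t\ \forall t\geq s\}$, couple the CPERE started from $(\{x\},B)$ with the CPERE started from $(\{x\},\pi)$ \ldots\ hence \ldots\ the two infection processes can be coupled to have the same law of survival after time $s$.'' This does not follow. The agreement of the backgrounds on $\Psi'_t$ for $t\geq s$ says nothing about what happened on $[0,s]$: before time $s$ the two backgrounds $\bfB^{B}$ and $\bfB^{\pi}$ may disagree anywhere outside $\Psi'$, so the infection processes $\bfC^{\{x\},B}_s$ and $\bfC^{\{x\},\pi}_s$ will in general be \emph{different sets}. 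For instance one may already be empty while the other is not. After $s$, survival is a function of $\bfC_s$ (together with the common post-$s$ randomness), so the two survival probabilities need not agree even though the backgrounds agree from $s$ on. You actually flag this obstacle yourself in the last paragraph (``the coupling bookkeeping around the random time $s$''), but then treat it as a matter of care rather than a missing idea; it is in fact the heart of the proof, and no resolution is offered.

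The paper circumvents this by not comparing $\bfC^{\{x\},B}$ with $\bfC^{\{x\},\pi}$ directly. Instead it introduces two auxiliary processes whose state at time $s$ is the same regardless of the initial background: $\underline{\bfC}^{s}$ keeps recovery events but drops infection arrows on $[0,s]$ (a lower bound), and $\overline{\bfC}^{s}$ keeps infection arrows but drops recovery events and ignores the background entirely on $[0,s]$ (an upper bound, equal to $\widetilde{\bfC}$ on $[0,s]$). Because neither auxiliary process looks at the background on $[0,s]$, and after $s$ they evolve inside the permanently coupled region on the event $A_s$, their survival events become literally identical as $B$ varies over $\cP(E)$; this is equations \eqref{HelpIndependence1} and \eqref{HelpIndependence2}. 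These processes sandwich the actual CPERE, which is what lets the paper transfer positivity (or nullity) of $\theta^{\pi}$ to $\theta(\cdot,\emptyset)$ and $\theta(\cdot,E)$. Your proposal is missing this sandwich construction, and without it the coupling does not close.

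A secondary, fixable issue: you write that the growth condition is available for ``any $\lambda>\lambda_c(r,C',B')$'' and suggest extending it to all of $(\lambda_c^{\pi}(r),\infty)$. Since $c_1(\cdot,\rho)$ is strictly decreasing, the inequality $c_1(\lambda,\rho)>\kappa^{-1}\rho$ is an \emph{upper} constraint on $\lambda$ and will fail for $\lambda$ large. The paper only needs (and only obtains, by continuity of $c_1$) a small window $\lambda<\lambda_c(r,C',B')+\varepsilon$ where the growth condition holds; once the critical values are shown to coincide there, monotonicity in $\lambda$ handles the rest of the supercritical regime. Your phrasing should be tightened to match this, but this part does not invalidate the overall strategy — it is the absent sandwich coupling that does.
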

\begin{remark}
	If we consider graphs with subexponential growth, i.e.~$\rho=0$, the inequality $c_1(\lambda,\rho)>\kappa^{-1}\rho$ is satisfied for all $\lambda>0$. Thus, on these graphs the critical infection rate is always independent of the initial configuration as long as Assumptions~\ref{AssumptionBackground} $(i)$ and $(ii)$ are satisfied.
\end{remark}

Next, we consider the relationship to the critical infection rate for non-triviality of the \emph{upper invariant law}. As in the CP  we can conclude by standard methods the existence of this upper invariant law $\overline{\nu}= \overline{\nu}_{\lambda,r}$ with $(\bfC_t^{V,E},\bfB_t^E)\Rightarrow \overline{\nu}$ as$t \rightarrow \infty.$
Since the upper invariant law is the largest invariant law in the stochastic order 
the equality $\overline{\nu}=\delta_{\emptyset}\otimes \pi$, where the right hand side is the trivial invariant law, is equivalent to ergodicity of the system, i.e.~ that there exists a unique invariant law which is the weak limit of the process. Since the upper invariant law $\overline{\nu}_{\lambda,r}$ is monotone in the stochastic order with respect to $\lambda$ (and also $r$) we can define the critical infection rate for non-triviality of $\overline{\nu}$ by
\begin{equation*}
\lambda'_{c}(r):=\inf\{\lambda>0:\overline{\nu}_{\lambda,r}\neq\delta_{\emptyset}\otimes \pi\}.
\end{equation*}
The next result  connects this phase transition to the already known phase transition between certain extinction and survival (with positive probability) of the infection in the population. 
\begin{theorem}\label{CrticalValuesAgree}
	Suppose that Assumption~\ref{AssumptionBackground} is satisfied. Then $\lambda'_{c}(r)=\lambda^{\pi}_{c}(r)$. If additionally $c_1\big(\lambda^{\pi}_{c}(r),\rho\big)>\kappa^{-1}\rho $, then $\lambda'_{c}(r)=\lambda_{c}(r)$.
\end{theorem}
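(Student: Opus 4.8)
The plan is to establish the two equalities separately.

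\emph{The equality $\lambda'_c(r)=\lambda^\pi_c(r)$.} The core is a self-duality of the CPERE run against a time-reversed background. In the graphical representation of Section~\ref{GraphRepresentationCPERE} I would fix a horizon $t$ and a realisation of $\bfB$ on $[0,t]$ and observe that reversing time together with the orientation of all infection arrows turns the infection dynamics into a CPERE with the \emph{same} rates $\lambda,r$ --- the infection arrows across each edge are placed symmetrically in both directions, and a time-reversed Poisson process is again Poisson with the same intensity --- now run against the time-reversed background $\hat\bfB_s:=\bfB_{t-s}$. By reversibility of $\bfB$ (Assumption~\ref{AssumptionBackground}~$(iii)$), when the background starts from $\pi$ we have $(\hat\bfB_s)_{0\le s\le t}\stackrel{d}{=}(\bfB_s)_{0\le s\le t}$, so integrating the pathwise duality of infection paths yields
\begin{align*}
\Pw^{(A,\pi)}_{\lambda,r}(\bfC_t\cap D\neq\emptyset)=\Pw^{(D,\pi)}_{\lambda,r}(\bfC_t\cap A\neq\emptyset)\qquad\text{for all }A,D\subset V.
\end{align*}
Taking $A=V$ and $D=\{x\}$ and letting $t\to\infty$: the right-hand side decreases to $\theta^\pi(\lambda,r,\{x\})$ because $\{\bfC_t\neq\emptyset\}$ is decreasing in $t$ with intersection $\{\bfC_t\neq\emptyset\ \forall t\ge 0\}$; the left-hand side converges to $\overline{\nu}(\{(C,B):x\in C\})$ since $(C,B)\mapsto\1_{\{x\in C\}}$ is continuous and, by standard monotonicity arguments, $(\bfC^{V,\pi}_t,\bfB^\pi_t)\Rightarrow\overline{\nu}$ as well (the weak limit started from $(V,\pi)$ dominates every invariant law, is itself invariant, and is dominated by $\overline{\nu}$, using attractiveness and ergodicity of $\bfB$). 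Hence $\overline{\nu}(\{(C,B):x\in C\})=\theta^\pi(\lambda,r,\{x\})$. As $\overline{\nu}$ is translation invariant with $\cP(E)$-marginal $\pi$, we get $\overline{\nu}\neq\delta_\emptyset\otimes\pi$ if and only if $\overline{\nu}(\{(C,B):x\in C\})>0$ if and only if $\theta^\pi(\lambda,r,\{x\})>0$, that is $\lambda'_c(r)=\lambda^\pi_c(r)$.

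\emph{The equality $\lambda'_c(r)=\lambda_c(r)$ under the growth condition.} From \eqref{UniqueSolution2} the product $c_1(\lambda,\rho)\,\lambda$ depends only on $\rho$ and $|\cN_x|$, so $\lambda\mapsto c_1(\lambda,\rho)$ is non-increasing. Coupling the background started from $E$ with the one started from $\pi$ through the monotone graphical representation and building the infection on top with the same Poisson data gives $\bfC^{\{x\},\pi}_t\subseteq\bfC^{\{x\},E}_t$, hence $\theta(\lambda,r,\{x\},E)\ge\theta^\pi(\lambda,r,\{x\})$ and so $\lambda_c(r,\{x\},E)\le\lambda^\pi_c(r)$. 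Consequently $c_1\big(\lambda_c(r,\{x\},E),\rho\big)\ge c_1\big(\lambda^\pi_c(r),\rho\big)>\kappa^{-1}\rho$ by hypothesis, so Theorem~\ref{IndependencyCirticalRateSurvival} applies with $C'=\{x\}$ and $B'=E$ (Assumptions~\ref{AssumptionBackground}~$(i)$ and $(ii)$ being part of Assumption~\ref{AssumptionBackground}) and yields $\lambda_c(r,C,B)=\lambda^\pi_c(r)=:\lambda_c(r)$ for all finite non-empty $C\subset V$ and all $B\subset E$. Together with the first part this gives $\lambda'_c(r)=\lambda^\pi_c(r)=\lambda_c(r)$.

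I expect the duality step to be the main obstacle: one must set up the graphical construction carefully enough that the time-reversed infection dynamics is genuinely a CPERE against $\hat\bfB$ with unchanged rates, and verify that reversibility of $\bfB$ is precisely what makes $\hat\bfB$ equal in law to $\bfB$. The identification of the weak limit started from $(V,\pi)$ with $\overline{\nu}$ is routine but should be carried out with the monotone coupling explicitly in hand.
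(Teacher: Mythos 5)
Your proof is correct and follows essentially the same route as the paper: the first equality is obtained by combining the self-duality under time reversal (the paper's Proposition~\ref{DistributionalDuality}, using reversibility of $\bfB$) with convergence from $(V,\pi)$ to $\overline{\nu}$ (the paper's Lemma~\ref{ConvergenceToUpperInv}) to show $\theta^\pi(\{x\})=\overline{\nu}(\{(C,B):x\in C\})$ (the paper's Proposition~\ref{EqualityOfCriticalValues}), and the second is an application of Theorem~\ref{IndependencyCirticalRateSurvival}. One small point in your favor: the paper's proof of the second part is quite terse and cites Theorem~\ref{IndependencyCirticalRateSurvival} directly from the hypothesis $c_1(\lambda^\pi_c(r),\rho)>\kappa^{-1}\rho$, whereas that theorem's hypothesis is phrased in terms of $\lambda_c(r,C',B')$; you explicitly close this gap by taking $(C',B')=(\{x\},E)$, noting $\lambda_c(r,\{x\},E)\le\lambda^\pi_c(r)$ by monotonicity of the initial background and then using that $c_1(\cdot,\rho)$ is decreasing — this is exactly the reasoning the paper leaves implicit.
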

Next we state results on the continuity of the survival probability with respect to its parameters.
This is important for subsequently studying the behavior at the critical point and (complete) convergence. 

If $C$ is empty then $\theta(C,B)=0$ and if $|C|= \infty$ then $\theta(C,B)=1$ for any $B$.
Thus, the survival probability is obviously continuous. Therefore, we will only consider the case where $C$ is non-empty and finite, and
define for such initial configurations $(C,B)$ 
the region of survival by
\begin{align}\label{SurvivalRegion}
\cS(C,B):=\{(\lambda,r)\in (0,\infty)^2:\theta(\lambda,r,C,B)>0\}.
\end{align}
On the complement $\big(\cS(C,B)\big)^c$ we see that the survival probability is again $0$, and thus obviously continuous. So the only interesting question is if $\theta(\cdot,C,B)$ is continuous on $\cS(C,B)$ and its boundary. Unfortunately, on general graphs we are not able to determine whether the survival probability is continuous on the whole survival region. However, we can assure continuity on the interior of the parameter set
\begin{align}\label{SurvivalRegionWithGrowth}
\cS_{c_1}:=\{(\lambda,r): \exists \lambda'\leq \lambda \text{ s.t. } (\lambda',r)\in\cS(\{x\},\emptyset) \text{ and }  c_1(\lambda',\rho)>\kappa^{-1}\rho\},
\end{align}
which contains all parameters $(\lambda,r)$ such that a $\lambda'\leq \lambda$ exists for which survival is still possible and the aforementioned growth condition is satisfied. 

By Theorem~\ref{IndependencyCirticalRateSurvival} if follows that if the graph $G$ is of subexponential growth, i.e.~$\rho=0$, then $\cS(C,B)=\cS_{c_1}$ for all $(C,B)$ with $C$ finite and non-empty. In this case we drop the initial configuration and write the survival region as $\cS$. We denote by $\inte(U)$ the interior of a set $U\subset \R^d$, i.e.~the largest open set which is contained in $U$.
\begin{proposition}\label{ContinuityTheorem}
	Let $C\subset V$ be finite and non-empty and $B\subset E$. Suppose Assumption~\ref{AssumptionBackground} is satisfied, then 
	\begin{itemize}
	    \item[$(i)$] the survival probability $\theta(\cdot,C,B)$ is continuous on $\inte(\cS_{c_1})$.
	    \item[$(ii)$] If additionally $\rho=0$ and $\theta(\lambda,r,C,B)=0$ for all $(\lambda,r)\in (0,\infty)^2\backslash\inte(\cS)$, then the survival probability $\theta(\cdot,C,B)$ is continuous on $(0,\infty)^2$.
	\end{itemize}
\end{proposition}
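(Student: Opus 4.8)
The plan is to deduce part~(i) from upper and lower semicontinuity of $\theta(\cdot,C,B)$, proved separately, and then to read off part~(ii) from~(i), upper semicontinuity, and the extra hypothesis. Upper semicontinuity holds on all of $(0,\infty)^2$ and is the easy half: with $\theta_t(\lambda,r,C,B):=\Pw^{(C,B)}_{\lambda,r}(\bfC_s\neq\emptyset\ \forall s\le t)$ one has $\theta(\cdot,C,B)=\inf_{t\ge 0}\theta_t(\cdot,C,B)$, a decreasing limit. Each $\theta_t$ is continuous in $(\lambda,r)$: using the graphical representation of Section~\ref{GraphRepresentationCPERE} one couples all parameters in a small box about $(\lambda,r)$ on one probability space by thinning the infection arrows with i.i.d.\ marks and superimposing extra independent recovery crosses, leaving the autonomous background $\bfB$ untouched; since $C$ is finite all the coupled infections up to time $t$ sit inside $\widetilde{\bfC}^{C}_t$ for the largest rate in the box, an a.s.\ finite set (cf.\ Lemma~\ref{LinearGrowth}), so only finitely many Poisson points matter and the family of active arrows and crosses is a.s.\ locally constant in the parameters, whence dominated convergence gives continuity of $\theta_t$. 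A decreasing limit of continuous functions being upper semicontinuous, $\theta(\cdot,C,B)$ is u.s.c.

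The substance is lower semicontinuity on $\inte(\cS_{c_1})$. Fix $(\lambda_0,r_0)\in\inte(\cS_{c_1})$ and take, as in the definition of $\cS_{c_1}$, a rate $\lambda'\le\lambda_0$ with $(\lambda',r_0)\in\cS(\{x\},\emptyset)$ and $c_1(\lambda',\rho)>\kappa^{-1}\rho$; by Theorem~\ref{IndependencyCirticalRateSurvival} the critical rate is then configuration independent, and by Theorem~\ref{InfectionVSBackground} the growth condition guarantees the a.s.\ eventual inclusion $\B^L_{\lfloor t/c\rfloor}(e)\subset\Psi'_t$ of balls into the permanently coupled region for any $c>\kappa^{-1}\rho$. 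The plan is to run a renormalisation (block) argument in the spirit of Bezuidenhout--Grimmett, adapted to the evolving environment: choosing the block length $L$ and the time step $T$ so that the infection tracked by the construction, which advances by $O(L)$ per step, never outruns this expanding coupled region, the relevant block events become measurable with respect to the graphical representation --- and a genuinely coupled, hence (by ergodicity) asymptotically stationary, background --- over bounded space-time windows, with probability exceeding $1-\varepsilon$ for $\varepsilon$ as small as we like. A comparison with supercritical oriented percolation then shows that from a fully infected reference block the infection survives, and that on the survival event of $\bfC^{C,B}$ such a block, with supercritical descendance, almost surely appears eventually.

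Granting this, I would write $\theta(\lambda_0,r_0,C,B)$ as an increasing limit of finite-certificate probabilities. Let $A_n$ be the event that by time $n$ the infection started from $(C,B)$ has produced a fully infected reference block whose graphical descendants dominate a supercritical oriented percolation cluster. Then $A_n$ forces survival and $A_n\uparrow\{\text{survival}\}$ up to a null set, so $\Pw_{\lambda_0,r_0}(A_n)\uparrow\theta(\lambda_0,r_0,C,B)$; each $A_n$ is a bounded space-time event, so $(\lambda,r)\mapsto\Pw_{\lambda,r}(A_n)$ is continuous as in the first step, and for $(\lambda,r)$ near $(\lambda_0,r_0)$ the certificate still forces survival (the underlying block probabilities stay above $1-2\varepsilon$). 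Hence $\theta(\lambda,r,C,B)\ge\Pw_{\lambda,r}(A_n)$ near $(\lambda_0,r_0)$, and letting $(\lambda,r)\to(\lambda_0,r_0)$ and then $n\to\infty$ yields $\liminf_{(\lambda,r)\to(\lambda_0,r_0)}\theta(\lambda,r,C,B)\ge\theta(\lambda_0,r_0,C,B)$; together with u.s.c.\ this proves~(i). For~(ii), if $\rho=0$ then $\cS(C,B)=\cS_{c_1}=\cS$ by Theorem~\ref{IndependencyCirticalRateSurvival} and the following remark, so $\theta(\cdot,C,B)$ is continuous on $\inte(\cS)$ by~(i) and vanishes on $(0,\infty)^2\setminus\inte(\cS)$ by hypothesis; at any $(\lambda_0,r_0)\notin\inte(\cS)$ we have $\theta(\lambda_0,r_0,C,B)=0$, while an approaching sequence contributes $0$ along its terms outside $\inte(\cS)$ and, by u.s.c., $\limsup\le 0$ along its terms in $\inte(\cS)$, so $\theta(\cdot,C,B)$ is continuous on all of $(0,\infty)^2$.

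I expect the block construction of the second paragraph to be the main obstacle: the Bezuidenhout--Grimmett machinery has to be rerun while the background is carried along, with a careful choice of block length and time step ensuring that every far-away restart sees a well-behaved --- coupled, asymptotically stationary --- environment, which is precisely where Assumption~\ref{AssumptionBackground}, the growth condition $c_1(\lambda',\rho)>\kappa^{-1}\rho$ and Theorem~\ref{InfectionVSBackground} become indispensable; one must also check that the finite certificate it produces is stable under small changes of $(\lambda,r)$. The remaining ingredients --- continuity of finite space-time probabilities, the monotone approximation, and the semicontinuity bookkeeping --- are routine.
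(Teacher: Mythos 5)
Your upper‑semicontinuity step and the reduction of part~(ii) to part~(i) plus the extra hypothesis are sound and match the spirit of the paper (the paper works with one‑sided continuity in each variable separately and then invokes \cite[Proposition~2]{kruse1969joint}; joint u.s.c.\ from a decreasing limit of continuous finite‑time probabilities is an equivalent packaging). The problem is the lower‑semicontinuity half. You propose to run a Bezuidenhout--Grimmett block construction for the CPERE, produce a finite certificate event $A_n$ that forces survival, and exhaust the survival event by such certificates. But Proposition~\ref{ContinuityTheorem} is stated for general connected, transitive graphs of bounded degree, and the block construction is not available in that generality: it leans on the $\Z^d$ lattice structure (coordinate‑wise translations, reflections across coordinate hyperplanes, orthants) to stack and reflect the events $\cA_1,\cA_2,\cA_3$ and to define the macroscopic grid $\{(j,k)\}$. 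Worse, even on $\Z^d$ the construction in this paper is carried out only for the CPDP, because the edge independence of dynamical percolation is used to make the truncated process $\prescript{}{L}\bfC$ well‑defined without choosing a boundary condition and to make spatially separated block events conditionally independent --- the paper says explicitly (the remark following the definition of $\cA_1,\cA_2$) that this breaks down for a general spin system. So the ``main obstacle'' you flag at the end is not merely technical: the machinery you invoke genuinely does not exist at the level of generality needed here, and your certificate $A_n$ cannot be constructed.

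The paper instead proves the lower bound by a much softer, non‑renormalisation route. It combines three ingredients: (a) Lemma~\ref{SurvivalContinuityLemma1}, a conditional Borel--Cantelli argument showing that on the survival event the infection eventually contains a fully infected ball $\B_n(x)$ for any fixed $n$; (b) the strong Markov restart at the stopping time when such a ball appears, combined with translation invariance, which gives $\theta(\lambda')\ge\Pw_{\lambda'}(D_{n,t})\,\theta(\lambda'',\B_n,\emptyset)$ for $\lambda''<\lambda'<\lambda$; and (c) Lemma~\ref{SurvivalContinuityLemma2}, which shows $\theta(\lambda'',r,\B_n(x),\emptyset)\to 1$ as $n\to\infty$ and is where the real work happens. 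That last step does not use a block construction at all; it uses the duality with the upper invariant law (Proposition~\ref{EqualityOfCriticalValues}), the $0$--$1$ law $\overline{\nu}(\{\emptyset\}\times\cP(E))\in\{0,1\}$ (Proposition~\ref{NonnegagtivitiyLemma}), and an approximation lemma (Lemma~\ref{ApproximationLemma}, built on Lemmas~\ref{MaximalSpread}, \ref{CommonDistribution}, \ref{BounOnDecouplingLemma} and Theorem~\ref{InfectionVSBackground}) to pass from the stationary initial background $\pi$ to the worst‑case initial background $\emptyset$. This is the piece your plan would need to replace, and it is precisely where the growth condition and Assumption~\ref{AssumptionBackground} enter, as you anticipated --- but via duality and coupling of the background, not via a lattice renormalisation.
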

With this result we are now able to determine two conditions which are equivalent to complete convergence of the CPERE.
In order to formulate the result  we abuse notation somewhat by writing
\begin{align*}
\{x\in \bfC_t \text{ i.o.}\}=\{x\in \bfC_t \text{ for a sequence of times } t\uparrow \infty\},
\end{align*}
where i.o.~is short for ``infinitely often".
\begin{theorem}\label{CompleteConvergenceComplete}
	Let Assumption~\ref{AssumptionBackground} be satisfied. 
	Suppose there exists a $\lambda'\leq \lambda$ such that $c_1(\lambda',\rho)>\kappa^{-1}\rho$, and that
	\begin{equation}\label{ConvergenceCond1.1}
	\Pw_{\lambda',r}^{(C,B)}(x\in \bfC_t \text{ i.o.})=\theta(\lambda',r,C,B)
	\end{equation}
	for all $x\in V$, $C\subset V$ and $B\subset E$ as well as
	\begin{align}\label{ConvergenceCond2.1}
	\lim_{n\to\infty}\limsup_{t\to \infty}\Pw_{\lambda',r}(\bfC_t^{\B_n(x),\emptyset}\cap \B_n(x)\neq\emptyset)=1
	\end{align}
	for any $x\in V$. Then complete convergence is satisfied, 
	i.e.~
	\begin{equation}
	\label{CompleteConvergence}
        (\bfC_t^{C,B},\bfB_t^B)\Rightarrow \theta(\lambda,r,C,B)\overline{\nu}+[1-\theta(\lambda,r,C,B)](\delta_{\emptyset}\otimes\pi)\quad \text{ as } t\to\infty.
    \end{equation}
	Conversely if \eqref{CompleteConvergence} holds for $(\lambda,r)\in \cS_{c_1}$ and additionally $\overline{\nu}\neq \delta_{\emptyset}\otimes \pi$, then \eqref{ConvergenceCond1.1} and \eqref{ConvergenceCond2.1} are satisfied.
\end{theorem}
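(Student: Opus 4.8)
The plan is to mimic the classical proof of complete convergence for the contact process (as in Liggett's book), carefully tracking the background process and making use of the growth condition $c_1(\lambda',\rho)>\kappa^{-1}\rho$ to reduce everything from general initial configurations $(C,B)$ to the stationary background case. The fundamental tool is \emph{duality/self-duality} of the CPERE conditional on the background: conditionally on the realization of $\bfB$, the infection process $\bfC$ is an additive particle system, so it has a dual which is itself a contact process run along the time-reversed background. Since Assumption \ref{AssumptionBackground}$(iii)$ gives reversibility of $\bfB$, the time-reversed background has the same law when started stationary, which is exactly why duality is usable here.

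\textbf{Forward direction.} Assume \eqref{ConvergenceCond1.1} and \eqref{ConvergenceCond2.1} hold at $\lambda'$. First I would upgrade these from $\lambda'$ to the given $\lambda\geq\lambda'$: by monotonicity of the coupled CPERE in $\lambda$ and by Theorem \ref{IndependencyCirticalRateSurvival} (which applies since $c_1(\lambda',\rho)>\kappa^{-1}\rho$ forces us into the regime $\cS_{c_1}$ where all critical values coincide), the hitting-probability identity \eqref{ConvergenceCond1.1} and the lower bound \eqref{ConvergenceCond2.1} propagate upward in $\lambda$; continuity of $\theta$ on $\inte(\cS_{c_1})$ from Proposition \ref{ContinuityTheorem}$(i)$ is what makes the right-hand side of \eqref{ConvergenceCond1.1} behave well. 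Next, the standard argument: for a fixed finite test set $D\subset V$ and the dual contact process $\hat{\bfC}$ started from $D$ (run along the reversed background started from $\pi$), write
\begin{align*}
\Pw^{(C,B)}\big(\bfC_t\cap D=\emptyset\big)=\E\big[(1-h_C)^{?}\big],
\end{align*}
more precisely use the duality equation $\Pw^{(C,B)}(\bfC_t\cap D\neq\emptyset)=\Pw(\hat{\bfC}_t^{D}\cap C\neq\emptyset \text{ on the reversed-}B\text{ picture})$ and split according to whether the dual survives. On the survival event, condition \eqref{ConvergenceCond2.1} together with the growth condition forces the dual (which, once large, couples with a dual started from all of $V$ on the permanently coupled region) to intersect $C$ with probability tending to $1$; on the extinction event the dual has died, contributing nothing. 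The spatial ergodicity needed to pass from ``large finite dual set'' to ``intersects $C$'' comes from \eqref{ConvergenceCond2.1}. This yields
\begin{align*}
\lim_{t\to\infty}\Pw^{(C,B)}\big(\bfC_t\cap D\neq\emptyset\big)=\Pw(\hat{\bfC}^D\text{ survives})\cdot\text{(limit prob.)}= \theta(\lambda,r,C,B)\cdot\overline{\nu}\{\xi:\xi\cap D\neq\emptyset\}+\ldots,
\end{align*}
and identifying the dual survival limit with $\overline{\nu}$ (upper invariant law, using $(\bfC^{V,E},\bfB^E)\Rightarrow\overline{\nu}$ and that the $D$-marginal of $\overline{\nu}$ equals the dual survival probability) gives convergence of all finite-dimensional ``non-empty on $D$'' probabilities, hence \eqref{CompleteConvergence}; the background marginal converges to $\pi$ by ergodicity, Assumption \ref{AssumptionBackground}$(i)$.

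\textbf{Converse.} Assume \eqref{CompleteConvergence} holds at $(\lambda,r)\in\cS_{c_1}$ with $\overline{\nu}\neq\delta_\emptyset\otimes\pi$. Condition \eqref{ConvergenceCond2.1} then follows almost immediately: $\Pw(\bfC_t^{\B_n(x),\emptyset}\cap\B_n(x)\neq\emptyset)\geq\Pw(\bfC_t^{\B_n(x),\emptyset}\cap\{x\}\neq\emptyset)\to\theta(\lambda,r,\B_n(x),\emptyset)\cdot\overline{\nu}\{\xi\ni x\}$ by \eqref{CompleteConvergence}, and $\theta(\lambda,r,\B_n(x),\emptyset)\to1$ as $n\to\infty$ since survival probability increases to $1$ as the initial infected set grows to $V$ (additivity plus $\overline{\nu}\neq\delta_\emptyset\otimes\pi$); so the $\limsup$-then-limit is at least $\overline{\nu}\{\xi\ni x\}>0$, and one sharpens to $=1$ using that on $\B_n(x)$-survival the process couples with $\bfC^{V,E}$. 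For \eqref{ConvergenceCond1.1}: one inequality, $\Pw(x\in\bfC_t\text{ i.o.})\leq\theta(C,B)$, is trivial since $x\in\bfC_t$ i.o.\ implies non-extinction. For the reverse, the standard trick is: on the event of survival, by \eqref{CompleteConvergence} and $\overline{\nu}\{\xi\ni x\}>0$ the process is near $\overline{\nu}$ along a subsequence, so $x\in\bfC_t$ for arbitrarily large $t$; making this rigorous requires a $0$–$1$-type argument conditional on survival, using the Markov property and that survival of the CPERE is a tail event, to show $\Pw(x\in\bfC_t\text{ i.o.}\mid\text{survival})=1$.

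\textbf{Main obstacle.} I expect the crux to be the self-duality bookkeeping in the presence of a \emph{non-stationary, evolving} background — one must run the dual infection process along the time-reversal of $\bfB$ over $[0,t]$, and only reversibility (Assumption \ref{AssumptionBackground}$(iii)$) together with starting the background from $\pi$ makes the reversed background tractable; handling a general initial background $B$ requires first coupling $\bfB^B$ to $\bfB^\pi$ and absorbing the discrepancy, which is precisely where Assumption \ref{AssumptionBackground}$(ii)$ and the growth condition $c_1(\lambda',\rho)>\kappa^{-1}\rho$ enter (via Theorem \ref{InfectionVSBackground}: eventually the infection, and hence the relevant dual, sits inside the permanently coupled region $\Psi'$, on which the background discrepancy is invisible). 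Threading this coupling through the duality identity, uniformly in the test set $D$, is the delicate part; the rest is a faithful adaptation of the classical contact-process argument.
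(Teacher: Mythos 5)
Your proposal is aligned with the paper's proof at the level of strategy: conditional duality along the reversed graphical representation, reversibility (Assumption \ref{AssumptionBackground}$(iii)$) to obtain self-duality when $\bfB_0\sim\pi$, a coupling to the permanently coupled region (via Theorem \ref{InfectionVSBackground}) to absorb an arbitrary initial background, an upper/lower-bound sandwich for $\Pw^{(C,B)}(\bfC_t\cap D\neq\emptyset)$, and an extended Borel--Cantelli argument for the converse. The main obstacle you single out is indeed where the work lies. Two places, however, need more care than your sketch gives them.

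First, your opening step of ``upgrading'' \eqref{ConvergenceCond1.1} and \eqref{ConvergenceCond2.1} from $\lambda'$ to $\lambda$ is not justified by monotonicity alone. Condition \eqref{ConvergenceCond1.1} is an \emph{equality}; both sides are increasing in the infection rate, and one cannot conclude that the equality persists as $\lambda'$ increases to $\lambda$ without essentially reproving it. The paper avoids this: it establishes the lower bound
\[
\liminf_{t\to\infty}\Pw_{\lambda',r}^{(C,B)}(\bfC_t\cap C'\neq\emptyset)\geq\theta(\lambda',r,C,B)\,\theta^\pi(\lambda',r,C')
\]
directly at $\lambda'$ (where the growth condition is available), then passes to $\lambda$ using monotonicity $\Pw_{\lambda,r}(\cdot)\geq\Pw_{\lambda',r}(\cdot)$ together with continuity of $\theta$ on $\inte(\cS_{c_1})$ (Proposition \ref{ContinuityTheorem}) by letting $\lambda'\uparrow\lambda$. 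You should restructure the forward direction accordingly rather than trying to transfer the hypotheses themselves.

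Second, convergence of the marginal $\bfC_t$ together with $\bfB_t\Rightarrow\pi$ does not automatically yield joint convergence of $(\bfC_t,\bfB_t)$ to the prescribed mixture. The paper proves this separately (Proposition \ref{CompleteConvergenceJoint}) by feeding the duality relation $\Pw(\bfC_{t+s}^{C,B}\cap A\neq\emptyset,\bfB_{t+s}^B\cap H\neq\emptyset)=\Pw(\bfC_s^{C,B}\cap\widehat{\bfC}_t^{A,B,t+s}\neq\emptyset,\bfB_{t+s}^B\cap H\neq\emptyset)$ through the same restart-and-couple machinery (Lemmas \ref{ControlBackground} and \ref{ControlDualProcess}) to factorize the joint law in the limit. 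Your sketch silently conflates marginal with joint convergence; this step needs to be spelled out. With these two points addressed, the rest of your outline matches the paper.
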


Thus, our last main results, for which we apply Theorem \ref{CompleteConvergenceComplete}, 
concern our main example the \emph{contact process on a dynamical percolation} (CPDP) for which the background is given by the dynamical percolation introduced in Example~\ref{ThreeParticularSpinSystems} $(i)$. Here we have two additional parameters $\alpha$ and $\beta$ to consider, which are the opening and closing rates of the edges. Note that the dynamical percolation $\bfB$ satisfies Assumption~\ref{AssumptionBackground} for all $\alpha,\beta>0$. Since we have two additional parameters for the CPDP define, analogously to \eqref{SurvivalRegion}, the survival region for the CPDP by
\begin{equation}\label{SurvivalRegionCPDP}
\cS(C,B):=\{(\lambda,r,\alpha,\beta)\in (0,\infty)^4:\theta\big(\lambda,r,\alpha,\beta,C,B\big)>0\}.
\end{equation}
In this special case we are able to show that even if $c_1(\lambda,\rho)\leq \kappa^{-1}\rho$ the interior and the closure of the survival region is independent of the initial configuration.
\begin{proposition}\label{IndependenceGrowthCond}
	Let $x\in V$ and $(\bfC,\bfB)$ be a CPDP.
	Then for any $C\subset V$ finite and non-empty and any $B \subset E$ we have that $\inte(\cS(C,B))=\inte(\cS^{\pi})$
	and $\overline{\cS(C,B)}=\overline{\cS^{\pi}}.$
\end{proposition}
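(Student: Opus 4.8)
The plan is to strip the problem, via monotonicity and additivity, down to a comparison between the two extreme initial backgrounds $\emptyset$ and $E$, and then to settle that comparison by a restart argument in which the fast, spatially independent refreshing of dynamical percolation plays the role that the growth condition $c_1(\lambda,\rho)>\kappa^{-1}\rho$ played in Theorem~\ref{IndependencyCirticalRateSurvival}. (Where that growth condition holds, and in particular whenever $\rho=0$, Theorem~\ref{IndependencyCirticalRateSurvival} already yields the stronger conclusion $\cS(C,B)=\cS^{\pi}$, so the content below is only needed on graphs of exponential growth.) \emph{Reduction to $B\in\{\emptyset,E\}$.} Dynamical percolation admits a monotone graphical construction, with $\bfB^{B_1}_t\subseteq\bfB^{B_2}_t$ for all $t$ whenever $B_1\subseteq B_2$, in which one may also sample the initial background as $\pi$ independently of the update clocks, so that $\bfB^{\emptyset}_t\subseteq\bfB^{\pi}_t\subseteq\bfB^{E}_t$. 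As the infection process is monotone in its background, the coupled infections satisfy $\bfC^{C,\emptyset}_t\subseteq\bfC^{C,B}_t\subseteq\bfC^{C,E}_t$ and $\bfC^{C,\emptyset}_t\subseteq\bfC^{C,\pi}_t\subseteq\bfC^{C,E}_t$ for every $B\subseteq E$; hence $\cS(\{x\},\emptyset)\subseteq\cS(C,B),\cS^{\pi}\subseteq\cS(\{x\},E)$. By additivity of the contact process conditional on the background and translation invariance, $\cS(C,\emptyset)=\cS(\{x\},\emptyset)=:\cS^{\emptyset}$ and $\cS(C,E)=\cS(\{x\},E)=:\cS^{E}$ for every finite non-empty $C$. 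Finally, monotonicity (now also in $\lambda,r,\alpha,\beta$) shows that $\cS^{\emptyset},\cS^{\pi},\cS^{E}$ and each $\cS(C,B)$ are up-sets for the order in which $\lambda,\alpha$ increase and $r,\beta$ decrease; since translating a point of such a set strictly in the favourable direction lands in its interior, each equals the closure of its interior. So it suffices to prove $\inte(\cS^{\emptyset})=\inte(\cS^{E})$ and $\overline{\cS^{\emptyset}}=\overline{\cS^{E}}$, the statements for $\cS(C,B)$ and $\cS^{\pi}$ then following by the sandwich.

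\emph{Reduction to a one-sided inequality.} For fixed $(r,\alpha,\beta)$ set $g^{B}(r,\alpha,\beta):=\inf\{\lambda>0:\theta(\lambda,r,\alpha,\beta,\{x\},B)>0\}$ for $B\in\{\emptyset,E\}$; then $g^{\emptyset}\ge g^{E}$ trivially. Suppose we also prove $g^{\emptyset}\le g^{E}$. Then for each $(r,\alpha,\beta)$ we have $\{\lambda>g^{E}\}\subseteq\cS^{\emptyset}\subseteq\cS^{E}\subseteq\{\lambda\ge g^{E}\}$, and since $\overline{\{\lambda>g^{E}\}}=\overline{\{\lambda\ge g^{E}\}}$ (each boundary point is approached from directly above in the $\lambda$-direction) this forces $\overline{\cS^{\emptyset}}=\overline{\cS^{E}}$. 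Moreover, if $(\lambda,r,\alpha,\beta)\in\inte(\cS^{E})$ then $(\lambda-\varepsilon,r,\alpha,\beta)\in\cS^{E}$ for small $\varepsilon>0$, so $\lambda-\varepsilon\ge g^{E}=g^{\emptyset}$ and hence $(\lambda,r,\alpha,\beta)\in\cS^{\emptyset}$; thus $\inte(\cS^{E})\subseteq\inte(\cS^{\emptyset})$, giving equality. So the whole proposition reduces to the implication: if $\theta(\lambda,r,\alpha,\beta,\{x\},E)>0$, then $\theta(\lambda+\delta,r,\alpha,\beta,\{x\},\emptyset)>0$ for every $\delta>0$.

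\emph{The restart/block argument.} Assume $\theta(\lambda,r,\alpha,\beta,\{x\},E)>0$ and fix $\delta>0$. The idea is to run a space--time block construction for the CPDP started from $(\{x\},E)$: using the room afforded by positivity of $\theta$, it shows that at parameters infinitesimally below $(\lambda,r,\alpha,\beta)$ the process from $(\{x\},E)$ dominates a supercritical oriented percolation, and in particular that there is a bounded ``seed'' — a single infected vertex $x$ together with all edges of a large enough ball around $x$ open — from which the block recursion is self-sustaining. What makes this go through \emph{without} the growth condition is that each edge is refreshed by an independent Poisson clock of rate $\alpha+\beta$: conditionally on the past up to any time $s$, and for every $t\ge s+1$, one has $\Pw(e\in\bfB_t\mid\cF_s)\ge\tfrac{\alpha}{\alpha+\beta}(1-e^{-(\alpha+\beta)})>0$ for each edge $e$, and these events are independent across edges, so inside any bounded space--time box the background is stochastically bounded below uniformly in the past — which is all a local block construction needs, with no control on the global spread of the infection required. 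To finish, note that from $(\{x\},\emptyset)$, with positive probability, at time $1$ the vertex $x$ is infected and all edges of the seed ball are open (nothing else having happened), so the process from $(\{x\},\emptyset)$ reaches a configuration dominating the seed; by the strong Markov property and the block recursion, now run at the strictly more favourable parameters $(\lambda+\delta,r,\alpha,\beta)$ (monotonicity), we obtain $\theta(\lambda+\delta,r,\alpha,\beta,\{x\},\emptyset)>0$.

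\emph{Main obstacle.} The hard part is the last step: setting up the block construction for the CPDP so that it works at (or arbitrarily close to) the survival threshold using only the local refreshing of dynamical percolation in place of the growth condition, and making rigorous that the recursion, once seeded in a bounded region, is insensitive to the arbitrary (e.g.\ all-closed, coming from the empty start) background outside that region. This is exactly where the independence and exponential local relaxation of dynamical percolation are essential; it is also the reason one obtains equality of interiors and closures, rather than of the survival regions themselves, since the argument needs a small amount of slack and says nothing about the exact critical surface.
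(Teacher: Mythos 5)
Your reductions (sandwiching a general $(C,B)$ between $\emptyset$ and $E$, monotone up-sets having $\inte(A)=\inte(\overline{A})$ and $\overline{\inte(A)}=\overline A$, and collapsing to a one-sided inequality on the critical infection rate) are all sound and in fact parallel what the paper does via its Lemma~\ref{lem:TopologicalAuxLemma}. The gap is precisely where you flag the ``main obstacle'': the restart/block argument is not available in the setting where this proposition has content. The Bezuidenhout--Grimmett construction in this paper is set up only for the CPDP on $\Z^d$, where $\rho=0$ and the growth condition $c_1(\lambda,\rho)>\kappa^{-1}\rho$ holds automatically, so Theorem~\ref{IndependencyCirticalRateSurvival} already yields the stronger statement $\cS(C,B)=\cS^\pi$. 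Proposition~\ref{IndependenceGrowthCond} is aimed at general transitive bounded-degree graphs (in particular exponential growth), where no block construction, oriented-percolation comparison, or ``seed'' recursion has been established; your sketch silently imports machinery that only exists where it is not needed, and gives no indication of how to build the analogous blocks on a general graph.

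The paper's actual proof avoids block constructions entirely by exploiting a feature that is very special to dynamical percolation: starting from $B=\emptyset$, the law of $\bfB_s^{\emptyset}$ at the explicit time $s=\tfrac{1}{\alpha+\beta}\log(\alpha/\varepsilon)$ is \emph{exactly} the product Bernoulli measure $\pi_{\alpha-\varepsilon,\beta+\varepsilon}$ (and from $B=E$, exactly $\pi_{\alpha+\varepsilon,\beta-\varepsilon}$). Feeding this into the delayed/accelerated bounding processes $\underline{\bfC}^{s}$, $\overline{\bfC}^{s}$ of Proposition~\ref{IndependencySurvival}, and thinning the edge-update Poisson process, gives the two implications of Proposition~\ref{IndependenceGrowthCond2}: if $(\lambda,r,\alpha+\varepsilon,\beta-\varepsilon)\in\cS^\pi$ then $(\lambda,r,\alpha,\beta)\in\cS(\{x\},\emptyset)$, and the dual statement for the complement of $\cS^\pi$ and $\cS(\{x\},E)$. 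From these, equality of interiors and closures is purely topological. Note in particular that the paper's slack is taken in $(\alpha,\beta)$, not $\lambda$: it is the transition function of the edge dynamics, not the infection parameter, that produces the exact distributional identity, and your local stochastic lower bound $\Pw(e\in\bfB_t\mid\cF_s)\ge\tfrac{\alpha}{\alpha+\beta}(1-e^{-(\alpha+\beta)})$ is a weaker version of this exact fact that, by itself, does not give the parameter-shifted stationary comparison needed to close the argument.
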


In Section~\ref{Sec:CPDP} we even show a slightly stronger but somewhat more technical statement, see Proposition~\ref{IndependenceGrowthCond2}.
We already note here that only in the cases where the boundary of 

\smallskip
\noindent
For the next results we assume the underlying graph to be the $d$-dimensional lattice, i.e.  
\begin{equation*}
V=\Z^d \quad \text{ and } \quad E=\{\{x,y\}\subset \Z^d: ||x-y||_1=1\},
\end{equation*}
where $||\cdot||_1$ denotes the $1$-norm. We denote by $\zero\in \Z^d$ the $d$-dimensional vector of zeros. Note that the $d$-dimensional lattice is of subexponential growth, and thus the growth condition $c_1(\lambda,\rho)>\kappa^{-1}\rho$ is always satisfied.
Thus, the critical infection rate is given through  
\begin{equation*}
\lambda_c(r,\alpha,\beta)=\inf\{\lambda>0:\theta(\lambda,r,\alpha,\beta,\{\zero\},\emptyset)>0\}=\inf\{\lambda>0:\theta(\lambda,r,\alpha,\beta,C,B)>0\},
\end{equation*}
for any $C$ non-empty and finite as well as arbitrary $B$.
Likewise, the survival region does not depend on the initial condition, and thus we denote it simply by $\cS$.

Due to the independence of the dynamics of the edges in dynamical percolation
we can explicitly state the invariant law $\pi=\pi_{\alpha,\beta}$ of the background process.
Namely, under $\pi_{\alpha,\beta}$ the states of the edges are described by i.i.d. Bernoulli random variables 
with parameter $\frac{\alpha}{\alpha+\beta}$, i.e.~for every $e\in E$
\begin{align*}
\pi(\{B\subset E: e\in B\})=\frac{\alpha}{\alpha+\beta} \quad\text{ and } \quad  \pi(\{B\subset E: e\notin B\})=\frac{\beta}{\alpha+\beta}.
\end{align*}
We adapt the techniques developed by \cite{bezuidenhout1990critical} to the CPDP and use them show the following results.

\begin{theorem}\label{NoSurvivalAtCriticality}
	The CPDP on $\Z^d$ goes a.s. extinct at criticality, i.e. 
	\begin{align*}
	\theta\big(\lambda,r,\alpha,\beta,\{\zero\},\emptyset\big)=0
	\end{align*}
	for all $(\lambda,r,\alpha,\beta)\in (0,\infty)^4\backslash 
	\inte(\cS)$.
\end{theorem}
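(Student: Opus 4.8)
Since $\cS=\cS(\{\zero\},\emptyset)=\{(\lambda,r,\alpha,\beta):\theta(\lambda,r,\alpha,\beta,\{\zero\},\emptyset)>0\}$ and, because $\Z^d$ has $\rho=0$, this region does not depend on the initial configuration by Theorem~\ref{IndependencyCirticalRateSurvival} (see also Proposition~\ref{IndependenceGrowthCond}), the assertion is equivalent to the statement that $\cS$ is an \emph{open} subset of $(0,\infty)^4$. The plan is to prove this by the block-construction method of Bezuidenhout and Grimmett \cite{bezuidenhout1990critical}, adapted so as to also control the dynamical background. Throughout one works with the graphical representation of the CPDP, in which the infection, recovery, edge-opening and edge-closing events are driven by independent Poisson processes, and with the joint monotonicity of the CPDP: under the graphical coupling $(\bfC^{C,B},\bfB^{B})$ is non-decreasing in both coordinates of $(C,B)$ (more infected sites and more open edges only help the infection, and dynamical percolation is itself attractive), so $\emptyset\subset E$ is the pessimistic choice of initial background.

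\emph{Step 1 (finite space-time box).} Fix $(\lambda_0,r_0,\alpha_0,\beta_0)\in\cS$, i.e.~$\theta(\lambda_0,r_0,\alpha_0,\beta_0,\{\zero\},\emptyset)>0$. The heart of the argument is a restart/spreading lemma: for every $\varepsilon>0$ there exist $L,T<\infty$, a ``good seed'' (a suitably thick infected configuration on one face of the box $\Lambda_{L,T}:=[-L,L]^d\times[0,T]$, together with the all-closed background), and finitely many spatial translates of $\Lambda_{L,T}$ such that, started from a good seed and using only the Poisson clocks inside $\Lambda_{L,T}$, the CPDP produces a good seed in each designated neighbouring box with probability at least $1-\varepsilon$. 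This is derived exactly as for the classical contact process: survival forces the conditioned process to be spread out, a finite-energy argument confines the relevant survival mechanism to a bounded box up to arbitrarily small error, and one re-seeds. The one genuinely new point is the background. Here one uses that (a) $\theta>0$ already holds for the all-closed initial background, so the infection spreads against the worst case; (b) dynamical percolation is ergodic with coupling error decaying exponentially uniformly in the initial state (Assumption~\ref{AssumptionBackground}$(ii)$ holds for all $\alpha,\beta>0$), so after an initial portion of the long interval $[0,T]$ the background inside the box is essentially stationary and the infection spreads as in the unperturbed model; and (c) by monotonicity in the background, resetting the background to $\emptyset$ at the base of each box only lowers the relevant probabilities, which is the direction we need, and simultaneously makes the background in distinct boxes depend on disjoint clocks.

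\emph{Step 2 (continuity and percolation comparison).} The good-box event of Step~1 is measurable with respect to the restriction of the four driving Poisson processes to the bounded region $\Lambda_{L,T}$, and its probability is continuous in $(\lambda,r,\alpha,\beta)$: coupling the Poisson processes for two nearby parameter vectors by thinning and superposition, the events differ only on the vanishing-probability event that an extra or missing point falls in $\Lambda_{L,T}$. Hence there is an open neighbourhood $U\ni(\lambda_0,r_0,\alpha_0,\beta_0)$ on which the good-box event still has probability at least $1-2\varepsilon$. Good boxes that are sufficiently separated in space-time use disjoint clocks, and—thanks to the reset of the background in Step~1(c)—the good events form a $k$-dependent field for a fixed $k$; choosing $\varepsilon$ small enough at the outset, standard domination results for $k$-dependent oriented percolation show that this field stochastically dominates a supercritical oriented percolation. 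The Bezuidenhout--Grimmett comparison then yields survival of the CPDP, i.e.~$\theta(\lambda,r,\alpha,\beta,\{\zero\},\emptyset)>0$ for all $(\lambda,r,\alpha,\beta)\in U$. Thus $U\subset\cS$, and since the point of $\cS$ was arbitrary, $\cS$ is open, which is exactly the claim.

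\emph{Main obstacle.} The bulk of the work—and the only part that is not routine—is Step~1: running the full Bezuidenhout--Grimmett chain of lemmas (non-extinction in large boxes, regeneration, linear growth of the infected region, re-seeding of neighbouring boxes) in the presence of the evolving background, and organising the restart so that the background is reset to $\emptyset$ at the base of each block. This reset is what makes the good events of different blocks depend on disjoint families of Poisson clocks, hence $k$-dependent; without it the background would carry correlations across blocks. One must also verify that the reset is legitimate, i.e.~that it produces a lower bound—this is precisely the joint monotonicity of the CPDP in $(C,B)$, together with $\emptyset$ being minimal. Once Step~1 is in place, the continuity of finite-volume events and the $k$-dependent comparison in Step~2 are entirely standard.
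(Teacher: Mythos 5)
Your proposal is correct and follows essentially the same route as the paper: you reduce the theorem to the openness of $\cS$, invoke the Bezuidenhout--Grimmett block construction (with the background restarted in the all-closed state at the base of each block so that the good-block events are $k$-dependent), note that the good-block probabilities depend only on a bounded piece of the graphical representation and are therefore continuous in $(\lambda,r,\alpha,\beta)$, and then push the parameters slightly past the boundary while retaining the supercritical oriented-percolation comparison. This is precisely what the paper does via Condition~\ref{SpaceTimeCondition}, Proposition~\ref{ConstructionOfBlocks}, Lemma~\ref{BuildBlock}, Theorem~\ref{ComparisonWithOrientedPercolation}, and the continuity Lemmas~\ref{ContinuityLemma} and~\ref{ContinuityLemma2}.
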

With the help of Theorem~\ref{NoSurvivalAtCriticality} and results analogous to Proposition~\ref{ContinuityTheorem} we can then extend the results on the continuity of the survival probability to the whole parameter regime $(0,\infty)^4$.
\begin{proposition}\label{JointContiuityOfCPDP}
	Let $C\subset V$ and $B\subset E$. For the CPDP on $\Z^d$ the survival probability is continuous, i.e.
	\begin{equation*}
	(\lambda,r,\alpha,\beta)\mapsto \theta(\lambda,r,\alpha,\beta,C,B)
	\end{equation*}
	is continuous seen as function from $(0,\infty)^4$ to $[0,1]$.
\end{proposition}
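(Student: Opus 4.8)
The plan is to reduce the claim to the two facts already established for the CPDP: first, Proposition~\ref{ContinuityTheorem} (or its four-parameter analogue) which gives continuity of $\theta(\cdot,C,B)$ on $\inte(\cS)$ — recall that on the $d$-dimensional lattice $\rho=0$, so $\cS_{c_1}=\cS$ and the growth condition is automatic; and second, Theorem~\ref{NoSurvivalAtCriticality}, which gives $\theta\equiv 0$ on $(0,\infty)^4\setminus\inte(\cS)$. Once we know that $\theta$ vanishes identically off $\inte(\cS)$, it is continuous there trivially (it is locally constant equal to $0$ on the open set $(0,\infty)^4\setminus\overline{\cS}$, and equal to $0$ on all of the complement of $\inte(\cS)$). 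Hence the only delicate point is continuity at a boundary point $p_0\in\partial\cS\cap(0,\infty)^4$: we must show $\theta(p)\to 0=\theta(p_0)$ as $p\to p_0$. Approaching through the complement of $\inte(\cS)$ is immediate since $\theta\equiv 0$ there; so it suffices to show $\theta(p)\to 0$ as $p\to p_0$ through $\inte(\cS)$.

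First I would record that by Proposition~\ref{IndependenceGrowthCond} the interior and closure of $\cS$ do not depend on $(C,B)$, and by additivity of the contact process $\theta(\lambda,r,\alpha,\beta,C,B)>0$ for one finite non-empty $C$ iff it holds for all; so it is enough to treat $C=\{\zero\}$, $B=\emptyset$, and then transfer to general finite $C$ by the elementary bounds $\theta(\{\zero\},\emptyset)\le\theta(C,B)\le\sum_{x\in C}\theta(\{x\},B)$ together with a comparison in $B$ (monotonicity of $\theta$ in the initial background via the graphical coupling, with the extremes $B=\emptyset$ and $B=E$ sandwiching the general case, and $\theta(\{x\},E)$ being the survival probability of the classical contact process on $\Z^d$, which is itself continuous and vanishes at its criticality — but more directly, one simply uses that $\theta(\cdot,C,B)$ is sandwiched between $\theta(\cdot,\{x\},\emptyset)$ for $x\in C$ up to the finite factor $|C|$, so boundary continuity for the singleton-$\emptyset$ case forces it for all $(C,B)$). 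Next, for the singleton case, I would invoke the standard finite-space-time block argument of Bezuidenhout--Grimmett: the key output of the construction underlying Theorem~\ref{NoSurvivalAtCriticality} is that survival with positive probability already implies a supercritical oriented-percolation block event in a \emph{finite} space-time box, i.e.\ for $p\in\inte(\cS)$ there is a finite box $[-n,n]^d\times[0,T]$ on which a restart event has probability exceeding the oriented-percolation threshold. This block probability is a continuous function of $(\lambda,r,\alpha,\beta)$ because it is an event measurable with respect to finitely many Poisson clocks on a bounded region, whose intensities depend continuously (in fact smoothly) on the parameters. Hence $\inte(\cS)$ is exactly the set where this finite-box probability exceeds the threshold, which shows $\inte(\cS)=\inte(\overline{\cS})$ and, crucially, that $\partial\cS$ is precisely the level set where the block probability equals the threshold.

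The main obstacle — and where the Bezuidenhout--Grimmett machinery does the real work — is the uniform-in-approach estimate: as $p\to p_0\in\partial\cS$ from inside, $\theta(p)$ must go to $0$ rather than jump. The way to get this is the standard one: fix $\varepsilon>0$; by the restart/renormalization lemma (the analogue for the CPDP of the comparison with oriented percolation), there is a finite box and a threshold $\delta>0$ such that whenever the block probability at parameter $p$ exceeds $1-\delta$ one has $\theta(p)\ge \varepsilon^{-1}$... more precisely one shows the contrapositive directional bound: there is a function $g$ with $g(q)\to 0$ as $q\downarrow 0$ such that $\theta(p)\le g\big(\text{threshold}-b(p)\big)$ is the wrong direction — instead, the clean route is: suppose $p_k\to p_0$ with $\theta(p_k)\ge\varepsilon>0$ for all $k$; the Bezuidenhout--Grimmett construction applied at each $p_k$ with the \emph{same} target block probability (possible because survival probability bounded below by $\varepsilon$ gives, via the restart argument, a uniform lower bound on the achievable block probability over a box whose size may depend on $\varepsilon$ but not on $k$) produces a block probability at $p_k$ bounded away from the threshold uniformly in $k$; letting $k\to\infty$ and using continuity of the (fixed, finite-box) block probability in the parameters, the block probability at $p_0$ also exceeds the threshold, so $p_0\in\inte(\cS)$, contradicting $p_0\in\partial\cS$. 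Therefore $\theta(p_k)\to 0$, completing continuity at $p_0$. Assembling: $\theta$ is continuous on $\inte(\cS)$ by Proposition~\ref{ContinuityTheorem}, identically $0$ and hence continuous on the open complement of $\overline{\cS}$, and continuous at every boundary point by the argument just sketched; since these three regions cover $(0,\infty)^4$, $\theta$ is continuous on $(0,\infty)^4$, first for $(\{\zero\},\emptyset)$ and then, by the sandwich bounds above, for every finite $C$ and every $B$. The one genuinely load-bearing ingredient that must be checked carefully is that the restart construction of Theorem~\ref{NoSurvivalAtCriticality} indeed yields a \emph{uniform} lower bound on the finite-box block probability in terms of a lower bound on $\theta$ alone, independent of the precise parameter values — this is exactly the content one extracts from the proof of Theorem~\ref{NoSurvivalAtCriticality}, so it is available to us.
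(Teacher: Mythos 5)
Your reduction to the single boundary case is the right idea, but the load-bearing step — that the Bezuidenhout–Grimmett construction provides a \emph{fixed} finite space-time box (size depending on $\varepsilon$ only, not on $k$) on which the block probability at each $p_k$ exceeds the oriented-percolation threshold whenever $\theta(p_k)\ge\varepsilon$ — is not available, and you should not treat it as established. The proof of Theorem~\ref{CPDPSurvivalImplyCond} (survival implies the finite condition) is a soft, sequential-compactness argument at a \emph{fixed} supercritical parameter: $n$ is chosen via Lemma~\ref{SurvivalContinuityLemma2} so that $\theta(\B_n(\zero),\emptyset)$ is close to $1$, and then $L_k,T_k$ are produced so that certain probabilities equal $1-\delta$; none of these choices is quantified in terms of $\theta(\{\zero\},\emptyset)$ alone. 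The rate at which $\theta(\B_n(\zero),\emptyset)\to 1$ is controlled by the upper invariant law and depends on the full parameter $p$, so nothing prevents the required box size from blowing up as $p_k\to\partial\cS$. Trying to supply the uniformity via Dini's theorem does not help either: the functions $p\mapsto\theta(\B_n(\zero),\emptyset,p)$ are continuous only on $\inte(\cS)$, so Dini gives uniform convergence only on compact subsets of the open survival region, which never reach $p_0$. In effect, the uniformity you invoke is essentially equivalent to the boundary continuity you are trying to prove, so the argument as written is circular at its crucial step. (A secondary slip: $\theta(\{x\},E)$ is the CPDP with all edges initially open, not the classical contact process, and the upper bound in your sandwich involves $\theta(\{\zero\},E)$ rather than $\theta(\{\zero\},\emptyset)$; this is repairable but should be stated correctly.)

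The paper avoids the issue entirely by going through a more elementary route. Proposition~\ref{ContinuityCPDP} gives one-sided continuity in each coordinate everywhere on $(0,\infty)^4$ (the easy, monotone-in-$t$-limit direction) and the other one-sided continuity on $\inte(\cS)$; the remaining one-sided limit at a boundary point is trivially $0=\theta(p_0)$, since from the subcritical side $\theta\equiv 0$ and $\theta(p_0)=0$ by Theorem~\ref{NoSurvivalAtCriticality}. This yields \emph{separate} continuity of $\theta$ in each of the four parameters on all of $(0,\infty)^4$. The paper then invokes \cite[Proposition~2]{kruse1969joint}: a function on $(0,\infty)^d$ that is monotone in each coordinate and separately continuous is jointly continuous. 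Since $\theta$ is monotone in $\lambda,r,\alpha,\beta$ (Lemmas~\ref{MonotonicityAdditivityLemma} and~\ref{MonotonicityOfCPDPLemma}), joint continuity follows with no need for uniform control of Bezuidenhout–Grimmett box sizes. I would replace your boundary compactness argument with this monotone-plus-separately-continuous lemma; it is exactly the tool that makes the step you worried about cost nothing.
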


\begin{theorem}\label{CPDPCompleteConvergence}
	The CPDP on $\Z^d$  
	satisfies complete convergence as stated in \eqref{CompleteConvergence}.
\end{theorem}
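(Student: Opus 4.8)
The plan is to verify the two sufficient conditions \eqref{ConvergenceCond1.1} and \eqref{ConvergenceCond2.1} of Theorem~\ref{CompleteConvergenceComplete} for the CPDP on $\Z^d$ and then invoke that theorem directly. Since $\Z^d$ has subexponential growth, $\rho=0$, so the growth condition $c_1(\lambda',\rho)>\kappa^{-1}\rho$ is automatic for every $\lambda'>0$; in particular we may simply take $\lambda'=\lambda$, and the survival region does not depend on the initial configuration. Thus the whole burden reduces to establishing the two displayed limiting/hitting identities for arbitrary $(\lambda,r,\alpha,\beta)\in(0,\infty)^4$.

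First I would treat the subcritical/critical regime $(\lambda,r,\alpha,\beta)\notin\inte(\cS)$. By Theorem~\ref{NoSurvivalAtCriticality} the process dies out a.s.\ there, so $\theta=0$ and $\Pw^{(C,B)}(\bfC_t=\emptyset\text{ eventually})=1$; hence both sides of \eqref{ConvergenceCond1.1} are $0$, and \eqref{CompleteConvergence} holds trivially because $\bfC_t^{C,B}\Rightarrow\delta_\emptyset$ while $\bfB_t^B\Rightarrow\pi$ by ergodicity of the dynamical percolation (Assumption~\ref{AssumptionBackground}$(i)$, which holds for all $\alpha,\beta>0$). So the real work is confined to the supercritical regime $\inte(\cS)$.

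For $(\lambda,r,\alpha,\beta)\in\inte(\cS)$ I would run the Bezuidenhout--Grimmett block-construction machinery, adapted to the CPDP as announced before Theorem~\ref{NoSurvivalAtCriticality} (this adaptation is carried out in Section~\ref{Sec:CPDP}). Concretely: (a) from the a priori positivity of $\theta$ one first upgrades to a finite space-time box statement — with probability close to $1$ the infection, restricted to a large box and with the background started empty, ``crosses'' the box and seeds suitably many infected sites near the faces; this is exactly the content needed for \eqref{ConvergenceCond2.1}, since it gives $\limsup_{t\to\infty}\Pw(\bfC_t^{\B_n(x),\emptyset}\cap\B_n(x)\neq\emptyset)\to1$ as $n\to\infty$. (b) The same block estimate, by the standard comparison with oriented percolation on $\Z^{d}\times\Z$ with density $1-\varepsilon$, shows that on the survival event the process is ``everywhere dense'': for each fixed $x$, $\Pw^{(C,B)}(x\in\bfC_t\text{ i.o.}\mid\text{survival})=1$, which together with the trivial inclusion $\{x\in\bfC_t\text{ i.o.}\}\subset\{\bfC_t\neq\emptyset\ \forall t\}$ yields \eqref{ConvergenceCond1.1}. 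A technical point worth isolating is that the block construction must be performed with the background started from the \emph{empty} configuration (the worst case for the infection); since the dynamical percolation couples monotonically for different initial backgrounds and has exponentially fast coupling (Assumption~\ref{AssumptionBackground}$(ii)$), restarting arguments at successive renewal times go through, and finiteness of $C$ plus additivity of the contact process lets one pass from a single seed to arbitrary finite $C$, and then a further limiting/monotonicity argument (or direct comparison) handles general, possibly infinite, $C$ and arbitrary $B$.

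Having verified \eqref{ConvergenceCond1.1} and \eqref{ConvergenceCond2.1} for all parameters in $\inte(\cS)$ with $\lambda'=\lambda$, Theorem~\ref{CompleteConvergenceComplete} delivers \eqref{CompleteConvergence} on $\inte(\cS)$; combined with the trivial case on $(0,\infty)^4\setminus\inte(\cS)$ handled above — where Theorem~\ref{NoSurvivalAtCriticality} shows even the critical boundary dies out — complete convergence holds on all of $(0,\infty)^4$, which is the assertion. I expect the main obstacle to be step (a)–(b): re-running the Bezuidenhout--Grimmett construction in the presence of the dynamically evolving (and, away from stationarity, non-product) background, i.e.\ controlling the block events jointly for $(\bfC,\bfB)$ and checking that the restart/renewal structure survives the coupling of backgrounds; everything else is either automatic from $\rho=0$ or a routine consequence of ergodicity and additivity.
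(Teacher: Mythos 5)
Your route is essentially the one the paper follows: because $\rho=0$ on $\Z^d$ the growth condition is automatic, so you verify Conditions~\eqref{ConvergenceCond1.1} and \eqref{ConvergenceCond2.1} on the survival region via the block construction (the paper does this through Propositions~\ref{SecondCondition}, \ref{FirstCondition1} and \ref{FirstCondition2}), invoke Theorem~\ref{CompleteConvergenceComplete}, and treat the non-survival region separately.

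There is one genuine, if small, gap in your treatment of the complement $\cS^c$. You argue that there $\theta=0$ and $\Pw^{(C,B)}(\bfC_t=\emptyset\text{ eventually})=1$, and then read off \eqref{CompleteConvergence}. That chain of reasoning only applies for finite $C$: for infinite $C$ (e.g.\ $C=V$) the process never hits $\emptyset$ at any finite time, and indeed $\theta(C,B)=1$, so \eqref{CompleteConvergence} is asking for convergence to $\overline{\nu}$, not for a.s.\ extinction. What rescues the case is that in $\cS^c$ the upper invariant law is trivial. Concretely, Proposition~\ref{EqualityOfCriticalValues} gives $\overline{\nu}=\delta_\emptyset\otimes\pi$, hence $(\bfC_t^{V,E},\bfB_t^E)\Rightarrow\delta_\emptyset\otimes\pi$, and then the monotonicity of Lemma~\ref{MonotonicityAdditivityLemma} sandwiches $(\bfC_t^{C,B},\bfB_t^B)$ between this and $\delta_\emptyset\otimes\pi$, so it too converges to $\delta_\emptyset\otimes\pi$; since this agrees with the right-hand side of \eqref{CompleteConvergence} for any value of $\theta(C,B)$, the conclusion holds for all $(C,B)$. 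With that substitution in the subcritical step, your proposal coincides with the paper's proof.
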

One reason for studying the CPDP is that the background is still simple enough so that we can adapt some techniques known for the CP and show the above results. But another reason is that we can compare some CPERE with more general background processes to it: Let $(\bfC,\bfB)$ be a CPERE such that $\bfB$ is a spin system with rate $q(\cdot,\cdot)$. Assume that $\bfB$ is of range $R$, i.e.~$q(e,B)=q(e,B\cap \B^L_R(e))$. Set $\cN^L_e(R):=\B^L_R(e)\backslash \{e\}$ and define the minimal and maximal spin rates by
\begin{align}\label{MaximalAndMinimalRates1}
\begin{aligned}
\alpha_{\min}&:=\min_{F\subset \cN^L_e(R) }q(e,F),\qquad  \beta_{\min}:=\min_{F\subset\cN^L_e(R) }q(e,F\cup \{e\})\\
\alpha_{\max}&:=\max_{F\subset \cN^L_e(R) }q(e,F)\,\, \text{ and }\,\, \beta_{\max}:=\max_{F\subset \cN^L_e(R) }q(e,F\cup \{e\}).
\end{aligned}
\end{align}
Then there exist two CPDP with the same infection and recovery rates, which
bound the CPERE from above and below.

\begin{proposition}\label{ComparisonCPEREandCPDP}
	Let $(\bfC,\bfB)$ be a CPERE with infection and recovery rate $\lambda,r$. Then there exist two CPDP $(\overline{\bfC},\overline{\bfB})$ and $(\underline{\bfC},\underline{\bfB})$ with the same infection and recovery rates $\lambda,r$ as $(\bfC,\bfB)$, for which  the rates of $\overline{\bfB}$ and $\underline{\bfB}$ are respectively $\alpha_{\max},\beta_{\min}$ and $\alpha_{\min}, \beta_{\max}$, and the property that  $(\underline{\bfC}_0,\underline{\bfB}_0)= (\bfC_0,\bfB_0) = (\overline{\bfC}_0,\overline{\bfB}_0)$ implies $\underline{\bfC}_t\subset \bfC_t\subset \overline{\bfC}_t$ and $\underline{\bfB}_t\subset \bfB_t\subset \overline{\bfB}_t$ for all $t>0$ a.s. 
\end{proposition}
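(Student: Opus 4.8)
The plan is to realise all six processes on one probability space by a single graphical representation carrying the background $\bfB$ together with the two dynamical percolations $\overline{\bfB},\underline{\bfB}$, and on top of these backgrounds the three infection processes, all driven by shared Poisson noise; the two chains of inclusions then drop out by induction over the (locally finitely many) graphical events. The structural fact I would use repeatedly is that, since $\bfB$ has range $R$, for any $\zeta\subseteq E$ the opening rate $q(e,\zeta)$ (for $e\notin\zeta$) equals $q(e,\zeta\cap\cN^L_e(R))$ and hence lies in $[\alpha_{\min},\alpha_{\max}]$, while the closing rate $q(e,\zeta)$ (for $e\in\zeta$) equals $q(e,(\zeta\cap\cN^L_e(R))\cup\{e\})$ and hence lies in $[\beta_{\min},\beta_{\max}]$.

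\textbf{Step 1: coupling the backgrounds.} For each $e\in E$ I would take two independent Poisson point processes on $[0,\infty)\times[0,1]$, namely $\Pi^e_{+}$ of intensity $\alpha_{\max}\,dt\,du$ (``opening clocks'') and $\Pi^e_{-}$ of intensity $\beta_{\max}\,dt\,du$ (``closing clocks''), independent over $e$ (if $\alpha_{\max}=0$ simply drop the opening clocks, and likewise for $\beta_{\max}$). At a point $(t,u)\in\Pi^e_{+}$ one updates a configuration $\zeta$ only when $e\notin\zeta$, adding $e$ iff $u\le q(e,\zeta)/\alpha_{\max}$ for $\bfB$, iff $u\le 1$ for $\overline{\bfB}$, and iff $u\le\alpha_{\min}/\alpha_{\max}$ for $\underline{\bfB}$; symmetrically, at $(t,u)\in\Pi^e_{-}$ one updates $\zeta$ only when $e\in\zeta$, removing $e$ iff $u\le q(e,\zeta)/\beta_{\max}$ for $\bfB$, iff $u\le\beta_{\min}/\beta_{\max}$ for $\overline{\bfB}$, and iff $u\le 1$ for $\underline{\bfB}$. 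Thinning shows $\bfB$ is the prescribed spin system and $\overline{\bfB},\underline{\bfB}$ the dynamical percolations with edge rates $(\alpha_{\max},\beta_{\min})$ resp.\ $(\alpha_{\min},\beta_{\max})$; that these dynamics are well defined on the infinite line graph and that the event-by-event argument below is legitimate is exactly the localisation/percolation content of the graphical representation of Section~\ref{GraphRepSpinSystemCose}. Starting all three from a common $B_0$, I would prove $\underline{\bfB}_t\subseteq\bfB_t\subseteq\overline{\bfB}_t$ for all $t$ by induction over events; since opening clocks never remove and closing clocks never add an edge, configurations cannot cross at a single instant, and it suffices to check each inclusion at the edge $e$ of the current event. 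At a $\Pi^e_{+}$-event $\overline{\bfB}$ always ends with $e$ present, so $\bfB_t\subseteq\overline{\bfB}_t$ holds trivially at $e$; and if $\underline{\bfB}$ adds $e$ then $u\le\alpha_{\min}/\alpha_{\max}\le q(e,\bfB_{t-})/\alpha_{\max}$ (because $e\notin\bfB_{t-}$ forces $q(e,\bfB_{t-})\ge\alpha_{\min}$), so $\bfB$ adds $e$ as well. Symmetrically, at a $\Pi^e_{-}$-event $\underline{\bfB}$ always ends with $e$ absent, and if $\overline{\bfB}$ removes $e$ then so does $\bfB$ (now using $q(e,\bfB_{t-})\ge\beta_{\min}$). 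Thus both inclusions persist; note that this uses no attractiveness of $q$ — it is the extremal rate choice together with the separation of opening and closing clocks that forces the bounding percolations to act extremally.

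\textbf{Step 2: coupling the infections.} On the same space I would add the standard contact-process ingredients of Section~\ref{GraphRepresentationCPERE}, independent of the $\Pi^e_{\pm}$: for each ordered neighbour pair $(y,x)$ a rate-$\lambda$ Poisson process of infection arrows $y\to x$, and for each $x\in V$ a rate-$r$ Poisson process of recovery crosses, with all three infection processes using the same arrows and crosses. Running $\underline{\bfC},\bfC,\overline{\bfC}$ against the backgrounds $\underline{\bfB},\bfB,\overline{\bfB}$ from a common $C_0$ under the rule ``a cross at $x$ kills $x$, and an arrow $y\to x$ at time $t$ infects $x$ in the copy with background $\bfB'$ iff $y\in\bfC'_{t-}$ and $\{x,y\}\in\bfB'_{t-}$'' makes each $(\bfC',\bfB')$ a CPERE with rates $(\lambda,r)$ and background $\bfB'$, cf.\ \eqref{InfectionRatesWithBackground}; in particular $(\overline{\bfC},\overline{\bfB})$ and $(\underline{\bfC},\underline{\bfB})$ are CPDP with the required parameters by Step~1. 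Inducting over events: a cross acts identically on all three infection sets and hence preserves the inclusions, while at an arrow $y\to x$ the event ``$x$ becomes infected in the copy with background $\bfB'$'' is increasing in both $\bfC'_{t-}$ and $\bfB'_{t-}$; combined with $\underline{\bfC}_{t-}\subseteq\bfC_{t-}\subseteq\overline{\bfC}_{t-}$ and the background ordering of Step~1 this yields $\underline{\bfC}_t\subseteq\bfC_t\subseteq\overline{\bfC}_t$, which is the claim.

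\textbf{The main obstacle.} The only genuinely delicate point is the rigour of the infinite-volume graphical construction — that the three background dynamics (and then the infection dynamics) are indeed well defined by these local rules, and that the event-by-event induction is valid — which is precisely the localisation/percolation argument behind the graphical representations of Sections~\ref{GraphRepSpinSystemCose} and~\ref{GraphRepresentationCPERE} (finite range and bounded rates guarantee that in any bounded space--time region only finitely many events are relevant). Everything else is bookkeeping; in particular the a priori danger that $\bfB$ ``crosses'' an extremal percolation at an edge that is open in $\bfB$ but not in $\underline{\bfB}$, or closed in $\bfB$ but not in $\overline{\bfB}$, is ruled out as soon as the clocks are split into opening and closing families and $\overline{\bfB},\underline{\bfB}$ are run at the extremal rates $(\alpha_{\max},\beta_{\min})$ and $(\alpha_{\min},\beta_{\max})$.
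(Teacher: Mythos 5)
Your proof is correct and arrives at the same conclusion by a slightly different route. The paper constructs $\underline{\bfB}$ and $\overline{\bfB}$ via the same $\mathbf{up}_{e,F}/\mathbf{down}_{e,F}$ maps as $\bfB$ but with the extremal constant rates, observes the pointwise rate comparison $\underline{q}(e,B)\leq q(e,B)\leq\overline{q}(e,B)$ for $e\notin B$ (reversed for $e\in B$), and then delegates the existence of a monotone coupling to the comparison theorem for spin systems in Liggett's book (Theorem~III.1.5). You instead build the coupling by hand, marking each opening/closing clock with an independent uniform $u$ and thinning according to the three acceptance thresholds; the induction over events at a single edge then shows the inclusions persist, using only that $q(e,\cdot)$ lies in $[\alpha_{\min},\alpha_{\max}]$ on the open side and $[\beta_{\min},\beta_{\max}]$ on the closed side. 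This is more elementary and self-contained, and — as you correctly observe — it nowhere uses attractiveness of $q$: the two bounding processes are dynamical percolations whose extremality is a consequence of the rate bounds alone, not of any monotonicity of $\bfB$. That is a genuine (if small) gain in generality over the cited comparison theorem, which is stated for attractive spin systems; in the paper's setting it costs nothing because $\bfB$ is assumed attractive anyway, but your version makes the hypothesis visible. The infection-process coupling in your Step~2 is identical to the paper's: same arrows and crosses for all three copies, monotonicity of the infection map in both the infected set and the background, and the inclusion chain follows. The "main obstacle" you flag (well-posedness of the infinite-volume graphical construction and the event-by-event induction) is indeed exactly what Section~\ref{GraphRepSpinSystemCose} supplies via condition~\eqref{totalrb}, so the remark is accurate rather than a gap.
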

As direct consequence of Proposition~\ref{ComparisonCPEREandCPDP} we obtain bounds on the survival probabilities. In the following corollary
$\theta$ denotes the survival probability of a general CPERE and $\theta_{\text{DP}}$ of the CPDP. 
\begin{corollary}\label{ComparisonSurvival}
	Assume that there exist 
	$\alpha_{\max}$,$\alpha_{\min}$, $\beta_{\max},\beta_{\min}\geq0$ as in \eqref{MaximalAndMinimalRates1}. Then
	\begin{align*}
	\theta_{\text{DP}}(\lambda,r,\alpha_{\max},\beta_{\min},C,B)\geq \theta(\lambda,r,C,B)\geq \theta_{\text{DP}}(\lambda,r,\alpha_{\min},\beta_{\max},C,B)
	\end{align*}
	where $C\subset V$ and $B\subset E$.
\end{corollary}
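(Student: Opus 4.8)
The plan is to obtain Corollary~\ref{ComparisonSurvival} as an immediate consequence of the pathwise sandwiching in Proposition~\ref{ComparisonCPEREandCPDP}, exploiting only that the survival event $\{\bfC_t\neq\emptyset\ \forall t\geq 0\}$ is increasing in the trajectory $(\bfC_t)_{t\geq 0}$.

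First I would dispose of the degenerate case $C=\emptyset$: here $\theta(\lambda,r,\emptyset,B)=0=\theta_{\text{DP}}(\lambda,r,\alpha,\beta,\emptyset,B)$ for all choices of rates, so all three terms vanish and the inequalities hold trivially. Hence assume from now on that $C$ is non-empty.

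Next, invoke Proposition~\ref{ComparisonCPEREandCPDP} to realise, on a single probability space, the CPERE $(\bfC,\bfB)$ together with the two CPDP $(\overline{\bfC},\overline{\bfB})$ and $(\underline{\bfC},\underline{\bfB})$, the former with background rates $(\alpha_{\max},\beta_{\min})$ and the latter with $(\alpha_{\min},\beta_{\max})$, all started from the common configuration $(C,B)$, so that almost surely $\underline{\bfC}_t\subset\bfC_t\subset\overline{\bfC}_t$ for every $t>0$. Since all three infection sets equal the non-empty set $C$ at time $0$, on the full-measure event carrying these inclusions we have
\[
\{\bfC_t\neq\emptyset\ \forall t\geq 0\}\subseteq\{\overline{\bfC}_t\neq\emptyset\ \forall t\geq 0\}
\quad\text{and}\quad
\{\underline{\bfC}_t\neq\emptyset\ \forall t\geq 0\}\subseteq\{\bfC_t\neq\emptyset\ \forall t\geq 0\}.
\]
Taking probabilities under the coupling and recalling that the coupled copies of $\bfC$, $\overline{\bfC}$ and $\underline{\bfC}$ carry exactly the laws defining $\theta(\lambda,r,C,B)$, $\theta_{\text{DP}}(\lambda,r,\alpha_{\max},\beta_{\min},C,B)$ and $\theta_{\text{DP}}(\lambda,r,\alpha_{\min},\beta_{\max},C,B)$, the first inclusion yields the upper bound and the second the lower bound in the asserted chain of inequalities.

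There is no real obstacle; the only minor points requiring care are the degenerate initial condition (handled above) and the fact that Proposition~\ref{ComparisonCPEREandCPDP} states the inclusions only for $t>0$. The latter is harmless because at $t=0$ all three infection sets coincide with $C$, which is non-empty in the relevant case, so ``non-empty for all $t\geq 0$'' and ``non-empty for all $t>0$'' describe the same event there.
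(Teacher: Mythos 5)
Your proof is correct and follows exactly the route the paper intends: the paper presents Corollary~\ref{ComparisonSurvival} as a ``direct consequence'' of the pathwise coupling in Proposition~\ref{ComparisonCPEREandCPDP} without giving further details, and your argument spells out precisely that deduction via monotonicity of the survival event. The extra care you take with the degenerate case $C=\emptyset$ and the $t>0$ versus $t\geq 0$ distinction is sound, if somewhat more explicit than the paper requires.
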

Furthermore, we are able to infer for a CPERE that complete convergence holds on a subset of its survival region. To be precise this subset will be the interior of the survival region of the CPDP $(\underline{\bfC},\underline{\bfB})$, which lies ``below'' the CPERE. 
Since this is a consequence of Theorem \ref{CPDPCompleteConvergence} we need the underlying graph to be $\Z^d$.
\begin{theorem}\label{CPDPCompleteConvergenceCPERE}
	Let $(\bfC,\bfB)$ be a CPERE on $\Z^d$
	and suppose that Assumption~\ref{AssumptionBackground} is satisfied
	as well as that there exist 	$\alpha_{\min}$, $\beta_{\max}$
	as in \eqref{MaximalAndMinimalRates1}. 
	If $\theta_{\text{DP}}(\lambda,r,\alpha_{\min},\beta_{\max},\{\zero\},\emptyset)>0$ then complete convergence
	as stated in \eqref{CompleteConvergence} holds.
\end{theorem}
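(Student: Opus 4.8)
Since $G=\Z^d$ has subexponential growth we have $\rho=0$, so the growth condition $c_1(\lambda',\rho)>\kappa^{-1}\rho$ in Theorem~\ref{CompleteConvergenceComplete} is automatic for $\lambda'=\lambda$, and it suffices to verify \eqref{ConvergenceCond1.1} and \eqref{ConvergenceCond2.1} for $(\bfC,\bfB)$ at $(\lambda,r)$. Let $(\underline{\bfC},\underline{\bfB})$ be the lower comparison CPDP of Proposition~\ref{ComparisonCPEREandCPDP}: a CPDP with infection/recovery rates $\lambda,r$ and background rates $\alpha_{\min},\beta_{\max}$ which, started from the same configuration as $(\bfC,\bfB)$, satisfies $\underline{\bfC}_t\subset\bfC_t$ and $\underline{\bfB}_t\subset\bfB_t$. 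By hypothesis $\theta_{\text{DP}}(\lambda,r,\alpha_{\min},\beta_{\max},\{\zero\},\emptyset)>0$, which by Theorem~\ref{NoSurvivalAtCriticality} forces $(\lambda,r,\alpha_{\min},\beta_{\max})\in\inte(\cS)$; in particular $(\lambda,r,\alpha_{\min},\beta_{\max})\in\cS_{c_1}$ (as $\rho=0$) and $\overline{\nu}_{\text{DP}}\neq\delta_{\emptyset}\otimes\pi$ (using $\lambda'_c=\lambda^{\pi}_c=\lambda_c$ from Theorems~\ref{CrticalValuesAgree} and \ref{IndependencyCirticalRateSurvival}). Since the CPDP on $\Z^d$ satisfies complete convergence for all parameters (Theorem~\ref{CPDPCompleteConvergence}), the converse part of Theorem~\ref{CompleteConvergenceComplete}, applied to the lower CPDP, shows that \eqref{ConvergenceCond1.1} and \eqref{ConvergenceCond2.1} (with $\lambda'$ read as $\lambda$) hold for $(\underline{\bfC},\underline{\bfB})$.

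Condition \eqref{ConvergenceCond2.1} for $(\bfC,\bfB)$ then follows by monotone domination: applying Proposition~\ref{ComparisonCPEREandCPDP} to the CPERE started from $(\B_n(x),\emptyset)$, the coupled lower CPDP also starts from $(\B_n(x),\emptyset)$, so $\bfC^{\B_n(x),\emptyset}_t\supseteq\underline{\bfC}^{\B_n(x),\emptyset}_t$ and hence $\Pw_{\lambda,r}(\bfC^{\B_n(x),\emptyset}_t\cap\B_n(x)\neq\emptyset)\ge\Pw_{\lambda,r,\alpha_{\min},\beta_{\max}}(\underline{\bfC}^{\B_n(x),\emptyset}_t\cap\B_n(x)\neq\emptyset)$; letting $t\to\infty$ and then $n\to\infty$ the right-hand side tends to $1$ by \eqref{ConvergenceCond2.1} for the lower CPDP, while the left-hand side is bounded by $1$, so \eqref{ConvergenceCond2.1} holds for the CPERE.

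The remaining point, \eqref{ConvergenceCond1.1}, i.e.\ $\Pw^{(C,B)}_{\lambda,r}(x\in\bfC_t\text{ i.o.})=\theta(\lambda,r,C,B)$ for all $x\in V$, $C\subset V$, $B\subset E$, is the heart of the argument. The inequality ``$\le$'' holds since $\emptyset$ is absorbing for $\bfC$, whence $\{x\in\bfC_t\text{ i.o.}\}\subseteq\{\bfC_t\neq\emptyset\ \forall t\}$. For ``$\ge$'' I would use a Lévy $0$--$1$ law argument that avoids a new renormalization for the CPERE and imports the estimate through the lower CPDP. Writing $(\cF_t)_{t\ge0}$ for the natural filtration of $(\bfC,\bfB)$, on $\{\bfC_t\neq\emptyset\}$ pick measurably $y_t\in\bfC_t$; by the Markov property the post-$t$ process is a CPERE from $(\bfC_t,\bfB_t)$, which dominates (Proposition~\ref{ComparisonCPEREandCPDP}) the lower CPDP from $(\bfC_t,\bfB_t)$, which in turn dominates (attractiveness of dynamical percolation) the lower CPDP from $(\{y_t\},\emptyset)$. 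Since ``i.o.'' is insensitive to the shift by $t$, combining this with \eqref{ConvergenceCond1.1} for the lower CPDP and translation invariance gives
\[
\Pw^{(C,B)}_{\lambda,r}\big(x\in\bfC_u\text{ i.o.}\ \big|\ \cF_t\big)\ \ge\ \theta_{\text{DP}}\big(\lambda,r,\alpha_{\min},\beta_{\max},\{\zero\},\emptyset\big)\ =:\ \eta\ >\ 0
\]
on $\{\bfC_t\neq\emptyset\}$. On the survival event $\{\bfC_s\neq\emptyset\ \forall s\}$ this bound holds for every $t$, so by Lévy's upward martingale convergence theorem $\1_{\{x\in\bfC_u\text{ i.o.}\}}=\lim_{t\to\infty}\Pw^{(C,B)}_{\lambda,r}(x\in\bfC_u\text{ i.o.}\mid\cF_t)\ge\eta>0$ a.s.\ there, forcing $\1_{\{x\in\bfC_u\text{ i.o.}\}}=1$. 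Hence $\{\bfC_s\neq\emptyset\ \forall s\}\subseteq\{x\in\bfC_u\text{ i.o.}\}$ up to a null set, which yields ``$\ge$'' and thus \eqref{ConvergenceCond1.1}; note that this works verbatim whether $C$ is finite or infinite. With \eqref{ConvergenceCond1.1} and \eqref{ConvergenceCond2.1} established, Theorem~\ref{CompleteConvergenceComplete} with $\lambda'=\lambda$ gives complete convergence \eqref{CompleteConvergence}.

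I expect the main obstacle to be \eqref{ConvergenceCond1.1}, and within it the care needed to (i) make the chain of couplings ``CPERE from $(\bfC_t,\bfB_t)\supseteq$ lower CPDP from $(\bfC_t,\bfB_t)\supseteq$ lower CPDP from $(\{y_t\},\emptyset)$'' rigorous via Proposition~\ref{ComparisonCPEREandCPDP} and attractiveness of dynamical percolation, and (ii) choose $y_t$ and condition so that Lévy's law applies on the completed right-continuous filtration. The genuinely hard analytic input — the Bezuidenhout--Grimmett-type verification of \eqref{ConvergenceCond1.1}/\eqref{ConvergenceCond2.1} for a supercritical CPDP — is delegated entirely to Theorem~\ref{CPDPCompleteConvergence} and transported by the comparison, so no fresh block construction is required here.
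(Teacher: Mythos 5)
Your proof is correct and follows the paper's overall strategy (compare to the lower CPDP via Proposition~\ref{ComparisonCPEREandCPDP}, verify \eqref{ConvergenceCond1.1} and \eqref{ConvergenceCond2.1}, apply Theorem~\ref{CompleteConvergenceComplete}), but the route to \eqref{ConvergenceCond1.1} is organized differently. The paper proceeds in two separate steps: Proposition~\ref{FirstCondition1} establishes positivity of $\Pw^{(C,B)}(x\in\underline{\bfC}_t\text{ i.o.})$ for the \emph{CPDP} via the block construction, transfers this positivity to the CPERE by the coupling $\underline{\bfC}_t\subset\bfC_t$, and only then invokes Proposition~\ref{FirstCondition2} -- a general-purpose L\'evy/martingale lemma for CPERE on $\Z^d$ that upgrades positivity to the full equality $\Pw^{(C,B)}(x\in\bfC_t\text{ i.o.})=\theta(C,B)$, and which requires a separate symmetry argument $\Pw^{(\{x\},\emptyset)}(\zero\in\bfC_t\text{ for some }t)=\Pw^{(\{\zero\},\emptyset)}(x\in\bfC_t\text{ for some }t)$ to produce the lower bound $\big(\Pw^{(\{\zero\},\emptyset)}(A)\big)^2$ needed in the martingale step. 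You instead run a single L\'evy argument, inserting the quantitative bound $\eta=\theta_{\text{DP}}(\lambda,r,\alpha_{\min},\beta_{\max},\{\zero\},\emptyset)$ directly through the coupling chain \emph{inside} the conditional expectation, thereby bypassing both the intermediate transfer-of-positivity step and Proposition~\ref{FirstCondition2} entirely. This is a genuine streamlining: the uniform lower bound on $\Pw(A\mid\cF_t)$ comes ready-made from the CPDP's known property rather than having to be re-derived internally for the CPERE, and the symmetry of $\Z^d$ enters only through the already-established \eqref{ConvergenceCond1.1} for the CPDP. The minor technical points you flag (measurable selection of $y_t\in\bfC_t$, working along a discrete time sequence $t=n$, right-continuity of the filtration) are routine and the argument is sound. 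One can also observe that your way of extracting \eqref{ConvergenceCond1.1}/\eqref{ConvergenceCond2.1} for the CPDP -- via Theorem~\ref{CPDPCompleteConvergence} plus the converse part of Theorem~\ref{CompleteConvergenceComplete} -- is a slight detour compared to citing Propositions~\ref{FirstCondition1}, \ref{FirstCondition2} and \ref{SecondCondition} directly, but it reaches the same point.
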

\begin{remark}\label{ComparsisonRemark1}
    It is in fact possible to compare a CPERE on more general graphs $G$ with a CPDP on a $1$-dimensional integer lattice. For this we need to assume that $G$ is distance transitive, i.e.~for any two vertices $x,y\in V$ exists a graph automorphism $\sigma$ such that $\sigma(x)=y$. Note that distance transitivity is a stronger property than the transitivity which we assumed here since on transitive graphs only for neighboring vertices $x,y\in V$, i.e.~$\{x,y\}\in E$, such a graph automorphism exists.
    
    Then, for $(\bfC,\bfB)$ a CPERE on $G$ which satisfies Assumption~\ref{AssumptionBackground} complete convergence
	as stated in \eqref{CompleteConvergence} holds if $c_1(\lambda,\rho)>\kappa^{-1}\rho$ and   $\theta^{\Z}_{\text{DP}}(\lambda,r,\alpha_{\min},\beta_{\max},\{\zero\},\emptyset)>0$, where $\theta^{\Z}_{\text{DP}}$ denotes the survival probability of a CPDP on the $1$-dimensional integer lattice and $\alpha_{\min}$, $\beta_{\max}$ are defined as in \eqref{MaximalAndMinimalRates1}. This is further discussed in Remark \ref{rem:complconvextensionproof}.
\end{remark}
\subsection{Outline}
The rest of this paper is organized as follows. In Section~\ref{Sec:Discussion} we discuss our main results and put them into context with the current state of research and cite the relevant literature. Furthermore, we state open problems and  possible directions for future research.

In Section~\ref{Sec:GraphicalRepesentation} we present briefly a graphical representation for a broad class of interacting particle systems as found in \cite{swart2017course}. We use this representation to first construct an arbitrary finite range spin system, and thereafter we construct the CPERE. We end this section by deriving some basic properties such as for example monotonicity of the process with respect to the rates $\lambda$ and $r$.

In Section~\ref{Sec:Independece} we compare the asymptotic expansion speed of the set of all infections and the permanently coupled region and use this to prove our first main result Theorem~\ref{IndependencyCirticalRateSurvival}, which states that under a certain growth condition the critical infection rate $\lambda_c$ is independent of the initial configuration.

In the first subsection of Section~\ref{Sec:DualityAndInvariant} we derive a duality relation for the infection process $\bfC$. With this duality we study the connection between survival and non-triviality of the upper invariant law and prove Theorem~\ref{CrticalValuesAgree}. Furthermore, in the subsequent subsections we prove Proposition~\ref{ContinuityTheorem}, which states some continuity properties of the survival probability and we prove Theorem~\ref{CompleteConvergenceComplete}, which provides us with conditions which imply complete convergence of the CPERE.

In Section~\ref{Sec:CPDP} we focus on our main example the CPDP. In the first part we prove Proposition~\ref{ComparisonCPEREandCPDP} a comparison result between a general CPERE and a CPDP with adequately chosen rates. Furthermore, we show Proposition~\ref{IndependenceGrowthCond}. This result yields that even if the growth condition used in Section~\ref{Sec:Independece} does not hold we still get that the positivity of the survival probability is independent of the initial configuration, at least in the non-critical parameter regime.

In the second part we focus on the CPDP on the $d$-dimensional integer lattice and adapt techniques developed by Bezuidenhout and Grimmett~\cite{bezuidenhout1990critical}. This enables us to prove that the CPDP dies out at criticality (Theorem~\ref{NoSurvivalAtCriticality}) and as a consequence we are able to prove Proposition~\ref{JointContiuityOfCPDP}. We are also able to verify that complete convergence holds for the whole parameter regime of the CPDP (Theorem~\ref{CPDPCompleteConvergence}). We end this section by showing that complete convergence holds for the CPERE if a comparison to a suitable CPDP can be made,
namely we show Theorem~\ref{CPDPCompleteConvergenceCPERE}.
\section{Discussion}\label{Sec:Discussion}

In this paper we study a contact process in a fairly general ergodic evolving random environment. Our main example, the CPDP with $\bfB$ dynamical percolation, was first considered by Linker and Remenik \cite{linker2019contact}. They studied the survival probability and the existence of a phase transition. Among other things they could prove that in this model there exists a so called \emph{immunization phase}. This means that if we assume $r>0$ to be fixed, then for certain choices of $\alpha$ and $\beta$ there exists no infection rate $\lambda$ such that survival is possible. Furthermore, they studied the asymptotic behavior for survival as the update speed of an edge $\alpha+\beta$ tends to $0$ or $\infty$. Note that just recently \cite{hilario2021results} provide some further results in this direction for the CPDP on $\Z^d$. In all of these results the background is assumed to be stationary, i.e the initial distribution is the unique invariant law.

Thus, our first results concerning the influence of the initial configuration on the critical infection rate complement their results. We managed to show for general CPERE that if for a given $r>0$ we find a $\lambda>0$ with $\theta(\lambda,r,C,B)>0$ for some configuration $(C,B)$, which satisfies the inequality $c_1(\lambda,\rho)>\kappa^{-1}\rho$, then the survival of the infection process $\bfC$ is independent of the choice of the initial configuration, i.e.~the critical infection rate $\lambda_c(r)$ does not depend on $(C,B)$. We note that as this condition is always fulfilled if $\rho=0$ we have for subexponential growth graphs established independence of the initial condition in general.

With this we showed that in particular on subexponential growth graphs, as for example $\Z^d$, 
the results shown in \cite{linker2019contact} and \cite{hilario2021results} regarding the CPDP do not depend on their stationarity assumption of the background but hold in general.

For general CPERE it is not clear in the case $\rho>0$ if independence of the initial condition may still hold in general, or if there are background dynamics for which additional assumptions 
(such as our sufficient condition $c_1(\lambda,\rho)>\kappa^{-1}\rho$) are necessary:
\begin{problem}
	Let $x\in V$ be arbitrary but fixed and suppose $\rho>0$. Is the critical infection rate always independent of the initial conditions? In other words is $\lambda_c(r,\{x\},\emptyset)=\lambda_c(r,C,B)$ for all 
	$C\subset V$ finite and $B\subset E$? 
\end{problem}

For the CPDP we were able to show in Proposition~\ref{IndependenceGrowthCond} that even if the growth condition is not satisfied the interior of the survival region is still independent of the initial configuration. Thus, the only missing piece is the behaviour at criticality, i.e.~on the boundary of the survival region. Since the CP dies out at criticality on a large class of graphs, we would expect the same to be true for the CPDP. Hence, we come to the following conjecture.
\begin{conjecture}
	Let $(\bfC,\bfB)$ be the CPDP.
	Then $\lambda_c(r,C,B)=\lambda^{\pi}_c(r)$ for all $(C,B)$ with $C$ non-empty and finite.
\end{conjecture}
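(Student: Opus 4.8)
With $r,\alpha,\beta>0$ fixed, the key input is Proposition~\ref{IndependenceGrowthCond}, which already gives $\inte(\cS(C,B))=\inte(\cS^{\pi})$ and $\overline{\cS(C,B)}=\overline{\cS^{\pi}}$ for every finite non-empty $C\subset V$ and every $B\subset E$. Since $\theta(\cdot,C,B)$ is monotone in $\lambda$, the survival region $\cS(C,B)$ meets the line $\{(\lambda,r,\alpha,\beta):\lambda>0\}$ in a half-line; if we knew $\cS(C,B)$ to be \emph{open}, this half-line would be $\big(\lambda_{c}(r,C,B),\infty\big)$ and would coincide with the corresponding slice of $\inte(\cS(C,B))=\inte(\cS^{\pi})$, so that running the same argument for $\pi$ yields $\lambda_{c}(r,C,B)=\lambda^{\pi}_{c}(r)$. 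Thus the conjecture is \emph{equivalent} to: the CPDP dies out at criticality --- equivalently, $\cS(C,B)$ is open --- on every connected transitive graph of bounded degree, for every initial configuration (including the stationary background $B\sim\pi$). By monotonicity of $\theta$ in the background ($\emptyset\subset B\subset E$) and additivity of the infection process conditional on the background, it is even enough to treat $C=\{x\}$ with the two extreme backgrounds, i.e.\ to prove the single inequality $\lambda_{c}(r,\{x\},E)\ge\lambda_{c}(r,\{x\},\emptyset)$. The growth condition of Theorem~\ref{IndependencyCirticalRateSurvival} is of no help, as it fails precisely in the regime ($\rho>0$, $\alpha+\beta$ small) that the conjecture concerns.

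\textbf{The plan.} Replay, on a general transitive graph, the Bezuidenhout--Grimmett block construction~\cite{bezuidenhout1990critical} that underlies Theorem~\ref{NoSurvivalAtCriticality} in Section~\ref{Sec:CPDP}. One would: (i) show that survival with positive probability from \emph{some} configuration forces, for a suitably large space--time box, a ``block event'' --- the infection restricted to the box crosses it and reproduces a translate of its initial shape near the far face --- of probability arbitrarily close to $1$; (ii) use that for dynamical percolation the permanently coupled region $\Psi'_{t}$ of \eqref{DefinitionPermanetlyCoupledRegion} grows at exponential rate $\kappa=\alpha+\beta$ (indeed $\Pw(e\notin\Psi'_{t})=e^{-(\alpha+\beta)t}$, needing no growth condition) to make the box long enough in time that every edge meeting it is refreshed, after which the background inside the box is distributed as if started from its stationary Bernoulli law, so that the block event becomes insensitive to the background at the bottom of the box; (iii) compare the renormalised process with a supercritical one-dimensional oriented percolation in the time direction --- using transitivity of $G$ to translate blocks --- and conclude that ``survives with positive probability'' is an open condition in $(\lambda,r,\alpha,\beta)$ independent of $(C,B)$, i.e.\ exactly the openness of $\cS(C,B)$. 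Step (iii) would moreover recover joint continuity of $\theta$ and, as in Theorem~\ref{CPDPCompleteConvergence}, complete convergence on the full parameter range.

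\textbf{Main obstacle.} Step (i). The Bezuidenhout--Grimmett argument is inherently Euclidean: it exploits the slab/hyperplane geometry of $\Z^{d}$, a genuine linear drift direction, and isoperimetric estimates, none of which has an evident analogue on an arbitrary transitive graph; even for the \emph{classical} contact process ``extinction at criticality'' is open on general transitive (let alone general amenable) graphs. A complete answer would therefore likely proceed graph class by graph class --- $\Z^{d}$ being Theorem~\ref{NoSurvivalAtCriticality}, homogeneous trees and other graphs carrying a usable renormalisation skeleton being separate (and, on nonamenable graphs, structurally different) cases --- unless one can extract the finite-box survival estimate of step (i) directly from positive survival probability. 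A tempting shortcut for the reduced inequality $\lambda_{c}(r,\{x\},E)\ge\lambda_{c}(r,\{x\},\emptyset)$ is a restart/coupling argument: survival from $(\{x\},E)$ should create a local ``regeneration'' which, because dynamical percolation forgets the state of any finite edge set at rate $\alpha+\beta$, recurs --- with smaller but still positive probability --- when the background starts from $\emptyset$, after an $O(1)$ wait for equilibration on the relevant window. But making ``regeneration'' into something iterable seems to require exactly the finite-box control of step (i), so this shortcut does not, as far as we can see, escape the core difficulty.
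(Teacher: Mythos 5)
The statement you were given is a \emph{conjecture} in the paper, placed in the Discussion section with an explicit acknowledgment that it is expected but not proved: the paper records that extinction at criticality is ``the only missing piece,'' and that such extinction (Theorem~\ref{NoSurvivalAtCriticality}) is established only on $\Z^d$ --- whereas the conjecture concerns the CPDP on general connected transitive graphs of bounded degree, and is only at issue when $\rho>0$. So there is no proof in the paper to compare against, and your submission is not a proof either, as you yourself say in your ``Main obstacle'' paragraph.

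That said, your analysis is sound and matches the paper's view of the problem. The reduction via Proposition~\ref{IndependenceGrowthCond}, and further via the sandwich $\lambda_c(r,\{x\},E)\le\lambda_c^\pi(r)\le\lambda_c(r,\{x\},\emptyset)$ plus additivity to the single inequality $\lambda_c(r,\{x\},E)\ge\lambda_c(r,\{x\},\emptyset)$, is correct. Your claim that the conjecture is \emph{equivalent} to extinction at criticality overreaches slightly --- equality of all the critical values does not obviously force the survival regions to be open --- but the direction that matters, openness implies the conjecture, is fine. Your plan to run Bezuidenhout--Grimmett, using the rate-$(\alpha+\beta)$ forgetfulness of dynamical percolation ($\Pw(e\notin\Psi'_t)=e^{-(\alpha+\beta)t}$, so $\kappa=\alpha+\beta$ independently of the graph) to carry out step (ii), is the natural one, and you correctly identify that the genuine obstacle is step (i): extracting a high-probability finite-box block event from mere positivity of survival is tied to the Euclidean slab geometry of $\Z^d$, and even for the classical CP extinction at criticality is open on general transitive graphs. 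Nothing in the paper resolves this, and for exactly that reason the paper leaves the statement as a conjecture.
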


We were also able to show that the survival probability is continuous on the interior of the subset $\cS_{c_1}$ of the survival region defined in \eqref{SurvivalRegionWithGrowth}.
Furthermore, we conclude that the phase transition of survival with the background started stationary, i.e.~$\theta^{\pi}(\lambda,r,\{x\})=0$ to  $\theta^{\pi}(\lambda,r,\{x\})>0$, agrees with the phase transition of non-triviality of the upper invariant law, i.e.~$\overline{\nu}= \delta_{\emptyset}\otimes\pi$ to $\overline{\nu}\neq \delta_{\emptyset}\otimes\pi$. Thus, if additionally $c_1(\lambda_c^{\pi}(r),\rho)>\kappa^{-1}\rho$ holds the initial configuration of the background is of no importance to the question of non-triviality of $\overline{\nu}$.
This in itself is an interesting observation. But of course one would like to characterize all invariant laws for the CPERE which follows from showing complete convergence.

For the CP Durrett and Griffeath \cite{durrett1982contact} formulated equivalent conditions for complete convergence, which have been adapted to several variations of the CP, for example to 
the multi-type contact process by Remenik \cite{remenik2008contact}.  We managed to provide a similar characterization for the CPERE based on the previously stated results: Under the assumption that 
the growth condition $c_1(\lambda,\rho)>\kappa^{-1}\rho$ is satisfied we could show that Condition \eqref{ConvergenceCond1.1} and \eqref{ConvergenceCond2.1} imply complete convergence of the CPERE, i.e.
\begin{equation*}
    (\bfC_t^{C,B},\bfB_t^B)\Rightarrow \theta(C,B)\overline{\nu}+[1-\theta(C,B)](\delta_{\emptyset}\otimes\pi)
\end{equation*}
as $t\to \infty$. By using additionally monotonicity of the CPERE and continuity of the survival probability we are now able to determine a parameter subset 
 \begin{align*}
 \cS^{\text{cc}}_{c_1}:=\{ (\lambda,r): \exists \lambda'\leq \lambda \text{ s.t. } &(\lambda',r)\in\cS(\{x\},\emptyset)\\ 
 &\text{ and } c_1(\lambda',\rho)>\kappa^{-1}\rho,
 \eqref{ConvergenceCond1.1} \text{ and } \eqref{ConvergenceCond2.1} \text{ hold}\} \subset \cS_{c_1}
  \end{align*}
for which complete convergences holds. We then have that $(\lambda,r)\in \cS^{\text{cc}}_{c_1}$ if there exists a $\lambda'\leq \lambda$ such that $(\lambda',r)\in\cS(\{x\},\emptyset)$ and 
$c_1(\lambda',\rho)>\kappa^{-1}\rho$, as well as that  the conditions \eqref{ConvergenceCond1.1} and \eqref{ConvergenceCond2.1} hold. 

We illustrated the survival region $\cS(C,B)$ and the subsets $ \cS_{c_1}$ and $\cS^{\text{cc}}_{c_1}$ in Figure~\ref{fig:PhaseTransitionSurv}. On subexponential graphs, i.e.~$\rho=0$, the inequality $c_1(\lambda',\rho)>\kappa^{-1}\rho$  is trivially satisfied for all $\lambda>0$ since $c_1(\lambda,\rho)>0$ for all $\lambda>0$. Thus, $\cS_{c_1}=\cS(C,B)$ for all $(C,B)$ with $C$ non-empty and finite.
\begin{figure}[t]
	\centering 
	\includegraphics[width=120mm]{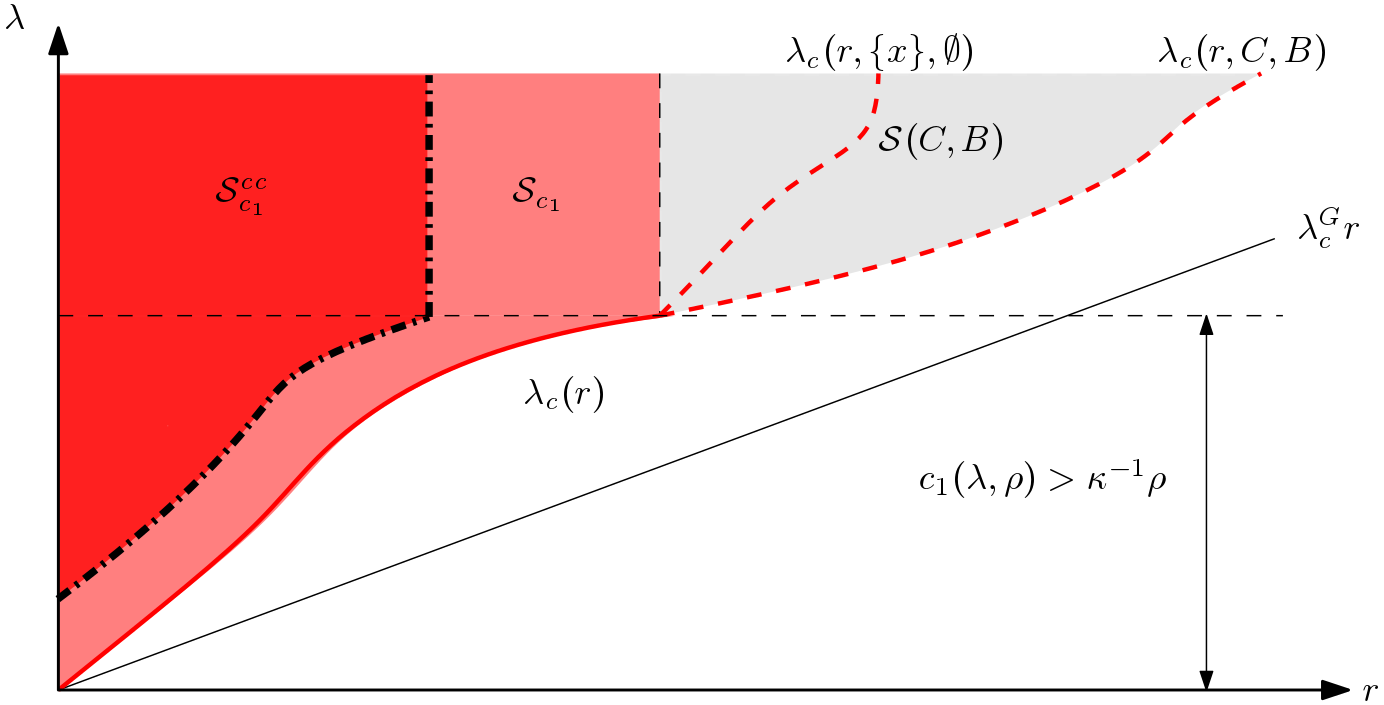}
	\caption{The solid and dashed red curve indicates the critical infection rate $\lambda_{c}(r,\cdot)$ of the CPERE. The solid black curve indicates the critical infection rate of a CP, where $\lambda_c^{G}$ is the critical infection rate for $r=1$.}
	\label{fig:PhaseTransitionSurv}
\end{figure}

Theorem~\ref{CompleteConvergenceComplete} only yields complete convergence of the CPERE if $(\lambda,r)\in \cS_{c_1}^{cc}$. Some crucial steps of our proof rely  on the assumption that the asymptotic expansion speed of the permanently coupled region is greater than that of the infection. But it is not clear that this assumption is necessary.

Let us for a moment consider the CP on a regular tree. In this setting it is possible to show that the CP has an intermediate phase, where complete convergence does not hold but the survival probability is positive. The reason for this occurrence is that in this phase global survival is possible, i.e.~the survival probability is positive, but if we consider an arbitrary vertex $x$ the probability that it gets reinfected infinitely often is $0$, and thus local survival is not possible. Thus, \eqref{ConvergenceCond1.1} and \eqref{ConvergenceCond2.1} fail. 
If local survival is not possible the CP started in a finite initial configuration will converge to $\delta_{\emptyset}$ as $t\to \infty$. But it is possible to construct several (in fact, infinitely many) non-trivial invariant laws, see \cite[Chapter I.4]{liggett2013stochastic} for details.

Now the question is if the background process can cause a similar phenomenon if the growth condition is not satisfied. Hence, the following question arises.
\begin{problem}
    What is the limiting behaviour of $(\bfC_t,\bfB_t)$ as $t\to \infty$ if for a pair of rates $(\lambda,r)\in(0,\infty)^2$ which satisfies Condition \eqref{ConvergenceCond1.1} and \eqref{ConvergenceCond2.1}, no $\lambda'\leq \lambda$ with $c_1(\lambda',\rho)\leq\kappa^{-1}\rho$ does exist which satisfies the \eqref{ConvergenceCond1.1} and \eqref{ConvergenceCond2.1}?
\end{problem}

In the special case that $G$ is the $d$-dimensional integer lattice so that the growth condition always holds, we showed for our main example, the CPDP, that complete convergence holds for all $(\lambda,r,\alpha,\beta)\in(0,\infty)^{4}$. Furthermore, for general CPERE whose  background process $\bfB$ satisfies Assumption~\ref{AssumptionBackground} complete convergence holds on the survival region of a CPDP with suitably chosen parameters, see Theorem~\ref{CPDPCompleteConvergenceCPERE}. Therefore, we might ask the following two questions.
\begin{problem}
	Does complete convergence hold for every $(\lambda,r)\in (0,\infty)^2$ for a CPERE on the $d$-dimensional integer lattice, if the background satisfies Assumption~\ref{AssumptionBackground}?
\end{problem}
\begin{problem}
	Does the CPERE on the $d$-dimensional integer lattice go extinct a.s. at criticality, if the background satisfies Assumption~\ref{AssumptionBackground}?
\end{problem}
Closely related to complete convergence is the asymptotic shape theorem. Now let $\tau:=\inf\{t\geq 0:\bfC^{\{\zero\},\emptyset}_t\neq \emptyset\}$ denote the extinction time of the infection process $\bfC$ with initial configuration $(\{\zero\},\emptyset)$.
Let $\bfH_t:=\bigcup_{s\leq t}\bfC^{\{\zero\},\emptyset}_s$ be the set of all vertices which were infected at least once until time $t$ and $\bfK_t:=\{x\in V: x\in \bfC^{\{\zero\},\emptyset}_s\symdiff\bfC_s^{V,E} \,\,\forall s\geq t\}$
be the permanently coupled region of the infection process $\bfC$. Furthermore, we set $\bfH_t':=\bfH_t+\big[\small{-}\tfrac{1}{2},\tfrac{1}{2}\big]^d$ and $\bfK'_t:=\bfK_t+\big[\small{-}\tfrac{1}{2},\tfrac{1}{2}\big]^d$.
\begin{conjecture}\label{Conjecture1}
	Let $(\bfC,\bfB)$ be a CPERE with infection rate $\lambda>0$ and recovery rate $r>0$, where $\bfB$ satisfies Assumption~\ref{AssumptionBackground}. Suppose that $\theta(\lambda,r,\{\zero\},\emptyset)>0$ and there exist constants $C_1,C_2,M>0$ such that
	\begin{align}
	\Pw(t\leq \tau<\infty)&\leq C_1 \exp(-C_2 t),\label{ConjectureEq1}\\
	\Pw(x\notin \bfH_{M||x||_1+t},\tau=\infty)&\leq C_1 \exp(-C_2 t),\label{ConjectureEq2}\\
	\Pw(x\notin \bfK_{M||x||_1+t},\tau=\infty)&\leq C_1 \exp(-C_2 t).\label{ConjectureEq3}
	\end{align}
	Then there exists a bounded and convex subset $U\subset \R^d$ such that for every $\varepsilon>0$
	\begin{align*}
	\Pw\big(\exists s\geq0 :  t(1-\varepsilon)U\subset (\bfK'_t\cap 		\bfH_t')\subset \bfH_t'\subset t(1+\varepsilon)U \,\,\forall t\geq s\big| \tau=\infty\big)=1.
	\end{align*}
\end{conjecture}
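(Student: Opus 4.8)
This is an asymptotic shape theorem, and the natural route is the subadditive-ergodic-theorem method applied to hitting times, conditioned on survival $\{\tau=\infty\}$, following the program of Garet and Marchand \cite{garet2012asymptotic} for the contact process in a \emph{static} random environment and adapting it to the evolving environment $\bfB$ and to the extra object $\bfK_t$. One works throughout on $\{\tau=\infty\}$, which has positive probability by hypothesis; the three estimates \eqref{ConjectureEq1}--\eqref{ConjectureEq3} play exactly the role of the large-deviation bounds that in \cite{garet2012asymptotic} must first be established by hand. Since $G=\Z^d$ we have $\rho=0$, so the growth condition $c_1(\lambda,\rho)>\kappa^{-1}\rho$ and all earlier results quoted below are available.

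For $x\in\Z^d$ let $\sigma_0(x):=\inf\{t\ge 0:x\in\bfH_t\}$ be the first infection time of $x$. As the infection may die out locally around $x$ while surviving globally, $\sigma_0$ is not well adapted to subadditivity, and one builds instead the \emph{essential hitting time} $\sigma(x)$ by the standard alternating construction: wait until $x$ is infected; if the infection issued from $x$ (realised inside the coupled graphical representation, using additivity of $\bfC$ given the background) survives forever, stop; otherwise wait until a fixed neighbourhood of $\zero$ has been re-infected by $\bfC^{V,E}$ and the background has coupled there, then restart the attempt from a fresh neighbourhood of $x$. Using the exponential tail \eqref{ConjectureEq1} of the extinction time on $\{\tau<\infty\}$, the number of rounds needed has an exponential tail uniform in $x$; hence $\sigma(x)<\infty$ a.s.\ on $\{\tau=\infty\}$, the discrepancy $\sigma(x)-\sigma_0(x)$ has a tail uniform in $x$, and — combined with \eqref{ConjectureEq2} — $\E[\sigma(x)\mid\tau=\infty]<\infty$ with $\E[\sigma(x)\mid\tau=\infty]\le M'||x||_1$ for some finite $M'$. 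The construction is arranged so that $\sigma$ is almost subadditive, $\sigma(x+y)\le \sigma(x)+\sigma''$, where conditionally on the first $\sigma(x)$ units of time $\sigma''$ is stochastically dominated by a copy of $\sigma(y)$ living in the environment and graphical representation translated by $x$.

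One then invokes the subadditive ergodic theorem in the form valid for stationary ergodic random environments. This is where Assumption~\ref{AssumptionBackground}$(i)$ and $(ii)$ enter: by $(ii)$ the background started from $\emptyset$ coincides, on any fixed edge from some random time with exponential tail onwards, with a background $\bfB^{\pi}$ started stationary, so a burn-in argument replaces $\bfB^{\emptyset}$ by the stationary $\bfB^{\pi}$ without changing the tail quantity $n^{-1}\sigma(nx)$; and for $\bfB^{\pi}$ the space--time environment is stationary under spatial translations of $\Z^d$ (and ergodic under them when $\pi$ is a Bernoulli product measure, in particular for the CPDP; in the general spin-system case the spatial ergodicity of $\pi$ is an extra input needed here). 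One obtains, for each $x\in\Z^d$, a constant $\mu(x)\ge 0$ with $n^{-1}\sigma(nx)\to\mu(x)$ a.s.\ and in $L^1$ on $\{\tau=\infty\}$. Homogeneity extends $\mu$ to $\Q^d$, subadditivity and the moment bounds give Lipschitz continuity and hence an extension to a seminorm on $\R^d$, the lattice symmetries together with translation invariance of $\pi$ give $\mu(-x)=\mu(x)$ and the point symmetries of its unit ball, and strict positivity $\mu(x)\ge c\,||x||_1$ follows from finite speed of propagation: since $\rho=0$, Lemma~\ref{LinearGrowth} applied to $\widetilde{\bfC}\supseteq\bfC$ confines $\bfH_t$ to a ball of radius of order $t$, i.e.\ $\sigma_0(x)\ge c\,||x||_1$. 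Thus $U:=\{y\in\R^d:\mu(y)\le 1\}$ is bounded and convex.

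Finally, one upgrades directional convergence to the uniform sandwich in the usual way, by covering $\{\mu=1\}$ by finitely many directions and controlling the interpolation error by monotonicity and the moment bounds. For the inner bound one uses $\bfH_t\supseteq\{x:\sigma(x)\le t\}$, giving $t(1-\varepsilon)U\subset\bfH_t'$ for all large $t$; for the $\bfK_t$ half of the inner bound one runs the same essential-hitting-time construction for the entrance time $\sigma_{\bfK}(x):=\inf\{t:x\in\bfK_t\}$ with \eqref{ConjectureEq3} replacing \eqref{ConjectureEq2}, and observes that after a round in which the infection issued from $x$ survives, $x$ joins $\bfK$ after a further delay of exponential tail ($x$ must become coupled in the background, by Assumption~\ref{AssumptionBackground}$(ii)$, and in the infection shortly afterwards), so $\sigma_{\bfK}$ has the \emph{same} limiting norm $\mu$ and $t(1-\varepsilon)U\subset\bfK_t'\cap\bfH_t'$ for all large $t$; for the outer bound one shows $\sigma(x)-\sigma_0(x)=o(||x||_1)$ a.s.\ (Borel--Cantelli over $\Z^d$ using the uniform exponential tail on the number of rounds), whence $\bfH_t\subseteq\{x:\sigma(x)\le t(1+\varepsilon/2)\}\subseteq t(1+\varepsilon)U$ for all large $t$, and the three inclusions combine to the claim. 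The main obstacles I expect are precisely this comparison $\sigma(x)-\sigma_0(x)=o(||x||_1)$ between the essential and naive hitting times — already the delicate step in \cite{garet2012asymptotic} — and the ergodic-theorem step in the evolving environment, namely making the burn-in from $\bfB^{\emptyset}$ to $\bfB^{\pi}$ precise via Assumption~\ref{AssumptionBackground}$(ii)$ and securing the spatial ergodicity of $\pi$ when the background is a general interacting particle system rather than an i.i.d.\ field.
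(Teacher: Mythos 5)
The statement you have addressed is a \emph{conjecture}, not a theorem: the paper offers no proof, only the comment that the authors ``believe that Conjecture~\ref{Conjecture1} should hold true'' by analogy with Garet--Marchand \cite{garet2012asymptotic} and Deshayes \cite{deshayes2014contact,deshayes2015asymptotic}. So there is no in-paper argument to compare your proposal against, and what you have written is a programme rather than a proof --- which you explicitly acknowledge in your final paragraph by flagging the two open technical steps.

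That said, the programme you outline is precisely the one the authors themselves gesture at, and it is sensible: build Garet--Marchand essential hitting times $\sigma(x)$ from the graphical representation, use \eqref{ConjectureEq1} to control the number of restart rounds, \eqref{ConjectureEq2} to control the naive hitting time, invoke a subadditive ergodic theorem to get the norm $\mu$, identify $U$ as its unit ball, run the parallel construction for the coupled-region entrance time $\sigma_{\bfK}$ using \eqref{ConjectureEq3} and Assumption~\ref{AssumptionBackground}$(ii)$, and pass from directional convergence to the uniform sandwich. The obstacles you identify --- the comparison $\sigma(x)-\sigma_0(x)=o(\|x\|_1)$ and the ergodic-theorem step in a non-i.i.d., merely ergodic, time-evolving environment (where spatial ergodicity of $\pi$ for general spin systems is indeed an additional input) --- are exactly the ones that would have to be resolved to turn this into a proof, and they are nontrivial. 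A further subtlety worth recording in any serious attempt: the essential-hitting-time restart mechanism must restart \emph{both} $\bfC$ and $\bfB$ from a controlled state, and while $\bfC$ can be dominated by a fully infected box, the background cannot be reset; one must instead use the permanently coupled region $\Psi'$ (via Proposition~\ref{ExpansionSpeedPerCouRegion}) to show the post-restart evolution is, after an exponential-tail delay, insensitive to the pre-restart background --- this is the genuinely new ingredient relative to \cite{garet2012asymptotic} and \cite{deshayes2014contact}, and your sketch only mentions it in passing. Until those gaps are closed, this remains, as the paper says, a conjecture.
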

Let us briefly explain the three conditions mentioned in this conjecture. Condition \eqref{ConjectureEq1} implies that if the infection process $\bfC$ goes extinct, then this will happen most likely early on. Condition \eqref{ConjectureEq2} basically states that if $\bfC$ survives, the infection expands asymptotically at least according to some linear speed with high probability. Condition \eqref{ConjectureEq3} has a similar interpretation for the permanently coupled region.

Garet and Marchand \cite{garet2012asymptotic} proved an asymptotic shape theorem for the contact process on $\Z^d$ in a static random environments. Deshayes adapted their techniques in \cite{deshayes2014contact} to a dynamical setting and showed an asymptotic shape theorem for a contact process with ageing. Furthermore, in \cite{deshayes2015asymptotic} it was explained that this can also be extended to a broader class of time dynamical contact processes, which includes among others the contact process with varying recovery rates studied by \cite{broman2007stochastic} and \cite{steif2007critical}. Since the latter model shares a lot of similarities with the CPERE constructed here we believe that Conjecture~\ref{Conjecture1} should hold true.

Both works \cite{garet2012asymptotic} and \cite{deshayes2014contact} have proven similar conditions to \eqref{ConjectureEq1}, \eqref{ConjectureEq2} and \eqref{ConjectureEq3}, for the contact process in a static random environment and respectively for the contact process with ageing, by an adaption of the techniques developed in \cite{bezuidenhout1990critical}. Since we also adapt these techniques for the CPDP, we believe the following conjecture to be true.
\begin{conjecture}\label{Conjecture2}
	Let $(\bfC,\bfB)$ be a CPDP with rates $\lambda,r,\alpha,\beta>0$ on the $d$-dimensional integer lattice. Suppose $\theta_{\text{DP}}(\lambda,r,\alpha,\beta)>0$, then there exists $C_1,C_2,M>0$ such that \eqref{ConjectureEq1}, \eqref{ConjectureEq2} and \eqref{ConjectureEq3} are fulfilled.
\end{conjecture}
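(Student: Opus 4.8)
\emph{Proof strategy.} The plan is to leverage the Bezuidenhout--Grimmett-type space--time block construction already developed for the CPDP in Section~\ref{Sec:CPDP} (the same one underlying Theorem~\ref{NoSurvivalAtCriticality} and Theorem~\ref{CPDPCompleteConvergence}) and to upgrade it to a \emph{coupled} restart argument in the spirit of Garet and Marchand \cite{garet2012asymptotic} and Deshayes \cite{deshayes2014contact}. Since $\theta_{\text{DP}}(\lambda,r,\alpha,\beta)>0$, Theorem~\ref{NoSurvivalAtCriticality} places $(\lambda,r,\alpha,\beta)$ in $\inte(\cS)$, which is precisely the regime in which those block estimates are available: inside a sufficiently large space--time box the CPDP, constructed from its graphical representation, stochastically dominates a $1$-dependent supercritical oriented percolation on $\Z^d\times\Z$ whose density of open sites is as close to $1$ as we wish, and the same domination couples the infection started from $\{\zero\}$ with the one started from $V$. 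All three bounds are then obtained from this comparison exactly as for the classical contact process, with $M$ chosen as a fixed multiple of the inverse asymptotic speed of the dominating oriented percolation, $C_2$ the smallest of the three exponential rates produced below, and $C_1$ their sum.

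For \eqref{ConjectureEq1} one argues that if the infection has not died out by time $t$ then, except on an event of probability at most $C_1 e^{-C_2 t}$, it has already produced a good space--time block from which the dominating oriented percolation survives forever, so that $\tau=\infty$; intersecting the complementary event with $\{\tau>t\}$ gives $\Pw(t\le \tau<\infty)\le C_1 e^{-C_2 t}$. For \eqref{ConjectureEq2}, on $\{\tau=\infty\}$ the surviving oriented cluster occupies, within $\lfloor M\|x\|_1+t\rfloor$ time units, every block in a space--time cone of linear opening reaching $x$, except with probability $C_1 e^{-C_2 t}$ uniformly in $x$, by the standard large-deviation estimate for the linear growth of supercritical oriented percolation transferred through the comparison; since occupied blocks contain infected vertices of $\bfC^{\{\zero\},\emptyset}$, this yields $x\in\bfH_{M\|x\|_1+t}$ off an exponentially small set.

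The main work, and the step I expect to be the obstacle, is \eqref{ConjectureEq3}, the linear growth of the permanently coupled region $\bfK_t$. Here one runs the block construction simultaneously for $\bfC^{\{\zero\},\emptyset}$ and the maximal process $\bfC^{V,E}$ and must show that they agree, permanently, on a linearly growing set. Two points need care. First, the coupling must be made permanent: once a block is good for both processes, attractiveness of the CPDP (so that $\bfC^{\{\zero\},\emptyset}_s\subseteq\bfC^{V,E}_s$ for all $s$) together with the local structure of a good block forces the two processes to agree at the relevant vertices for all later times, so that $\bfK_t$ inherits the linear growth established for $\bfH_t$ in \eqref{ConjectureEq2}; this is the CPDP analogue of the coupling step in \cite{garet2012asymptotic,deshayes2014contact}. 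Second, the block events must be chosen measurably with respect to both the infection and the background graphical representations, and one must rule out that the evolving environment obstructs the coupling; but for dynamical percolation each edge is resampled independently at rate $\alpha+\beta$, so $\Pw(e\notin\Psi'_t)=e^{-(\alpha+\beta)t}$ and the background is permanently coupled on any fixed finite set off an exponentially small event, contributing only an extra exponentially small error without changing the form of \eqref{ConjectureEq3}. Combining the one-sided estimates for \eqref{ConjectureEq1}--\eqref{ConjectureEq2} with this coupled refinement for \eqref{ConjectureEq3}, and taking common constants, then gives the three inequalities with a single choice of $C_1,C_2,M$.
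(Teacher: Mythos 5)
The statement you are asked to prove is labeled Conjecture~\ref{Conjecture2} in the paper: the authors do \emph{not} prove it. They only remark that Garet and Marchand \cite{garet2012asymptotic} and Deshayes \cite{deshayes2014contact} establish analogues of \eqref{ConjectureEq1}--\eqref{ConjectureEq3} for related models by adapting the Bezuidenhout--Grimmett machinery, and on that basis conjecture the result. There is therefore no proof in the paper to compare your proposal against.

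Taken on its own terms, what you have written is a program, not a proof, and you acknowledge as much (``the step I expect to be the obstacle''). The route you sketch is the natural one and matches the authors' own suggestion, and your bookkeeping for the background is correct: for dynamical percolation $\Pw(e\notin\Psi'_t)=e^{-(\alpha+\beta)t}$, so permanence of the background coupling on a linearly growing region carries an exponentially small error. The bounds \eqref{ConjectureEq1} and \eqref{ConjectureEq2} are indeed the routine part of a Garet--Marchand-type argument once the block construction is in hand.

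The genuine gap is \eqref{ConjectureEq3}, and your treatment of it conflates two distinct things. Condition~\ref{SpaceTimeCondition}, Proposition~\ref{ConstructionOfBlocks} and Theorem~\ref{ComparisonWithOrientedPercolation} define ``good blocks'' for a \emph{single} truncated CPDP started from a fully infected box with background $\emptyset$; a good block in this sense gives no control on the discrepancy between $\bfC^{\{\zero\},\emptyset}$ and $\bfC^{V,E}$ inside the block, because the upper process arrives at the block boundary from an uncontrolled configuration (more infected sites, possibly different background restrictions at the boundary). Saying that ``attractiveness together with the local structure of a good block forces the two processes to agree at the relevant vertices for all later times'' is precisely the step that has to be proved, and it is not a consequence of the paper's block events as they stand. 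To close this one would have to reformulate the block events for the coupled pair $(\bfC^{\{\zero\},\emptyset},\bfC^{V,E},\bfB)$ — in the spirit of Garet--Marchand's coupled construction — prove the analogue of Proposition~\ref{ConstructionOfBlocks} for that pair, and then show that on the coupled good-block event the two processes \emph{permanently} agree at the target site, not merely at the time the block is declared good. None of this is done in the paper, and your sketch does not supply it either; it simply asserts that it works by analogy. So the proposal correctly identifies the bottleneck but does not resolve it, which is consistent with the authors' decision to leave the statement as a conjecture.
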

In this paper we tried to minimize the assumptions we pose on the underlying graph $G$ such that it is as general as possible. But if we restrict the class of graphs we could still relax some assumption further. 

For example, if we only consider subexponential growth graphs we could relax Assumption~\ref{AssumptionBackground} $(ii)$. Here, it would suffice to assume that $\Pw(e\in \Psi'_{n})$ decays polynomially with a high enough exponent in order to show that the critical infection rate is independent of the initial configuration. This can be seen in the proof of Proposition~\ref{ExpansionSpeedPerCouRegion} where we compare the asymptotic expansion speed of the infected area and of the permanently coupled region.

One may still be able to prove continuity of the survival probability without assuming the reversibility of Assumption~\ref{AssumptionBackground} $(iii)$. 
There are interesting background dynamics
that are not reversible, for example the noisy voter model
on more general graphs than the $1$-dimensional integer lattice.
Depending on the underlying graph one may be able to use results from ergodic theory instead of duality combined with reversibility, for example if the graph automorphism group of $G$ is finitely generated.

Furthermore, one could relax Assumption~\ref{AssumptionBackground} $(i)$ and consider non-ergodic systems as for example the ferromagnetic Ising model on $\Z^d$ for $d\geq 2$ and for a large enough inverse temperature $\beta$. In this case the setting would be fundamentally different since already the invariant distribution of the background process $\bfB$ would depend, by assumption, on the initial configuration.

Apart from these concrete questions there are a number of further research directions regarding the CPERE.
Here, we mainly considered the behaviour of the CPERE in the supercritical regime, i.e.~the parameter regime where the survival probability is positive. For the CP it is known that in the subcritical regime the infection dies out exponentially fast, i.e.~$\Pw(\bfC_t\neq \emptyset)\leq e^{-C(\lambda)t}$, where $C(\lambda)$ is a constant only depending on the infection rate $\lambda$ see \cite{bezuidenhout1991exponential}. 
Linker and Remenik \cite{linker2019contact} showed on $\Z$ that an immunization region exists where the infection cannot survive no matter how large we choose the infection parameter. One heuristic explanation for this phenomenon is that if the update speed is slow enough then the background isolates the infection long enough in a bounded area such that the infection dies out. One could roughly say that the background dynamics dominates the survival behaviour. This may also have consequences for the subcritical behaviour of CPERE which has not been studied yet in any detail.

Also, in this paper we assumed that the background process evolves autonomously, i.e.~we do not allow feedback from the infection process to the background. Depending on the application of the model this may not be a realistic assumption. For example, in the current 
Covid-19 Pandemic testing, contact tracing, and isolation through quarantine are important  measures for curbing the spread of the infection. This could be modeled through closing edges adjacent to infected individuals at a higher rate.
There is great interest in the effectiveness of these measures, and this has been studied recently via simulations, see for example by Aleta et al.~\cite{aleta2020modelling} and Kucharski et al.~\cite{kucharski2020effectiveness}. 
Of course, allowing a feedback from the infection process to the background would lead to 
a dependency structure that is far more complex, and we thus leave the analysis of these models for future research.

\section{Graphical representation and basic facts}\label{Sec:GraphicalRepesentation}
The construction of the CPERE via a graphical representation
is essential for our proofs. Here, we
follow the construction via random maps of Swart \cite{swart2017course}. 

For  an interacting particle system on a space $\Lambda$ with state space $\cP(\Lambda)$ the changes are described by maps $m:\cP(\Lambda) \to \cP(\Lambda)$ whose actions can then also be interpreted graphically. Let $\cM$ be the set which contains all relevant maps and let $(\gamma_m)_{m\in \cM}$ be non-negative constants. 
Let $\Xi$ be a Poisson point set on $(\cM\times \R)$ 
with intensity measure $\xi$ characterized via $\xi(\{m\}\times (s,t])=\gamma_m (t-s)$ for $m \in \cM$ and $0\leq s<t$. 
If $\Lambda$ is finite and hence the set $\cM$ is finite then for every $t\geq0$ we can enumerate the points in $\Xi_{0,t}:=\Xi\cap (\cM\times (0,t])$ consecutively, i.e. 
\begin{equation*}
\Xi_{0,t}=\{(m_{1},t_{1}),\dots,(m_{n},t_{n})\}
\end{equation*}
with $t_1<\dots <t_n$ a.s., and define $\bfX_t(A):=m_{n}\circ\dots\circ m_{1}(A)$,
where $A\in \cP(\Lambda)$. By \cite[Proposition 2.5]{swart2017course} this is a Feller process with the generator
\begin{align}
\label{randommappinggenerator}
\cA f(A)=\sum_{m\in \cM} \gamma_{m}(f(m(A))-f(A))
\end{align}
where $f\in C(\cP(\Lambda))$ and $A\in \cP(\Lambda).$ 
 If $\cM$ is a countable set of local maps more care has to  be taken so that the process $\bfX$ is well defined. Let
\begin{align*}
\cD(m):=\{ x\in \Lambda : \exists A\in \cP(\Lambda) \text{ s.t. } x\in m(A)\symdiff A\}.
\end{align*}
This set is the collection of all $x\in \Lambda$ which can possibly be changed by $m$. Next, for a given $x\in \Lambda$ call $y\in \Lambda$ \emph{$m$-relevant} if there exist $ A,B\in \cP(\Lambda)$ such that $x\in m(A)\symdiff m(B)$ and $ A\symdiff B=\{y\}$.
Let
\begin{align*}
\cR_x(m):=\{ y\in \Lambda : y \text{ is } m\text{-relevant w.r.t } x \}.
\end{align*}
  Now, \cite[Theorem~4.14]{swart2017course} and \cite[Theorem~4.17]{swart2017course} state that if
 \begin{align}\label{totalrb}
\sup_{x\in \Lambda}\sum_{m\in \cM,\cD(m)\ni x} \gamma_m(|\cR_x(m)|+1)<\infty
\end{align}
 then $\bfX$ is a well defined c\`adl\`ag Feller process with values in $\cP(\Lambda)$ and generator $\cA$, i.e.~$\bfX$ has paths in $D_{[0,\infty)}(\cP(\Lambda))$, the space of all c\`adl\`ag functions mapping $[0,\infty)$ to $\cP(\Lambda)$.

\subsection{Graphical representation of finite range spin systems}\label{GraphRepSpinSystemCose}
We consider spin systems as our background processes
whose generators are generally written in the form
\begin{equation}\label{GeneratorSpinSystem}
\cA_{\text{Spin}} f(B)=\sum_{e\in E} q(e,B)(f(B\symdiff\{e\})-f(B)),
\end{equation}
where $f\in C(\cP(E))$ and $B\subset E$.
The interpretation of a spin system is that at an edge $e\in E$ a spin flip takes places with a spin rate $q(e,B)$ which depends on the current configuration $B$. 
We can rewrite this generator in the form (\ref{randommappinggenerator}).
Since we only consider finite range systems we know that there exists a range $R\in \N$ such that $q(e,B)=q(e,B\cap\B^L_R(e))$ for any $B\subset E$. 
We distinguish between an up or down flip, i.e. between having $e\notin B$ or $e\in B$ before the flip. Then we consider every possible configuration of the $R$-neighbourhood of $e$, which we denote by $\cN^L_e(R):=\B_R^L(e)\backslash\{e\}$. 
For every $e\in E $ and $F\subset \cN^L_e(R)$ we set
\begin{align*}
\mathbf{up}_{e,F}(B)&:=
\begin{cases}
B\cup \{e\}& \text{ if } e\notin B \text{ and } B\cap \cN^L_e(R)=F\\
B& \text{ otherwise},
\end{cases}\\
\mathbf{down}_{e,F}(B)&:=
\begin{cases}
B\backslash \{e\}& \text{ if } e\in B \text{ and } B\cap \cN^L_e(R)=F\\
B& \text{ otherwise},
\end{cases}
\end{align*}
for $B\subset E$ and choose the rates to be
\begin{align}\label{SpinSystemRates}
\gamma_{\mathbf{up}_{e,F}}=q(e,F) \quad \text{and} \quad \gamma_{\mathbf{down}_{e,F}}=q(e,F\cup \{e\}). 
\end{align}
Note that $e\notin F$, since $F\subset \cN^L_{e}(R)$ and that for every $e$ there are finitely many $F\subset \cN^L_{e}(R)$ and therefore finitely many relevant maps. We denote the sets of the two types of maps by 
\begin{align*}
\cM_{\text{up}}=\{\mathbf{up}_{e,F}: e\in E, F\subset \cN^L_e(R)\}\,\, \text{and} \,\,
\cM_{\text{down}}=\{\mathbf{down}_{e,F}:e\in E, F\subset \cN^L_e(R)\}
\end{align*}
and define the set of all maps as $\cM_{\text{back}}:=\cM_{\text{up}}\cup \cM_{\text{down}}$. Now let $\Xi^{q}$ be the corresponding  Poisson point set on $\cM\times \R$.
Obviously \eqref{totalrb} is satisfied, and thus 
the associated process $\bfX$ from the aforementioned random mapping representation is a well defined Feller process  with the generator as in (\ref{randommappinggenerator}).
Also, by plugging in the maps and rates it can be easily verified  that this generator is the same as the generator stated in \eqref{GeneratorSpinSystem}.
Graphically, we represent the state of the edge by a grey area while it is closed (the last map applied was a down flip) and by a white area while it is open (the last map applied was an up flip), see Figure~\ref{fig:CPGraphRepVis:b}.

\subsection{Construction of the CPERE via graphical representation}\label{GraphRepresentationCPERE}
In this subsection we explicitly construct the CPERE.
We assume that the maps and rates used to construct the (autonomous) background $\bfB$ via the random mapping representation are known, i.e.~$\cM_{\text{back}}$ is a countable set which contains local maps $m:\cP(E)\to \cP(E)$ with corresponding rates $(\gamma_m)_{m\in \cM_{\text{back}}}$
as in Section~\ref{GraphRepSpinSystemCose}.
In order to be in the setting of the random mapping representation we consider $\Lambda=V\cup E$ and thus the state space is $\cP(V\cup E)$ which can be identified with $\cP(V)\times \cP(E)$ 
via a one-to-one correspondence.

First we extend the maps $m\in \cM_{\text{back}}$ to maps $m^{*}:\cP(V\cup E)\to \cP(V\cup E)$. For every set $A\subset V\cup E$ there exist $C\subset V$ and $B\subset E$ such that $A=C\cup B$. Then we set $m^{*}(A):=C\cup m(B)$ for every $m\in \cM_{\text{back}}$. Let $\cM^{*}_{\text{back}}$ denote the set of all maps $m^{*}$ and set $\gamma_{m^{*}}=\gamma_m$. Next for $A\subset V\cup E$ and $x,y\in V$ such that $\{x,y\}\in E$ we define
\begin{align*}
\mathbf{coop}_{x,y}(A)&:=
\begin{cases}
A\cup \{y\}& x\in A \text{ and } \{x,y\}\in A,\\
A& \text{otherwise},
\end{cases}\\
\mathbf{rec}_{x}(A)&:=A\backslash \{x\}.
\end{align*}
and set the rates to be $\gamma_{\textbf{coop}_{x,y}}=\lambda$ and $\gamma_{\mathbf{rec}_{x}}=r$. The map $\textbf{coop}_{x,y}$ is called the cooperative infection map. The name comes from the fact that for $x$ to successfully infect $y$ it needs the edge $\{x,y\}$ to be open. In this sense $x$ and $\{x,y\}$ must cooperate such that the infection spreads to $y$. Now set
\begin{equation*}
\cM_{\text{CP}}:=\underbrace{\{\mathbf{coop}_{x,y}:x,y\in V \text{ s.t. }\{x,y\}\in E\}}_{=\cM_{\text{inf}}}\cup \underbrace{\{\mathbf{rec}_{x}:x\in V\}}_{=\cM_{\text{rec}}}
\end{equation*}
Let us denote by $\Xi=\Xi_{\lambda,r}$ the Poisson point set with respect to  $\cM:=\cM_{\text{CP}}\cup \cM^{*}_{\text{back}}$ and by $(\gamma_m)_{m\in \cM}$ the corresponding rates. Obviously, \eqref{totalrb} is satisfied, and thus there exists a c\`adl\`ag Feller process $\bfX$ on $\cP(V\cup E)$ with generator
\begin{align*}
\cA f(A)=&\sum_{m\in \cM}\gamma_{m}(f(m(A))-f(A))\\
=&\sum_{x\in V}\lambda\sum_{y\in V:\{x,y\},x\in A }(f(A\cup \{y\})-f(A))+\sum_{x\in V}r(f(A\backslash \{x\})-f(A))\\
+&\sum_{m\in \cM_{\text{back}}}\gamma_{m}(f((A\backslash E )\cup m(A\backslash V))-f(A)),
\end{align*}
where $A\subset V\cup E$. Graphically, we draw an infection arrow from $x$ to $y$ when the map $\mathbf{coop}_{x,y}$ is applied which also draws input from the state of the edge $\{x,y\}$ (as indicated by the grey/white shading of the corresponding area). The recovery events induced by the application of the map $\mathbf{rec}_{x}$ are indicated through crosses. See Figure~\ref{fig:GraphRep}(a).
While the state of the edges in $E$ is specified by the shading the state of vertices in $V$ can be deduced by defining infection paths: These follow vertices in the direction of time (upwards in Figure~\ref{fig:GraphRep}(b)). They are stopped by crosses but may also follow infection arrows if the corresponding edge is open. We then have that 
$$C_t^C=\{v \in V: \exists \text{ an infection path  connecting some $v' \in C$ at time $0$ with $v$ at time $t$}\}$$
is a CPERE with the corresponding background $B$, see Figure~\ref{fig:GraphRep}(b) where the infection paths are indicated in red and the open edges are additionally highlighted in purple. 
Here, the background process $\bfB$ is given by a dynamical percolation 
(see Example~\ref{ThreeParticularSpinSystems} $(i)$). Note that in the case of the dynamical percolation there is a much simpler choice of maps and rates since it can be constructed via the maps
\begin{equation*}
\mathbf{birth}_{e}(B):=B\cup \{e\} \quad \text{and} \quad	\mathbf{death}_{e}(B):= B\backslash \{e\}
\end{equation*}
for $B\subset E$ and rates $\gamma_{\mathbf{birth}_{e}}=\alpha$ and $\gamma_{\mathbf{death}_{e}}=\beta$ for all $e\in E$.

The process $\bfX$ is a combination of infection process and the background in one. But, it is far more convenient to treat these two parts as separate object. Therefore, we switch back to the state space $\cP(V)\times \cP(E)$, which we achieve by setting $\bfC_t:=\bfX_t\backslash E$ and $\bfB_t:=\bfX_t\backslash V$ for all $t\geq 0$. We obtain the CPERE as described in the beginning, i.e.~$(\bfC,\bfB)$ is a Feller process on the state space $\cP(V)\times \cP(E)$ and $\bfC$ has jump rates \eqref{InfectionRatesWithBackground}.

\begin{figure}[t]
	\centering 
	\subfigure[The two tailed arrows illustrate the $\mathbf{coop}$ maps and the crosses on the vertices the $\mathbf{rec}$ maps. 
	The circles and crosses on the edges illustrate the $\mathbf{birth}$ and $\mathbf{death}$ maps.
	 The grey area indicates that an edge is closed. ]{\label{fig:GraphRep:a}\includegraphics[width=68.5mm]{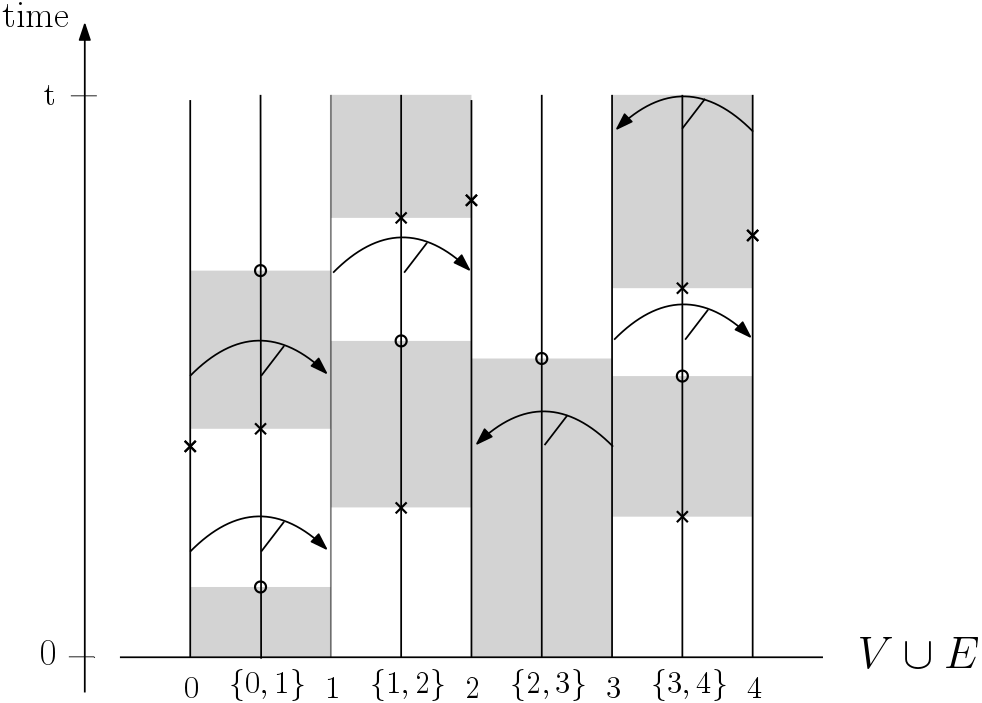}}\hfill
	\subfigure[The vertical red lines indicate 
	when a vertex is infected and the purple lines when an edge is open. An infection path consists of red vertical lines and red arrows, which leads from a vertex at time $0$ to a vertex at time $t$. For a successful transmission of an infection also the corresponding edge must be open.]{\label{fig:GraphRep:b}\includegraphics[width=68.5mm]{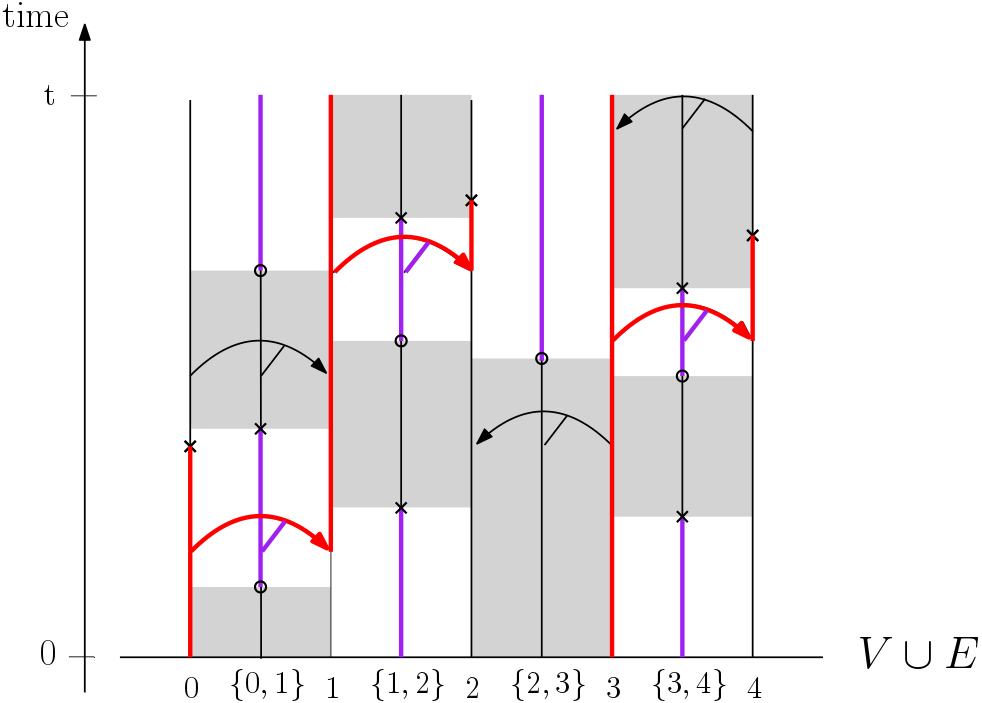}}
	\caption{Illustration of the graphical representation of a CPDP as an example. 
    Here the state space is $V\cup E$ with $V=\Z$.
    }
	\label{fig:GraphRep}
\end{figure}

\begin{remark}\label{SplitUpPointProcesses}
	The Poisson point set $\Xi$ used in the graphical representation can be represented via  three independent Poisson point set. These are $\Xi^{\inf}$ on $\cM_{\inf}\times \R$, which are in the graphical representation (see Figure~\ref{fig:GraphRep}), the infection arrows, $\Xi^{\text{rec}}$ on $\cM_{\text{rec}}\times \R$ corresponding to the recovery symbols and $\Xi^{\text{back}}$ on $\cM^{*}_{\text{back}}\times \R$ which are the maps used to construct the background process. 
\end{remark}

\subsection{Basic properties of the CPERE}\label{BasicProperties}
In this subsection we summarize some basic properties of the CPERE which are already known for the CP and which can be shown analogously through couplings of the involved processes via the graphical representation. Let  $\Pw_{\lambda,r}$ be the law of the Feller process $(\bfC,\bfB)$ constructed via the graphical representation using the Poisson point set $\Xi=\Xi_{\lambda,r}$. 
By construction it is clear that $(\bfC,\bfB)$ is a strong Markov process with respect to the filtration $(\cF_t)_{t\geq 0}$  generated by $(\Xi_{0,t})_{t\geq 0}$.

We equip $\cP(V)$ and $\cP(E)$ with the inclusion $\subset$ as a partial order and for elements in $\cP(V)\times \cP(E)$ we write $(C,B)\subset(C',B')$ if $C\subset C'$ and $B\subset B'$, which also defines a partial order. We denote by $(T(t))_{t\geq 0}=(T_{\lambda,r}(t))_{t\geq 0}$ the corresponding Feller semigroup of the CPERE. If $\mu$ is the initial distribution of the CPERE we use  $\mu T(t)$ as a short notation for
\begin{equation*}
    \mu T(t)f=\int T(t)f(x)\mu(\mathsf{d} x).
\end{equation*}
Furthermore, we denote by ``$\preceq$'' the stochastic order. 

\begin{lemma}[Monotonicity and additivity]
\label{MonotonicityAdditivityLemma}
Let $(\bfC,\bfB)$ be a CPERE with parameters $\lambda,r$ constructed using  $\Xi_{\lambda,r}.$
\begin{itemize}
    \item[(i)] (Monotoniciy with respect to the initial condition) If $C_1 \subset C_2$ and $B_1 \subset B_2$ then for all $t \geq 0$ also  $\bfC_t^{C_1,B_1}\subset \bfC_t^{C_2,B_2}$ and 
    $\bfB_t^{B_1}\subset \bfB_t^{B_2}$.
    This also implies that $(\bfC,\bfB)$ is a \emph{monotone} Feller process, i.e.~let $\mu_1$ and $\mu_2$ be probability measures on $\cP(V)\times \cP(E)$ with $\mu_1\preceq\mu_2$. Then this implies $\mu_1T_{\lambda,r}(t)\preceq \mu_2T_{\lambda,r}(t)$ for all $t\geq 0$.
    \item[(ii)] (Monotoniciy with respect to $\lambda,r$) 
    Let $\widehat{\lambda}\geq \lambda$. Then there exists a CPERE $(\widehat{\bfC},\bfB)$ with infection rate $\widehat{\lambda}$, the same initial configuration and  recovery rate $r$ such that $\bfC_t\subseteq  \widehat{\bfC}_t$ for all $t\geq0$. In words $\bfC$ is monotone increasing in $\lambda$. On the other hand $\bfC$ is monotone decreasing in $r$.
    \item[(iii)] (Additivity) Let $t\geq 0$, then $\bfC_t^{C,B}\cup \bfC_t^{C',B}=\bfC_t^{C\cup C',B}$ 
	for all $B\subseteq E$ and $C,C'\subseteq V$.
\end{itemize}
\end{lemma}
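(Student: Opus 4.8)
The plan is to establish all three properties simultaneously from a single coupling built on one common graphical representation, since this is precisely what the graphical construction in Section~\ref{GraphRepresentationCPERE} is designed to deliver. Concretely, I would fix a single Poisson point set $\Xi=\Xi_{\lambda,r}$ on $\cM\times\R$ (respectively a common set with split components $\Xi^{\inf}$, $\Xi^{\text{rec}}$, $\Xi^{\text{back}}$ as in Remark~\ref{SplitUpPointProcesses}) and use it to drive all the processes appearing in each statement. The key structural observation, to be checked once, is that every map $m\in\cM$ is \emph{monotone} as a function $\cP(V\cup E)\to\cP(V\cup E)$: the background maps $\mathbf{up}_{e,F}^*$, $\mathbf{down}_{e,F}^*$ are monotone because the spin system is attractive (this is exactly where Assumption on attractiveness enters), the maps $\mathbf{birth}_e$, $\mathbf{death}_e$ are trivially monotone, $\mathbf{rec}_x(A)=A\setminus\{x\}$ is monotone, and $\mathbf{coop}_{x,y}$ is monotone since adding points to $A$ can only enable, never disable, the addition of $y$. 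Additivity of $\mathbf{coop}_{x,y}$ and $\mathbf{rec}_x$ — i.e.\ $m(A\cup A')=m(A)\cup m(A')$ — is a direct case check.

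For part~(i), I would argue as follows. On any finite truncation the process is obtained by composing finitely many maps $m_n\circ\cdots\circ m_1$ from the common $\Xi_{0,t}$; since each $m_i$ is monotone, the composition is monotone, so $(C_1,B_1)\subset(C_2,B_2)$ gives $\bfX_t^{C_1\cup B_1}\subset\bfX_t^{C_2\cup B_2}$, and passing to the limit via the well-definedness result \eqref{totalrb} preserves the inclusion. Splitting $\bfX_t=\bfC_t\cup\bfB_t$ gives $\bfC_t^{C_1,B_1}\subset\bfC_t^{C_2,B_2}$ and $\bfB_t^{B_1}\subset\bfB_t^{B_2}$. The statement about monotone Feller processes then follows by a standard argument: given $\mu_1\preceq\mu_2$ there is a coupling $(\mathbf{Y}_1,\mathbf{Y}_2)$ of the initial laws with $\mathbf{Y}_1\subset\mathbf{Y}_2$ a.s.; running both from the same $\Xi$ and using the pathwise monotonicity just proved, the time-$t$ marginals are coupled with $\bfX_t^{\mathbf{Y}_1}\subset\bfX_t^{\mathbf{Y}_2}$, which is exactly $\mu_1T_{\lambda,r}(t)\preceq\mu_2T_{\lambda,r}(t)$.

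For part~(iii), additivity, I would again use a single $\Xi$: on a finite truncation $\bfC_t^{C,B}\cup\bfC_t^{C',B}=\bigl(m_n\circ\cdots\circ m_1\bigr)(C\cup B)\cup\bigl(m_n\circ\cdots\circ m_1\bigr)(C'\cup B)$, and since each $m_i$ restricted to the $V$-coordinate is additive while acting on the common $B$-coordinate identically in both runs, an induction on $n$ gives $\bigl(m_n\circ\cdots\circ m_1\bigr)\bigl((C\cup B)\cup(C'\cup B)\bigr)$; projecting onto $V$ yields $\bfC_t^{C\cup C',B}$. (One has to be slightly careful that $\mathbf{coop}$ reads the $B$-coordinate, which is the same $\bfB_t^B$ in all three processes, so additivity in the $C$-coordinate is genuinely preserved.) Taking the truncation limit finishes it.

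Part~(ii) is the one requiring a new ingredient, since one cannot simply reuse the same $\Xi$ for two different infection rates. The plan is the standard thinning/superposition coupling: write the infection-arrow Poisson set at rate $\widehat\lambda$ as a superposition $\Xi^{\inf}\cup\widetilde\Xi^{\inf}$ of an independent copy at rate $\lambda$ and an independent copy at rate $\widehat\lambda-\lambda$, keep $\Xi^{\text{rec}}$ and $\Xi^{\text{back}}$ identical, build $\bfC$ from $\Xi^{\inf}$ and $\widehat\bfC$ from $\Xi^{\inf}\cup\widetilde\Xi^{\inf}$ with the common initial condition, and observe that $\widehat\bfC$ sees a superset of the infection arrows $\bfC$ sees while sharing the same recoveries and the same background $\bfB$; then pathwise monotonicity (as in part~(i), with the extra arrows only ever adding infections) gives $\bfC_t\subseteq\widehat\bfC_t$. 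Monotone decrease in $r$ is the mirror image: split the recovery Poisson set at the larger rate as a superposition of one at the smaller rate plus an independent remainder, and note that extra crosses only kill infections. The main obstacle — though it is really a bookkeeping point rather than a deep one — is making sure the coupling in (ii) is done so that $\bfB$ is literally the same process in both copies (it must be, since $\bfB$ is autonomous and driven only by $\Xi^{\text{back}}$), and that adding infection arrows preserves monotonicity even though $\mathbf{coop}_{x,y}$ also reads the edge state; but since the edge states agree, this is immediate. Everything else is the routine graphical-representation argument already invoked repeatedly in the CP literature, so I would keep the write-up brief and point to \cite{swart2017course} and \cite{liggett2013stochastic} for the analogous statements for the classical contact process.
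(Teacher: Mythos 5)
Your overall strategy — a single Poisson point set $\Xi$ driving all copies, pathwise monotonicity of the composed maps for part~(i), superposition/thinning of the infection and recovery clocks for part~(ii), and additivity of the $V$-coordinate maps given a shared background for part~(iii) — is exactly the argument the paper sketches, and parts~(ii) and~(iii) are correct as written. The gap is in the specific claim that every map in $\cM$, including $\mathbf{up}_{e,F}$ and $\mathbf{down}_{e,F}$, is monotone because the spin system is attractive. This is false: those maps are not monotone as functions $\cP(E)\to\cP(E)$, regardless of attractiveness. For a concrete counterexample take range $R=1$ with a single relevant neighbouring edge $a\in\cN^L_e(R)$ and consider $\mathbf{up}_{e,\emptyset}$ applied to $B_1=\emptyset\subset B_2=\{a\}$: then $\mathbf{up}_{e,\emptyset}(B_1)=\{e\}$ (the flip fires because $B_1\cap\cN^L_e(R)=\emptyset$) while $\mathbf{up}_{e,\emptyset}(B_2)=\{a\}$ (no flip, since $B_2\cap\cN^L_e(R)=\{a\}\neq\emptyset$), so $\{e\}\not\subset\{a\}$. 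Composing the same $\Xi^{\text{back}}$-events from ordered initial backgrounds therefore does \emph{not} preserve inclusion under the representation of Section~\ref{GraphRepSpinSystemCose}; that representation couples the processes but is not a monotone coupling.

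Attractiveness still gives $\bfB_t^{B_1}\subset\bfB_t^{B_2}$, but this needs a different ingredient than the map-by-map argument you propose. Either appeal directly to the abstract monotone-coupling result for attractive spin systems (e.g.\ \cite[Theorem~III.1.5]{liggett2012interacting}, which the paper itself invokes in the proof of Proposition~\ref{ComparisonCPEREandCPDP}), or replace the $\{\mathbf{up}_{e,F}\}_{F}$ family by maps indexed by \emph{up-sets} of neighbourhood configurations: for the finitely many increasing levels $q_1<\dots<q_k$ taken by $F\mapsto q(e,F)$ on $e\notin B$, define $\mathbf{up}_{e,j}(B)=B\cup\{e\}$ iff $e\notin B$ and $q(e,B\cap\cN^L_e(R))\geq q_j$, at rate $q_j-q_{j-1}$; by attractiveness the event set of each $\mathbf{up}_{e,j}$ is an up-set so the map is genuinely monotone, and the total up-flip rate is recovered. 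Once the monotone coupling of $\bfB$ is secured by one of these routes, the rest of your part~(i) argument (monotonicity of $\mathbf{coop}_{x,y}$ and $\mathbf{rec}_x$, Strassen for the distributional statement) goes through unchanged.
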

\begin{proof} All of these statements follow directly from the graphical representation. In (i) we use that $\bfB$ is an attractive spin system which implies $\bfB_t^{B_1}\subset \bfB_t^{B_2}$ and subsequently, as for the regular CP, we obtain $\bfC_t^{C_1,B_1}\subset \bfC_t^{C_2,B_2}$. For (ii) we define $(\widehat{\bfC},\bfB)$ via  $\Xi_{\lambda,r}$ and add additional independent infection arrows with rate $(\widehat{\lambda}-\lambda)$. For (iii) the background processes are identical and so this follows from the graphical representation, just as for the classical CP.
\end{proof}

Furthermore the probabilities of events where $\bfC$ depends only on a finite time horizon are continuous with respect to the infection and recovery rates if we consider finitely many  vertices are initially infected.
\begin{lemma}[Continuity for finite times and finite initial infections]\label{ContinuityLemma}
	Let $(\bfC,\bfB)$ be a CPERE with initial configuration $C\subset V$ and $B\subset E$ such that $|C|<\infty$. Furthermore, let $t\geq 0$ and  $\cA\subset D_{\cP(V)}([0,t])$. Then
	\begin{align*}
	\lambda\mapsto \Pw^{(C,B)}_{\lambda,r}((\bfC_s)_{s\leq t}\in\cA) \quad \text{and} \quad r\mapsto \Pw^{(C,B)}_{\lambda,r}((\bfC_s)_{s\leq t}\in\cA)
	\end{align*}
	are continuous.
\end{lemma}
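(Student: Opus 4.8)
The plan is to realise all the CPEREs whose parameters lie in a neighbourhood of $(\lambda,r)$ on a single probability space by Poisson thinning, and to exploit that, since $C$ is finite, on the bounded horizon $[0,t]$ only a random but almost surely finite number of the driving Poisson points can possibly be ``switched on or off'' as the parameters move. Since joint continuity of $(\lambda,r)\mapsto\Pw^{(C,B)}_{\lambda,r}((\bfC_s)_{s\le t}\in\cA)$ immediately implies both separate continuity statements, I would prove joint continuity.

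I would fix $(\lambda,r)$, put $\bar\lambda:=\lambda+1$, $\bar r:=r+1$, and build on one probability space: the autonomous background $\bfB$ from its own Poisson points $\Xi^{\text{back}}$ as in Section~\ref{GraphRepresentationCPERE}; independently a Poisson point set $\Xi^{\inf}$ of infection arrows of rate $\bar\lambda$, each arrow $a$ carrying an independent $U_a\sim\mathrm{Unif}[0,1]$; and independently a Poisson point set $\Xi^{\text{rec}}$ of recovery symbols of rate $\bar r$, each symbol $c$ carrying an independent $V_c\sim\mathrm{Unif}[0,1]$. For $0<\lambda'\le\bar\lambda$, $0<r'\le\bar r$, I would let $(\bfC^{(\lambda',r')},\bfB)$ be the CPERE with initial condition $(C,B)$ obtained by running the graphical representation but retaining only the arrows with $U_a\le\lambda'/\bar\lambda$ and the recovery symbols with $V_c\le r'/\bar r$. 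By the thinning property and independence of the marks this process has law $\Pw^{(C,B)}_{\lambda',r'}$, and all these processes are driven by the same background and the same arrow/recovery locations and times, only the thinning varying (this is a refinement of the couplings behind Lemma~\ref{MonotonicityAdditivityLemma} and Remark~\ref{SplitUpPointProcesses}).

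Next I would localise the region on which the dynamics on $[0,t]$ can ``feel'' the parameters. Let $\widetilde{\bfC}=\widetilde{\bfC}^{C}$ be the SI model started from $C$ that uses \emph{all} arrows of $\Xi^{\inf}$ (hence has rate $\bar\lambda$), ignores the background and has no recovery; a standard first-moment estimate over time-increasing chains of arrows gives $\E|\widetilde{\bfC}_t|\le|C|\,e^{\bar\lambda\Delta t}<\infty$, where $\Delta$ is the degree bound of $G$, so $\cR:=\widetilde{\bfC}_t$ is a.s.\ finite. Since each $(\bfC^{(\lambda',r')},\bfB)$ uses a subset of these arrows, respects the background and additionally recovers, $\bfC^{(\lambda',r')}_s\subset\widetilde{\bfC}_s\subset\cR$ for all $s\le t$. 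Processing the common Poisson points in time order, the evolutions of $\bfC^{(\lambda,r)}$ and $\bfC^{(\lambda',r')}$ stay equal as functions on $[0,t]$ as long as no arrow with tail in $\cR$ and time in $[0,t]$ has $U_a$ in the interval of length $|\lambda-\lambda'|/\bar\lambda$ between $\lambda/\bar\lambda$ and $\lambda'/\bar\lambda$, and no recovery symbol at a site of $\cR$ with time in $[0,t]$ has $V_c$ in the analogous interval of length $|r-r'|/\bar r$ (the background and all other maps act identically on both). Hence the event that the two paths differ on $[0,t]$ is contained in $A_\lambda\cup A_r$, where $A_\lambda$ (resp.\ $A_r$) is the event that some such ``switchable'' arrow (resp.\ recovery symbol) fires, and consequently $|\Pw^{(C,B)}_{\lambda',r'}((\bfC_s)_{s\le t}\in\cA)-\Pw^{(C,B)}_{\lambda,r}((\bfC_s)_{s\le t}\in\cA)|\le\Pw(A_\lambda\cup A_r)$.

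Finally I would let $M$ be the number of arrows with tail in $\cR$ and time in $[0,t]$; since $\cR$ is a.s.\ finite and each vertex emits only finitely many arrows in $[0,t]$, $M<\infty$ a.s. Conditioning on $\Xi^{\inf}$ fixes $\cR$ and $M$, and the marks $(U_a)$ are then i.i.d.\ uniform, so $\Pw(A_\lambda\mid\Xi^{\inf})=1-(1-|\lambda-\lambda'|/\bar\lambda)^{M}\le\min\{M|\lambda-\lambda'|/\bar\lambda,\,1\}$; taking expectations and letting $\lambda'\to\lambda$, dominated convergence gives $\Pw(A_\lambda)\to0$, and symmetrically $\Pw(A_r)\to0$ as $r'\to r$, which yields the claim. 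The one point that genuinely needs care is the apparent self-reference — the switchable set $\cR$ and the count $M$ are built from the very arrows whose marks are being randomised — and this is resolved precisely by first conditioning on the arrow configuration $\Xi^{\inf}$ and only then invoking independence of the marks; everything else is a routine graphical-representation computation. It is essential here that $C$ is finite, since that is exactly what makes $\cR$ and $M$ a.s.\ finite.
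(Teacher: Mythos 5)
Your proof is correct and uses essentially the same approach as the paper: couple the processes through a common graphical representation, observe that because $C$ is finite the infection is a.s.\ confined to a finite region $\cR$ on $[0,t]$, and bound the probability that the coupled paths diverge by the probability that some switchable Poisson point in $\cR\times[0,t]$ fires. The paper defers the details to Liggett and only sketches the one-sided coupling $\bfC_t\subset\widehat\bfC_t$ for $\widehat\lambda>\lambda$, whereas your thinning construction with uniform marks cleanly handles both sides and both parameters at once (even giving joint continuity), and your conditioning on $\Xi^{\inf}$ before randomising the marks is exactly the right way to dispose of the apparent circularity in defining $\cR$.
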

\begin{proof}
	This can be proven analogously as for the classical CP, see \cite[Chapter~I.1]{liggett2013stochastic}. 
	To show continuity in $\lambda$ one constructs as in the proof of Lemma~\ref{MonotonicityAdditivityLemma} a CPERE $(\widehat{\bfC},\bfB)$ with infection rate $\widehat{\lambda}>\lambda$ and the same recovery rate and initial configuration as $(\bfC,\bfB)$. This process $(\widehat{\bfC},\bfB)$ is then coupled via the graphical representation to $(\bfC,\bfB)$ such that $\bfC_t\subset \widehat{\bfC}_t$ for all $t> 0$ and $\bfC_0= \widehat{\bfC}_0$. It suffices to show that as $|\widehat{\lambda}-\lambda|\to 0$ then
	\begin{equation*}
	    \Pw(\bfC_s\neq \widehat{\bfC}_s \text{ for some } s\leq t)\to 0,
	\end{equation*}
	which follows because up to time $t$ a.s.~only finitely many vertices are infected.
	The statement for $r$ follows again analogously.
\end{proof}
\section{Influence of the initial state of the background on survival}\label{Sec:Independece}
In this section we prove Theorem~\ref{IndependencyCirticalRateSurvival}, i.e.~that the critical infection rate for survival does not depend on the initial configuration of CPERE if a certain growth condition is satisfied. We assume throughout the whole section that the background $\bfB$ satisfies Assumption~\ref{AssumptionBackground} $(i)$ and $(ii)$.

At first we need to study the asymptotic expansion speed of the infection process $\bfC$ and the permanently coupled region $\Psi'$ or to be precise the asymptotic expansion speed of a connected component of an arbitrary edge $e$ separately. Then we can compare these two objects in terms of expansion speed. 

The maximal number of infected vertices can be represented by a classical contact process $\widetilde{\bfC}^C=(\widetilde{\bfC}^C_t)_{t\geq0}$ with infection rate $\lambda>0$, recovery rate $r=0$ and $\widetilde{\bfC}^{C}_0=C\subset V$, which is coupled with the CPERE $(\bfC^{C,B},\bfB^{B})$ such that $\bfC^{C,B}_t\subset\widetilde{\bfC}^{C}_t$ for all $t\geq 0$ for any $B\subset E$. 
Simply put, we ignore the background $\bfB$, and thus consider every infection arrow to be valid regardless of the state of the edge at the time of the transmission and ignore every recovery event.

In Figure~\ref{fig:Growth:a} we illustrated the spread of the maximal infection, i.e.~$t\mapsto\widetilde{\bfC}^{\{0\}}_t$, on $\Z$ and in Figure~\ref{fig:Growth:b} the expansion speed of the connected component of $\Psi_t'$ which contains $\{0,1\}$, where we consider $\bfB$ to be a dynamical percolation. The comparison suggest that $\Psi'$ expands much faster than $\widetilde{\bfC}^{\{0\}}$.
\begin{figure}[t]
	\centering 
	\subfigure[\footnotesize The black lines represent the right and left most particle of $\widetilde{\bfC}^{\{0\}}$ on $\Z$ with $\lambda=2$. The red line has a slope of $\frac{1}{2}$ and the blue line $\frac{1}{3}$.] {\label{fig:Growth:a}\includegraphics[width=68.5mm]{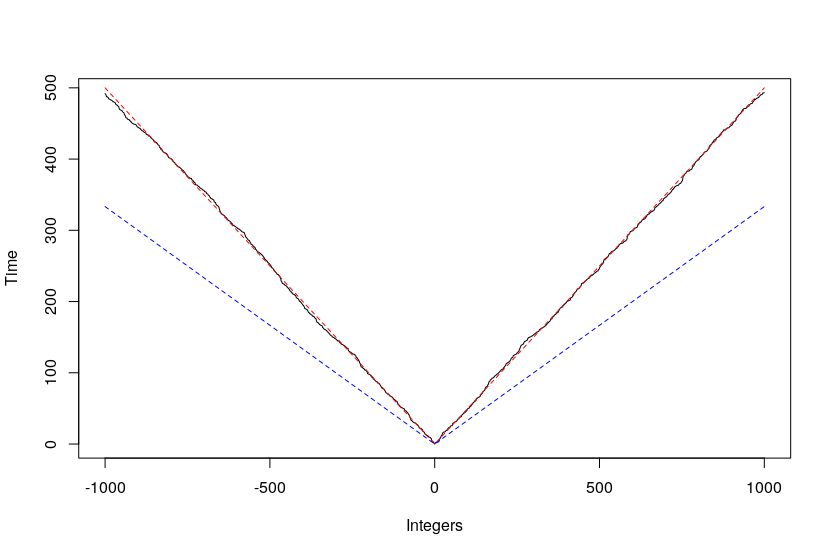}}
	\hfill
	\subfigure[\footnotesize A simulation of the first update times of a dynamical percolation on $\Z$ with speed $v=\alpha+\beta=2$. The black bars are the waiting times until the first update. The red dashed line is the right and left most edge of the connected component of $\Psi'$ containing the edge $\{0,1\}$.]{\label{fig:Growth:b}\includegraphics[width=68.5mm]{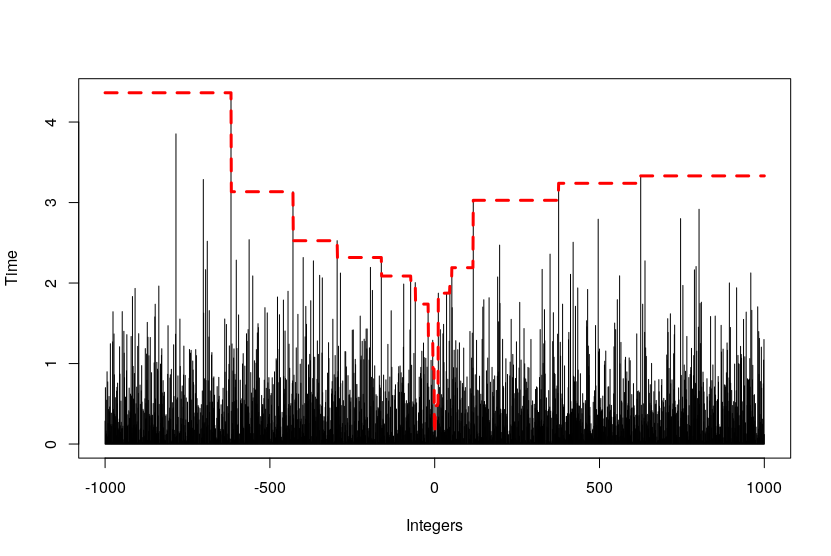}}
	\caption{Simulations of the infected area $\widetilde{\bfC}^{\{0\}}$ on the left and the first update times of a dynamical percolation and thus $\Psi'$ on the right on the integer lattice $\Z$.}
	\label{fig:Growth}
\end{figure}

We start with the asymptotic expansion speed of the set of all infections, i.e.~the process $\widetilde{\bfC}^C$. It is well known that asymptotically the infected area can grow at most at some linear speed in time. This is also illustrated in Figure~\ref{fig:Growth:a}. Next we provide an explicit upper bound for this linear speed. To be precise, for a given infection parameter $\lambda>0$ this upper bound will be $(c_1(\lambda,\rho))^{-1}$, where $c_1(\lambda,\rho)$ is the unique solution of
\begin{align}\label{GrowthConstantInfection}
c\lambda-1-\log(c\lambda |\cN_x|)=\rho
\end{align}
such that $0<c_1(\lambda,\rho)<\lambda^{-1}$.
\begin{lemma}\label{UniqueSolution}
	Let $\lambda>0$ and $x\in V$. There exists a unique solution $0<c_1(\lambda,\rho)<\lambda^{-1}$ of \eqref{GrowthConstantInfection}. Furthermore, $\lambda\mapsto c_1(\lambda,\rho)$ is continuous, strictly decreasing, $c_1(\lambda,\rho)\to \infty$ as $\lambda\to 0$ and $c_1(\lambda,\rho)\to 0$ as $\lambda\to \infty$.
\end{lemma}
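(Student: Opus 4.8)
The plan is to eliminate one variable by the substitution $u := c\lambda$. Writing $d := |\cN_x|$, equation \eqref{GrowthConstantInfection} becomes
\[
u - \log u = \rho + 1 + \log d =: K ,
\]
and the right-hand side $K$ no longer involves $\lambda$. So it suffices to analyse the scalar function $g(u) := u - \log u$ on $(0,\infty)$ and then set $c_1(\lambda,\rho) := u^\ast/\lambda$ for the appropriate root $u^\ast$ of $g(u) = K$.

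First I would record the elementary properties of $g$: it is smooth with $g'(u) = 1 - u^{-1}$, hence strictly decreasing on $(0,1)$ and strictly increasing on $(1,\infty)$, with $g(u)\to +\infty$ both as $u\to 0^+$ and as $u\to\infty$, and $g(1)=1$. Consequently $g$ restricts to a continuous, strictly decreasing bijection from $(0,1)$ onto $(1,\infty)$. Since $G$ is an infinite connected transitive graph every vertex has at least two neighbours, so $d\geq 2$ and therefore $K = \rho + 1 + \log d \geq 1 + \log 2 > 1$. Hence there is a unique $u^\ast \in (0,1)$ with $g(u^\ast) = K$, and putting $c_1(\lambda,\rho) := u^\ast/\lambda$ gives a solution of \eqref{GrowthConstantInfection} with $0 < c_1(\lambda,\rho) < \lambda^{-1}$. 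Uniqueness inside $(0,\lambda^{-1})$ is immediate: any such $c$ yields $c\lambda \in (0,1)$ solving $g(c\lambda) = K$, and $g$ is injective on $(0,1)$. (Note that $g$ has a second root in $(1,\infty)$, i.e. a second solution $c > \lambda^{-1}$ of \eqref{GrowthConstantInfection}; this is exactly why the constraint $c_1 < \lambda^{-1}$ is imposed, to single out the branch we want.)

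The key observation for the remaining assertions is that $u^\ast$ depends only on $\rho$ (through $K$) and on the fixed degree $d$, not on $\lambda$. Thus $c_1(\lambda,\rho) = u^\ast/\lambda$ as a function of $\lambda$ on $(0,\infty)$, from which continuity, strict monotone decrease, $c_1(\lambda,\rho)\to\infty$ as $\lambda\to 0$, and $c_1(\lambda,\rho)\to 0$ as $\lambda\to\infty$ all follow at once.

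I do not expect a serious obstacle here; the only points needing a little care are (i) verifying $K > 1$ so that a root of $g$ actually lies in $(0,1)$ (this is where one uses $\rho \geq 0$ together with $|\cN_x|\geq 2$), and (ii) selecting the correct branch of $g$, since the unrestricted equation has two roots and only the one in $(0,1)$ respects $c_1 < \lambda^{-1}$.
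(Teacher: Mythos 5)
Your proof is correct and takes a somewhat different route from the paper, so a brief comparison is worthwhile. The paper also begins from the reformulation $c\lambda\,e^{-c\lambda} = |\cN_x|^{-1}e^{-(1+\rho)}$, but then solves it explicitly via the Lambert $W$-function, writing $c_1(\lambda,\rho)=-\tfrac{1}{\lambda}W\bigl(-|\cN_x|^{-1}e^{-(1+\rho)}\bigr)$ and deducing uniqueness, continuity and monotonicity from the known properties of $W$ on $(-e^{-1},0)$. You instead perform the same substitution $u=c\lambda$ and analyse the elementary function $g(u)=u-\log u$ directly, observing that it is a strictly decreasing bijection from $(0,1)$ onto $(1,\infty)$, and that the level $K=\rho+1+\log|\cN_x|$ exceeds $1$. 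The two are mathematically equivalent — the $u^\ast\in(0,1)$ you extract is exactly $-W(-|\cN_x|^{-1}e^{-(1+\rho)})$ — but your version is more self-contained, avoids introducing $W$, and makes the crucial scaling structure $c_1(\lambda,\rho)=u^\ast/\lambda$ completely transparent, so that all the limiting and monotonicity claims in $\lambda$ become one-liners. The paper, by contrast, argues strict monotonicity and the endpoint limits from the corresponding properties of $W$, and also separately checks that $g_{\rho}(c)=c\lambda-1-\log(c\lambda|\cN_x|)-\rho$ is strictly decreasing on $(0,1/\lambda)$ to pin down uniqueness — a step your bijection argument subsumes. One small point of care, which you handled correctly: the inequality $K>1$ (equivalently $|\cN_x|e^{\rho}>1$) is what places the desired root in $(0,1)$, and this uses $|\cN_x|\geq 2$, which holds for any infinite connected transitive graph; the paper implicitly uses the same fact to ensure the argument of $W$ lies in $(-e^{-1},0)$.
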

\begin{proof}
	The solution $c_1(\lambda,\rho)$ can actually be stated explicitly with the help of the Lambert $W$-function, i.e. the inverse function of $t\mapsto te^t$.
	As domain of the function we consider $(-e^{-1},\infty)$ such that $W:(-e^{-1},\infty)\to (-1,\infty)$. 
	
	Now one can show that  $c_1(\lambda,\rho)=-\frac{1}{\lambda}W\big(-|\cN_x|^{-1}e^{-(1+\rho)}\big)$, which can be verified by inserting our guess into \eqref{GrowthConstantInfection}. First wee see that $c\lambda-1-\log(c\lambda |\cN_x|)=\rho$ if and only if $|\cN_x|^{-1}\exp(-(1+\rho))= c\lambda \exp(-c\lambda)$. Therefore, inserting our guess in the right-hand side and using that $W$ is the inverse function of $t\mapsto te^t$ verifies that this is a solution of \eqref{GrowthConstantInfection}.
	
	The function $W$ is continuous and strictly increasing. Furthermore, $W(s)\to -1$ as $s\to -e^{-1}$, $W(s)\to \infty$ as $s\to \infty$ and $W(0)=0$. We see that 
	\begin{equation*}
	-e^{-1}<|\cN_x|^{-1}\exp(-(1+\rho))<0
	\end{equation*}
	and $-1<W(s)<0$ for $-e^{-1}<s<0$, and thus it follows that $c_1(\lambda,\rho)<\frac{1}{\lambda}$. For $\lambda>0$ fixed we set $g_{\rho}(c):=c\lambda-1-\log(c\lambda |\cN_x|))-\rho$ for all $c>0$. Obviously the function $g_{\rho}$ is smooth on $(0,\infty)$ and its derivative is $g'_\rho(c)=\lambda-\frac{1}{c}>0$ for all $c<\frac{1}{\lambda}$ which implies that $g_{\rho}$ is strictly decreasing on $(0,\frac{1}{\lambda})$, and thus $c_1(\lambda, \rho)$ must be the unique solution of \eqref{GrowthConstantInfection} on $(0,\frac{1}{\lambda})$. At last the two properties follow immediately by the properties of the Lambert $W$-function.
\end{proof}

Next we define the first hitting time of $y\in V$ for $\widetilde{\bfC}$ with initial infections $C\subset V$ as $\tau_y(C):=\inf\{t\geq 0: y\in\widetilde{\bfC}^{C}_t\}$.
\begin{lemma}\label{LinearGrowth}
	Let $\lambda>0$ and set $g_{\rho}(c):=c\lambda-1-\log(c\lambda |\cN_x|))-\rho$ for all $c>0$. Then for every $0<c< c_1(\lambda,0)$ we have $g_0(c)>0$ and
	\begin{align*}
	\Pw\big(\tau_y(\{x\})<cd(x,y)\big)\leq \frac{\exp(-g_0(c)d(x,y))}{1-\exp(-g_0(c))},
	\end{align*}
	where $x\neq y$. This implies in particular for all $c<c_1(\lambda,\rho)$ that for any $x\in V$
	\begin{align*}
	\Pw(\exists s\geq0: \widetilde{\bfC}^{\{x\}}_{ct}\subset \B_{\lfloor t\rfloor}(x)\,\, \forall t\geq s)=1.
	\end{align*}
\end{lemma}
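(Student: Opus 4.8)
The plan is to first prove the tail estimate for $\tau_y(\{x\})$ and then deduce the containment statement by a Borel--Cantelli argument. The positivity $g_0(c)>0$ on $(0,c_1(\lambda,0))$ is immediate from Lemma~\ref{UniqueSolution}: there it is shown that $g_0$ is smooth and strictly decreasing on $(0,\lambda^{-1})$ with unique zero $c_1(\lambda,0)$, so $g_0(c)>g_0(c_1(\lambda,0))=0$ for $c<c_1(\lambda,0)$. The same monotonicity together with $g_0(c_1(\lambda,\rho))=\rho$ (the defining equation \eqref{GrowthConstantInfection}) gives $c_1(\lambda,\rho)\le c_1(\lambda,0)$ and $g_0(c)>\rho$ for $c<c_1(\lambda,\rho)$; these facts are used in the second part.

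For the tail bound, fix $x\neq y$, write $d:=d(x,y)$, and work in the graphical representation of $\widetilde{\bfC}$ (infection arrows only, no recovery). If $y\in\widetilde{\bfC}^{\{x\}}_t$ there is a nearest-neighbour walk $x=x_0,x_1,\dots,x_n=y$ with $n\ge d$ and arrow times $0<s_1<\dots<s_n\le t$ carrying an infection arrow $x_{i-1}\to x_i$ at time $s_i$. Since there are at most $|\cN_x|^n$ walks of length $n$ starting at $x$,
\[
\Pw\big(\tau_y(\{x\})<t\big)\ \le\ \sum_{n\ge d}|\cN_x|^n\,\Pw\big(\gamma_n\text{ traversable before }t\big),
\]
where $\gamma_n$ is a fixed walk of length $n$. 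Traversing $\gamma_n$ greedily (at each step waiting for the first arrow on the next directed edge) minimises the arrival time at the endpoint, so $\gamma_n$ is traversable before $t$ iff this greedy arrival time is $<t$; and by memorylessness of the arrow Poisson processes (and the strong Markov property, when a directed edge is traversed more than once) the greedy arrival time is a sum of $n$ i.i.d.\ $\mathrm{Exp}(\lambda)$ random variables, hence $\Gamma(n,\lambda)$-distributed. Thus $\Pw(\gamma_n\text{ traversable before }t)=\Pw(\Gamma(n,\lambda)<t)\le e^{\theta t}(\lambda/(\lambda+\theta))^n$ for any $\theta>0$ by a Chernoff bound. Taking $t=cd$ and $\theta=c^{-1}-\lambda>0$ (admissible since $c<c_1(\lambda,0)<\lambda^{-1}$), so that $\lambda+\theta=c^{-1}$,
\[
\Pw\big(\tau_y(\{x\})<cd\big)\ \le\ e^{d(1-\lambda c)}\sum_{n\ge d}\big(|\cN_x|\lambda c\big)^n .
\]
Equation \eqref{GrowthConstantInfection} gives $|\cN_x|\lambda c_1(\lambda,0)=e^{\lambda c_1(\lambda,0)-1}<1$, so $|\cN_x|\lambda c<1$ and the series converges; moreover $e^{1-\lambda c}|\cN_x|\lambda c=e^{-g_0(c)}$, so each summand is at most $e^{-g_0(c)n}$ and summation yields exactly $e^{-g_0(c)d}(1-e^{-g_0(c)})^{-1}$. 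I expect this to be the crux of the argument: the naive first--moment bound counting open arrow sequences overshoots by a factor $e^{\lambda t}$, and passing through the $\Gamma(n,\lambda)$ identity / Chernoff bound is precisely what produces the sharp rate $g_0(c)$.

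For the containment statement, fix $c<c_1(\lambda,\rho)$ and pick $c''\in(c,c_1(\lambda,\rho))$, so that $g_0(c'')>\rho\ge 0$. Since $\widetilde{\bfC}$ is non-decreasing in time and $t\mapsto\B_{\lfloor t\rfloor}(x)$ is constant on $[n,n+1)$, the event $\{\widetilde{\bfC}^{\{x\}}_{c(n+1)}\subseteq\B_n(x)\text{ for all }n\ge N\}$ is contained in $\{\widetilde{\bfC}^{\{x\}}_{ct}\subseteq\B_{\lfloor t\rfloor}(x)\text{ for all }t\ge N\}$, so it suffices to show that a.s.\ only finitely many of the events
\[
A_n:=\big\{\exists\,y\in V:\ d(x,y)>n\ \text{ and }\ \tau_y(\{x\})\le c(n+1)\big\}
\]
occur. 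For $d(x,y)\ge n+1$ we have $c(n+1)<c''(n+1)\le c''d(x,y)$, so the tail bound just proved gives
\[
\Pw(A_n)\ \le\ \frac{1}{1-e^{-g_0(c'')}}\sum_{k>n}\big|\{y:d(x,y)=k\}\big|\,e^{-g_0(c'')k}\ \le\ \frac{1}{1-e^{-g_0(c'')}}\sum_{k>n}|\B_k(x)|\,e^{-g_0(c'')k}.
\]
By the definition of $\rho$ one has $|\B_k(x)|\le e^{(\rho+\varepsilon)k}$ for all large $k$; choosing $\varepsilon>0$ with $\rho+\varepsilon<g_0(c'')$ turns the last sum into a convergent geometric tail, whence $\sum_n\Pw(A_n)<\infty$ and Borel--Cantelli completes the proof.
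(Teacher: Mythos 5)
Your proof is correct and takes essentially the same route as the paper: count walks of length $n\ge d(x,y)$ by $|\cN_x|^n$, identify the fastest traversal time of a fixed walk as $\Gamma(n,\lambda)$, apply a Chernoff bound with $\theta=c^{-1}-\lambda$ to get the factor $e^{(1-\lambda c)d}(\lambda c)^n$, sum the geometric series, then for the second part exploit $g_0(c)>\rho$ when $c<c_1(\lambda,\rho)$ together with the exponential-growth bound and Borel--Cantelli. One small point in your favour: you bound $|\B_k(x)|\le e^{(\rho+\varepsilon)k}$ for large $k$ with $\rho+\varepsilon<g_0$, which is exactly what the definition of $\rho$ as a limit gives; the paper instead asserts finiteness of $\sup_{k}|\partial\B_{k+1}(x)|e^{-\rho(k+1)}$, which actually fails in general (for instance on $\Z^d$, $d\ge 2$, where $\rho=0$ but $|\partial\B_{k+1}|\sim k^{d-1}\to\infty$), so your version is the cleaner and more careful justification of the same summability claim.
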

To understand this result more clearly let us consider Figure~\ref{fig:Growth:a}. In this figure we visualized that the set of all infections expands asymptotically linear in time with some slope $c'>0$. What Lemma~\ref{LinearGrowth} basically states is that for every slope $c<c_1(\lambda,\rho)$ from some time point $s\geq 0$ onwards the boundary of the set of all infected individuals will expand with a steeper slope than $c$, and thus $c<c'$.
\begin{proof}
	With some minor changes one can show  the first claim analogously as in \cite[Lemma 1.9]{durrett1988lecture}.
	In order to find an upper bound on $\tau_y(\{x\})$ one needs to consider all possible infection paths from $x$ to $y$ for the process $\widetilde{\bfC}$. Note that we only need to consider paths without loops. Now assume that we have a path that visits $m$ vertices while getting from $x$ to $y$. Then the total time $T_m$ to reach $y$ only via infections along this path is the sum of $m$ independent exponential random variables with parameter $\lambda$, and thus $T_m\sim \Gamma(n,\lambda)$. One can calculate that
	\begin{equation*}
	    \Pw(T_m<c m)\leq \exp\big(-m(c\lambda-1-\log(c\lambda))\big)
	\end{equation*}
	and note that there are at most $|\cN_x|^m$ possible paths between $x$ and $y$ of length $m$. Summing over all $m\geq d(x,y)$ yields	
	that for $0<c<c_1(\lambda,0)$
	\begin{align*}
	    \Pw\big(\tau_y(\{x\})<c d(x,y)\big)\leq\frac{\exp\big(-d(x,y)(c\lambda-1-\log(c\lambda |\cN_z|))\big)}{1-\exp\big(1-c \lambda+\log(c \lambda |\cN_z|)\big)},
	\end{align*}
	where $z\in V$ is arbitrary and $c\lambda-1-\log(c\lambda |\cN_z|)>0$ since $c<c_1(\lambda,0)$, and thus the first claim follows.

For the second claim we conclude that
	\begin{align*}
	\Pw\big(\widetilde{\bfC}^{\{x\}}_{c(n+1)}\not\subseteq \B_n(x)\big)&\leq\Pw\big(\exists y\in V : d(x,y)=n+1, \tau_y(\{x\})<c(n+1)\big)\\
	&\leq \sum_{y\in V:\,d(x,y)= n+1} \Pw\big(\tau_y(\{x\})<cd(x,y)\big)\\
	&\leq |\partial \B_{n+1}(x)|\frac{\exp(-g_0(c)(n+1))}{1-\exp(-g_0(c))}.
	\end{align*}
	Note that if $c<c_1(\lambda,\rho)$, then $g_0(c)>\rho$. Thus,
	\begin{align*}
	\Pw\big(\widetilde{\bfC}^{\{x\}}_{c(n+1)}\not\subseteq \B_n(x)\big)&\leq\Big(\frac{\sup_{k\geq 0}|\partial \B_{k+1}(x)|e^{-\rho(k+1)}}{1-\exp(-g_0(c))}\Big)\exp(-(\underbrace{g_0(c)-\rho}_{=g_{\rho}(c)>0})(n+1))
	\end{align*}
	and since $G$ is of exponential growth $\rho$ the first factor is finite. Since $g_{\rho}(c)>0$ by a comparison with the geometric sum we see that the right hand side is summable. Thus, applying the Borel-Cantelli Lemma we get that
	\begin{align*}
	\Pw\big(\exists N\geq 1: \widetilde{\bfC}^{\{x\}}_{c(n+1)}\subseteq \B_n(x), \forall n\geq N \big)=1.
	\end{align*}
	Since $\widetilde{\bfC}^{\{x\}}_{ct}\subset \widetilde{\bfC}^{\{x\}}_{c(n+1)}$ for all $t\in(n,n+1]$ it follows that
	\begin{align*}
	\Pw\big(\exists s\geq 0: \widetilde{\bfC}^{\{x\}}_{ct}\subseteq \B_{\lfloor t\rfloor}(x), \forall t\geq s \big)=1. &\qedhere
	\end{align*}
\end{proof}
Next we consider the speed of expansion of the permanently coupled region $\Psi'$. Recall from \eqref{DefinitionPermanetlyCoupledRegion} that the permanently coupled region at time $t$, $\Psi'_t$ contains all edges $e$ such that $e\in \bfB^{B_1}_s\symdiff\bfB^{B_1}_s$ for all $B_1,B_2\subset E$ and all $s\geq t$. In words this means that $e$ is in state open or closed regardless of the initial configuration and from $t$ on wards the state will never depend on the initial configuration again. Thus, we call such an edge permanently coupled.  Recall that $\B^L_k(e)$ denotes the ball of radius $k\in \N$ around an edge $e\in E$ in the line graph $L(G)$.
\begin{proposition}\label{ExpansionSpeedPerCouRegion}
	Let $e\in E$ and $\kappa$ as in Assumption~\ref{AssumptionBackground} $(ii)$. If $c>\kappa^{-1}\rho$, then
	\begin{align*}
	\Pw(\exists s\geq 0: \B^L_{\lfloor t\rfloor+1}(e)\subset\Psi_{ct}' \,\,\forall t\geq s)=1.
	\end{align*}
\end{proposition}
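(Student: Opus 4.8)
The plan is to mirror the structure of Lemma~\ref{LinearGrowth}: first establish an exponential tail bound on the time it takes for a \emph{single} edge at graph distance $n$ from $e$ (in the line graph $L(G)$) to become permanently coupled, then sum over all such edges, invoke finiteness of $\sup_k |\partial\B^L_{k}(e)|e^{-\rho k}$ (which holds since $L(G)$ has the same exponential growth rate $\rho$ as $G$), and close with Borel--Cantelli. Concretely, write $\sigma_a := \inf\{t\geq 0 : a\in \Psi'_s \ \forall s\geq t\}$ for the ``permanent coupling time'' of an edge $a\in E$. By Assumption~\ref{AssumptionBackground}$(ii)$ there are $T,K,\kappa>0$ with $\Pw(a\notin\Psi'_t)<K\exp(-\kappa t)$ for all $t\geq T$. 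The first step is to translate this into a tail bound on $\sigma_a$ itself: since $\{\sigma_a > t\}\subseteq \{a\notin\Psi'_t\}$ (if $a$ is permanently coupled from time $t'$ onward for some $t'\le t$, then in particular $a\in\Psi'_t$), we immediately get $\Pw(\sigma_a > t) < K\exp(-\kappa t)$ for $t\geq T$.

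The second step is the union bound over the sphere $\partial\B^L_{n+1}(e)$. For $c>\kappa^{-1}\rho$, we want to show $\B^L_{n+1}(e)\subset\Psi'_{c(n+1)}$ for all large $n$, i.e.\ every edge $a$ with $d_{L(G)}(e,a)\leq n+1$ satisfies $\sigma_a\leq c(n+1)$. Thus
\begin{align*}
\Pw\big(\B^L_{n+1}(e)\not\subset\Psi'_{c(n+1)}\big)
&\leq \sum_{a\in\B^L_{n+1}(e)} \Pw\big(\sigma_a > c(n+1)\big)
\leq |\B^L_{n+1}(e)|\, K\exp(-\kappa c(n+1)),
\end{align*}
valid once $c(n+1)\geq T$. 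Since $|\B^L_{n+1}(e)|\leq C' e^{\rho(n+1)}$ for a constant $C'$ depending only on the graph (using $\rho = \lim_k \frac1k\log|\B^L_k(e)|$ together with subadditivity, exactly as for $G$), the right-hand side is bounded by $C' K \exp\big(-(\kappa c - \rho)(n+1)\big)$, and $\kappa c - \rho > 0$ by the hypothesis $c > \kappa^{-1}\rho$. Hence the bounds are summable in $n$, and Borel--Cantelli gives $\Pw(\exists N: \B^L_{n+1}(e)\subset\Psi'_{c(n+1)}\ \forall n\geq N)=1$.

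The final step is to pass from the discrete statement to the statement for all real $t\geq s$. Here we use monotonicity of $\Psi'$ in time --- namely $\Psi'_{t_1}\subseteq\Psi'_{t_2}$ for $t_1\leq t_2$, which is immediate from the definition \eqref{DefinitionPermanetlyCoupledRegion} since the constraint ``$\forall s\geq t$'' weakens as $t$ grows --- together with monotonicity of the balls $\B^L_{\lfloor t\rfloor+1}(e)\subseteq \B^L_{n+1}(e)$ for $t\in(n-1,n]$. For $t\in(n, n+1]$ we have $\lfloor t\rfloor + 1 \le n+1$ and $ct \ge cn$; choosing the indexing carefully (e.g.\ comparing $\B^L_{\lfloor t\rfloor+1}(e)$ at a real time $t$ with the discrete event at the integer just below) yields $\B^L_{\lfloor t\rfloor+1}(e)\subseteq\B^L_{n+1}(e)\subseteq\Psi'_{c(n+1)}\subseteq\Psi'_{ct}$ is the wrong direction, so instead one shifts: from $\B^L_{n+1}(e)\subset\Psi'_{c(n+1)}$ for all $n\geq N$ and $t\in[n+1,n+2)$ one gets $\lfloor t\rfloor+1 = n+2$...

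I expect the genuinely delicate point to be exactly this bookkeeping in the last step — lining up the floor function, the factor $c$, and the ``$+1$'' in the ball radius so that the almost-sure discrete inclusion upgrades cleanly to the claimed continuous-time inclusion $\B^L_{\lfloor t\rfloor+1}(e)\subset\Psi'_{ct}$ for all $t\geq s$. The cleanest route is probably to first prove the intermediate claim $\Pw(\exists s\geq 0:\B^L_{\lfloor ct\rfloor}(e)\subset\Psi'_{t}\ \forall t\geq s)=1$ type statement by reindexing $m = \lfloor c(n+1)\rfloor$, and then re-substitute; but since $c$ is an arbitrary real $>\kappa^{-1}\rho$ one can also simply replace $c$ by a slightly larger rational (or just any slightly larger constant $c'$) at the outset to absorb all the off-by-one losses, which makes the floor manipulations harmless. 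The probabilistic content — steps one and two — is routine once Assumption~\ref{AssumptionBackground}$(ii)$ and the subexponential control on sphere sizes are in hand; everything hard is in the deterministic interpolation.
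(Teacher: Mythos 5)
Your overall plan — tail bound from Assumption~\ref{AssumptionBackground}$(ii)$, union bound over the ball, Borel--Cantelli, then monotonicity of $t\mapsto\Psi'_t$ to pass to continuous time — is exactly the paper's approach, and the detour through $\sigma_a$ is harmless though unnecessary, since the assumption already bounds $\Pw(a\notin\Psi'_t)$ directly. However, two slips need fixing. First, subadditivity gives you the wrong inequality: submultiplicativity $|\B^L_{m+n}(e)|\leq|\B^L_m(e)|\,|\B^L_n(e)|$ plus Fekete's lemma show that $\frac{1}{n}\log|\B^L_n(e)|$ converges to its \emph{infimum} $\rho$, hence $|\B^L_n(e)|\geq e^{\rho n}$ for every $n$ — a lower bound. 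The uniform upper bound $|\B^L_n(e)|\leq C'e^{\rho n}$ you invoke does not follow and already fails on $\Z^d$, where $\rho=0$ but $|\B^L_n(e)|\sim n^d$. What is true is that for any $\epsilon>0$ one eventually has $|\B^L_n(e)|\leq e^{(\rho+\epsilon)n}$; taking $0<\epsilon<\kappa c-\rho$ still gives summability, and the paper sidesteps the constant entirely by applying the root test to $|\B^L_{n+1}(e)|e^{-\kappa cn}$.

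Second, your Borel--Cantelli event $\{\B^L_{n+1}(e)\subset\Psi'_{c(n+1)}\}$ has the time index on the wrong side for the interpolation: for $t\in[n,n+1)$ you have $\lfloor t\rfloor+1=n+1$ but $ct<c(n+1)$, and since $\Psi'$ is \emph{increasing} in time the inclusion $\Psi'_{c(n+1)}\subseteq\Psi'_{ct}$ fails. You rightly flag this bookkeeping as the delicate point, but your proposed fix of perturbing $c$ \emph{upward} makes the mismatch worse; you need a slightly \emph{smaller} $c'\in(\kappa^{-1}\rho,c)$, so that $c'(n+1)\leq cn\leq ct$ once $n\geq c'/(c-c')$. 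Cleaner still, and what the paper actually does, is to estimate $\Pw(\B^L_{n+1}(e)\not\subseteq\Psi'_{cn})$ — with $cn$, not $c(n+1)$, in the subscript. The series is still summable when $\kappa c>\rho$, and the interpolation is then immediate: for $t\in[n,n+1)$ with $n\geq N$ one has $\B^L_{\lfloor t\rfloor+1}(e)=\B^L_{n+1}(e)\subseteq\Psi'_{cn}\subseteq\Psi'_{ct}$.
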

\begin{proof}
	Fix an arbitrary $e\in E$ and recall that by Assumption~\ref{AssumptionBackground} $(ii)$ there exist $T,K,\kappa>0$ such that $\Pw(e\notin  \Psi'_{t})\leq K e^{-\kappa t}$ for all $t>T$. Thus, it follows that
	\begin{align}\label{PartialSets}
	\sum_{n=\lceil T\rceil }^{\infty}\Pw(\B^L_{n+1}(e)\not\subseteq\Psi_{cn}')\leq K\sum_{n=\lceil T\rceil}^{\infty}|\B^L_{n+1}(e)|e^{-\kappa c n}.
	\end{align}
    By assumption we know that $\frac{1}{n}\log(|\B_n(x)|)\to \rho$. Note that transitivity of the graph $G$ implies that $|\B_n(x)|\leq |\B_n^L(e)|\leq |\cN_x||\B_n(x)|$ with $e=\{x,y\}$. Furthermore $G$ is of bounded degree which implies that $\frac{1}{n}\log(|\B^L_n(e)|)\to \rho$. Now using continuity of $\exp(\cdot)$ we see that
    \begin{equation*}
    |\B^L_n|^{1/n}=\exp\big(n^{-1}\log(|\B^L_n|)\big)\to e^\rho
    \end{equation*}
    This implies that
    \begin{equation*}
        \limsup_{n\to \infty}(|\B^L_{n+1}|e^{-\kappa c n})^{1/n+1} =e^{-\kappa c}\limsup_{n\to \infty}(|\B^L_{n+1}|)^{1/n+1}=e^{\rho-\kappa c}<1,
    \end{equation*}
    since by assumption $\kappa c>\rho$. 
     
    Hence, the root test for summability yields that the right hand side of \eqref{PartialSets} is finite. 
    Therefore the Borel-Cantelli Lemma yields
	\begin{align*}
	\Pw(\exists N\geq 1: \B^L_{n+1}(e) \subseteq \Psi_{cn}', \forall n\geq N )=1.
	\end{align*}
	Note that $\Psi_{cn}'\subset \Psi_{ct}'$ for all $t\geq n$, which proves the claim.
\end{proof}

We use these two results such that we can compare the asymptotic expansion speed of the infection and the coupled region. Since one process has values in $\cP(V)$ and the other in $\cP(E)$ we need to introduce the following notation. We denote by
\begin{equation*}
\Phi_{t}:=\{x\in V: \{x,y\}\in \Psi'_t \,\, \forall y\in \cN_x \}.
\end{equation*}
the set of all vertices whose attached edges are already permanently coupled at time $t$. 

\begin{theorem}\label{InfectionVSBackground}
	Let $\lambda>0$, $C\subset V$ be non-empty and finite, $\kappa$ as in Assumption~\ref{AssumptionBackground} $(ii)$. If $c_1(\lambda,\rho)>\kappa^{-1}\rho$, then
	\begin{equation*}
	\Pw(\exists s\geq 0: \widetilde{\bfC}^{C}_t\subseteq\Phi_t\,\, \forall t\geq s )= 1.
	\end{equation*}
\end{theorem}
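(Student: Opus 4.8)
The plan is to glue together Lemma~\ref{LinearGrowth} and Proposition~\ref{ExpansionSpeedPerCouRegion} by means of a small dictionary between balls in $G$ and balls in the line graph $L(G)$. Since $c_1(\lambda,\rho)>\kappa^{-1}\rho$ by hypothesis, I would first fix one constant $c$ with $\kappa^{-1}\rho<c<c_1(\lambda,\rho)$; this open interval is non-empty precisely because of the growth condition, and this is the only place it is used. The virtue of a single $c$ is that Lemma~\ref{LinearGrowth} then controls the infection at the space--time scaling $(ct,\lfloor t\rfloor)$, while Proposition~\ref{ExpansionSpeedPerCouRegion} controls the permanently coupled region at the matching scaling $(ct,\lfloor t\rfloor+1)$.

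Next I would record the geometric translation: if $e=\{x,z\}\in E$ is any edge incident to $x$ and $\B^L_{n+1}(e)\subseteq\Psi'_s$, then $\B_n(x)\subseteq\Phi_s$. Indeed, for $x'\in\B_n(x)$ pick a $G$-geodesic $x=v_0\sim\dots\sim v_k=x'$ with $k\le n$; the edges $e_i:=\{v_{i-1},v_i\}$ form a path of length $k-1$ in $L(G)$ (consecutive ones share a vertex), and $e_1=\{x,v_1\}$ shares the vertex $x$ with $e$, so $d_{L(G)}(e,e_k)\le k$. Any edge $\{x',y\}$ with $y\in\cN_{x'}$ shares the vertex $x'=v_k$ with $e_k$, hence $d_{L(G)}(e,\{x',y\})\le k+1\le n+1$, so $\{x',y\}\in\B^L_{n+1}(e)\subseteq\Psi'_s$; as this holds for every $y\in\cN_{x'}$, we get $x'\in\Phi_s$.

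With these two inputs the assembly is straightforward. By additivity (Lemma~\ref{MonotonicityAdditivityLemma}(iii)) one has $\widetilde{\bfC}^C_t=\bigcup_{x\in C}\widetilde{\bfC}^{\{x\}}_t$, so it suffices to treat a single initial vertex $x$ and then take the maximum of the finitely many (random) times over $x\in C$. Fix $x$, choose an edge $e_x$ incident to $x$, and intersect the two almost sure events: by Lemma~\ref{LinearGrowth} (applicable since $c<c_1(\lambda,\rho)$) there is a.s.\ an $s_1$ with $\widetilde{\bfC}^{\{x\}}_{ct}\subset\B_{\lfloor t\rfloor}(x)$ for all $t\ge s_1$, and by Proposition~\ref{ExpansionSpeedPerCouRegion} (applicable since $c>\kappa^{-1}\rho$) there is a.s.\ an $s_2$ with $\B^L_{\lfloor t\rfloor+1}(e_x)\subset\Psi'_{ct}$ for all $t\ge s_2$. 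For $t\ge s_1\vee s_2$ the geometric translation with $n=\lfloor t\rfloor$ and $s=ct$ gives $\B_{\lfloor t\rfloor}(x)\subseteq\Phi_{ct}$, hence $\widetilde{\bfC}^{\{x\}}_{ct}\subseteq\Phi_{ct}$; the substitution $u=ct$ then yields $\widetilde{\bfC}^{\{x\}}_u\subseteq\Phi_u$ for all real $u\ge c(s_1\vee s_2)$. Taking the maximum of these times over the finitely many $x\in C$ completes the argument.

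\textbf{Expected main obstacle.} There is not much of one: once Lemma~\ref{LinearGrowth} and Proposition~\ref{ExpansionSpeedPerCouRegion} are available, the theorem is essentially bookkeeping. The one genuine point is the $G$-versus-$L(G)$ dictionary above --- checking that all edges incident to a vertex in $\B_n(x)$ lie in $\B^L_{n+1}(e)$ for an edge $e$ incident to $x$ --- together with the observation that a \emph{single} constant $c$ can serve both the infection bound and the coupling bound, which is exactly what $c_1(\lambda,\rho)>\kappa^{-1}\rho$ guarantees.
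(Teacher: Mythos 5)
Your argument is correct and follows essentially the same route as the paper: fix one constant $c$ in the nonempty interval $(\kappa^{-1}\rho,\,c_1(\lambda,\rho))$, combine Lemma~\ref{LinearGrowth} and Proposition~\ref{ExpansionSpeedPerCouRegion} at matching scales, translate $\B^L_{\lfloor t\rfloor+1}(e_x)\subset\Psi'$ into $\B_{\lfloor t\rfloor}(x)\subset\Phi$, and handle general finite $C$ by additivity. The only cosmetic difference is that you spell out the $G$/$L(G)$ dictionary in detail and take a maximum of random times where the paper uses a union bound; these are equivalent.
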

\begin{proof}
	Let $x\in V$ and $y\in \cN_x$. First we consider $C=\{x\}$. Note that we assumed $c_1(\lambda,\rho)>\kappa^{-1}\rho$, and thus there exists a $c<c_1(\lambda,\rho)$ such that $c\kappa>\rho$. Since $c<c_1(\lambda,\rho)$ by Lemma \ref{LinearGrowth} we get that
	\begin{equation}\label{InfectionVSBackgroundEq1}
	\Pw(\exists s>0: \widetilde{\bfC}^{\{x\}}_{ct}\subset \B_{\lfloor t\rfloor}(x)\,\, \forall t\geq s)=1.
	\end{equation}
	On the other hand we know that $c\kappa>\rho$, and hence Proposition~\ref{ExpansionSpeedPerCouRegion} implies that 
	\begin{equation*}
	\Pw(\exists s>0 :\B^{L}_{\lfloor t\rfloor+1}(\{x,y\}) \subset \Psi'_{ct} \,\, \forall t>s)=1.
	\end{equation*}
	Since $\B^{L}_{\lfloor t\rfloor+1}(\{x,y\})$ contains all edges attached to any vertex in $\B_{\lfloor t\rfloor}(x)$, we see by definition of the random set $\Phi_{ct}$ that
	\begin{equation}\label{InfectionVSBackgroundEq2}
	\Pw(\exists s>0 :\B_{\lfloor t\rfloor}(x) \subset \Phi_{ct} \,\, \forall t>s)=1.
	\end{equation}
	By combining \eqref{InfectionVSBackgroundEq1} and \eqref{InfectionVSBackgroundEq2} we get that
	\begin{equation*}
	\Pw(\exists s\geq 0: \widetilde{\bfC}^{\{x\}}_t\subseteq\Phi_t\,\, \forall t\geq s )= 1.
	\end{equation*}
	Now let $C\subset V$ be an arbitrary non-empty and finite subset. Then we see with additivity that
	\begin{equation*}
	\Pw(\nexists s\geq 0: \widetilde{\bfC}^{C}_t\subseteq\Phi_t\,\, \forall t\geq s )\leq \sum_{x\in C}\Pw( \nexists s\geq 0: \widetilde{\bfC}^{\{x\}}_t \subseteq \Phi_{t}\,\, \forall t\geq s ).
	\end{equation*}
	But we already showed that $\Pw( \nexists s\geq 0: \widetilde{\bfC}^{\{x\}}_t \subseteq \Phi_{t}\,\, \forall t\geq s )=0$ for all $x\in V$ and thus, the right hand side is already equal to $0$. This proves the claim.
\end{proof}
We are finally ready to prove the main results of this chapter. We begin to prove independence of initial configuration of the background process $\bfB$.
\begin{proposition}\label{IndependencySurvival}
	Let $C\subset V$ be finite and non-empty.
    Suppose that $c_1(\lambda,\rho)>\kappa^{-1}\rho$ and $B\subset E$, then $\theta(\lambda,r,C,B)>0$ if and only if $\theta(\lambda,r,C,B')>0$ for all $B'\subset E$.
\end{proposition}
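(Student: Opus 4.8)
The ``if'' direction is immediate (take $B'=B$), so the content is the ``only if'' direction. The plan is to reduce to the two extreme backgrounds by monotonicity, use Theorem~\ref{InfectionVSBackground} to confine a surviving infection to the permanently coupled region, and then argue that survival from such a configuration cannot ``see'' the initial background.

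First I would use that the CPERE is monotone in its initial background (Lemma~\ref{MonotonicityAdditivityLemma}$(i)$ with $C_1=C_2=C$): $\theta(\lambda,r,C,\emptyset)\le\theta(\lambda,r,C,B)\le\theta(\lambda,r,C,E)$ for every $B\subset E$. Hence it suffices to show $\theta(\lambda,r,C,E)>0\Rightarrow\theta(\lambda,r,C,\emptyset)>0$, since then $\theta(\lambda,r,C,B)>0$ gives $\theta(\lambda,r,C,E)>0$, gives $\theta(\lambda,r,C,\emptyset)>0$, gives $\theta(\lambda,r,C,B')>0$ for all $B'$. For this I would build $(\bfC^{C,\emptyset},\bfB^{\emptyset})$, $(\bfC^{C,E},\bfB^{E})$ and the SI-process $\widetilde{\bfC}^{C}$ on one probability space via the graphical representation, so that $\bfC^{C,\emptyset}_t\subset\bfC^{C,E}_t\subset\widetilde{\bfC}^{C}_t$, $\bfB^{\emptyset}_t\subset\bfB^{E}_t$, and $\bfB^{\emptyset}$, $\bfB^{E}$ agree on $\Psi'_s$ for all $s\ge 0$. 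With $A:=\{\bfC^{C,E}_t\neq\emptyset\ \forall t\}$ we have $\Pw(A)=\theta(\lambda,r,C,E)>0$, and by Theorem~\ref{InfectionVSBackground} almost surely $\widetilde{\bfC}^{C}_t\subseteq\Phi_t$ for all large $t$; so for $n$ large enough the event $A_n:=A\cap\{\widetilde{\bfC}^{C}_t\subseteq\Phi_t\ \forall t\ge n\}$ has positive probability.

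The key observation is a ``locked-in'' property: a CPERE restarted at time $n$ from any infection configuration $D\subseteq\Phi_n$ never leaves $\Phi$ afterwards (it is dominated by the SI-process from $D$, which by additivity and the Markov property sits inside $\widetilde{\bfC}^{C}$ on $[n,\infty)$ once $D\subseteq\widetilde{\bfC}^{C}_n$, and one invokes the full-probability statement of Theorem~\ref{InfectionVSBackground}); hence each of its transmissions at a time $t\ge n$ uses an edge of $\Psi'_t$, on which $\bfB^{\emptyset}_t=\bfB^{E}_t$. So, run with the same Poisson clocks, the restart of the CPERE from $(D,\bfB^{\emptyset}_n)$ and from $(D,\bfB^{E}_n)$ coincide for all $t\ge n$, and in particular ``this restart survives'' is an event not referring to which initial background was used. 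Since $\cP(V)$ has only countably many finite subsets, on $A_n$ the value $D^{\star}:=\bfC^{C,E}_n\subseteq\Phi_n$ equals some particular finite non-empty set with positive probability, so for a suitable deterministic $n$ and $D^{\star}$ the event that $\bfC^{C,E}_n=D^{\star}$, that $\widetilde{\bfC}^{C}_t\subseteq\Phi_t$ for all $t\ge n$, and that the time-$n$ restart from $D^{\star}$ survives, has positive probability.

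It then remains to feed this ``good'' configuration to the $\emptyset$-process. With positive probability the infection and recovery clocks are silent in a large fixed ball $\B_k(C)$ during a short interval $[0,\delta]$ while $\bfB^{\emptyset}$ opens every edge in $\B_k(C)$ and $\bfB^{E}$ keeps them open; taking $k$ large enough (for the fixed $n$) that no disagreement of the two backgrounds entering $\B_k(C)$ from outside can reach the infection before time $n$, the two infection processes then see the same edge states on everything they touch up to time $n$, so $\bfC^{C,\emptyset}_t=\bfC^{C,E}_t$ on $[0,n]$, hence $\bfC^{C,\emptyset}_n=D^{\star}$; combined with the locked-in property this forces $\bfC^{C,\emptyset}$ to survive, so $\theta(\lambda,r,C,\emptyset)>0$. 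The main obstacle is exactly this last step: because the CPERE is monotone \emph{increasing} in the initial background, survival of the $E$-process yields no pointwise lower bound for the $\emptyset$-process, and one genuinely needs a positive-probability event on which the poorer background locally synchronizes with the richer one before the infection has spread — controlling the relevant finite time window and finite edge set so that this has positive probability is where the hypothesis $c_1(\lambda,\rho)>\kappa^{-1}\rho$ (through Theorem~\ref{InfectionVSBackground} and Proposition~\ref{ExpansionSpeedPerCouRegion}) is used.
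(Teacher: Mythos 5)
The reduction to the extreme backgrounds $\emptyset$ and $E$ and the ``locked-in'' observation (that on $A_n$ the time-$n$ restart cannot tell $\bfB^{\emptyset}_n$ from $\bfB^E_n$) are both correct, and the latter is conceptually the same idea as the paper's identities \eqref{HelpIndependence1} and \eqref{HelpIndependence2}. However, the final step of your argument — forcing $\bfC^{C,\emptyset}_{[0,n]}=\bfC^{C,E}_{[0,n]}$ with positive probability by having $\bfB^{\emptyset}$ open every edge in $\B_k(C)$ and $\bfB^E$ keep them open during $[0,\delta]$ — has a genuine gap. First, this synchronization event is not available under Assumption~\ref{AssumptionBackground}(i)--(ii) alone: ergodicity and exponential coupling of $\Psi'$ do not imply that the spin rates $q(e,\cdot)$ are bounded away from zero, so for a general attractive finite-range spin system you cannot force all edges of a ball into a prescribed state in a short interval. (It works for dynamical percolation, but the proposition is stated for general backgrounds.) Second, even where the event is available, you need $\Pw\big(\{\bfC^{C,E}_n=D^\star\}\cap\{\text{restart from }D^\star\text{ survives}\}\cap\{\text{synchronization}\}\big)>0$, and these three events are strongly correlated through the graphical representation on $[0,n]$: the synchronization event prescribes the background on $\B_k(C)\times[0,\delta]$, which is precisely what $\bfC^{C,E}$ and hence $D^\star$ and the restart depend on. You give no argument that the intersection is nonempty, and simply multiplying marginal positive probabilities is not valid here.

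The paper avoids both issues by passing through the stationary law $\pi$ rather than comparing $\emptyset$ and $E$ directly. It introduces the auxiliary processes $\underline{\bfC}^s$ (ignore all infection arrows on $[0,s]$) and $\overline{\bfC}^s$ (ignore recoveries and background on $[0,s]$), which \emph{do not read the background at all} on $[0,s]$. Consequently the events $A_s(C)\cap\{\underline{\bfC}^{C,B,s}\text{ survives}\}$ and $A_s(C)\cap\{\overline{\bfC}^{C,B,s}\text{ survives}\}$ are literally independent of $B$ — no synchronization is needed — and one can replace the deterministic initial background by a $\pi$-distributed one, getting $\theta^{\pi}$ on the nose. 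This converts the problem to a clean sandwich $\theta(\{x\},\emptyset)\ge\Pw(T>s)(\theta^{\pi}-\varepsilon)$ (for survival) and $\theta(\{x\},E)\le\E^{\{x\}}[\theta^{\pi}(\widetilde{\bfC}_s)]$ (for extinction), using only attractiveness, the growth condition through Theorem~\ref{InfectionVSBackground}, and the fact that $\pi$ is invariant. If you want to repair your argument you would essentially be forced into this same detour: rather than synchronizing $\bfB^{\emptyset}$ with $\bfB^{E}$, decouple the infection from the background on an initial time window and then let the background equilibrate.
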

\begin{proof}
	Recall that we assumed that Assumptions~\ref{AssumptionBackground} $(i)$ and $(ii)$ are satisfied. Furthermore let $x\in V$ be fixed. The proof strategy is to use $\theta^{\pi}(\{x\})$ as a reference, i.e.~$\bfB_0\sim \pi$. Note that we omit the infection and recovery rate as variables since they are considered constant throughout the whole proof. By monotonicity it suffices to show that $\theta(C,\emptyset)>0$ if and only if $\theta(C,E)>0$.
	Recall that the translation invariance of the CPERE with $\bfB_0\sim\pi$ 
	implies that $\theta^{\pi}(\{x\})=\theta^{\pi}(\{y\})$ for all $y\in V$ and by additivity also 
	$\theta^{\pi}(\{x\})>0$  if and only if $\theta^{\pi}(C)>0$ for any  finite non-empty set $C$.
	
	Thus, it suffices to show:
	\begin{enumerate}
		\item[a)] If $\theta^{\pi}(\{x\})>0$, then $\theta(\{x\},\emptyset)>0$.
		\item[b)] If $\theta^{\pi}(\{x\})=0$, then $\theta(\{x\},E)=0$.
	\end{enumerate}
	The key idea is that we prove this by coupling the CPERE $(\bfC,\bfB)$ to processes $\underline{\bfC}$ and $\overline{\bfC}$, which act as a upper and lower bound, i.e.~$\underline{\bfC}_0=\bfC_0=\overline{\bfC}_0$ and $\underline{\bfC}_t\subset \bfC_t\subset\overline{\bfC}_t$ for all $t>0$. Note that all three infection processes will depend on the same background process $\bfB$. Let $s>0$, then we define $\underline{\bfC}^{C,B,s}$ as follows.
	\begin{enumerate}
		\item We set $\underline{\bfC}^{C,B,s}_0=C$. On $[0,s]$ we only consider the recovery symbols caused by $\Xi^{\text{rec}}$ and ignore all infection arrows, i.e.~$\mathbf{coop}_{x,y}$ maps.
		\item On $(s,\infty)$ we use the same graphical representation as for the $\bfC^{C,B}$, i.e.~the same infection arrows and recovery symbols generated by $\Xi^{\text{inf}}$ and $\Xi^{\text{rec}}$ and the same background $\bfB^B$.
	\end{enumerate}
	Next we define $\overline{\bfC}^{C,B,s}$ as follows.
	\begin{enumerate}
		\item We set $\overline{\bfC}^{C,B,s}_0=C$. On $[0,s]$ we only consider the infection events caused by $\Xi^{\text{inf}}$. This means we ignore all recovery symbols caused by $\Xi^{\text{rec}}$ and also the background $\bfB^{B}$ in the sense that we treat all edges as open. 
		\item On $(s,\infty)$ we again use the same graphical representation as for $\bfC^{C,B}$ and we use the same background $\bfB^B$.
	\end{enumerate}
	See Figure~\ref{fig:Coupling} for a illustration of $\overline{\bfC}^s$, $\bfC$ and $\underline{\bfC}^s$ on the same realization of $\bfB$.
	\begin{figure}[b]
		\centering 
		\subfigure[\footnotesize Construction of $\bfC$. ]{\label{fig:Coupling:a}\includegraphics[width=43mm]{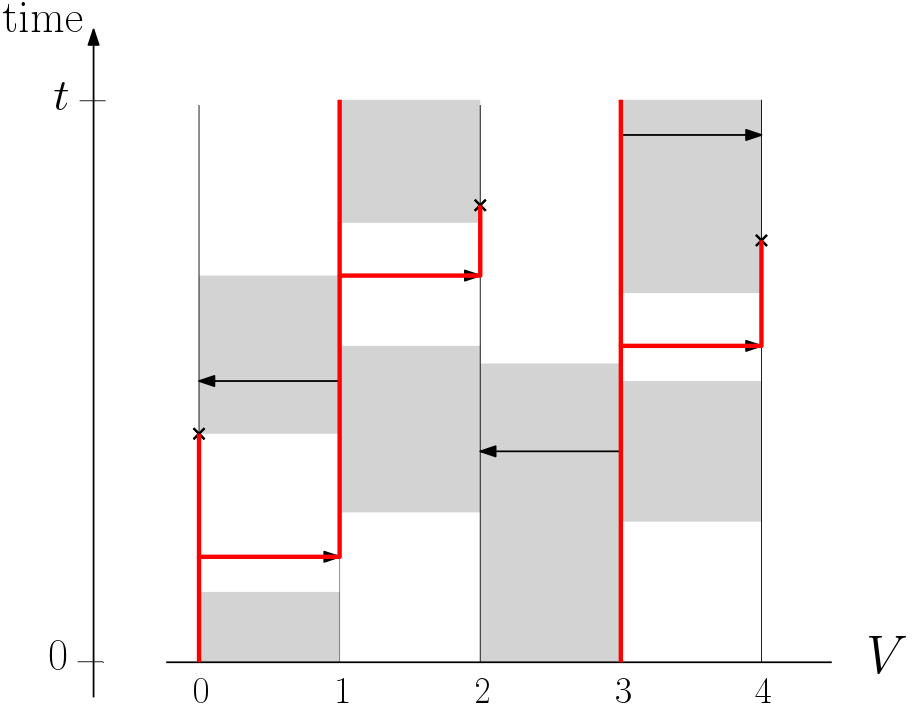}}\hspace{3mm}
		\subfigure[\footnotesize Construction of $\protect\underline{\bfC}^s$.] {\label{fig:Coupling:b}\includegraphics[width=43mm]{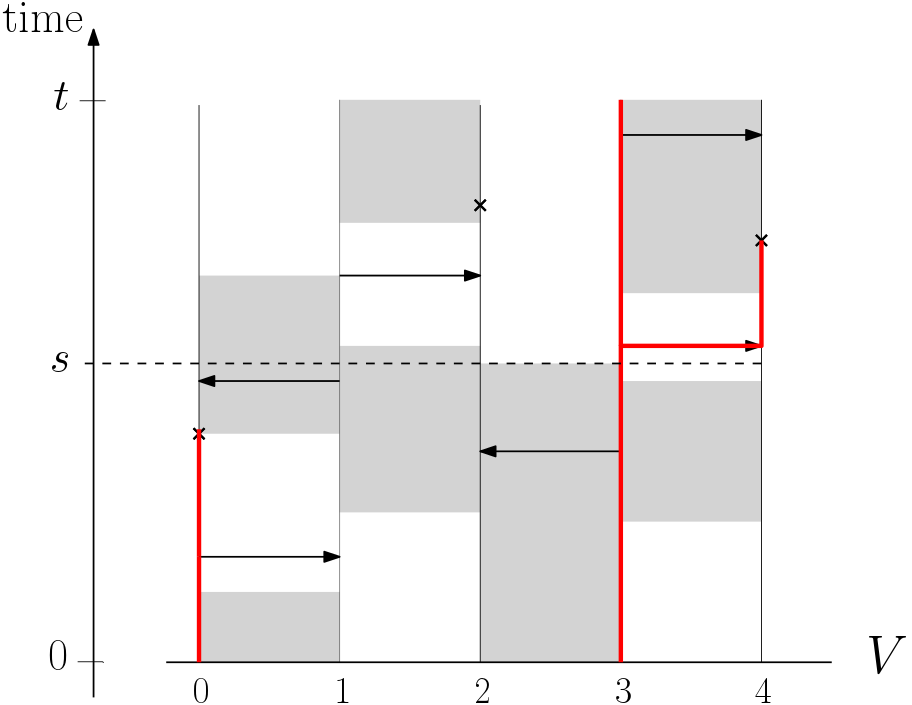}}\hspace{3mm}
		\subfigure[\footnotesize Construction of $\protect\overline{\bfC}^s$.] {\label{fig:Coupling:c}\includegraphics[width=43mm]{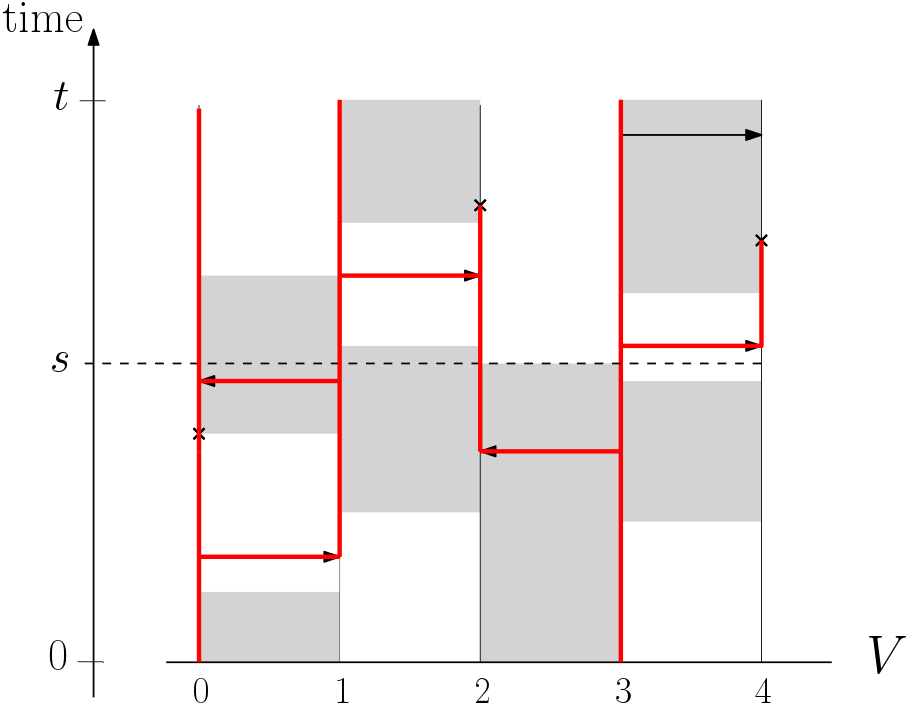}}
		\caption{An illustration of the construction of the three process  $\protect\underline{\bfC}^s$, $\bfC$ and $\protect\overline{\bfC}^s$ using the same graphical representation.}
		\label{fig:Coupling}
	\end{figure}
	
	Recall that $\widetilde{\bfC}^C$ is the classical contact process without recoveries which is coupled to the CPERE $(\bfC^{C,B},\bfB^B)$ such that $\bfC^{C,B}_0= \widetilde{\bfC}^{C}_0=C$ and $\bfC^{C,B}_t\subset \widetilde{\bfC}^{C}_t$ for all $t\geq 0$. By construction $\overline{\bfC}^{C,B,s}_t=\widetilde{\bfC}^{C}_t$ for all $t\leq s$.
	
	We set $A_{s}(C):=\{ \widetilde{\bfC}^{C}_t\subseteq \Phi_t\,\, \forall t\geq s\}$. Another reason why we consider these two processes is that by the construction of $\underline{\bfC}^s$ and $\overline{\bfC}^s$ it is clear that
	\begin{align}
	\Pw(A_{s}(C)\cap\{\underline{\bfC}^{C,\emptyset,s}_t\neq \emptyset \,\, \forall t\geq 0\})&=\Pw(A_{s}(C)\cap\{\underline{\bfC}^{C,E,s}_t\neq \emptyset \,\, \forall t\geq 0\}),\label{HelpIndependence1}\\
	\Pw(A_{s}(C)\cap\{\overline{\bfC}^{C,\emptyset,s}_t\neq \emptyset \,\, \forall t\geq 0\})&=\Pw(A_{s}(C)\cap\{\overline{\bfC}^{C,E,s}_t\neq \emptyset \,\, \forall t\geq 0\}),\label{HelpIndependence2}
	\end{align}
	since both processes are independent of the background $\bfB$ on $[0,s]$ and in the time interval $(s,\infty)$ all infection paths stay in the coupled region, i.e.~the initial configuration of the background process has no influence.
	
	We start by proving a). To avoid clutter we set $A_s:=A_s(\{x\})$.  We see that 
	\begin{align*}
	\theta(\{x\},\emptyset)\geq \Pw(	A_{s}\cap\{\underline{\bfC}^{\{x\},\emptyset,s}_t\neq \emptyset \,\, \forall t\geq 0\})
	\end{align*}
	for every $s>0$ and by \eqref{HelpIndependence1} we get that
	\begin{align}\label{BackIndependencyInequality1}
	\theta(\{x\},\emptyset)\geq \int \Pw(A_{s}\cap\{\underline{\bfC}^{\{x\},B,s}_t\neq \emptyset \,\, \forall t\geq 0\})\pi(\mathsf{d} B).
	\end{align}
	The state $\emptyset$ is obviously an absorbing state for the infection. Hence, 
	\begin{align}\label{BackIndependencyInequality2}
	\begin{aligned}
	&\int \Pw(A_{s}\cap\{\underline{\bfC}^{\{x\},B,s}_t\neq \emptyset \,\, \forall t\geq 0\})\pi(\mathsf{d} B)\\
	=&\int \Pw(A_{s}\cap\{\underline{\bfC}^{\{x\},B,s}_t\neq \emptyset \,\, \forall t\geq s\})\pi(\mathsf{d} B).
	\end{aligned}	
	\end{align}
	Let $\underline{\underline{\bfC}}^s$ be a process which is constructed analogously as $\underline{\bfC}^s$ with the difference that on $[0,s]$ also no recovery symbols have an effect. Therefore, $\underline{\underline{\bfC}}$ is just a delayed CPERE. By construction it is clear that it is only possible for $\underline{\bfC}^{s,\{x\},B}$ to survive if until time $s$ the vertex $x$ is not hit by a recovery symbol, i.e.~if we set $T:=\inf\{t>0:(\mathbf{rec}_{x},t)\in\Xi^{\text{rec}}\}$, then $\underline{\bfC}^{s,\{x\},B}$ goes extinct a.s. on the event $\{T\leq s\}$. Note that $\underline{\underline{\bfC}}^{\{x\},B,s}=\underline{\bfC}^{\{x\},B,s}$ on $\{T> s\}$ and thus,
	\begin{equation}\label{BackIndependencyInequality3}
	\begin{aligned}
	&\int \Pw\big(A_{s}\cap\{\underline{\bfC}^{\{x\},B,s}_t\neq \emptyset \,\, \forall t\geq s\}\big)\pi(\mathsf{d} B)\\
	=&\int \Pw\big(A_{s}\cap\{\underline{\underline{\bfC}}^{\{x\},B,s}_t\neq \emptyset \,\, \forall t\geq s\}\cap \{T>s\})\big)\pi(\mathsf{d} B).
	\end{aligned}	
	\end{equation}
	Furthermore we know that the event $\{T>s\}$ only depends on $\Xi^{\text{rec}}$ in the time interval $[0,s]$. Since $A_s$ only depends on $\Xi^{\text{inf}}$ and the point processes $\Xi^{\text{back}}$ and $\Xi^{\text{rec}}$ have no impact on the survival of $\underline{\underline{\bfC}}$ on $[0,s]$, we get that
	\begin{align}\label{BackIndependencyInequality4}
	\begin{aligned}
	&\int \Pw\big(A_{s}\cap\{\underline{\underline{\bfC}}_t^{\{x\},B,s}\neq \emptyset \,\, \forall t\geq s\}\cap \{T>s\})\big)\pi(\mathsf{d} B)\\
	=&\Pw(T>s)	\int \Pw\big(A_{s}\cap\{\underline{\underline{\bfC}}_t^{\{x\},B,s}\neq \emptyset \,\, \forall t\geq s\}\big)\pi(\mathsf{d} B).
	\end{aligned}
	\end{align}
	By construction it follows that $(\underline{\underline{\bfC}}_t^{s})_{t\leq s}$ and $(\bfB_t)_{t\leq s}$ are independent. Also since $\pi$ is the unique invariant law of the background process we see that 
	\begin{align*}
	\int \Pw\big(\underline{\underline{\bfC}}_t^{\{x\},B,s}\neq \emptyset \,\, \forall t\geq 0\big) \pi(\mathsf{d}B)=\int \Pw\big(\bfC^{\{x\},B}_t\neq \emptyset \,\, \forall t\geq 0\big)\pi(\mathsf{d}B)=\theta^{\pi}(\{x\})>0,
	\end{align*}
	for every $s\geq 0$, where the last inequality follows by assumption. As already mentioned $\underline{\underline{\bfC}}$ is just a delayed CPERE and if it is started stationary the survival probability is constant in $s$.	By Theorem~\ref{InfectionVSBackground} for every $\theta^{\pi}(\{x\})>\varepsilon>0$ there exists a $S>0$ such that $\Pw(A_s)>1-\varepsilon$ for all $s> S$, where we used that $A_s\subset A_{s'}$ if $s\leq s'$.  
	We can use this to conclude that
	\begin{align}\label{IndependencySurvivalEq1}
	\bigg|\int \Pw\big(\underline{\underline{\bfC}}_t^{\{x\},B,s}\neq \emptyset \,\, \forall t\geq 0\big) \pi(\mathsf{d}B)-\int \Pw\big(A_{s}\cap\{\underline{\underline{\bfC}}_t^{\{x\},B,s}\neq \emptyset \,\, \forall t\geq s\}\big)\pi(\mathsf{d} B)\bigg|<\varepsilon.
	\end{align}
	Now using \eqref{BackIndependencyInequality1}-\eqref{IndependencySurvivalEq1} successively yields that $\theta(\{x\},\emptyset)\geq \Pw(T>s)(\theta^{\pi}(\{x\})-\varepsilon)>0$, where we used that $\Pw(T>s)>0$ for all $s\geq 0$. This proves a).
	
	It remains to show b). Here, it suffices to show that
	\begin{align}\label{IndependencySurvivalEq2}
	\Pw(A_s\cap\{\bfC^{\{x\},E}_t\neq \emptyset \,\,\forall t\geq 0\})=0
	\end{align}
	for all $s>0$. This is because Theorem~\ref{InfectionVSBackground} yields that
	\begin{align*}
	\Pw\Big(\bigcup_{n\in\N_0} A_n\Big)=\Pw(\exists s\geq 0: \widetilde{\bfC}^{C}_t\subseteq\Phi_t\,\, \forall t\geq s)=1,
	\end{align*}
	where we used in the first equality that $A_s\subset A_{s'}$ if $s\leq s'$. Hence,
	\begin{align*}
	    \Pw(\bfC^{\{x\}, E}_t\neq \emptyset \,\, \forall t\geq 0)&=\Pw(\{\exists s\geq 0: \widetilde{\bfC}^{\{x\}}_t\subseteq  \Phi_t\, \forall t\geq s\}\cap \{\bfC^{\{x\}, E}_t\neq \emptyset \,\, \forall t\geq 0\})\\
	    &\leq\sum_{n=0}^{\infty}\Pw(A_n \cap \{\bfC^{\{x\}, E}_t\neq \emptyset \,\, \forall t\geq 0\}),
	\end{align*}
	and therefore \eqref{IndependencySurvivalEq2} implies that the right hand side is $0$.
	By construction of $\overline{\bfC}$ we see that
	\begin{equation*}
	    \Pw(A_s\cap\{\bfC^{\{x\},E}_t\neq \emptyset \,\,\forall t\geq 0\})\leq \Pw(A_s\cap\{\overline{\bfC}^{\{x\},E,s}_t\neq \emptyset \,\,\forall t\geq 0\}).
	\end{equation*}
	Furthermore by \eqref{HelpIndependence2} it follows that 
	\begin{equation*}
	    \Pw(A_s\cap\{\overline{\bfC}^{\{x\},E,s}_t\neq \emptyset \,\,\forall t\geq 0\})=\int\Pw(A_s\cap\{\overline{\bfC}^{\{x\},B,s}_t\neq \emptyset \,\,\forall t\geq 0\})\pi(\mathsf{d}B)
	\end{equation*}
	and since $\overline{\bfC}^{\{x\},B,s}_s=\widetilde{\bfC}^{\{x\}}_s$ for all $B\subset E$ we get
	\begin{align*}	
	    \int\Pw(A_s\cap\{\overline{\bfC}^{\{x\},B,s}_t\neq \emptyset \,\,\forall t\geq 0\})\pi(\mathsf{d}B)
	    \leq \E^{\{x\}}[\Pw^{(\widetilde{\bfC}_s,\pi)}(\bfC_t\neq \emptyset \,\,\forall t\geq 0)]=0,
	\end{align*}
	where we used that by assumption $\theta^{\pi}(C)=0$ for all finite $C$ and $|\widetilde{\bfC}^{\{x\}}_s|< \infty$ a.s.. Therefore,
	\begin{equation*}
	    \Pw^{\{x\},E}(A_s\cap\{\bfC^{\{x\},E}_t\neq \emptyset \,\,\forall t\geq 0\})=0
	\end{equation*}
	for all $s\geq 0$, which implies $\theta(\{x\},E)=0$.
\end{proof}
Now we have shown that if the growth condition $c_1(\lambda,\rho)>\kappa^{-1}\rho$ holds, then the chance to survive is independent of the initial configuration of the background process. Next we finally show the main result which is that if for a given $r$ there exist a non-empty and finite set $C'\subset V$ and a set $B'\subset E$ such that $c_1(\lambda_c(r,C',B'),\rho)>\kappa^{-1}\rho$, then it follows that $\lambda_c(r,C,B)=\lambda^{\pi}_c(r)$ for all non-empty and finite $C\subset V$ and $B\subset E$. 
\begin{proof}[Proof of Theorem~\ref{IndependencyCirticalRateSurvival}]
	Let $r>0$ and suppose there exists a non-empty and finite $C'\subset V$ and set $B'\subset E$ such that $c_1\big(\lambda_{c}(r,C',B'),\rho\big)>\kappa^{-1}\rho$. By Lemma~\ref{UniqueSolution} before $\lambda\mapsto c_1(\lambda,\rho)$ is continuous and strictly decreasing. Hence, there exists an $\varepsilon>0$ such that all $\lambda<\lambda_{c}(r,C',B')+\varepsilon$ satisfy $c_1(\lambda,\rho)>\kappa^{-1}\rho$. Now we consider $\lambda<\lambda_{c}(r,C',B')+\varepsilon$. Proposition~\ref{IndependencySurvival} implies in particular that
	\begin{align}\label{Equivalence1}
	\theta(\lambda,r,C',B')>0\Leftrightarrow\theta^\pi(\lambda,r,C')>0.
	\end{align}
	Furthermore, since the CPERE is translation invariant if $\bfB_0\sim \pi$ we see that
	\begin{align}\label{Equivalence2}
	\theta^\pi(\lambda,r,C')>0\Leftrightarrow \theta^\pi(\lambda,r,C)>0,
	\end{align}
	for every non-empty and finite $C\subset V$. This, in particular implies that
	\begin{align*}
	\lambda_{c}(r,C',B')=\lambda^{\pi}(r).
	\end{align*}
	Next we use again that $c_1(\lambda,\rho)>\kappa^{-1}\rho$ such that Proposition~\ref{IndependencySurvival} together with \eqref{Equivalence1} and \eqref{Equivalence2} yield that $\theta(\lambda,r,C',B')>0$ if and only if $\theta(\lambda,r,C,B)>0$ for all non-empty and finite $C\subset V$ and all $B\subset E$. This obviously implies that 
	\begin{align*}
	\lambda_{c}(r,C',B')=\lambda_c^{\pi}(r)=\lambda_{c}(r,C,B)
	\end{align*}
	for all finite and non-empty $C\subset V$ and $B\subset E$.
\end{proof}
\section{The CPERE and its invariant laws}\label{Sec:DualityAndInvariant}
In this section we mainly study the invariant laws of the CPERE. We assume throughout the whole section that the background $\bfB$ satisfies Assumption~\ref{AssumptionBackground} $(i)$-$(iii)$.
\subsection{Upper invariant law and the dual process of $\bfC$}\label{InvariantLaw}
In this subsection we formulate a duality relation for the infection process $\bfC$ and among other things we establish a connection between the survival probability and the upper invariant law.

First we introduce the notion of duality. Let $\bfX=(\bfX_u)_{0\leq u\leq t}$ and $\bfY=(\bfY_u)_{0\leq u\leq t}$ be two processes on the same probability space and let the Polish spaces $\IS_X$ and $\IS_Y$ denote their respective state spaces. 
We call $\bfX$ and $\bfY$ \emph{dual} with respect to a function $H:\IS_X\times \IS_Y\to \R$ if $s\mapsto\E[H(\bfX_{t-s},\bfY_{s})]$ is a constant function for $0\leq s\leq t$.

Now we construct a dual process for the infection process $\bfC$ on a finite time interval $[0,t]$ for any $t\geq 0$. We first fix the background $\bfB$ in the time interval $[0,t]$, i.e.~we set  $\widehat{\bfB}^{B,t}_s:=\bfB^B_{(t-s){-}}$ for all $0\leq s\leq t$. We denote by $\cG:=\sigma(\bfB_s:0\leq s\leq t)$ the $\sigma$-algebra generated from the background process until time $t$. Next we define the dual process $(\widehat{\bfC}^{A,B,t}_s)_{0\leq s\leq t}$ by reversing the time flow and start at $t$. Let $A\subset V$ be the initial configuration at time $t$, i.e.~$\widehat{\bfC}^{A,B,t}_0=A$. We define this process analogously to $\bfC$ via the graphical representation using the same infection and recovery events just backwards in time and the direction of the infection arrows is reversed, i.e.
\begin{align*}
	(u,\mathbf{coop}_{x,y})\to (t-u,\mathbf{coop}_{y,x}) \text{ and } (u,\mathbf{rec}_{x}) \to (t-u,\mathbf{rec}_{x}),
\end{align*}
where $x,y \in V$ such that $\{x,y\}\in E$. Now we see that we coupled $\widehat{\bfC}^t$ to $\bfC$ in such a way that the conditional duality relation
\begin{align}\label{ConditinalDuality}
	\Pw(\bfC^{C,B}_{t}\cap A \neq \emptyset |\cG)=\Pw(\bfC^{C,B}_{s}\cap \widehat{\bfC}^{A,B,t}_{t-s}\neq \emptyset |\cG)=\Pw(C\cap \widehat{\bfC}^{A,B,t}_{t} \neq \emptyset |\cG) 
\end{align}
holds a.s. for all $s\leq t$. Note that the process $\bfC$ and $\widehat{\bfC}^t$ are dual with respect to the functions $H(A,B):=\1_{\{A\cap B\neq \emptyset\}}$. Obviously $(\widehat{\bfC}^{t},\widehat{\bfB}^{t})$ will in general not be CPERE, but this process will nevertheless prove useful. See \autoref{fig:Dualsym} for a illustration of the construction.
\begin{figure}[t]
	\centering
	\includegraphics[width=85mm]{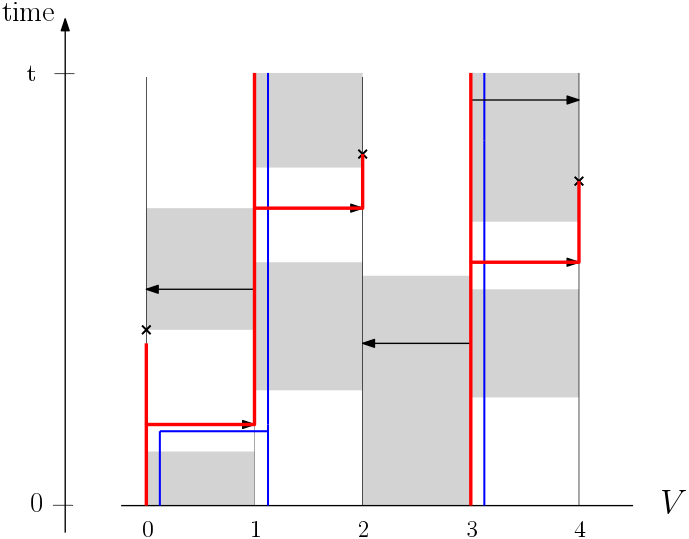}
	\caption{Again arrows and crosses represent respectively infection and recovery events and grey areas are the blocked edges. The red lines are again the infection paths of the forward-time process $\bfC$. The blue lines are the infection paths backwards in time with respect to the mirrored arrows, which are used to define the process $\widehat{\bfC}$.}
	\label{fig:Dualsym}
\end{figure}

Next we show amongst other things that we can recover a self duality in the case where we assume stationarity of $\bfB$, i.e.~$\bfB_0\sim \pi$.
\begin{proposition}[Distributional duality]\label{DistributionalDuality}
	Let $t\geq 0$, $A,C\subseteq V$ and $B,H\subset E$ then
	\begin{align*}
		s\mapsto\Pw(\bfC^{C,B}_{s}\cap\widehat{\bfC}_{t-s}^{A,B,t} \neq \emptyset,\bfB^B_t\cap H\neq \emptyset) \quad \text{ and }\quad s\mapsto\Pw(\bfC^{C,B}_{s}\cap\widehat{\bfC}_{t-s}^{A,B,t} \neq \emptyset)
	\end{align*}
	are constant functions. If $\bfB$ is reversible this implies in particular that for all $t\geq 0$
	\begin{align*}
		\Pw^{(C,\pi)}(\bfC_{t}\cap A \neq \emptyset)=\Pw^{(A,\pi)}(\bfC_{t}\cap C \neq \emptyset).
	\end{align*}
\end{proposition}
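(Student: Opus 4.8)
The plan is to read the two constancy statements off directly from the conditional duality \eqref{ConditinalDuality}, and then to obtain the self-duality by evaluating the resulting constant function at the endpoints $s=0$ and $s=t$ and averaging over a stationary background. First I would fix $s\in[0,t]$ and observe that $\{\bfB^B_t\cap H\neq\emptyset\}\in\cG=\sigma(\bfB_u:0\le u\le t)$, so that the tower property together with \eqref{ConditinalDuality} gives
\[
\Pw\big(\bfC^{C,B}_{s}\cap\widehat{\bfC}_{t-s}^{A,B,t}\neq\emptyset,\ \bfB^B_t\cap H\neq\emptyset\big)
=\E\big[\1_{\{\bfB^B_t\cap H\neq\emptyset\}}\,\Pw\big(C\cap\widehat{\bfC}^{A,B,t}_{t}\neq\emptyset\,\big|\,\cG\big)\big],
\]
and the right-hand side does not involve $s$; the same computation with the indicator replaced by $1$ yields constancy of $s\mapsto\Pw(\bfC^{C,B}_{s}\cap\widehat{\bfC}_{t-s}^{A,B,t}\neq\emptyset)$. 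This settles the first two assertions.

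For the self-duality, evaluate this last constant function at $s=t$ and at $s=0$ to obtain, for every fixed $B\subset E$, the identity $\Pw(\bfC^{C,B}_{t}\cap A\neq\emptyset)=\Pw(C\cap\widehat{\bfC}^{A,B,t}_{t}\neq\emptyset)$. Integrating over $B\sim\pi$ turns the left-hand side into $\Pw^{(C,\pi)}(\bfC_{t}\cap A\neq\emptyset)$, so it remains to prove $\int\Pw(C\cap\widehat{\bfC}^{A,B,t}_{t}\neq\emptyset)\,\pi(\mathsf{d}B)=\Pw^{(A,\pi)}(\bfC_{t}\cap C\neq\emptyset)$. I would deduce this from the stronger claim that, under $B\sim\pi$, the pair $(\widehat{\bfC}^{A,B,t}_s,\widehat{\bfB}^{B,t}_s)_{0\le s\le t}$ has the same law as the CPERE $(\bfC^{A,B}_s,\bfB^{B}_s)_{0\le s\le t}$ under $B\sim\pi$; specialising to $s=t$ and intersecting with $C$ then gives the identity.

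This law identification is the crux. Recall from Remark~\ref{SplitUpPointProcesses} that the driving noise decomposes into the independent Poisson point sets $\Xi^{\text{back}}$ (generating $\bfB$) and $\Xi^{\text{inf}},\Xi^{\text{rec}}$ (the infection arrows and recovery crosses), and that $\widehat{\bfC}^{A,B,t}$ is produced by the \emph{same} graphical functional as $\bfC$, now fed with the time-reversed background $\widehat{\bfB}^{B,t}_s=\bfB^B_{(t-s){-}}$ and the reflected, arrow-flipped marks $(u,\mathbf{coop}_{x,y})\mapsto(t-u,\mathbf{coop}_{y,x})$, $(u,\mathbf{rec}_{x})\mapsto(t-u,\mathbf{rec}_{x})$. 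Now: (i) since $\bfB$ is reversible with invariant law $\pi$ (Assumption~\ref{AssumptionBackground}), the trajectory $(\bfB^B_{(t-s){-}})_{0\le s\le t}$ under $B\sim\pi$ has the law of $(\bfB^B_s)_{0\le s\le t}$ under $B\sim\pi$; (ii) a Poisson point process on $\cM\times\R$ with the given time-homogeneous intensity is invariant under the reflection $u\mapsto t-u$ of $[0,t]$, and because $\gamma_{\mathbf{coop}_{x,y}}=\gamma_{\mathbf{coop}_{y,x}}=\lambda$ for every edge and $\gamma_{\mathbf{rec}_{x}}=r$, the relabelling $\mathbf{coop}_{x,y}\leftrightarrow\mathbf{coop}_{y,x}$ is measure-preserving, so the reflected-and-flipped infection/recovery marks on $[0,t]$ have the same law as $\Xi^{\text{inf}},\Xi^{\text{rec}}$ on $[0,t]$; and (iii) the background noise and the infection/recovery noise are independent and are reversed separately, so the joint input law is unchanged. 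Feeding inputs that agree in law into the same measurable functional then gives $(\widehat{\bfC}^{A,B,t},\widehat{\bfB}^{B,t})\overset{d}{=}(\bfC^{A,\pi},\bfB^{\pi})$ on $[0,t]$, as required.

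I expect the main obstacle to be making steps (i)--(iii) fully rigorous: one must invoke reversibility in precisely the form ``the stationary trajectory on $[0,t]$ is invariant in law under time reversal'', handle the c\`adl\`ag versus c\`agl\`ad modifications at the reflected time points (a $\pi\otimes(\text{Poisson})$-null nuisance), and check carefully that the map sending the driving data together with the initial set $A$ to the dual process is literally the same measurable map as the one defining $\bfC$, so that equality in law of the inputs transfers to the outputs. The two constancy statements and the reduction of the self-duality to this law identification are, by contrast, routine given \eqref{ConditinalDuality}.
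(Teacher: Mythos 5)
Your argument is correct and follows essentially the same route as the paper: read the two constancy statements off the conditional duality \eqref{ConditinalDuality} via the tower property, integrate the endpoint identity $\Pw(\bfC^{C,B}_{t}\cap A\neq\emptyset)=\Pw(C\cap\widehat{\bfC}^{A,B,t}_{t}\neq\emptyset)$ against $\pi$, and then identify the time-reversed stationary pair $(\widehat{\bfC}^{A,\pi,t},\widehat{\bfB}^{\pi,t})$ as a CPERE with initial law $\delta_A\otimes\pi$, using reversibility of $\bfB$ and the reflection/arrow-flip invariance of the Poisson marks. The paper asserts this law identification as a single step, while you unpack it into (i)--(iii); that is the only difference, and it is one of detail rather than substance.
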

\begin{proof} 
	Let $t\geq 0$. By using \eqref{ConditinalDuality} we see that
	\begin{align*}
	\Pw(\bfC^{C,B}_{t}\cap A \neq \emptyset,\bfB^B_t\cap H\neq \emptyset)&=\E[\Pw(\bfC^{C,B}_{t}\cap A \neq \emptyset|\cG)\1_{\{\bfB^B_t\cap H\neq \emptyset\}}]\\
	&=\E[\Pw(\bfC^{C,B}_{s}\cap \widehat{\bfC}^{A,B,t}_{t-s}\neq \emptyset |\cG)\1_{\{\bfB^B_t\cap H\neq \emptyset\}}]\\
	&=\Pw(\bfC^{C,B}_{s}\cap \widehat{\bfC}^{A,B,t}_{t-s}\neq \emptyset,\bfB^B_t\cap H\neq \emptyset)
	\end{align*}
	for all $s\leq t$. The equality $\Pw(\bfC^{C,B}_{t}\cap A \neq \emptyset)=\Pw(\bfC^{C,B}_{s}\cap \widehat{\bfC}^{A,B,t}_{t-s}\neq \emptyset)$ for all $s\leq t$ follows by taking the expectation in \eqref{ConditinalDuality}, which proves the first claim.
	
	For the second claim choose $s=0$ and integrate both sides with respect to $\pi$, and thus
	\begin{equation}\label{DualityEquality1}
		\int \Pw^{(C,B)}(\bfC_{t}\cap A \neq \emptyset)\pi(\mathsf{d} B)=	\int\Pw(\widehat{\bfC}^{A,B,t}_{t}\cap C\neq \emptyset)\pi(\mathsf{d} B).
	\end{equation}
	We assumed that $\bfB$ is reversible with respect to its invariant law $\pi$. Let us consider $(\bfB_s)_{s\leq t}$ with $\bfB_0\sim\pi$ and as before set $\widehat{\bfB}_s^{\pi,t}:=\bfB_{(t-s)-}$ for $0\leq s\leq t$, then by reversibility it follows that $(\bfB_s)_{s\leq t}\stackrel{d}{=}(\widehat{\bfB}^{\pi,t}_{s})_{s\leq t}$. 
	We define again via the reversed graphical representation the process $(\widehat{\bfC}^{A,\pi,t}_s)_{s\leq t}$ with respect to the background $(\widehat{\bfB}^{\pi,t}_{s})_{s\leq t}$. Now the process $(\widehat{\bfC}^{A,\pi,t}_s ,\widehat{\bfB}^{\pi,t}_{s})_{s\leq t}$ is again a CPERE with initial distribution $\delta_{A}\otimes \pi$. Hence, this fact together with \eqref{DualityEquality1} yields that
	\begin{equation*}
		\Pw^{(C,\pi)}(\bfC_{t}\cap A \neq \emptyset)=\Pw^{(A,\pi)}(\bfC_{t}\cap C \neq \emptyset).\qedhere
	\end{equation*}
\end{proof}
Recall that we denoted by $T(t)=T_{\lambda,r}(t)$ the Feller semigroup corresponding to the CPERE $(\bfC,\bfB)$ with parameters $\lambda$ and $r$ and by $\preceq$ the stochastic order. Since we assumed that $\bfB$ is an attractive spin system it follows immediately that $(\bfC,\bfB)$ is also attractive and thus by standard methods follows that $(\delta_{V}\otimes\delta_{E})T(t)\Rightarrow\overline{\nu}$ as $t\to \infty$, which provides the existence of the upper invariant law $\overline{\nu}$ of $(\bfC,\bfB)$ .

It is called the upper invariant law since, if $\nu$ is an arbitrary invariant law of $(\bfC,\bfB)$, then $\nu\preceq \overline{\nu}$. Furthermore it is monotone in $\lambda$ and $r$, i.e.~if  $\lambda_1\leq \lambda_2$,  then $\overline{\nu}_{\lambda_1,r}\preceq \overline{\nu}_{\lambda_2,r}$ and if $r_1\geq r_2$ then $\overline{\nu}_{\lambda,r_1}\preceq \overline{\nu}_{\lambda,r_2}$.

It is not necessary to start the background with every edge being open, i.e.~$\bfB_0=E$, to ensure convergence towards the upper invariant law. As long as the initial distribution of the background dominates $\pi$ stochastically, this ensures convergence towards $\overline{\nu}$.
\begin{lemma}\label{ConvergenceToUpperInv}
	Let $\mu$ be a probability measure with $\pi\preceq \mu$ then $(\delta_{V}\otimes\mu)T(t)\Rightarrow \overline{\nu}$  as $t\to \infty$.
\end{lemma}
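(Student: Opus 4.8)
The plan is to sandwich $(\delta_V\otimes\mu)T(t)$ between $(\delta_V\otimes\pi)T(t)$ and $(\delta_V\otimes\delta_E)T(t)$ and to show that both extremes converge weakly to $\overline\nu$. Since $E$ is the top element of $\cP(E)$ we have $\pi\preceq\mu\preceq\delta_E$, so Lemma~\ref{MonotonicityAdditivityLemma}(i) gives, for every $t\ge 0$,
\begin{equation*}
(\delta_V\otimes\pi)T(t)\preceq(\delta_V\otimes\mu)T(t)\preceq(\delta_V\otimes\delta_E)T(t).
\end{equation*}
The right-hand side converges weakly to $\overline\nu$ by definition. Once we know the left-hand side also converges weakly to $\overline\nu$, the general case follows from the standard fact that on the compact space $\cP(V)\times\cP(E)\cong\{0,1\}^{V\cup E}$ the stochastic order is closed under weak limits: any subsequential limit of $(\delta_V\otimes\mu)T(t)$ is squeezed between $\overline\nu$ and $\overline\nu$, hence equals $\overline\nu$, and the whole family therefore converges.

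The heart of the matter is thus $(\delta_V\otimes\pi)T(t)\Rightarrow\overline\nu$. First I would observe that $t\mapsto(\delta_V\otimes\pi)T(t)$ is stochastically non-increasing. Indeed, for $s\ge 0$ the law $(\delta_V\otimes\pi)T(s)$ has first marginal supported on subsets of $V$ and, because $\bfB$ is autonomous with invariant law $\pi$, second marginal exactly $\pi$; hence $(\delta_V\otimes\pi)T(s)\preceq\delta_V\otimes\pi$. Applying $T(t)$ and using monotonicity of the semigroup together with the Markov property yields $(\delta_V\otimes\pi)T(s+t)\preceq(\delta_V\otimes\pi)T(t)$. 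A non-increasing family of probability measures on $\{0,1\}^{V\cup E}$ converges weakly, and the limit $\underline\nu$ is an invariant law by the Feller property (standard for attractive systems). Since $(\delta_V\otimes\pi)T(t)\preceq(\delta_V\otimes\delta_E)T(t)$ for all $t$, letting $t\to\infty$ gives $\underline\nu\preceq\overline\nu$.

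For the reverse inequality I would use that $\overline\nu$ has $\cP(E)$-marginal equal to $\pi$: the $\cP(E)$-marginal of $(\delta_V\otimes\delta_E)T(t)$ is the law of $\bfB_t^E$, which converges to $\pi$ by Assumption~\ref{AssumptionBackground}(i), and the projection onto $\cP(E)$ is continuous, so the $\cP(E)$-marginal of $\overline\nu$ is $\pi$. Consequently $\overline\nu\preceq\delta_V\otimes\pi$ (couple a sample $(\bfC,\bfB)\sim\overline\nu$ with $(V,\bfB)$, which has law $\delta_V\otimes\pi$). Applying $T(t)$ and using monotonicity of the semigroup together with invariance of $\overline\nu$, we get $\overline\nu=\overline\nu T(t)\preceq(\delta_V\otimes\pi)T(t)$ for every $t$; letting $t\to\infty$ gives $\overline\nu\preceq\underline\nu$. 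Hence $\underline\nu=\overline\nu$, so $(\delta_V\otimes\pi)T(t)\Rightarrow\overline\nu$, and combined with the sandwich above this proves the lemma.

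The routine ingredients — weak convergence of monotone families of laws on $\{0,1\}^{V\cup E}$, closedness of the stochastic order under weak limits, and invariance of a decreasing limit of $\mu T(t)$ — are exactly as in Liggett's treatment of attractive systems and I would simply cite them. The one genuinely model-specific point, and the step to treat carefully, is the identification of the background marginal of $\overline\nu$ with $\pi$; this is where ergodicity of $\bfB$ (Assumption~\ref{AssumptionBackground}(i)), rather than mere attractiveness, is used. Everything else relies only on the monotonicity of the CPERE established in Lemma~\ref{MonotonicityAdditivityLemma}.
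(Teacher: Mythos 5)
Your proof is correct and follows essentially the same route as the paper: sandwich, show $(\delta_V\otimes\pi)T(t)$ is stochastically non-increasing, identify its limit with $\overline\nu$ by establishing $\overline\nu\preceq\delta_V\otimes\pi$ and then applying the semigroup. The only minor variant is how you justify $\overline\nu\preceq\delta_V\otimes\pi$: you compute the $\cP(E)$-marginal of $\overline\nu$ directly as the weak limit of the law of $\bfB_t^E$ under continuous projection, whereas the paper argues more abstractly that the second marginal of \emph{any} invariant law of $(\bfC,\bfB)$ must be an invariant law of the autonomous $\bfB$ and hence equals $\pi$ by uniqueness; both uses of Assumption~\ref{AssumptionBackground}(i) are equivalent here.
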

\begin{proof}
	First of all it is clear that $\delta_{V}\otimes \pi\preceq \delta_{V}\otimes \mu$, and therefore
	\begin{align*}
		\lim_{t\to \infty}(\delta_{V}\otimes \pi)T(t)\preceq \lim_{t\to \infty}(\delta_{V}\otimes \mu)T(t).
	\end{align*}
	if the limit exists. So its enough to prove convergence for $\pi=\mu$. Since $\pi$ is the invariant law of the background and the infection process can only occupy fewer vertices than all of $V$ it follows that $(\delta_{V}\otimes \pi)T(s)\preceq(\delta_{V}\otimes \pi)$ for all $s\geq 0$
	and by Lemma~\ref{MonotonicityAdditivityLemma} we get that
	\begin{equation*}
		(\delta_{V}\otimes \pi) T(t+s)\preceq(\delta_{V}\otimes \pi) T(t) \quad \text{ for all } t,s\geq 0.
	\end{equation*}
	Since $(\delta_{V}\otimes \pi) T(t)$ is bounded from below and decreasing with respect to $\preceq$ it follows that $(\delta_{V}\otimes \pi)T(t)\Rightarrow\nu'$ as  $t\to \infty$. Since $\overline{\nu}$ is the upper invariant law we know that $\nu'\preceq\overline{\nu}$, i.e.~it suffices to show that $\overline{\nu}\preceq\nu'$ holds. By Assumption~\ref{AssumptionBackground} $(i)$ we know that $\pi$ is the unique invariant law of $\bfB$. Thus, the second marginal of any invariant law of $(\bfC,\bfB)$ must be $\pi$. Hence, it is clear that for every invariant law $\nu$, it must hold that $\nu\preceq\delta_{V}\otimes \pi$. Therefore, by monotonicity and stationarity we know that
	\begin{align*}
		\nu=\nu T(t) \preceq(\delta_{V}\otimes \pi)T(t) \Rightarrow \nu' \quad\text{   as } t\to \infty.
	\end{align*}
	Since this is true for any invariant law $\nu$ it also holds for the upper invariant law, i.e.~$\nu=\overline{\nu}$.
\end{proof}
This enables us to deduce a connection between the survival probability $\theta^{\pi}$ of the infection process $\bfC$ started with stationary background and the upper invariant law $\overline{\nu}$ in the next result.
\begin{proposition}\label{EqualityOfCriticalValues}
	Let $C\subset V$ be finite, then
	\begin{align*}
		\theta^{\pi}(C)=\Pw^{(C,\pi)}(\bfC_t\neq \emptyset\,\, \forall t\geq 0)=\overline{\nu}(\{A\subset V:C\cap A\neq\emptyset\}\times \cP(E)),
	\end{align*}
	and thus in particular $\theta^{\pi}(\lambda,r,\{x\})>0$ if and only if $\overline{\nu}_{\lambda,r}\neq  \delta_{\emptyset}\otimes\pi$,	where $x\in V$ is arbitrary.
\end{proposition}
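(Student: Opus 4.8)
The plan is to derive the identity from the distributional self-duality of Proposition~\ref{DistributionalDuality} together with the convergence to the upper invariant law in Lemma~\ref{ConvergenceToUpperInv}, mirroring the classical argument for the contact process. First I would apply the self-duality relation $\Pw^{(C,\pi)}(\bfC_{t}\cap A\neq\emptyset)=\Pw^{(A,\pi)}(\bfC_{t}\cap C\neq\emptyset)$ with the choice $A=V$, which yields $\Pw^{(C,\pi)}(\bfC_t\neq\emptyset)=\Pw^{(V,\pi)}(\bfC_t\cap C\neq\emptyset)$ for every $t\geq 0$. The proposition then follows by letting $t\to\infty$ on both sides and identifying the two limits.

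On the left-hand side, since the empty configuration is absorbing for the infection process (infections are only transmitted from already occupied vertices, regardless of the state of the background), the event $\{\bfC_t\neq\emptyset\}$ coincides with $\{\bfC_s\neq\emptyset\ \forall s\leq t\}$, so these events decrease in $t$ to $\{\bfC_t\neq\emptyset\ \forall t\geq 0\}$; continuity of the measure from above gives $\Pw^{(C,\pi)}(\bfC_t\neq\emptyset)\downarrow\theta^{\pi}(C)$. On the right-hand side, because $C$ is finite the set $\{A\subset V:A\cap C\neq\emptyset\}=\bigcup_{x\in C}\{A:x\in A\}$ is a finite union of cylinder sets, hence clopen in the topology of pointwise convergence, so $\1_{\{A\cap C\neq\emptyset\}\times\cP(E)}$ is a bounded continuous function on $\cP(V)\times\cP(E)$. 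Applying Lemma~\ref{ConvergenceToUpperInv} with $\mu=\pi$ (note $\pi\preceq\pi$) gives $(\delta_V\otimes\pi)T(t)\Rightarrow\overline{\nu}$, whence $\Pw^{(V,\pi)}(\bfC_t\cap C\neq\emptyset)\to\overline{\nu}(\{A:A\cap C\neq\emptyset\}\times\cP(E))$. Equating the two limits proves the displayed identity.

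For the ``in particular'' claim I would specialize to $C=\{x\}$, obtaining $\theta^{\pi}(\lambda,r,\{x\})=\overline{\nu}_{\lambda,r}(\{A:x\in A\}\times\cP(E))$. If this quantity is positive then trivially $\overline{\nu}_{\lambda,r}\neq\delta_{\emptyset}\otimes\pi$. Conversely, if $\overline{\nu}_{\lambda,r}\neq\delta_{\emptyset}\otimes\pi$, then, since (as observed in the proof of Lemma~\ref{ConvergenceToUpperInv}) the second marginal of every invariant law equals $\pi$ by Assumption~\ref{AssumptionBackground}~$(i)$, the first marginal of $\overline{\nu}_{\lambda,r}$ is not $\delta_{\emptyset}$, i.e. $\overline{\nu}_{\lambda,r}(\{A\neq\emptyset\}\times\cP(E))>0$; writing $\{A\neq\emptyset\}=\bigcup_{x\in V}\{x\in A\}$ and using countable subadditivity, there is some $x$ with $\overline{\nu}_{\lambda,r}(\{A:x\in A\}\times\cP(E))>0$, and translation invariance of $y\mapsto\theta^{\pi}(\lambda,r,\{y\})$ then gives positivity for every $x$.

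The argument contains no genuinely hard analytic step; the care required is entirely in the bookkeeping. The points to get right are: that $\1_{\{A\cap C\neq\emptyset\}}$ really is continuous in the product topology (so weak convergence can legitimately be used), that $\emptyset$ remains absorbing for $\bfC$ despite the fluctuating background (which is what justifies the monotone passage to the limit on the left), and that the self-duality must be invoked through Proposition~\ref{DistributionalDuality} — i.e. it rests on the reversibility hypothesis in Assumption~\ref{AssumptionBackground}~$(iii)$ — rather than a naive self-duality that would fail without it.
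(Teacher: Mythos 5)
Your proof is correct and follows essentially the same route as the paper: self-duality from Proposition~\ref{DistributionalDuality} with $A=V$, monotone convergence on the left using that $\emptyset$ is absorbing, and weak convergence via Lemma~\ref{ConvergenceToUpperInv} on the right, followed by the same translation-invariance plus $\sigma$-(sub)additivity argument for the ``in particular'' equivalence. The only cosmetic differences are that you spell out the clopen-ness of $\{A:A\cap C\neq\emptyset\}$ to justify passing to the weak limit (which the paper leaves implicit) and you phrase the converse direction as a direct implication rather than the paper's contrapositive; both hinge on the second marginal of $\overline{\nu}$ being $\pi$, which you correctly flag.
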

\begin{proof}
	By the self duality relation from Proposition~\ref{DistributionalDuality} we get for $C\subset V$ 
	\begin{align*}
		\Pw^{(V,\pi)}(\bfC_{t}\cap C \neq \emptyset )=\Pw^{(C,\pi)}(\bfC_t\neq \emptyset)\to \Pw^{(C,\pi)}(\bfC_t\neq \emptyset\,\, \forall t\geq 0)\quad \text{ as } t\to \infty,
	\end{align*}
	where we used continuity of the probability measure. On the other hand, since $C$ is finite we get
	\begin{align*}
		\Pw^{(V,\pi)}(\bfC_{t}\cap C\neq \emptyset)&=\int \1_{\{ A \cap C\neq \emptyset\}}(\delta_V \otimes \pi)T(t)(\mathsf{d}(A,B))
		\to\int \1_{\{ A \cap C\neq \emptyset\}}\overline{\nu}(\mathsf{d}(A,B))
	\end{align*}
	as $t\to \infty$, where we used Lemma~\ref{ConvergenceToUpperInv}. Now we can conclude that
	\begin{align}\label{EquationUIL&SP}
		\overline{\nu}(\{A\subset V:A\cap C\neq\emptyset\}\times\cP(E) )=\Pw^{(C,\pi)}(\bfC_t\neq \emptyset\,\, \forall t\geq 0).
	\end{align}
	which yields the first claim.
	
	By translation invariance we know that $\theta^{\pi}(\{x\})=\theta^{\pi}(\{y\})$ for all $x,y\in V$. This yields in particular that the second claim does not depend on the choice of $x$. 
	Now choose $C=\{x\}$ for some $x\in V$. Suppose that $\theta^{\pi}(\{x\})>0$, then we see by \eqref{EquationUIL&SP} that
	\begin{align}\label{UIL&SPHelp1}
		\overline{\nu}(\{(A,B)\in \cP(V)\times\cP(E):x\in A\})>0.
	\end{align}
	This implies that $\overline{\nu}\neq \delta_{\emptyset}\otimes \pi$. For the converse direction we assume that $\theta^{\pi}(\{x\})=0$, and hence $\theta^{\pi}(\{y\})=0$ for all $y\in V$. Now we see by \eqref{EquationUIL&SP} that
	\begin{equation}\label{UIL&SPHelp2}
	\overline{\nu}(\{(A,B)\in \cP(V)\times\cP(E):y\in A\})=0.
	\end{equation}
	for all $y\in V$. Now let us consider the set $\cD:=\{(A,B)\in \cP(V)\times\cP(E):A\neq \emptyset\}$.
	By using $\sigma$-subadditivity and \eqref{UIL&SPHelp2} we see that
	\begin{equation*}
		\overline{\nu}(\cD)\leq \sum_{y\in V}\overline{\nu}(\{(A,B)\in \cP(V)\times\cP(E):y\in A\})=0,
	\end{equation*}
	and thus it follows that $\overline{\nu}= \delta_{\emptyset}\otimes \pi$. This provides the second claim.
\end{proof}

As a consequence we can show that the critical value $\lambda'_c(r)$ of the phase transition between triviality and non-triviality of the upper invariant law indeed agrees with the critical value for survival $\lambda^{\pi}_c(r)$, where the background is assumed to be stationary. If we additionally assume that $c_1\big(\lambda^{\pi}_c(r),\rho\big)>\kappa^{-1} \rho$, then we know that the critical infection rate of survival does not depend on the initial configuration.  
\begin{proof}[Proof of Theorem~\ref{CrticalValuesAgree}]
	Let $r>0$, then as a direct consequence of Proposition~\ref{EqualityOfCriticalValues} follows that $\lambda'_c(r)=\lambda_c^{\pi}(r)$. If we assume additionally $c_1\big(\lambda^{\pi}_c(r),\rho\big)>\kappa^{-1} \rho$ by Theorem~\ref{IndependencyCirticalRateSurvival} follows that there exists a $\lambda_{c}(r)$ such that $\lambda_{c}(r)=\lambda_c(r,C,B)$ for every $C\subset V$ non-empty and finite and every $B\subset E$, and thus in particular $\lambda'_c(r)=\lambda_c(r)$.
\end{proof}
For the remainder of this section we provide some results which we need in the subsequent sections. where we consider continuity properties of the survival probability and complete convergence.
\begin{proposition}\label{NonnegagtivitiyLemma}
	The measure $\overline{\nu}$ has the property that $\overline{\nu}(\{\emptyset\}\times\cP(E))\in \{0,1\}$.
\end{proposition}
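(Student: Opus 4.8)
The plan is to argue by contradiction. Set $p:=\overline{\nu}(\{\emptyset\}\times\cP(E))$ and suppose $0<p<1$; we must derive a contradiction. Write $F_0:=\{\emptyset\}\times\cP(E)$, which is a closed (hence Borel) subset of $\cP(V)\times\cP(E)$, and $F_1:=(\cP(V)\times\cP(E))\setminus F_0$. Decompose $\overline{\nu}=p\,\nu_1+(1-p)\,\nu_2$, where $\nu_1:=\overline{\nu}(\,\cdot\cap F_0)/p$ and $\nu_2:=\overline{\nu}(\,\cdot\cap F_1)/(1-p)$ are probability measures concentrated on $F_0$ and $F_1$ respectively. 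The strategy is to show that $\nu_2$ is again an invariant law and that $\nu_1=\delta_{\emptyset}\otimes\pi$; using the extremal properties of $\overline{\nu}$ and of $\delta_\emptyset\otimes\pi$ in the stochastic order this will force $\overline{\nu}=\nu_2$, contradicting $\overline{\nu}(F_0)=p>0=\nu_2(F_0)$.

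The key elementary observation is that $F_0$ is forward invariant for $(\bfC,\bfB)$: by the infection rates in \eqref{InfectionRatesWithBackground} the total infection rate vanishes when $\bfC=\emptyset$, so started in $F_0$ the process stays in $F_0$ almost surely. Hence for every Borel $A\subseteq F_1$ and every $(C,B)\in F_0$ we have $\Pw^{(C,B)}\big((\bfC_t,\bfB_t)\in A\big)=0$. Using invariance of $\overline{\nu}$ (i.e.\ $\overline{\nu}T(t)=\overline{\nu}$), for every Borel $A\subseteq F_1$,
\[
(1-p)\,(\nu_2 T(t))(A)=\int_{F_1}\Pw^{(C,B)}\big((\bfC_t,\bfB_t)\in A\big)\,\overline{\nu}(\mathsf d(C,B))=\overline{\nu}(A)=(1-p)\,\nu_2(A),
\]
so $(\nu_2T(t))(A)=\nu_2(A)$ for all such $A$; taking $A=F_1$ gives $(\nu_2T(t))(F_1)=1$, so $\nu_2T(t)$ is still concentrated on $F_1$ and therefore $\nu_2T(t)=\nu_2$. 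Thus $\nu_2$ is an invariant law. Consequently $p\,\nu_1=\overline{\nu}-(1-p)\,\nu_2$ is a difference of invariant measures, hence invariant, so $\nu_1$ is an invariant probability measure concentrated on $F_0$. Its $\cP(V)$-marginal is $\delta_\emptyset$, and its $\cP(E)$-marginal is an invariant law of the autonomous process $\bfB$, hence equals $\pi$ by Assumption~\ref{AssumptionBackground}$(i)$; a probability measure with first marginal $\delta_\emptyset$ is the product of $\delta_\emptyset$ with its second marginal, so $\nu_1=\delta_\emptyset\otimes\pi$.

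To close the argument I would use two comparisons in the stochastic order $\preceq$. Since $\nu_2$ is an invariant law and $\overline{\nu}$ is the upper invariant law, $\nu_2\preceq\overline{\nu}$. Since every invariant law has $\cP(E)$-marginal $\pi$ and $\emptyset$ is the minimum of $\cP(V)$, one checks directly that $\delta_\emptyset\otimes\pi$ is the minimal invariant law, so $\nu_1=\delta_\emptyset\otimes\pi\preceq\nu_2$; then $\overline{\nu}=p\,\nu_1+(1-p)\,\nu_2\preceq\nu_2$. Combining the two and using antisymmetry of $\preceq$ on probability measures on $\cP(V)\times\cP(E)$ (the increasing finite-dimensional events of the form $\{(A,B):F_V\subseteq A,\ F_E\subseteq B\}$ with $F_V,F_E$ finite separate measures) gives $\overline{\nu}=\nu_2$, contradicting $\overline{\nu}(F_0)=p>0=\nu_2(F_0)$. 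Hence $p\in\{0,1\}$. I do not expect a genuine obstacle: the only delicate points are the routine measurability/separation remarks and the step where agreement of $\nu_2T(t)$ and $\nu_2$ on Borel subsets of $F_1$ is upgraded to $\nu_2T(t)=\nu_2$ via $(\nu_2T(t))(F_1)=1$. Note that this argument uses only Assumption~\ref{AssumptionBackground}$(i)$, not $(ii)$ or $(iii)$, and in particular no growth condition.
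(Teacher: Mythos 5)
Your argument is correct, and it is the standard decomposition argument for the classical contact process adapted to carry the background along: write $\overline{\nu}=p(\delta_\emptyset\otimes\pi)+(1-p)\nu_2$ using that $\emptyset$ is a trap for $\bfC$, observe that the restriction $\nu_2$ is then itself invariant (and has second marginal $\pi$, so it dominates $\delta_\emptyset\otimes\pi$), and squeeze $\nu_2\preceq\overline{\nu}\preceq\nu_2$ to force $p\in\{0,1\}$. The paper itself only refers to \cite[Proposition~6.4]{seiler2021} and characterizes that proof as ``analogous to the CP''; your write-up is exactly that analogy, spelling out the two points that are genuinely new relative to the CP (one must check that the second marginal of the $F_0$-piece is $\pi$ rather than an arbitrary law, via autonomy and ergodicity of $\bfB$, and that $\delta_\emptyset\otimes\pi$ is stochastically below any invariant law because all invariant laws share the second marginal $\pi$). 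All the intermediate steps you flag as ``delicate'' are in fact fine as you set them up: $(\nu_2T(t))(F_1)=1$ does upgrade agreement on Borel subsets of $F_1$ to full equality, $T(t)$ is linear on signed measures so $p\nu_1=\overline{\nu}-(1-p)\nu_2$ is invariant, convex combination preserves $\preceq$, and the increasing clopen cylinders $\{(A,B):F_V\subseteq A,\ F_E\subseteq B\}$ form a $\pi$-system generating the Borel $\sigma$-algebra, so $\preceq$ is antisymmetric on probability measures here. You also correctly note that only Assumption~\ref{AssumptionBackground}$(i)$ is used.
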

\begin{proof}
    This can be shown analogously as for the CP. See \cite[Proposition~6.4]{seiler2021} for a proof of this result.
\end{proof}
	A consequence of this proposition is that if $\overline{\nu}\neq \delta_{\emptyset}\otimes \pi$, then
	\begin{align}\label{SurvivalOfAllInfected}
		\begin{aligned}
			\lim_{n\to \infty}\theta^{\pi}(\B_n(x))&=\lim_{n\to \infty}\overline{\nu}(\{A\subset V:A\cap \B_n(x)\neq\emptyset\}\times\cP(E) )\\
			&=\overline{\nu}(\{A\subset V:A\neq\emptyset\}\times\cP(E) )=1,
		\end{aligned}
	\end{align}
	where $x\in V$ is arbitrary and we used Proposition~\ref{EqualityOfCriticalValues} in the first equality. We want to extend this result to
	\begin{equation*}
	\lim_{n\to \infty}\theta(\B_n(x),\emptyset)=1.
	\end{equation*}
	Recall that $\bfB$ is an autonomous Feller process. Thus, we denote by $(S(t))_{t\geq 0}$ the Feller semigroup associated with the background process. Let $s>0$, then we set $\pi_{s}:=\delta_{\emptyset}S(s)$ and
	\begin{equation*}
		\theta^{\pi_s}(C):=\int\Pw(\bfC_t^{C,B}\neq \emptyset\,\, \forall t\geq 0)\pi_{s}(\mathsf{d}B).
	\end{equation*}
	By Assumption \ref{AssumptionBackground} $(i)$ there exists a unique invariant law $\pi$ of the background process $\bfB$ such that $\pi_s\Rightarrow \pi$ as $s\to \infty$. Recall that $\widetilde{\bfC}$ denotes a classical contact process with infection rate $\lambda>0$ without recovery, i.e.~only infection arrows are taken into account and the background as well as recovery symbols are completely ignored.
\begin{lemma}\label{MaximalSpread}
	Let $t>0$, $\varepsilon>0$ and $A\subset V$ finite. Then there exists a finite $D=D(t,\varepsilon, A)\subset V$ such that 
	\begin{equation*}
		\Pw(\widetilde{\bfC}_t^{A}\subset D)>1-\varepsilon.
		\end{equation*}
	\end{lemma}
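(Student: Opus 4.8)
The plan is a standard tightness argument: I would show that up to the fixed time $t$ the recovery-free process $\widetilde{\bfC}^{A}$ cannot have escaped too far from $A$ with high probability, and then take $D$ to be a sufficiently large ball around $A$. Precisely, writing $\B_N(A):=\bigcup_{x\in A}\B_N(x)$ (a finite set, since $A$ is finite and $G$ has bounded degree), it suffices to prove
\begin{align*}
\lim_{N\to\infty}\Pw\big(\widetilde{\bfC}^{A}_t\not\subset \B_N(A)\big)=0,
\end{align*}
and then to pick $N$ with this probability below $\varepsilon$ and set $D:=\B_N(A)$.

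To get the display I would re-use Lemma~\ref{LinearGrowth}. First, by Lemma~\ref{UniqueSolution} fix a speed $c$ with $0<c<c_1(\lambda,\rho)$; since $g_0=g_\rho+\rho$ is strictly decreasing on $(0,\lambda^{-1})$ and $g_0\big(c_1(\lambda,\rho)\big)=\rho$, this choice guarantees $g_0(c)>\rho\ (\geq 0)$ and also $c<c_1(\lambda,0)$, so the tail estimate of Lemma~\ref{LinearGrowth} applies with decay rate $g_0(c)$ strictly larger than the exponential growth rate of $G$. Since every infection path of $\widetilde{\bfC}^{A}$ starts at some $x\in A$ we have $\widetilde{\bfC}^{A}_t=\bigcup_{x\in A}\widetilde{\bfC}^{\{x\}}_t$, and a union bound over $x\in A$ and over the vertices $y$ with $d(x,y)>N$ gives
\begin{align*}
\Pw\big(\widetilde{\bfC}^{A}_t\not\subset\B_N(A)\big)\leq \sum_{x\in A}\sum_{y:\,d(x,y)>N}\Pw\big(y\in\widetilde{\bfC}^{\{x\}}_t\big).
\end{align*}
Taking $N>t/c$, on the event $\{y\in\widetilde{\bfC}^{\{x\}}_t\}$ one has $\tau_y(\{x\})\leq t<cN<c\,d(x,y)$, so this event is contained in $\{\tau_y(\{x\})<c\,d(x,y)\}$, whose probability Lemma~\ref{LinearGrowth} bounds by $e^{-g_0(c)d(x,y)}/(1-e^{-g_0(c)})$. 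Grouping the $y$'s by $k=d(x,y)$ and using, exactly as in the proof of Lemma~\ref{LinearGrowth}, that $G$ is of exponential growth $\rho$ — so that $\sup_k|\partial\B_k(x)|e^{-\rho k}<\infty$, independently of $x$ by transitivity — the right-hand side is at most a constant depending only on $\lambda,c$ times $|A|\sum_{k>N}e^{-(g_0(c)-\rho)k}$, which is finite and tends to $0$ as $N\to\infty$ because $g_0(c)>\rho$.

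I do not expect a real obstacle: this is a routine estimate. The one point requiring care is the choice of the auxiliary speed $c$, which must be strictly below $c_1(\lambda,\rho)$ precisely so that the exponential decay $g_0(c)$ furnished by Lemma~\ref{LinearGrowth} dominates the exponential growth $\rho$ of the graph, keeping the spatial tail summable; note that this uses only $c_1(\lambda,\rho)>0$, which holds for all $\lambda>0$ by Lemma~\ref{UniqueSolution}, and not the growth condition $c_1(\lambda,\rho)>\kappa^{-1}\rho$. One could alternatively give a softer, non-quantitative proof — $\widetilde{\bfC}^{A}_t$ is a.s.\ finite (by Lemma~\ref{LinearGrowth}, or by comparison with a branching process) and hence a.s.\ bounded since $G$ is locally finite, so $\Pw(\widetilde{\bfC}^{A}_t\subset\B_N(A))\uparrow 1$ as $N\to\infty$ — but the argument above has the advantage of exhibiting $D$ explicitly.
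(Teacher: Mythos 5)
Your argument is correct, but it is a more quantitative route than the one the paper takes. The paper's proof is exactly the ``softer'' alternative you mention at the end: since $|\widetilde{\bfC}^A_t|<\infty$ a.s.\ (for finite $A$), the increasing events $\{\widetilde{\bfC}^A_t\subset\B_n(x)\}$ exhaust a set of full measure, and monotone continuity of $\Pw$ gives a ball $\B_N(x)$ with probability above $1-\varepsilon$ — no rate of decay is needed. Your main argument re-derives the same conclusion from the explicit hitting-time tail bound of Lemma~\ref{LinearGrowth}, together with the observation that a speed $c<c_1(\lambda,\rho)$ makes $g_0(c)>\rho$, so the exponential tail beats the graph's exponential growth and the union bound over vertices at distance $>N$ is summable. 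Both are fine; yours buys an explicit, parameter-dependent choice of $D$ and a rate in $N$, whereas the paper's is shorter and needs only the a.s.\ finiteness of $\widetilde{\bfC}^A_t$. One small point worth noting in your quantitative version: the constant you need to control $|\partial\B_k(x)|$ by $e^{\rho k}$ times a constant, uniformly in $k$, does hold (by the definition of $\rho$ via the limit and subadditivity, together with transitivity making this independent of $x$), and you invoke this correctly just as in the proof of Lemma~\ref{LinearGrowth}.
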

\begin{proof}
	Let $t>0$ and $A\subset V$ finite and fixed. We know that for every finite initial configuration $A$ the random set $|\widetilde{\bfC}_t^{A}|<\infty$ a.s.. This implies that for some $x\in A$,
	\begin{equation*}
		\Pw\big(\exists n\geq 1:\widetilde{\bfC}_t^{A}\subset \B_n(x)\big)=1.
	\end{equation*}
	Thus, since $\{\widetilde{\bfC}_t^{A}\subset \B_n(x)\}\subset\{\widetilde{\bfC}_t^{A}\subset \B_m(x)\}$ if $m\geq n$ and because of continuity of $\Pw$, it follows that for every $\varepsilon>0$ there exists an $N\in \N$ such that
	\begin{align*}
		\Pw\big(\widetilde{\bfC}_t^{A}\subset \B_n(x)\big)>1-\varepsilon
	\end{align*}
	for all $n>N$, which proves the claim.
	\end{proof}
Recall that $\B^L_{n}(e)$ denotes the ball in the line graph $L(G)$ of radius $n\in \N$ with $e\in E$ as centre.
\begin{lemma}\label{CommonDistribution}
	Let $e\in E$ and $k\in \N$. There exists a probability law $\mu_s$ on $\cP(E^2)$ with marginals $\pi$ and $\pi_s$ such that for every $\varepsilon>0$ there exists a $s>0$ such that
	\begin{align*}
		\mu_s\big(\{(B,D)\in E^2:B\cap \B^L_{k}(e)= D\cap \B^L_{k}(e)\}\big)>1-\varepsilon.
	\end{align*}
\end{lemma}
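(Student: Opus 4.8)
The plan is to take $\mu_s$ to be the joint law of the graphically coupled background run from the stationary law and from the empty configuration, and then to read the desired agreement on $\B^L_k(e)$ off the exponential growth of the permanently coupled region $\Psi'$.

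First I would recall from Section~\ref{GraphRepSpinSystemCose} that the finite range spin system $\bfB$ can be realised on a single probability space, driven by the Poisson point set $\Xi^{\text{back}}$, that simultaneously produces $\bfB^{B}$ for every initial configuration $B\subset E$. I would then sample $B_0\sim\pi$ independently of $\Xi^{\text{back}}$ and set
\[
\mu_s:=\text{Law}\big((\bfB^{B_0}_s,\bfB^{\emptyset}_s)\big).
\]
Since $\pi$ is invariant for $\bfB$ by Assumption~\ref{AssumptionBackground}$(i)$, the first marginal of $\mu_s$ is $\pi$, while the second marginal is $\delta_{\emptyset}S(s)=\pi_s$ by the definition of $\pi_s$; hence $\mu_s$ is a coupling of $\pi$ and $\pi_s$ on $\cP(E)^2$, as required.

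The remaining step is the estimate. I would fix an edge $f\in\B^L_k(e)$ and observe that the event $\{f\in\Psi'_s\}$ is, by \eqref{DefinitionPermanetlyCoupledRegion}, measurable with respect to $\Xi^{\text{back}}$ and therefore independent of $B_0$. On this event one has $f\notin\bfB^{B_1}_s\symdiff\bfB^{B_2}_s$ for all $B_1,B_2\subset E$ (using the definition of $\Psi'_s$ at the single time $s$), so in particular $\1_{\{f\in\bfB^{B_0}_s\}}=\1_{\{f\in\bfB^{\emptyset}_s\}}$. Consequently
\[
\big\{\bfB^{B_0}_s\cap\B^L_k(e)\neq\bfB^{\emptyset}_s\cap\B^L_k(e)\big\}\subseteq\bigcup_{f\in\B^L_k(e)}\{f\notin\Psi'_s\},
\]
and a union bound combined with Assumption~\ref{AssumptionBackground}$(ii)$ gives, for all $s\geq T$,
\[
\mu_s\big(\{(B,D):B\cap\B^L_k(e)\neq D\cap\B^L_k(e)\}\big)\leq\sum_{f\in\B^L_k(e)}\Pw(f\notin\Psi'_s)\leq|\B^L_k(e)|\,K\exp(-\kappa s).
\]
Since $L(G)$ has bounded degree, $\B^L_k(e)$ is finite, so the right-hand side vanishes as $s\to\infty$; given $\varepsilon>0$ I would choose $s$ with $|\B^L_k(e)|K\exp(-\kappa s)<\varepsilon$, which proves the claim.

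I do not anticipate a genuine obstacle here. The only point that needs care is to use the independence of $\{f\in\Psi'_s\}$ from the random initial state $B_0$ correctly, so that averaging over $B_0\sim\pi$ does not spoil the permanent-coupling bound, and to note that one only needs the time-$s$ consequence of $f\in\Psi'_s$, which is the weakest one available. Everything else is the finite union bound over $\B^L_k(e)$ and the exponential tail already supplied by Assumption~\ref{AssumptionBackground}$(ii)$.
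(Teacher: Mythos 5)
Your proof is correct, and it constructs the same coupling $\mu_s$ as the paper (the joint law of the graphically coupled pair started from $\pi$ and from $\emptyset$). The difference is in how the agreement on $\B^L_k(e)$ is extracted. The paper invokes Proposition~\ref{ExpansionSpeedPerCouRegion}, which is the Borel--Cantelli-type almost-sure statement about the linear expansion of the coupled region at speed exceeding $\kappa^{-1}\rho$, and then uses continuity of $\Pw$ and monotonicity of the events to pick a large enough time $cs$. You instead go back directly to Assumption~\ref{AssumptionBackground}$(ii)$ and perform a finite union bound over $\B^L_k(e)$, yielding the explicit quantitative estimate
\[
\mu_s\big(\{(B,D):B\cap\B^L_k(e)\neq D\cap\B^L_k(e)\}\big)\leq |\B^L_k(e)|\,K e^{-\kappa s}\qquad (s\geq T),
\]
which tends to $0$ as $s\to\infty$. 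This is more elementary (no appeal to the growth constant $\rho$ or to the derived proposition) and also gives an explicit rate, so it is arguably the cleaner route. One small remark: the independence of $\{f\in\Psi'_s\}$ from $B_0$ is not actually needed in your union-bound step, since that event depends only on $\Xi^{\text{back}}$ and its probability is bounded by the assumption regardless of how the auxiliary randomness $B_0$ is introduced; it is of course harmless to note it. Your observation that only the time-$s$ consequence of $f\in\Psi'_s$ is used (rather than the full "for all $u\geq s$" clause) is correct and mirrors the paper's own (implicitly weaker) use of the coupled region $\Psi_s\supseteq\Psi'_s$.
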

\begin{proof}
	Let $\bfB^{\pi}$ be the background process such that $\bfB^{\pi}_0\sim \pi$. Now let $\bfB^{\pi}$ be coupled to $\bfB^{\emptyset}$ via the graphical representation. Recall that the coupled region was defined by
	\begin{equation*}
		\Psi_{t}=\{e\in E: e\notin \bfB_t^{B_1}\symdiff\bfB_t^{B_2}\,\,\forall B_1,B_2\subset E\}.
	\end{equation*}
	Choose $c>0$ such that $c\kappa>\rho$. By Theorem \ref{ExpansionSpeedPerCouRegion} we know that
	\begin{align*}
		\Pw(\exists s\geq 0: \B^L_{t+1}(e)\subset \Psi_{ct} \,\, \forall t\geq s)=1.
	\end{align*}
	By continuity of the law $\Pw$ and monotonicity of the event, there exists an $s>k$ such that $\Pw(\B^L_{t+1}(e)\subset \Psi_{ct} \,\, \forall t\geq s)>1-\varepsilon$, which in particular implies that
	\begin{align*}
		\Pw(\bfB_{cs}^{\pi}\cap \B^L_{k}(e)= \bfB_{cs}^{\emptyset}\cap \B^L_{k}(e))>1-\varepsilon.
	\end{align*}
	Now set $s'=cs$ and let $\mu_{s'}$ be the joint probability distribution of $(\bfB_{s'}^{\pi},\bfB_{s'}^{\emptyset})$. This distribution satisfies the claim.
\end{proof}
\begin{lemma}\label{BounOnDecouplingLemma}
	Let $\bfB$ be the background process with spin rate $q(\cdot,\cdot)$, $B\subset E$ and $e\in B$. Furthermore, let $u> 0$ and $n\in \N$, then for every $\varepsilon>0$ there exists a $k>n$ such that for all sets $D\subset E$ with $B\cap \B^L_{k}(e)=D\cap \B^L_{k}(e)$,
	\begin{align*}
		\Pw(\bfB^{B}_t\cap \B^L_{n}(e)=\bfB^D_t\cap \B^L_{n}(e) \,\, \forall t< u)>1-\varepsilon.
	\end{align*}
\end{lemma}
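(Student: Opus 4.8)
The plan is to prove this as a finite-speed-of-propagation statement for the background spin system, using the random-map graphical construction from Section~\ref{GraphRepSpinSystemCose}. I would run $\bfB^{B}$ and $\bfB^{D}$ on the \emph{same} realisation of the background graphical representation and track the discrepancy set $\Delta_{t}:=\bfB^{B}_{t}\symdiff\bfB^{D}_{t}$. By hypothesis $\Delta_{0}=B\symdiff D\subset E\setminus\B^{L}_{k}(e)$, and since $k>n$ we have $\Delta_{0}\cap\B^{L}_{n}(e)=\emptyset$, so the event whose probability we must bound is precisely $\{\exists\, t<u:\Delta_{t}\cap\B^{L}_{n}(e)\neq\emptyset\}$. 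Throughout I would use two consequences of finite range $R$ and bounded degree: by translation invariance the total flip rate $\bar q:=\sum_{F\subset\cN^{L}_{e}(R)}\big(q(e,F)+q(e,F\cup\{e\})\big)$ at a single edge is finite and does not depend on $e$, and $b:=\sup_{e'\in E}|\B^{L}_{R}(e')|<\infty$.

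The structural heart of the argument is that the state of an edge $a$ in each of the two processes changes only at flip events of the graphical representation located at $a$, and such an event can create a discrepancy at $a$ only when $\bfB^{B}$ and $\bfB^{D}$ already disagree somewhere inside $\cN^{L}_{a}(R)$ at that instant (if they agree on all of $\B^{L}_{R}(a)$, the same map yields the same outcome at $a$). Tracing the creation of a discrepancy at an edge $a\in\B^{L}_{n}(e)$ backwards in time then yields a chain $e_{0},e_{1},\dots,e_{m}$ with $e_{0}\in\Delta_{0}$, $e_{m}=a$, $d_{L}(e_{i-1},e_{i})\le R$ for all $i$, and flip events at $e_{1},\dots,e_{m}$ occurring at strictly increasing times $0<s_{1}<\dots<s_{m}<u$; a loop-erasure makes the $e_{i}$ pairwise distinct. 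Since $d_{L}(e,e_{0})\ge k+1$ while $d_{L}(e,e_{m})\le n$, the triangle inequality forces $m\ge m_{0}:=\lceil(k+1-n)/R\rceil$, and $m_{0}\to\infty$ as $k\to\infty$.

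It then remains to bound the probability that \emph{some} such chain of flips of length $\ge m_{0}$ fires in increasing order before time $u$. A fixed chain of $m$ distinct edges ending in $\B^{L}_{n}(e)$ can be chosen in at most $|\B^{L}_{n}(e)|\,b^{\,m-1}$ ways, and---this is the delicate point---for a fixed such chain the probability that its edges carry flip events in strictly increasing order within $[0,u]$ is at most $(\bar q u)^{m}/m!$, not merely $(\bar q u)^{m}$: distinct edges carry independent Poisson flip processes of rate $\le\bar q$, and conditioning successively on the first firing time of $e_{1}$, then $e_{2}$, and so on, produces the factorial gain. Summing over $m$ gives
\begin{equation*}
\Pw\big(\exists\, t<u:\ \bfB^{B}_{t}\cap\B^{L}_{n}(e)\neq\bfB^{D}_{t}\cap\B^{L}_{n}(e)\big)\ \le\ \frac{|\B^{L}_{n}(e)|}{b}\sum_{m\ge m_{0}}\frac{(b\,\bar q\,u)^{m}}{m!},
\end{equation*}
and the right-hand side is the tail of a convergent exponential series, hence smaller than $\varepsilon$ once $m_{0}$, i.e.\ $k$, is large enough; this is the claim. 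The main obstacle is exactly this last estimate: an unordered union bound over chains of flips is useless when $b\bar q u\ge1$, so one must exploit the ordering of the Poisson flip events to produce the $1/m!$ factor, and set up the backward space-time exploration carefully enough that the reduction to disjoint chains---and hence to independent Poisson clocks---is legitimate.
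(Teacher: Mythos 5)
Your argument is correct and develops in detail exactly the finite--speed--of--propagation idea the paper points to (the published text only states ``the state of some edge $e$ can only be influenced by the state of a second edge $a$, with high probability, after an at most linear amount of time proportional to the graph distance between $e$ and $a$'' and refers to \cite[Proposition~3.2.5]{seiler2021} for the details). In particular you correctly identify and resolve the one delicate estimate: using the ordering of the independent Poisson flip clocks along a loop--erased chain to obtain the Gamma-tail bound $\Pw(\tau_m<u)\le(\bar q u)^m/m!$, which supplies the $1/m!$ needed for the chain sum $\sum_{m\ge m_0}|\B^L_n(e)|\,b^{m-1}(\bar q u)^m/m!$ to vanish as $k\to\infty$.
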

\begin{proof}
    The idea of the proof is that for a finite range spin systems the state of some edge $e$ can only be influenced by the state of a second edge $a$, with high probability, after an at most linear amount of time proportional to the graph distance between $e$ and $a$. See \cite[Proposition~3.2.5]{seiler2021} for a detailed proof.
\end{proof}
With these three lemmas we are able to show the following useful approximation result of the survival probability. Recall that $c_1(\lambda,\rho)$ is the solution of \eqref{UniqueSolution2}, $\kappa$ is the constant from Assumption~\ref{AssumptionBackground} $(ii)$ and $\rho$ denotes the exponential growth of the graph $G$.
\begin{lemma}\label{ApproximationLemma}
	Let $\lambda,r>0$ and suppose that $c_1(\lambda,\rho)>\kappa^{-1} \rho$. Then for any $C\subset V$,
	\begin{align*}
	\lim_{s\to \infty}\theta^{\pi_{s}}(\lambda,r,C)=\theta^{\pi}(\lambda,r,C).
	\end{align*}
\end{lemma}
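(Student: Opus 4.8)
The plan is as follows. If $C=\emptyset$ both sides vanish, and if $|C|=\infty$ both sides equal $1$ (as noted in Section~\ref{Sec:MainResults}), so we may assume $C$ finite and non-empty. Observe first that $\pi_{s}\preceq\pi$: since $\delta_{\emptyset}\preceq\pi$ and the autonomous attractive background semigroup $(S(t))_{t\geq0}$ is monotone, $\pi_{s}=\delta_{\emptyset}S(s)\preceq\pi S(s)=\pi$. Hence, by monotonicity of the CPERE in its initial background (Lemma~\ref{MonotonicityAdditivityLemma}(i)), $\theta^{\pi_{s}}(\lambda,r,C)\leq\theta^{\pi}(\lambda,r,C)$ for every $s$, and it remains only to bound $\theta^{\pi}(C)-\theta^{\pi_{s}}(C)$ from above (we suppress $\lambda,r$ in the notation).

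Fix $\varepsilon>0$. On one probability space we realise the background graphical representation on $[0,\infty)$ — giving coupled processes $\bfB^{\pi}$, started from $\pi$ and hence stationary, and $\bfB^{\emptyset}$, started from $\emptyset$, so that $\bfB^{\emptyset}_{s}\sim\pi_{s}$ — together with an independent family of infection and recovery Poisson points. Using the latter on $[s,\infty)$ we start, at time $s$ from $C$, three processes: the infection process $\bfC^{\pi}$ driven by $\bfB^{\pi}$, the infection process $\bfC^{\emptyset}$ driven by $\bfB^{\emptyset}$, and the recovery-free contact process $\widetilde{\bfC}$ (which ignores the background and the recovery marks), so that $\bfC^{\pi}_{t},\bfC^{\emptyset}_{t}\subseteq\widetilde{\bfC}_{t}$ for all $t\geq s$. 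By stationarity of $\bfB^{\pi}$ and time-homogeneity, $\Pw(\bfC^{\pi}_{t}\neq\emptyset\ \forall t\geq s)=\theta^{\pi}(C)$, while the Markov property of $\bfB^{\emptyset}$ gives $\Pw(\bfC^{\emptyset}_{t}\neq\emptyset\ \forall t\geq s)=\theta^{\pi_{s}}(C)$. Let ${\Psi'}^{(s)}_{t}$ and $\Phi^{(s)}_{t}$ denote the permanently coupled regions (see \eqref{DefinitionPermanetlyCoupledRegion}) of the background graphical representation restricted to $[s,\infty)$; by time-homogeneity the pair $(\widetilde{\bfC}_{s+v},\Phi^{(s)}_{s+v})_{v\geq0}$ has the same law as the corresponding pair started at time $0$, and — since $\bfB^{\pi}_{s}$ and $\bfB^{\emptyset}_{s}$ are independent of the background marks on $[s,\infty)$ — the processes $\bfB^{\pi}$ and $\bfB^{\emptyset}$ agree, at every time $u\geq t$, on every edge of ${\Psi'}^{(s)}_{t}$.

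We now define a good event $G_{s}$ as an intersection of three events. Using the growth hypothesis $c_{1}(\lambda,\rho)>\kappa^{-1}\rho$, Theorem~\ref{InfectionVSBackground} (transported to the time-$s$ shift via the distributional identity above) yields a constant $V_{0}$, independent of $s$, with $\Pw(\widetilde{\bfC}_{t}\subseteq\Phi^{(s)}_{t}\ \forall t\geq s+V_{0})>1-\varepsilon/3$. Lemma~\ref{MaximalSpread}, applied over the time window $[s,s+V_{0}]$, yields a finite set $D\subset V$, independent of $s$, with $\Pw(\widetilde{\bfC}_{t}\subseteq D\ \forall t\in[s,s+V_{0}])>1-\varepsilon/3$; fix $e_{0}\in E$ and $n\in\N$ such that $\B^{L}_{n}(e_{0})$ contains all edges incident to $D$. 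By Lemma~\ref{BounOnDecouplingLemma} (finite speed of propagation of disagreement, with horizon $V_{0}$) there is $k>n$ so that, on the event that $\bfB^{\pi}_{s}$ and $\bfB^{\emptyset}_{s}$ agree on $\B^{L}_{k}(e_{0})$, the probability that they fail to agree on $\B^{L}_{n}(e_{0})$ at some time in $[s,s+V_{0}]$ is less than $\varepsilon/6$; and by Lemma~\ref{CommonDistribution} (equivalently Proposition~\ref{ExpansionSpeedPerCouRegion}, which forces $\B^{L}_{k}(e_{0})\subseteq\Psi'_{s}$ with probability tending to $1$) the agreement of $\bfB^{\pi}_{s}$ and $\bfB^{\emptyset}_{s}$ on $\B^{L}_{k}(e_{0})$ has probability $>1-\varepsilon/6$ once $s$ is large. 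Taking $G_{s}$ to be the intersection of $\{\widetilde{\bfC}_{t}\subseteq\Phi^{(s)}_{t}\ \forall t\geq s+V_{0}\}$, $\{\widetilde{\bfC}_{t}\subseteq D\ \forall t\in[s,s+V_{0}]\}$ and $\{\bfB^{\pi}_{t}\cap\B^{L}_{n}(e_{0})=\bfB^{\emptyset}_{t}\cap\B^{L}_{n}(e_{0})\ \forall t\in[s,s+V_{0}]\}$ then gives $\Pw(G_{s})>1-\varepsilon$ for all $s$ sufficiently large.

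Finally, on $G_{s}$ one shows $\bfC^{\pi}_{t}=\bfC^{\emptyset}_{t}$ for all $t\geq s$ by induction over the jump times of $\widetilde{\bfC}$ (a.s.\ finitely many in any bounded interval, since $\widetilde{\bfC}_{t}$ is finite and $G$ has bounded degree): recovery marks act identically on both processes, and at an infection mark across $\{x,y\}$ at time $t$ with $x\in\bfC^{\pi}_{t-}=\bfC^{\emptyset}_{t-}$ we have $x\in\widetilde{\bfC}_{t}$, so $\{x,y\}$ is incident to $x$ and hence lies in $\B^{L}_{n}(e_{0})$ when $t\leq s+V_{0}$ (then $x\in D$) and in ${\Psi'}^{(s)}_{t}$ when $t>s+V_{0}$ (then $x\in\Phi^{(s)}_{t}$); in both cases $\bfB^{\pi}_{t}$ and $\bfB^{\emptyset}_{t}$ agree on $\{x,y\}$, so the transmission succeeds for $\bfC^{\pi}$ if and only if it does for $\bfC^{\emptyset}$. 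Consequently the two survival events coincide on $G_{s}$, whence $\theta^{\pi}(C)-\theta^{\pi_{s}}(C)\leq\Pw(G_{s}^{c})<\varepsilon$ for all large $s$, and together with $\theta^{\pi_{s}}(C)\leq\theta^{\pi}(C)$ this proves $\lim_{s\to\infty}\theta^{\pi_{s}}(C)=\theta^{\pi}(C)$. I expect the main obstacle to be bookkeeping rather than a new idea: making $V_{0},D,n,k$ independent of $s$, correctly transporting Theorem~\ref{InfectionVSBackground} and Proposition~\ref{ExpansionSpeedPerCouRegion} from their time-$0$ formulations to the time-$s$ shift, and justifying that on ${\Psi'}^{(s)}_{t}$ the two genuinely random backgrounds $\bfB^{\pi}$ and $\bfB^{\emptyset}$ — not merely fixed deterministic initial configurations — agree from time $t$ onwards.
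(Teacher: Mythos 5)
Your proof is correct and is essentially the paper's argument. The paper establishes the same one-sided bound $\theta^{\pi}(C)\leq\theta^{\pi_s}(C)+4\varepsilon$ by chaining exactly the ingredients you use — Theorem~\ref{InfectionVSBackground}, Lemma~\ref{MaximalSpread} and Lemma~\ref{BounOnDecouplingLemma} — while packaging the coupling of $\bfB^{\pi}$ and $\bfB^{\emptyset}$ into the joint law $\mu_s$ of Lemma~\ref{CommonDistribution} and then integrating over it; your explicit pathwise coupling from time $0$ with infections launched at time $s$ is precisely how $\mu_s$ is constructed inside that lemma, so you have simply unwound the integral formulation. The remaining pieces (the good event, the inductive disagreement-free argument, and the reverse inequality from $\pi_s\preceq\pi$ and monotonicity) match the paper.
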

\begin{proof}
	Note that if $|C|=\infty$ or $C=\emptyset$ the statement is trivial, since either both sides are $1$ or $0$. Thus, we assume that $C$ is a finite non-empty subset of $V$. Fix $x\in C$ and $y\in \cN_x$. Since $c_1(\lambda,\rho)>\kappa^{-1}\rho$ by Proposition~\ref{InfectionVSBackground} we know that
	\begin{align*}
	\Pw(\exists u\geq 0: \widetilde{\bfC}_t^{C}\subset \Phi_t \,\, \forall t\geq u)=1.
	\end{align*}
	Set $A^1_{u}(C):=\{ \widetilde{\bfC}^{C}_t\subseteq \Phi_t\,\, \forall t\geq u\}$. We see that for every $\varepsilon>0$ there exists a $T>0$ such that $
	\Pw(A^1_{u}(C))\geq 1-\varepsilon$
	for all $u\geq T$, where we used that $A^1_{u}(C)\subset A^1_{u'}(C)$ for $u\leq u'$ and continuity of the law $\Pw$.
	
	Next we fix $u\geq T$ and define $A^2_{u,m}(C):=\{\widetilde{\bfC}_t^{C}\subset \B_m(x) \,\, \forall t\leq u\}$ for $m\in \N$. By Lemma~\ref{MaximalSpread} we can choose a $m=m(u)$ large enough such that $\Pw(A^2_{u,m}(C))>1-\varepsilon$. This yields that
	\begin{equation}\label{HelpApproximation1}
		\theta(C,B)\leq \Pw\big(A_u^1(C)\cap A^2_{u,m}(C)
		\cap\{\bfC^{C,B}_t\neq \emptyset\,\, \forall t\geq 0\}\big)+2\varepsilon
	\end{equation}
	for any $B\subset E$. By Lemma~\ref{BounOnDecouplingLemma} we can choose a $k=k(m)>m+1$ large enough such that
	\begin{equation}\label{HelpApproximation2}
		\Pw(\bfB^{B}_t\cap\B^L_{m+1}(\{x,y\})=\bfB^D_t\cap\B^L_{m+1}(\{x,y\}) \,\, \forall t\leq u)>1-\varepsilon,
	\end{equation}
	for any $D\subset E$ with $B\cap \B^L_{k}(\{x,y\})=D\cap \B^L_{k}(\{x,y\})$. Note that $\B^L_{m+1}(\{x,y\})$ contains in particular all edges which are attached to all vertices in $\B_{m}(x)$. Now for notational convenience define $A^3_{u,m}(C):=A_u^1(C)\cap A^2_{u,m}(C)$. Furthermore, we set
	\begin{align*}
		A_{u,m}(C,(B,D)):=&\{\bfB^{B}_t\cap\B^L_{m+1}(\{x,y\})=\bfB^D_t\cap\B^L_{m+1}(\{x,y\}) \,\, \forall t\leq u\}\cap A^3_{u,m}(C),\\
		E_{k}(B,D):=&\{(B,D)\in E^2:B\cap \B^L_{k}(\{x,y\})=D\cap \B^L_{k}(\{x,y\})\}.
	\end{align*}
	By Lemma~\ref{CommonDistribution} there exists a distribution $\mu_s$ on $\cP(E^2)$ with marginals $\pi$ and $\pi_s$, such that for $s>0$ large enough 
	\begin{equation}\label{HelpApproximation3}
	\mu_{s}\big(E_{k}(B,D)\big)>1-\varepsilon.
	\end{equation}
	Note that by choice of these events
	\begin{align}\label{HelpApproximation4}
	\begin{aligned}
		&A_{m,u}(C,(B,D)) \cap\{\bfC^{C,B}_t\neq \emptyset\,\, \forall t\geq 0\}\\
		=&A_{m,u}(C,(B,D)) \cap\{\bfC^{C,D}_t\neq \emptyset\,\, \forall t\geq 0\}
		\subset \{\bfC^{C,D}_t\neq \emptyset\,\, \forall t\geq 0\},
	\end{aligned}
	\end{align}
	since on the event $A_{m,u}(C,(B,D))$ the infection stays in $\B_{m}(x)$ until time $u$ and afterwards only travels along edges already contained in the permanently coupled region. But, for any of the initial configuration $B$ or $D$ the background does not differ in the ball $\B^L_{m+1}(\{x,y\})$ at any time $t\in [0,u]$ and thus, we can interchange $B$ and $D$ on $A_{m,u}(C,(B,D))$.
	Finally we can conclude that
	\begin{align}\label{HelpApproximation5}
		\begin{aligned}
			&\int \Pw\big(A_u^1(C)\cap A^2_{u,m}(C)
			\cap\{\bfC^{C,B}_t\neq \emptyset\,\, \forall t\geq 0\}\big)\pi(\mathsf{d}B)\\
			\leq&\int \Pw\big(A_{m,u}(C,(B,D))\cap \{\bfC^{C,B}_t\neq \emptyset\,\, \forall t\geq 0\}\big)\1_{E_k(B,D)}\mu_{s}(\mathsf{d}(B,D))+2\varepsilon\\
			\leq&\int \Pw\big(\bfC^{C,D}_t\neq \emptyset\,\, \forall t\geq 0\}\big)\pi_{s}(\mathsf{d}D)+2\varepsilon,
		\end{aligned}
	\end{align}
	where we used \eqref{HelpApproximation2} and \eqref{HelpApproximation3} in the first inequality and in the second the definition of $E_k(B,D)$ together with \eqref{HelpApproximation4}. Hence, by combining  \eqref{HelpApproximation1} and \eqref{HelpApproximation5} we obtain  
	\begin{align*}
	\theta^{\pi}(C)\leq \theta^{\pi_s}(C)+4\varepsilon.
	\end{align*}
	On the other hand we have that $\pi_s=\delta_{\emptyset} S(s)$. Since $\bfB$ is by assumption a monotone Feller process we get that $\pi_s\preceq\pi$ for all $s\geq 0$, and thus by monotonicity of the survival probability it follows that $\theta^{\pi_s}(C)\leq \theta^{\pi}(C)\leq \theta^{\pi_s}(C)+4\varepsilon$, which proves the claim.
\end{proof}
With this approximation result we are able to show the desired result. 
\begin{lemma}\label{SurvivalContinuityLemma2}
	Let $x\in V$ and $r>0$. Suppose that $c_1(\lambda_c^{\pi}(r),\rho)>\kappa^{-1} \rho$, then for all $\lambda>\lambda_c(r)=\lambda_c^{\pi}(r)$ 
	\begin{equation*}
		\lim_{n\to \infty}\theta(\lambda,r,\B_n(x),\emptyset)=1.
	\end{equation*}
\end{lemma}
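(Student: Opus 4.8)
The idea is to reduce to a parameter for which the growth condition of Section~\ref{Sec:Independece} is available, to transfer the estimate $\lim_{m\to\infty}\theta^{\pi}(\B_m(x))=1$ (which comes from \eqref{SurvivalOfAllInfected}) from a stationary to an empty initial background by a delay coupling in the spirit of Proposition~\ref{IndependencySurvival}(a), and to close the gap between the two backgrounds with Lemma~\ref{ApproximationLemma}.

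\textbf{Reduction.} Since $\lambda\mapsto c_1(\lambda,\rho)$ is continuous (Lemma~\ref{UniqueSolution}), the hypothesis $c_1(\lambda_c^{\pi}(r),\rho)>\kappa^{-1}\rho$ yields a $\delta>0$ with $c_1(\lambda'',\rho)>\kappa^{-1}\rho$ for all $\lambda''<\lambda_c^{\pi}(r)+\delta$. For given $\lambda>\lambda_c^{\pi}(r)$ set $\lambda':=\min\{\lambda,\lambda_c^{\pi}(r)+\delta/2\}$, so that $\lambda_c^{\pi}(r)<\lambda'\le\lambda$ and $c_1(\lambda',\rho)>\kappa^{-1}\rho$. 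By monotonicity in the infection rate (Lemma~\ref{MonotonicityAdditivityLemma}(ii)) we have $\theta(\lambda,r,\B_n(x),\emptyset)\ge\theta(\lambda',r,\B_n(x),\emptyset)$, so it suffices to prove the claim for $\lambda'$; write $\lambda$ for $\lambda'$ henceforth. As $\lambda>\lambda_c^{\pi}(r)$ we have $\theta^{\pi}(\{x\})>0$, hence $\overline{\nu}\neq\delta_{\emptyset}\otimes\pi$ by Proposition~\ref{EqualityOfCriticalValues}, and so \eqref{SurvivalOfAllInfected} gives $\lim_{m\to\infty}\theta^{\pi}(\B_m(x))=1$.

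\textbf{Delay coupling.} Fix $s>0$ and construct the CPERE from $(\B_n(x),\emptyset)$ via the graphical representation. Let $D_s:=\{v\in\B_n(x):\text{no recovery mark occurs at }v\text{ during }[0,s]\}$; then $D_s\subseteq\bfC^{\B_n(x),\emptyset}_s$, the set $D_s$ is independent of the background path $(\bfB^{\emptyset}_u)_{u\le s}$, and each $v\in\B_n(x)$ lies in $D_s$ independently with probability $e^{-rs}$. By the Markov property at time $s$, monotonicity of the survival probability in the infection set (Lemma~\ref{MonotonicityAdditivityLemma}(i)) and $\theta(\emptyset,\cdot)=0$,
\[
\theta(\B_n(x),\emptyset)=\E\!\left[\theta\big(\bfC^{\B_n(x),\emptyset}_s,\bfB^{\emptyset}_s\big)\right]\ \ge\ \E\!\left[\theta\big(D_s,\bfB^{\emptyset}_s\big)\right]\ =\ \E_{D_s}\!\left[\theta^{\pi_s}(D_s)\right],
\]
where $\pi_s=\delta_{\emptyset}S(s)$ and the last equality uses the independence of $D_s$ and $\bfB^{\emptyset}_s\sim\pi_s$. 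Letting $n\to\infty$ with $s$ fixed, $D_s$ increases to an independent thinning $D_s^{(\infty)}$ of $V$ of density $e^{-rs}$; since $D\mapsto\theta^{\pi_s}(D)$ is monotone and continuous along increasing sequences of sets (this last property being the usual statement that global survival from an infinite set is witnessed by a finite sub-configuration, via a restart argument, transcribed from the contact process), monotone convergence gives $\liminf_{n}\theta(\B_n(x),\emptyset)\ge\E[\theta^{\pi_s}(D_s^{(\infty)})]$ for every $s>0$. Finally one argues that this lower bound tends to $1$ as $s\to\infty$: Proposition~\ref{NonnegagtivitiyLemma}, together with invariance of $\overline{\nu}$ and the bound $\theta(A,B)<1$ for finite non-empty $A$ (positive probability of immediate extinction), forces the infection marginal of $\overline{\nu}\neq\delta_{\emptyset}\otimes\pi$ to be concentrated on infinite configurations; by the self-duality of Proposition~\ref{DistributionalDuality} (using Assumption~\ref{AssumptionBackground}(iii)) and Proposition~\ref{EqualityOfCriticalValues}, $\theta^{\pi}(D)=\overline{\nu}(\{A:A\cap D\neq\emptyset\}\times\cP(E))$, which therefore equals $1$ for any set that a.s. meets an infinite configuration, in particular for the positive-density set $D_s^{(\infty)}$; and Lemma~\ref{ApproximationLemma} allows one to pass from $\theta^{\pi_s}$ back to $\theta^{\pi}$. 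Combining these steps gives $\liminf_n\theta(\B_n(x),\emptyset)\ge1$, which is the assertion.

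\textbf{Main obstacle.} The delicate point is the final limit: Lemma~\ref{ApproximationLemma} controls $\theta^{\pi}-\theta^{\pi_s}$ only for a \emph{fixed} set, whereas $D_s^{(\infty)}$ depends on $s$, so one must either establish a version of Lemma~\ref{ApproximationLemma} that is uniform over the relevant (nested, random) sets, or run the delay argument at a fixed $s$ and directly control the survival, from the sparse random set $D_s^{(\infty)}$, of the infection with the non-stationary background law $\pi_s$ rather than $\pi$. An alternative route, which trades this for a second--moment estimate, is to show that for $n$ much larger than a fixed $m$ the infection from $(\B_n(x),\emptyset)$ already contains, with probability close to $1$, a fully infected translate of $\B_m(x)$ at time $s$ together with an essentially stationary background around it — using the finite propagation speed of the infection (Lemma~\ref{MaximalSpread}) and of the decoupling of the background (Lemmas~\ref{CommonDistribution} and \ref{BounOnDecouplingLemma}) over many well-separated candidate locations, and (on graphs of exponential growth) the growth condition to guarantee enough such locations — and then restart the process there. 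Either way, this spatial/approximate-independence bookkeeping is the technical heart of the proof.
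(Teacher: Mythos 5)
The central step of your delay-coupling argument — the claim that $D\mapsto\theta^{\pi_s}(D)$ is continuous along increasing sequences of finite sets, so that $\lim_n\theta^{\pi_s}(D_s^n)=\theta^{\pi_s}(D_s^{(\infty)})=1$ — is false as an abstract property of the survival probability. Global survival from an infinite set is \emph{not} in general witnessed by a finite sub-configuration: $\theta(C,B)=1$ trivially for every infinite $C$, while (for example in the subcritical regime) $\theta(C,B)=0$ for every finite $C$, so $\sup_{n}\theta^{\pi_s}(D_s^n)$ can be strictly smaller than $\theta^{\pi_s}(D_s^{(\infty)})$. One can hope to salvage it in the supercritical regime, but showing $\lim_n\theta^{\pi_s}(D_s^n)=1$ for a positive-density random thinning of $V$ is essentially the same type of statement as the lemma being proved — survival probability from increasingly large finite sets tends to $1$ — so the argument is circular. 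The parenthetical ``restart argument, transcribed from the contact process'' is exactly where all the work must go, and it cannot be invoked as known. (Note also that once you have $D_s^{(\infty)}$, $\theta^{\pi_s}(D_s^{(\infty)})=1$ is automatic because the set is infinite, so the excursion into Proposition~\ref{NonnegagtivitiyLemma} and duality in your last paragraph does not address the actual gap.)

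The alternative route you sketch at the end is the right one and is close to the paper's proof, but the paper's version is simpler than you anticipate: no tracking of where the infection has spread during $[0,s]$ (so no Lemma~\ref{MaximalSpread}), no second-moment estimate, and no explicit background decoupling at this step. Instead one fixes $n$ with $\theta^{\pi}(\B_n(x))>1-\varepsilon$ via \eqref{SurvivalOfAllInfected}, applies Lemma~\ref{ApproximationLemma} to obtain $\theta^{\pi_s}(\B_n(x))>1-\varepsilon$ for some large but \emph{fixed} $s$, and then picks $m$ translates $\B_n(x_i)$ with $d(x_i,x_j)>2n$. The events that no recovery mark at all falls in $\B_n(x_i)\times[0,s]$ are independent across $i$ and of fixed positive probability, so for $m$ large at least one occurs with probability $>1-\varepsilon$; on that event the corresponding ball is still fully infected at time $s$, this event is independent of the background, and the background at time $s$ is exactly $\pi_s$-distributed. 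Choosing $k$ with $\bigcup_{i\leq m}\B_n(x_i)\subset\B_k(x)$ then gives $\theta(\B_k(x),\emptyset)\geq(1-\varepsilon)\,\theta^{\pi_s}(\B_n(x))\geq(1-\varepsilon)^2$. Your reduction to a $\lambda$ satisfying the growth condition and your identification of Lemma~\ref{ApproximationLemma} as the key input are both correct; what needs replacing is the continuity-along-increasing-sets step, which the paper sidesteps by keeping $n$ and $s$ fixed and betting on an entire undisturbed ball rather than on a sparse thinned set.
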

\begin{proof}
	Let us fix $x\in V$. By Lemma~\ref{UniqueSolution} we know that $\lambda\mapsto c_1(\lambda,\rho)$ is continuous and strictly decreasing. Thus, if $c_1(\lambda^{\pi}(r),\rho)>\kappa^{-1} \rho$, then there exists an $\varepsilon'>0$ such that $c_1(\lambda,\rho)>\kappa^{-1}\rho$ for all $\lambda\in (\lambda_c^{\pi}(r),\lambda_c^{\pi}(r)+\varepsilon')$. Note that by Theorem~\ref{IndependencyCirticalRateSurvival} $\lambda^{\pi}_c(r)=\lambda_c(r)$. Let $n\geq 0$ and fix $\lambda\in (\lambda_c^{\pi}(r),\lambda_c^{\pi}(r)+\varepsilon')$ by \eqref{SurvivalOfAllInfected} we know that for every $\varepsilon>0$ there exists $n$ large enough such that $\theta^{\pi}(\B_n(x))>1-\varepsilon$ and by Lemma~\ref{ApproximationLemma} we know that for given $n$ and $\varepsilon$ there exist $s>0$ large enough such that
	\begin{equation}\label{SurvivalContinuityLemmaHelp1}
		\theta^{\pi_{s}}(\B_n(x))>1-\varepsilon.
	\end{equation}
	Choose a set $\{x_i: i\in\N \}\subset V$ such that $d(x_i,x_j)> 2n$ for $i\neq j$. Note that by this choice the sets $(\B_{n}(x_i))_{i\in\N}$ are disjoint. Let us consider the event
	\begin{align*}
	A_{m,n}^s:=\{\exists i\leq m: (t,\mathbf{rec}_x)\notin \Xi^{\text{rec}} \,\,\forall (t,x)\in[0,s]\times\B_n(x_i)\},
	\end{align*}
	i.e.~for some $i\leq m$ no recovery symbols occurs up to time $s$ in $\B_n(x_i)$. For given $s$ and $n$ choose $m$ large enough such that
	\begin{equation}\label{SurvivalContinuityLemmaHelp2}
		\Pw(A_{m,n}^s)>1-\varepsilon.
	\end{equation}
	Let $k=k(m,n)$ be large enough such that $\bigcup_{i=1}^{m}\B_{n}(x_i)\subset\B_{k}(x)$. Now by the choice of $s$ it follows that
	\begin{align}\label{SurvivalContinuityLemmaHelp3}
		\Pw(\bfC^{\B_{k}(x),\emptyset}_t\neq \emptyset\,\, \forall t\geq 0|A_{m,n}^s)\geq \Pw^{(\B_{n}(x),\pi_{s})}(\bfC_t\neq \emptyset\,\, \forall t\geq 0)=\theta^{\pi_s}(\B_{n}(x))
	\end{align}
	where we used the translation invariance of $(\bfC,\bfB)$ and that $A_{m,n}^s$ is independent of the background. Now by \eqref{SurvivalContinuityLemmaHelp1}, \eqref{SurvivalContinuityLemmaHelp2} and \eqref{SurvivalContinuityLemmaHelp3} we get that
	\begin{align*}
		\theta(\B_k(x),\emptyset)\geq \Pw(\bfC^{\B_{k}(x),\emptyset}_t\neq \emptyset\,\, \forall t\geq 0|A_{m,n}^s)\Pw(A_{m,n}^s)\geq \theta^{\pi_s}(\B_{n}(x))(1-\varepsilon) \geq (1-\varepsilon)^2,
	\end{align*}
	which yields that $\lim_{n\to \infty}\theta(\lambda,r,\B_n(x),\emptyset)=1$, for all $\lambda\in (\lambda_c(r),\lambda_{c}(r)+\varepsilon)$. Since the map $\lambda\mapsto c_1(\lambda,\rho)$ is strictly decreasing it is possible that there exists $\lambda'>\lambda$ such that $c_1(\lambda',\rho)>\kappa^{-1}\rho$ is no longer satisfied. In this case we can use monotonicity and see that
	\begin{equation*}
		\lim_{n\to \infty}\theta(\lambda',r,\B_n(x),\emptyset)\geq \lim_{n\to \infty}\theta(\lambda,r,\B_n(x),\emptyset)=1.\qedhere
	\end{equation*}
\end{proof}

\subsection{Continuity of the survival probability}\label{SectionContinuity}
In this section we study continuity of the survival probability with respect to the infection rate $\lambda$ and recovery rate $r$. We start by determining on which regions of the parameter space the functions
\begin{equation*}
	\lambda\mapsto\theta(\lambda,r,C,B) \quad \text{ and }\quad  r\mapsto\theta(\lambda,r,C,B)
\end{equation*}
are left or right continuous. Before we proceed we need the following result concerning the limit of a sequence of monotone and continuous functions.
\begin{lemma}\label{RightContinuity}
	Let $f,f_n:(0,\infty)\to [0,1]$ for every $n\geq 1$ with $\lim_{n\to \infty} f_n(x)=f(x)$ for all $x\in (0,\infty)$. Let $f_n$ be a continuous and monotone function for all $n\in \N$, and furthermore $f_n(x)\geq f_{n+1}(x)$ for all $x\in (0,\infty)$. If $f_n$ is a increasing function for all $n\in \N$, it follows that $f$ is right continuous and if $f_n$ is decreasing, then $f$ is left continuous. 
\end{lemma}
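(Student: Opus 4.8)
The plan is to identify $f$ with the pointwise infimum of the sequence $(f_n)_{n\in\N}$ and then to invoke two classical facts: an infimum of continuous functions is upper semicontinuous, and a monotone upper semicontinuous function on an interval is automatically one-sidedly continuous in the direction dictated by its monotonicity.

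First I would observe that, since $f_n(x)\geq f_{n+1}(x)$ for every $x\in(0,\infty)$ and $f_n(x)\to f(x)$, the sequence $(f_n(x))_{n\in\N}$ decreases to $f(x)$, so that $f(x)=\inf_{n\in\N} f_n(x)$ for all $x$. Consequently, for any $c\in\R$ the sublevel set $\{x\in(0,\infty): f(x)<c\}=\bigcup_{n\in\N}\{x\in(0,\infty): f_n(x)<c\}$ is open, because each $f_n$ is continuous; hence $f$ is upper semicontinuous, i.e. $\limsup_{y\to x} f(y)\leq f(x)$ for every $x\in(0,\infty)$. Next, since a pointwise limit of non-decreasing (resp. non-increasing) functions is non-decreasing (resp. non-increasing), $f$ inherits the monotonicity of the $f_n$.

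It then remains to verify the elementary fact that a monotone, upper semicontinuous function $f$ on $(0,\infty)$ is right continuous when it is non-decreasing and left continuous when it is non-increasing. For non-decreasing $f$ the one-sided limits exist and satisfy $f(x+)=\inf_{y>x} f(y)\geq f(x)$, and moreover $\limsup_{y\to x} f(y)=\max\bigl(f(x-),f(x+)\bigr)=f(x+)$; comparing with upper semicontinuity yields $f(x+)\leq f(x)$, hence $f(x+)=f(x)$, which is right continuity. Symmetrically, for non-increasing $f$ one has $\limsup_{y\to x} f(y)=\max\bigl(f(x-),f(x+)\bigr)=f(x-)\geq f(x)$, and upper semicontinuity forces $f(x-)=f(x)$, i.e. left continuity. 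This disposes of both cases and completes the argument.

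I do not anticipate a serious obstacle: the only point demanding a little care is the bookkeeping with one-sided limits of monotone functions, namely checking in each case that the relevant one-sided limit is precisely the $\limsup$ controlled by upper semicontinuity, but this is routine. Note that the hypotheses are used economically — continuity of the $f_n$ enters only through upper semicontinuity of their countable infimum, and the pointwise decrease $f_n\geq f_{n+1}$ is exactly what lets us write $f=\inf_n f_n$.
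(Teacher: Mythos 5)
Your proof is correct and complete. The paper itself does not display a proof of this lemma (it defers to the author's dissertation), so I cannot make a line-by-line comparison, but the route you take is clean and standard: writing $f=\inf_n f_n$ thanks to the pointwise monotonicity in $n$, observing that a countable infimum of continuous functions is upper semicontinuous via the open sublevel-set characterisation, noting that $f$ inherits the monotonicity of the $f_n$, and then using the elementary fact that for a function with one-sided limits one has $\limsup_{y\to x}f(y)=\max\bigl(f(x-),f(x+)\bigr)$, which for a non-decreasing $f$ equals $f(x+)$ and for a non-increasing $f$ equals $f(x-)$, so upper semicontinuity pins down precisely the one-sided limit that monotonicity does not already control. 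This is equivalent to the more hands-on $\varepsilon$–$\delta$ argument (pick $n$ with $f_n(x)<f(x)+\varepsilon$, then use continuity of $f_n$ and $f\leq f_n$ on a one-sided neighbourhood) but packages it more transparently.
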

\begin{proof}
	See \cite{seiler2021} for a proof of this result.
\end{proof}
As a direct consequence of this lemma we can conclude right continuity in the following proposition.
\begin{proposition}\label{RightContinuityOfSurv}
	Let $C\subset V$ and $B\subset E$. Then, for $r>0$ the function
	\begin{align*}
		\lambda\mapsto\theta(\lambda,r,C,B),
	\end{align*}
	is right continuous on $(0,\infty)$ and for $\lambda>0$ the function $r\mapsto\theta(\lambda,r,C,B)$ is left continuous on $(0,\infty)$.
\end{proposition}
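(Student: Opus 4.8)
The plan is to realize $\theta(\lambda,r,C,B)$ as a decreasing limit of the finite-horizon survival probabilities and then invoke Lemma~\ref{RightContinuity}. Concretely, for $t\geq 0$ set
\[
f_t(\lambda):=\Pw^{(C,B)}_{\lambda,r}(\bfC_s\neq\emptyset\text{ for all }s\leq t),
\]
so that $\{\bfC_s\neq\emptyset\ \forall s\leq t\}$ is a decreasing family of events as $t\uparrow\infty$ whose intersection is $\{\bfC_s\neq\emptyset\ \forall s\geq 0\}$; hence by continuity of the probability measure $f_t(\lambda)\downarrow\theta(\lambda,r,C,B)$ as $t\to\infty$, for every fixed $\lambda$. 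Restricting to integer $t=n$ gives a sequence $f_n$ with $f_n\geq f_{n+1}$ and $f_n(\lambda)\to\theta(\lambda,r,C,B)$ pointwise.

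The two hypotheses of Lemma~\ref{RightContinuity} that remain are that each $f_n$ is monotone increasing in $\lambda$ and that each $f_n$ is continuous in $\lambda$. Monotonicity is immediate from Lemma~\ref{MonotonicityAdditivityLemma}(ii): coupling the CPERE with infection rate $\widehat\lambda\geq\lambda$ to the one with rate $\lambda$ via the same graphical representation (adding independent extra infection arrows of rate $\widehat\lambda-\lambda$) gives $\bfC_s\subseteq\widehat\bfC_s$ for all $s$, so the event $\{\bfC_s\neq\emptyset\ \forall s\leq n\}$ is contained in its hatted counterpart and $f_n(\lambda)\leq f_n(\widehat\lambda)$. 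For continuity, one first reduces to finite initial configurations: if $|C|=\infty$ then $\theta\equiv 1$ and there is nothing to prove, and if $C$ is infinite one can approximate from below by finite subsets $C'\uparrow C$, using additivity (Lemma~\ref{MonotonicityAdditivityLemma}(iii)) and continuity of $\Pw$ to pass to the limit, so it suffices to treat finite $C$. For finite $C$, the event $\{\bfC_s\neq\emptyset\ \forall s\leq n\}$ lies in $D_{\cP(V)}([0,n])$ and depends only on a finite time horizon, so Lemma~\ref{ContinuityLemma} applies directly and yields continuity of $\lambda\mapsto f_n(\lambda)$. Lemma~\ref{RightContinuity} then gives right continuity of $\lambda\mapsto\theta(\lambda,r,C,B)$ on $(0,\infty)$.

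The statement for $r$ is proved the same way, now viewing $f_n$ as a function of $r$: by Lemma~\ref{MonotonicityAdditivityLemma}(ii) the infection process is monotone \emph{decreasing} in $r$ (more recovery crosses kill more infection paths), so each $f_n(\cdot)$ is a decreasing function of $r$; it is continuous by Lemma~\ref{ContinuityLemma}; and $f_n\downarrow\theta$ pointwise as before. The ``decreasing'' branch of Lemma~\ref{RightContinuity} then yields that $r\mapsto\theta(\lambda,r,C,B)$ is left continuous on $(0,\infty)$.

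I do not expect a serious obstacle here; the only point requiring minor care is the reduction to finite $C$ so that Lemma~\ref{ContinuityLemma} (which is stated for $|C|<\infty$) can be applied to get continuity of the approximating functions $f_n$. Everything else is a direct combination of the monotonicity/additivity facts, the finite-time continuity lemma, and the abstract Lemma~\ref{RightContinuity}.
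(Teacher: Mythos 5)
Your proof is correct and follows essentially the same route as the paper's: both express the survival probability as the decreasing limit in $t$ of the finite-horizon probability $\Pw^{(C,B)}_{\lambda,r}(\bfC_t\neq\emptyset)$ (identical to your $f_t$ since $\emptyset$ is absorbing), then apply Lemma~\ref{RightContinuity} using Lemma~\ref{ContinuityLemma} for continuity and Lemma~\ref{MonotonicityAdditivityLemma} for monotonicity. Your extra care about reducing to finite $C$ is a valid point that the paper leaves implicit, although the intermediate approximation step is unnecessary once you have noted that $\theta\equiv 1$ when $|C|=\infty$.
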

\begin{proof}
	By Lemma \ref{ContinuityLemma} we know that the function $\lambda\mapsto
	\Pw^{(C,B)}_{\lambda,r}(\bfC_t\neq \emptyset)$ is continuous for any $t\geq 0$ and also
	$\Pw^{(C,B)}_{\lambda,r}(\bfC_s\neq \emptyset)\geq \Pw^{(C,B)}_{\lambda,r}(\bfC_t\neq \emptyset)$ if $s\leq t$. Thus, we can conclude that
	\begin{equation*}
	\Pw^{(C,B)}_{\lambda,r}(\bfC_t\neq \emptyset)\downarrow \theta(\lambda,r,C,B) \quad \text{ as } \quad t\to \infty,
	\end{equation*}
	by continuity of $\Pw$. Since $\Pw^{(C,B)}_{\lambda,r}(\bfC_t\neq \emptyset)$ is increasing with respect to the infection rate $\lambda$, we can use Lemma~\ref{RightContinuity} to conclude that $\lambda\mapsto\theta(\lambda,r,C,B)$ is right continuous.
	
	Analogously it follows that $r\mapsto\theta(\lambda,r,C,B)$ is left continuous since $\Pw^{(C,B)}_{\lambda,r}(\bfC_t\neq \emptyset)$ is decreasing with respect to the recovery rate $r$.
\end{proof}
The continuity from the respective other side is more difficult to prove. Before we proceed with this we need the following somewhat technical result.
\begin{lemma}\label{SurvivalContinuityLemma1}
	Let $(\bfC,\bfB)$ be a CPERE, $\emptyset \neq C\subset V$ be finite and $B\subset E$. Set
	\begin{align*}
		D_{n,t}(C,B)&:=\{ \exists x\in V \text{ such that } \B_n(x) \subseteq \bfC^{C,B}_s \text{ for some } s\leq t\}
	\end{align*}
	for $n\in \N$ and $t\geq 0$. In words $D_{n,t}(C,B)$ is the event that for some $s\leq t$ there exists a vertex $x$ such that all vertices in the ball $\B_n(x)$ with centre $x$ are infected at time $s$. Then 
	\begin{align*}
		\lim_{t\to \infty}\Pw\big(D_{n,t}(C,B)\big)\geq  \theta(C,B) \quad  \text{ for all } n\in \N.
	\end{align*}
\end{lemma}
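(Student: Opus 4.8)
The plan is to run the classical ``restart'' argument for the contact process: first reduce the statement to a zero--one type assertion on the survival event, and then show that a surviving infection is given infinitely many, essentially independent, chances to completely fill a ball of radius $n$ around one of its currently infected vertices, each chance succeeding with a probability bounded away from $0$. Concretely, $D_{n,t}(C,B)$ increases in $t$ to $D_{n,\infty}(C,B):=\{\exists x\in V,\ \exists s\ge0:\ \B_n(x)\subseteq\bfC^{C,B}_s\}$, so continuity of $\Pw$ from below gives $\lim_{t\to\infty}\Pw(D_{n,t}(C,B))=\Pw(D_{n,\infty}(C,B))$. Writing $A:=\{\bfC^{C,B}_t\neq\emptyset\ \forall t\ge0\}$ for the survival event, it therefore suffices to prove $\Pw(A\setminus D_{n,\infty}(C,B))=0$, because that yields $\Pw(D_{n,\infty}(C,B))\ge\Pw(A)=\theta(C,B)$.

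The key input is a uniform ball--filling estimate: there are $T>0$ and $\delta>0$ such that $\Pw^{(C',B')}\big(\exists s\le T:\ \B_n(x)\subseteq\bfC_s\big)\ge\delta$ for every $x\in V$, every finite $C'\ni x$ and every $B'\subseteq E$. By monotonicity in the initial infection (Lemma~\ref{MonotonicityAdditivityLemma}$(i)$) I may take $C'=\{x\}$; by monotonicity in the initial background the worst case is $B'=\emptyset$; and by translation invariance it is then enough to find $T$ with $\Pw^{(\{o\},\emptyset)}\big(\exists s\le T:\ \B_n(o)\subseteq\bfC_s\big)>0$ for one fixed vertex $o$. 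For this I would fix a spanning tree $\cT$ of the (connected) subgraph of $G$ induced on $\B_n(o)$, rooted at $o$, and exhibit a positive--probability scenario on the three independent Poisson configurations $\Xi^{\mathrm{inf}},\Xi^{\mathrm{rec}},\Xi^{\mathrm{back}}$ of Remark~\ref{SplitUpPointProcesses}: starting from the empty background the edges of $\cT$ are opened one after another and then held open on a time interval $I$ of length one (each opening occurs at a strictly positive rate, since by attractiveness the opening rate of a still-closed edge is at least its opening rate in the empty configuration, which is positive and, by translation invariance, the same for all edges whenever $\pi\neq\delta_\emptyset$; holding them open for unit time has positive probability because the flip rates of a finite-range spin system are bounded), while no recovery symbol of $\Xi^{\mathrm{rec}}$ touches $\B_n(o)$ before the right endpoint of $I$ and, within the first half of $I$, the infection arrows of $\Xi^{\mathrm{inf}}$ along $\cT$ fire in order away from $o$. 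On this event all of $\B_n(o)$ is infected at one common time $\le T$ once $T$ is chosen large enough. (If $\pi=\delta_\emptyset$, i.e.\ $\emptyset$ is absorbing for $\bfB$, the background almost surely vacates every finite region and the finitely started infection dies out, so $\theta(C,B)=0$ and the lemma is vacuous; one treats this degenerate regime separately and assumes henceforth that $\emptyset$ is not absorbing.)

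With this estimate I would finish by a conditional Borel--Cantelli argument. Set $\sigma:=\inf\{t\ge0:\ \exists x,\ \B_n(x)\subseteq\bfC^{C,B}_t\}$, so that $D_{n,\infty}(C,B)=\{\sigma<\infty\}$. For $j\in\N_0$, on $\{\bfC_{jT}\neq\emptyset\}$ let $x_j$ be the least infected vertex in a fixed enumeration of $V$ and put $E_j:=\{\exists s\in[jT,(j+1)T]:\ \B_n(x_j)\subseteq\bfC_s\}$, with $E_j:=\emptyset$ otherwise. The strong Markov property of the CPERE at time $jT$, monotonicity in the initial infection, and the ball--filling estimate give $\Pw(E_j\mid\cF_{jT})\ge\delta\,\1_{\{\bfC_{jT}\neq\emptyset\}}$ for every $j$; on $A$ one has $\bfC_{jT}\neq\emptyset$ for all $j$, hence $\sum_{j\ge0}\Pw(E_j\mid\cF_{jT})=\infty$ a.s.\ on $A$, and since $E_j\in\cF_{(j+1)T}$ the conditional (Lévy) second Borel--Cantelli lemma shows that infinitely many $E_j$ occur a.s.\ on $A$, so $\bigcup_j E_j\supseteq A$ up to a null set. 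As $\bigcup_j E_j\subseteq\{\sigma<\infty\}=D_{n,\infty}(C,B)$, this gives $\Pw(A\setminus D_{n,\infty}(C,B))=0$ and completes the proof. I expect the only real difficulty to be the uniform ball--filling estimate, and within it the point of making the bound uniform in the initial background: one has to exclude the infection spread being suppressed by a very sparse background, which forces the worst case $B'=\emptyset$ and brings in Assumption~\ref{AssumptionBackground}$(i)$, attractiveness, and boundedness of the flip rates, together with the brief separate treatment of the case $\pi=\delta_\emptyset$.
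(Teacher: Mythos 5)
Your proposal is correct and follows essentially the same strategy as the paper: reduce by continuity of $\Pw$ to showing $\Pw(D_{n,\infty}(C,B))\geq\theta(C,B)$, establish a uniform positive lower bound on the probability of filling $\B_n(x)$ starting from a single infected vertex with worst-case background $\emptyset$ (after disposing of $\pi=\delta_\emptyset$), and conclude via the conditional second Borel--Cantelli lemma along a deterministic time grid. The paper compresses the positivity of the ball-filling probability into the single line $\varepsilon(n)=\Pw^{(\{x\},\emptyset)}(\bfC_1\supseteq\B_n(x))>0$ and uses time step $1$ with the event $A^n_{k+1}=\bigcup_{x}\{\bfC_{k+1}\supseteq\B_n(x)\}$, where you spell out the spanning-tree construction and use a longer block $[jT,(j+1)T]$ with a measurably chosen centre $x_j$; these are cosmetic differences and the arguments otherwise match.
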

\begin{proof}
	We can assume that $\pi\neq \delta_{\emptyset}$ since otherwise the survival probability is $0$ which makes the statement trivial. We omit for most parts of the proof the initial configuration $(C,B)$ since it remains unchanged throughout this proof. Note that since $D_{n,t}$ is increasing in $t$, it follows that $\lim_{t\to \infty}\Pw(D_{n,t})=\Pw(D_{n,\infty})$.	The idea of this proof is that if a vertex $x$ is infected at time $k\in \N$, i.e.~$x\in \bfC_k$, the probability that all vertices in a radius of $n$ get infected by time $k+1$, i.e.~$\bfC_{k+1}\supseteq \B_n(x)$, is positive for every fixed $n\in \N$. But if we assume that $\bfC$ survives we know that for every $t\geq 0$ there exists an $x\in V$ such that $x\in \bfC_t$ and this will imply $\Pw_{\lambda,r}(D_{n,\infty})\geq \theta(\lambda,r)$ for every $n\in \N$. In fact
	\begin{equation*}
		\{\bfC_t\neq \emptyset \,\, \forall t\geq 0\,\}=\{\forall k\in \N_0, \,\exists x\in V  \text{ such that } x\in \bfC_k\}
	\end{equation*}
	since $\emptyset$ is an absorbing state.
	
	Recall that $\cF_k$ is the $\sigma$-algebra generated from the Poisson point processes $\Xi$ used in the graphical representation until time $k$. Then we set
	\begin{equation*}
		\varepsilon=\varepsilon(n)=\Pw^{(\{x\},\emptyset)}( \bfC_1 \supseteq \B_n(x))>0.
	\end{equation*} We see that $\Pw( \bfC_{k+1}\supseteq \B_n(x) |\cF_k)\geq \varepsilon$ a.s. on $\{x\in \bfC_k\}$, where we used monotonicity with respect to the initial configurations (see Lemma \ref{MonotonicityAdditivityLemma}). This yields that for any $x^*\in V$
	\begin{align*}
		\Pw\Big(\bigcup_{x\in V}\{\bfC_{k+1}\supseteq \B_n(x)\}\Big|\cF_k\Big)\geq\Pw(\bfC_{k+1}\supseteq \B_n(x^*)|\cF_k) \geq \varepsilon .
	\end{align*}
	a.s. on $\{x^*\in \bfC_k\}$. We set $A^n_{k+1}:=\bigcup_{x\in V}\{\bfC_{k+1}\supseteq \B_n(x)\}\in \cF_{k+1}$ for $k\in \N_0$. We see that
	\begin{align*}
		\sum_{k=0}^{\infty}\Pw(A^n_{k+1}|\cF_k)=\infty \,\,\, \text{ a.s.~on }\,\,\, \{\forall k\in \N_0, \, \exists x\in V \text{ such that } x\in \bfC_k\}.
	\end{align*}
	Now we can use the extension of the Borel-Cantelli Lemma, found in \cite[Theorem 4.3.4]{durrett2019probability} and get that
	\begin{equation*}
		\Big\{\sum_{k=0}^{\infty}\Pw(A^n_{k+1}|\cF_k)=\infty\Big\}=\{ A^n_{k} \text{ i.o.}\}.
	\end{equation*}
	This implies $\{\bfC_t=\emptyset\,\,\forall t\geq 0\}\subset \{ A_{k} \text{ i.o.}\}$. Obviously $\Pw^{(C,B)}( A^n_{k} \text{ i.o.} )\leq \theta(C,B)$, and thus with what we just shown it follows that actually $\Pw^{(C,B)}( A^n_{k} \text{ i.o.} )= \theta(C,B)$ holds. This yields for all $n> 0$
	\begin{equation*}
		\Pw_{\lambda,r}(D_{n,\infty}(C,B))\geq \Pw^{(C,B)}_{\lambda,r}( A^{n}_{k} \text{ i.o.})= \theta(\lambda,r,C,B).\qedhere
	\end{equation*}
\end{proof}
Finally, we are prepared to prove the second continuity property. Recall from \eqref{SurvivalRegionWithGrowth} that
\begin{equation*}
	\cS_{c_1}=\{(\lambda,r): \exists \lambda'\leq \lambda \text{ s.t. } (\lambda',r)\in\cS(\{x\},\emptyset) \text{ and }  c_1(\lambda',\rho)>\kappa^{-1}\rho\},
\end{equation*}
where $\cS(\{x\},\emptyset)$ denotes the survival region for the initial configuration $(\{x\},\emptyset)$ defined in \eqref{SurvivalRegion}, i.e.~$(\lambda,r)\in\cS(\{x\},\emptyset)$ if and only if $\theta(\lambda,r,\{x\},\emptyset)>0$.
\begin{proposition}\label{LeftContinuityOfSurv}
	Let $C\subset V$ and $B\subset E$. 
	\begin{enumerate}
		\item[$(i)$] For $r>0$ the function $\lambda\mapsto\theta(\lambda,r,C,B)$
		is left continuous, and thus continuous, on $\{\lambda:(\lambda,r)\in 
		\inte(\cS_{c_1})$.
		\item[$(ii)$] For $\lambda>0$ the function $r\mapsto\theta(\lambda,r,C,B)$
		is right continuous, and thus continuous, on $\{r:(\lambda,r)\in
		\inte(\cS_{c_1})$.
	\end{enumerate} 	
\end{proposition}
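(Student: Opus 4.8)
The plan is first to reduce the statement to a one-sided estimate, and then to establish a ``restart'' bound. By Proposition~\ref{RightContinuityOfSurv} the map $\lambda\mapsto\theta(\lambda,r,C,B)$ is already right continuous and $r\mapsto\theta(\lambda,r,C,B)$ is already left continuous, and by the monotonicity of $\theta$ in $\lambda$ (increasing) and in $r$ (decreasing) from Lemma~\ref{MonotonicityAdditivityLemma} the one-sided limits $\lim_{\lambda\uparrow\lambda_0}\theta(\lambda,r,C,B)$ and $\lim_{r\downarrow r_0}\theta(\lambda,r,C,B)$ exist and are bounded above by $\theta(\lambda_0,r,C,B)$ and $\theta(\lambda,r_0,C,B)$ respectively. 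Hence it suffices to prove the reverse inequalities $\liminf_{\lambda\uparrow\lambda_0}\theta(\lambda,r,C,B)\geq\theta(\lambda_0,r,C,B)$ on $\{\lambda:(\lambda,r)\in\inte(\cS_{c_1})\}$, and analogously for $r$; we may also assume $C$ finite and non-empty, as otherwise $\theta$ is identically $0$ or $1$.

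Next I would extract the structural information hidden in $(\lambda_0,r)\in\inte(\cS_{c_1})$ (I treat case $(i)$; $(ii)$ is identical with the roles of $\lambda$ and $r$ exchanged). Since $\cS_{c_1}$ is open at $(\lambda_0,r)$, there is $\delta_0>0$ so that every $\lambda''\in(\lambda_0-\delta_0,\lambda_0)$ admits a $\lambda'\leq\lambda''$ with $\theta(\lambda',r,\{x\},\emptyset)>0$ and $c_1(\lambda',\rho)>\kappa^{-1}\rho$; thus $\lambda_c(r,\{x\},\emptyset)\leq\lambda'<\lambda_0$ and, since $c_1(\cdot,\rho)$ is decreasing by Lemma~\ref{UniqueSolution}, also $c_1(\lambda_c(r,\{x\},\emptyset),\rho)>\kappa^{-1}\rho$. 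Theorem~\ref{IndependencyCirticalRateSurvival} then yields $\lambda_c(r):=\lambda_c(r,C,B)=\lambda_c^{\pi}(r)=\lambda_c(r,\{x\},\emptyset)<\lambda_0$ together with $c_1(\lambda_c^{\pi}(r),\rho)>\kappa^{-1}\rho$, so that Lemma~\ref{SurvivalContinuityLemma2} is available; fix $\lambda_1:=\lambda_0-\tfrac{\delta_0}{2}>\lambda_c(r)$.

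The heart of the argument is the restart bound: for all $n\in\N$, $t\geq0$ and parameters $(\lambda,r)$,
\begin{equation*}
\theta(\lambda,r,C,B)\ \geq\ \Pw^{(C,B)}_{\lambda,r}\big(D_{n,t}(C,B)\big)\,\theta(\lambda,r,\B_n(x),\emptyset),
\end{equation*}
with $D_{n,t}$ as in Lemma~\ref{SurvivalContinuityLemma1}. Indeed, on $D_{n,t}$ let $\tau\leq t$ be the first time some $\B_n(Y)\subseteq\bfC_\tau$, which is a stopping time with $\cF_\tau$-measurable $Y$ since only finitely many vertices are infected up to time $t$; the strong Markov property gives $\Pw^{(C,B)}_{\lambda,r}(\bfC_s\neq\emptyset\ \forall s\mid\cF_\tau)=\theta(\lambda,r,\bfC_\tau,\bfB_\tau)$ on $D_{n,t}$, which by monotonicity in the initial configuration (Lemma~\ref{MonotonicityAdditivityLemma}) and translation invariance is at least $\theta(\lambda,r,\B_n(Y),\emptyset)=\theta(\lambda,r,\B_n(x),\emptyset)$; integrating over $D_{n,t}$ gives the bound. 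Now fix $\varepsilon>0$. By Lemma~\ref{SurvivalContinuityLemma2} pick $n$ with $\theta(\lambda_1,r,\B_n(x),\emptyset)>1-\varepsilon$, so by monotonicity $\theta(\lambda,r,\B_n(x),\emptyset)>1-\varepsilon$ for all $\lambda\geq\lambda_1$; with $n$ fixed, Lemma~\ref{SurvivalContinuityLemma1} at $(\lambda_0,r)$ gives $t$ with $\Pw^{(C,B)}_{\lambda_0,r}(D_{n,t})>\theta(\lambda_0,r,C,B)-\varepsilon$; and since $D_{n,t}\in D_{\cP(V)}([0,t])$ and $C$ is finite, Lemma~\ref{ContinuityLemma} makes $\lambda\mapsto\Pw^{(C,B)}_{\lambda,r}(D_{n,t})$ continuous, so there is $\delta\in(0,\tfrac{\delta_0}{2})$ with $\Pw^{(C,B)}_{\lambda,r}(D_{n,t})>\theta(\lambda_0,r,C,B)-2\varepsilon$ for $\lambda\in(\lambda_0-\delta,\lambda_0)$. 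For such $\lambda$ (which satisfy $\lambda>\lambda_1$) the restart bound yields $\theta(\lambda,r,C,B)\geq(\theta(\lambda_0,r,C,B)-2\varepsilon)(1-\varepsilon)$, and letting $\varepsilon\downarrow0$ proves $(i)$. Part $(ii)$ is the same with a neighbourhood $(r_0-\eta_0,r_0+\eta_0)$, $r_1:=r_0+\tfrac{\eta_0}{2}>r_0$ (still with $\lambda>\lambda_c(r_1)$), monotonicity of $\theta$ in $r$ to get $\theta(\lambda,r,\B_n(x),\emptyset)>1-\varepsilon$ for $r\leq r_1$, and the continuity of $r\mapsto\Pw^{(C,B)}_{\lambda,r}(D_{n,t})$ from Lemma~\ref{ContinuityLemma}.

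I expect the main obstacle to be the restart step: one must bound the conditional survival probability after a large ball has been filled uniformly in the (uncontrolled) current background, which is exactly where monotonicity lets one replace $\bfB_\tau$ by $\emptyset$, and then argue that this bound tends to $1$ as the ball grows—this is Lemma~\ref{SurvivalContinuityLemma2}, and is precisely where the interior-of-$\cS_{c_1}$ hypothesis (hence $c_1>\kappa^{-1}\rho$ at the critical value, hence $\lambda_0>\lambda_c(r)$) is indispensable. The only remaining technical point is the uniformity in $\lambda$ (resp.\ $r$) of the finite-time probability $\Pw^{(C,B)}_{\cdot,r}(D_{n,t})$, which is supplied by Lemma~\ref{ContinuityLemma}.
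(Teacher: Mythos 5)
Your proposal is correct and takes essentially the same route as the paper: the restart bound via the stopping time at which a ball $\B_n(x)$ fills up, combined with Lemma~\ref{SurvivalContinuityLemma1} to pull $\Pw(D_{n,t})$ up toward $\theta$, Lemma~\ref{SurvivalContinuityLemma2} to push $\theta(\cdot,\B_n,\emptyset)$ toward $1$, and Lemma~\ref{ContinuityLemma} for the finite-time continuity in the parameter. The paper organizes the limits via two auxiliary parameters $\lambda''<\lambda'<\lambda$ and iterated limits, whereas you fix a single reference $\lambda_1$ and run a direct $\varepsilon$--$\delta$ argument; this is a presentational difference, not a different proof idea.
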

\vspace{-1em}
\begin{proof}
	We assume that $C\subset V$ is finite and non-empty. Otherwise the survival probability is $0$ or $1$ and a constant function is obviously continuous. 
	We only show $(i)$ since $(ii)$ follows analogously, i.e only some minor changes are needed in the proof. We fix $r>0$ and assume that $\{\lambda:(\lambda,r)\in 
	\inte(\cS_{c_1})\neq \emptyset\}$. Thus, let $(\lambda,r)\in 
	\inte(\cS_{c_1})$, fix some $x\in V$ and define
	$\tau=\tau_n:=\inf\{t\geq 0 : \exists x\in V \text{ s.t. } \bfC_t\supseteq \B_n(x)\}$,
	where $n\in \N$. We see that
	\begin{align*}
	\theta(\lambda)&=\Pw(\bfC_s\neq \emptyset\,\, \forall s\geq 0)\geq \Pw(\{ \tau<t\}\cap\{\bfC_s\neq \emptyset\,\, \forall s\geq \tau\})\\
	&= \E[\1_{\{ \tau<t\}}\Pw(\bfC_s\neq \emptyset\,\, \forall s\geq \tau|\cF_\tau)]
	\end{align*}
 	for any $t\geq 0$, where we used again that if $\bfC_t\neq \emptyset$ for $t\geq \tau$, then this must also be true for all $t\leq \tau$. Now we use the fact that $(\bfC,\bfB)$ is a Feller process, and see that 
	\begin{align*}
	\Pw(\bfC_s\neq \emptyset\,\, \forall s\geq \tau|\cF_\tau)=\Pw\big(\bfC_{\tau+s}\neq \emptyset\,\, \forall s\geq 0\,\big|\,(\bfC_{\tau},\bfB_{\tau}) \big),
	\end{align*}
	where we used the strong Markov property.
	From the definition of $\tau$ it is clear that there exists an $x\in V$ such that $\bfC_{\tau}\supseteq \B_n(x)$. Now we know that
	\begin{align*}
	\Pw\big(\bfC_{\tau+s}\neq \emptyset\,\, \forall s\geq 0\,\big|\,(\bfC_{\tau},\bfB_{\tau}) \big)\geq \Pw^{(\B_n(x),\emptyset)}\big(\bfC_{s}\neq \emptyset\,\, \forall s\geq 0\big),
	\end{align*}
	and by translation invariance the right-hand side is independent of $x$. Thus, we can omit the vertex $x$ and write $\B_n$. So we get that
	\begin{align*}	
	\theta(\lambda)\geq \Pw_{\lambda}(D_{n,t})\Pw_{\lambda}^{(\B_n,\emptyset)}\big(\bfC_{s}\neq \emptyset\,\, \forall s\geq 0\big)=\Pw_{\lambda}(D_{n,t})\theta(\lambda,\B_n,\emptyset),
	\end{align*}
	where we used that $\{\tau<t\}=D_{n,t}$. The set $D_{n,t}$ is defined as in Lemma~\ref{SurvivalContinuityLemma1}. Now let $\lambda_c(r)<\lambda''<\lambda'<\lambda$, and thus $\lambda',\lambda''\in \{\lambda:(\lambda,r)\in 	\inte(\cS_{c_1})\}$. 
	Then we see that
	\begin{align*}
	\theta(\lambda')\geq \Pw_{\lambda'}(D_{n,t})\theta(\lambda',\B_n,\emptyset)\ge \Pw_{\lambda'}(D_{n,t})\theta(\lambda'',\B_n,\emptyset),
	\end{align*}
	where we used monotonicity, see  Lemma~\ref{MonotonicityAdditivityLemma}. Letting $\lambda'\uparrow \lambda$ yields
	\begin{align}\label{ContHelp1}
	\theta(\lambda-)\geq \Pw_{\lambda}(D_{n,t})\theta(\lambda'',\B_n,\emptyset),
	\end{align}
	where we used continuity of $\lambda \mapsto\Pw_{\lambda}(D_{n,t})$ which follows by Lemma~\ref{ContinuityLemma}. Recall that $(C,B)$ was the initial configuration of the CPERE, using Lemma \ref{SurvivalContinuityLemma1} we get that
	\begin{align*}
		\lim_{t\to \infty}\Pw_{\lambda}\big(D_{n,t}(C,B)\big)\geq  \theta(\lambda,C,B),
	\end{align*}
	and thus letting $t\to \infty$ in \eqref{ContHelp1} yields
	\begin{align*}
	\theta(\lambda-,C,B)\geq \lim_{t\to \infty} \Pw_{\lambda}(D_{n,t}(C,B))\theta(\lambda'',\B_n,\emptyset) \geq\theta(\lambda,C,B)\theta(\lambda'',\B_n,\emptyset).
	\end{align*}
	Since we know that $\lambda''\in \{\lambda:(\lambda,r)\in \inte(\cS_{c_1})\}$, by Lemma \ref{SurvivalContinuityLemma2} it follows that
	\begin{align*}
	\theta(\lambda'',\B_n,\emptyset)\to 1 \text{ as } n\to \infty.
	\end{align*}
	Putting everything together yields $\theta(\lambda-,C,B)\geq \theta(\lambda,C,B)$. Since we know that this function is increasing in $\lambda$, this yields left continuity on the set $\{\lambda:(\lambda,r)\in
	\inte(\cS_{c_1})\}$. Right continuity, and therefore continuity follows by Proposition~\ref{RightContinuityOfSurv}.
\end{proof}
We end this section with the following proof:
\begin{proof}[Proof of Proposition~\ref{ContinuityTheorem}]
\begin{enumerate}
    \item By Proposition~\ref{RightContinuityOfSurv} and Proposition~\ref{LeftContinuityOfSurv} it follows that 
	\begin{align*}
		(\lambda,r)\mapsto\theta(\lambda,r,C,B)
	\end{align*}
	is separately continuous on the open set $
	\inte(\cS_{c_1})\subset \R^2$, which means that the function is continuous in all variable separately, i.e.~$\lambda\mapsto\theta(\lambda,r,C,B)$ and $r\mapsto\theta(\lambda,r,C,B)$ are continuous on $\{\lambda:(\lambda,r)\in 	\inte(\cS_{c_1})\}$ and $\{r:(\lambda,r)\in 
	\inte(\cS_{c_1})\}$ respectively. Since the survival probability $\theta$ is monotone in the infection rate $\lambda$ and the recovery rate $r$ it follows that the function is jointly continuous on $	\inte(\cS_{c_1})$, see 
	\cite[Proposition~2]{kruse1969joint}.
	\item If $\rho=0$, then it follows that $\cS_{c_1}=\cS(C,B)$ for all $(C,B)$ with $C$ non-empty but finite. This implies in particular that the critical infection rate $\lambda_c(r)$ is independent of the initial configuration $(C,B)$, where we used Theorem~\ref{IndependencyCirticalRateSurvival}. We know by Proposition~\ref{LeftContinuityOfSurv} that the function $\lambda\mapsto\theta(\lambda,r,C,B)$ is left continuous on $(\lambda_c(r),\infty)$ and is zero on $(0,\lambda_c(r))$. Now by assumption $\theta(\lambda_c(r),r,C,B)=0$, and thus the function is left continuous on $(0,\infty)$. But we also know by Proposition~\ref{RightContinuityOfSurv} that $\lambda\mapsto\theta(\lambda,r,C,B)$ is right continuous on $(0,\infty)$, and therefore continuous on $(0,\infty)$. Continuity in $r$ follows analogously, and thus it follows that 
	\begin{align*}
		(\lambda,r)\mapsto\theta(\lambda,r,C,B)
	\end{align*}
	is separately continuous on $(0,\infty)^2$. We can conclude, analogously as in $(i)$, the joint continuity with \cite[Proposition~2]{kruse1969joint}.
\end{enumerate}
\end{proof}
\subsection{Equivalent conditions for complete convergence}\label{CompleteConvergenceChapter}
This section is dedicated to proving Theorem~\ref{CompleteConvergenceComplete}. Recall that $c_1(\lambda,\rho)$ is the solution of \eqref{UniqueSolution2}, $\kappa$ is the constant from Assumption~\ref{AssumptionBackground} $(ii)$ and $\rho$ denotes the exponential growth of the graph $G$. 

Therefore, the main goal is to show that if for given $\lambda,r>0$ there exists a $\lambda'\leq \lambda$ such that for $(\lambda',r)$ the two conditions \eqref{ConvergenceCond1.1} and \eqref{ConvergenceCond2.1} are satisfied, i.e.
\begin{align}\label{ConvergenceCond1}
\Pw_{\lambda',r}^{(C,B)}(x\in \bfC_t \text{ i.o.})=\theta(\lambda',r,C,B)
\end{align}
for all $x\in V$, $C\subset V$ and $B\subset E$ and
\begin{align}\label{ConvergenceCond2}
\lim_{n\to\infty}\limsup_{t\to \infty}\Pw_{\lambda',r}(\bfC_t^{\B_n(x),\emptyset}\cap \B_n(x)\neq\emptyset)=1
\end{align}
for any $x\in V$, then this implies complete convergence of the CPERE, i.e.
\begin{align}\label{CompleteConvergence1}
	(\bfC_t^{C,B},\bfB_t^B)\Rightarrow \theta(\lambda,r,C,B)\overline{\nu}+[1-\theta(\lambda,r,C,B)](\delta_{\emptyset}\otimes\pi)\quad \text{ as } t\to \infty
\end{align}
for all $C\subset V$ and $B\subset E$. On the other hand if for $\lambda$ the inequality $c_1(\lambda,\rho)>\kappa^{-1} \rho$ and \eqref{CompleteConvergence1} are satisfied, then this already implies that \eqref{ConvergenceCond1} and \eqref{ConvergenceCond2} are satisfied. We begin with the first part, and thus show convergence of the marginals $\bfC$ and $\bfB$ and then conclude that this already implies that the CPERE $(\bfC,\bfB)$ convergences.

By Assumption \ref{AssumptionBackground} $(i)$ we already know that $\bfB_t^{B} \Rightarrow \pi$ as $t\to \infty$ for all $B\subset E$. Hence it remains to show that the two conditions \eqref{ConvergenceCond1} and \eqref{ConvergenceCond2} imply that the infection process $\bfC$ convergences weakly as $t\to \infty$. We show that for any $C\subset V$ and $B\subset E$
\begin{equation}\label{ConvergenceCond3}
	\Pw_{\lambda,r}(\bfC_{t}^{C,B}\cap C'\neq \emptyset)\to \theta(\lambda,r,C,B)\theta^{\pi}(\lambda,r,C'),
\end{equation}
as $t\to \infty$ for every $C'\subset V$ finite, which suffices to conclude weak convergence of the infection process $\bfC$ since the function class $\{\1_{\{\cdot\,\cap C'\neq \emptyset\}}: C'\subset V\text{ finite}\}$ is convergence determining.
Then we show that the converse holds true as well, which provides that \eqref{CompleteConvergence1} implies \eqref{ConvergenceCond1} and \eqref{ConvergenceCond2}. Once we know that the marginals converge we show that this already implies the convergence of the joint distribution, i.e.~$(\bfC_t,\bfB_t)$ converges weakly as $t\to \infty$.

We first introduce some shorthand notation to keep the formulas somewhat cleaner. For $A\subset V$ we set
\begin{equation*}
\begin{aligned}
    A_E&:=\big\{\{x,y\}\in E: x\in A\big\},\\
	A^N&:=\bigcup_{x\in A}\B_{N}(x),\\
	A_E^N&:=\big\{\{x,y\}\in E: x\in A^N\big\},
\end{aligned}
\end{equation*}
where $\B_N(x)$ is the ball with centre $x$ and radius $N$ with respect to the graph distance of $G$.

Let $(\widecheck{\bfB}^{s/2}_r )_{r\geq s/2}$ denote a process with same dynamics as the background process $\bfB$, which is coupled with the original background in such a way that it starts at time $s/2$ with an initial distribution $\pi$ and is assumed to be independent from $(\bfB^{B}_r)_{r\leq s/2}$, but from $s/2$ onward it uses the same graphical representation as $\bfB^B$. For an illustration see Figure~\ref{fig:RestartedDual:b}. 
\begin{lemma}\label{ControlBackground}
	Let $D\subset V$ be finite and $B\subset E$. Then for every $\varepsilon>0$ there exists an $S>0$ such that for all $s\geq S$
	\begin{equation*}
		\Pw\big(\widecheck{\bfB}^{s/2}_u\cap D_E=\bfB^{B}_u\cap D_E \,\, \forall u\geq s\big)>1-\varepsilon.
	\end{equation*}
\end{lemma}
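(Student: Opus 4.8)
The plan is to exploit that $\widecheck{\bfB}^{s/2}$ and $\bfB^{B}$ are driven by the \emph{same} graphical representation of the background on $[s/2,\infty)$ and differ only through their (random) states at time $s/2$; hence the edges on which the two processes disagree at times $u\ge s$ are contained in a time-shifted complement of the permanently coupled region from \eqref{DefinitionPermanetlyCoupledRegion}, which decays exponentially by Assumption~\ref{AssumptionBackground}~$(ii)$. This is essentially the same mechanism as in the proof of Lemma~\ref{CommonDistribution}, only now I track a fixed finite set of edges $D_E$ instead of a single ball.

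First I would introduce, for $\tau\ge 0$, the permanently coupled region ${\Psi'}^{(\tau)}_{t}$ of the background \emph{restarted at time $\tau$}: defined exactly as $\Psi'_t$ in \eqref{DefinitionPermanetlyCoupledRegion} but with all initial states imposed at time $\tau$ and the graphical representation of the background run on $[\tau,\infty)$. Since the Poisson point processes underlying the graphical representation are time-homogeneous, ${\Psi'}^{(\tau)}_{\tau+t}$ has the same law as $\Psi'_t$, so Assumption~\ref{AssumptionBackground}~$(ii)$ gives $\Pw\big(e\notin{\Psi'}^{(\tau)}_{\tau+t}\big)<K\exp(-\kappa t)$ for every $e\in E$ and all $t\ge T$.

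Next I would note that, since $\widecheck{\bfB}^{s/2}_{s/2}$ and $\bfB^{B}_{s/2}$ are elements of $\cP(E)$ and both processes thereafter follow the same graphical representation, the defining property of ${\Psi'}^{(s/2)}_{t}$ applied to these two initial states at time $s/2$ shows that on $\{e\in{\Psi'}^{(s/2)}_{t}\}$ the state of $e$ agrees in $\widecheck{\bfB}^{s/2}$ and $\bfB^{B}$ for all $u\ge t$. Taking $\tau=s/2$ and $t=s$ (so the coupling is run for an additional time $s/2$), and then a union bound over $e\in D_E$ --- a finite set, since $D$ is finite and $G$ has bounded degree --- I would obtain, for all $s\ge 2T$,
\[
\Pw\big(\exists\, e\in D_E,\ \exists\, u\ge s:\ e\in\widecheck{\bfB}^{s/2}_u\symdiff\bfB^{B}_u\big)\ \le\ |D_E|\,K\exp(-\kappa s/2).
\]
Choosing $S:=\max\{2T,\ 2\kappa^{-1}\log(|D_E|K/\varepsilon)\}$ then makes the right-hand side at most $\varepsilon$ for $s\ge S$, which is exactly the complementary estimate to the one claimed.

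The argument is mostly bookkeeping; the point that needs a little care is the time-shift identity ${\Psi'}^{(\tau)}_{\tau+t}\overset{d}{=}\Psi'_t$ together with the fact that $\widecheck{\bfB}^{s/2}$ is, pathwise, the background restarted at time $s/2$ from the state $\widecheck{\bfB}^{s/2}_{s/2}$ under the shared graphical representation. This is what legitimises applying the ``$\forall\,B_1,B_2$'' clause in the definition of the permanently coupled region to the random states present at time $s/2$, and it relies on the graphical representation coupling all initial configurations simultaneously, as set up in Section~\ref{GraphRepSpinSystemCose}. Note that neither reversibility (Assumption~\ref{AssumptionBackground}~$(iii)$) nor the specific choice $\pi$ for the law of $\widecheck{\bfB}^{s/2}_{s/2}$ enters the argument.
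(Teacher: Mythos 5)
Your proof is correct, and it takes a genuinely different route from the paper's. The paper derives the estimate by first invoking Proposition~\ref{ExpansionSpeedPerCouRegion} (the a.s.\ linear-expansion statement for the permanently coupled region, whose proof involves the growth rate $\rho$ and a Borel--Cantelli argument) to get $\Pw(D\subset\Phi_t\ \forall t\geq S/2)>1-\varepsilon$ by continuity of measure, and then applies the Markov-property/time-shift step. You bypass Proposition~\ref{ExpansionSpeedPerCouRegion} entirely: since $D_E$ is a fixed finite set (finite because $D$ is finite and $G$ has bounded degree), you apply the pointwise exponential-decay bound of Assumption~\ref{AssumptionBackground}~$(ii)$ directly to the time-shifted coupled region ${\Psi'}^{(s/2)}_s$ (which has the law of $\Psi'_{s/2}$ by time-homogeneity of the Poisson construction) and take a union bound over $e\in D_E$. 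This is more elementary, makes no reference to $\rho$ or the growth condition, and even yields an explicit $S$. The time-shift step and the way you legitimise applying the ``$\forall\,B_1,B_2$'' clause of \eqref{DefinitionPermanetlyCoupledRegion} to the random states $\widecheck{\bfB}^{s/2}_{s/2},\bfB^B_{s/2}$ (via the graphical representation coupling all initial conditions simultaneously) match the paper's reasoning, but your derivation of the numerical bound is cleaner for this lemma, where only a fixed finite edge set needs to be controlled.
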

\begin{proof}
	Let $x\in D$. Let $c>0$ be chosen such that $c\kappa>\rho$. Then by Proposition~\ref{ExpansionSpeedPerCouRegion} we know that $\Pw(\exists s\geq 0: \B_{\lfloor c^{-1}t \rfloor}(x) \subseteq \Phi_{t}\,\, \forall t\geq s )=1$. Let $S'>0$ be chosen such that $D\subset \B_{\lfloor c^{-1}t \rfloor}$ for all $t\geq S'/2$. By continuity of the measure $\Pw$, for every $\varepsilon>0$ there exists an $S>S'>0$ such that $\Pw\big(D \subset \B_{\lfloor c^{-1}t \rfloor}(x) \subseteq \Phi_{t}, \forall t\geq \tfrac{S}{2} \big)>1-\varepsilon$. 
	Now fix an $s \geq S.$
	By 	using the Markov property 
	we can interpret $(\bfB^{B}_{u})_{u\geq s/2}$ as a background process started with the initial configuration $\bfB^{B}_{s/2}$, and $(\widecheck{\bfB}^{s/2}_u)_{u\geq s/2}$ is also a background process which is coupled via the graphical representation to the former with the difference that the initial distribution is its stationary distribution $\pi$  chosen to be independent of $\bfB^{B}_{s/2}$. Since at time $u \geq s$ these processes have evolved for a time of length $u-s/2 \geq s/2 \geq S/2$ we can apply the above result in order to conclude that for all $s\geq S$,
	\begin{equation*}
		\Pw(\widecheck{\bfB}^{s/2}_u\cap D_E=\bfB^{B}_u\cap D_E \,\, \forall u\geq s)>1-\varepsilon.\qedhere
	\end{equation*}
\end{proof}
Let $t,s>0$ and recall the dual process $(\widehat{\bfC}^{A,B,t+s}_r)_{r\leq t+s}$ of $(\bfC_r^{C,B})_{r\leq t+s}$. In the definition of the dual process we fixed the background $(\bfB^{B}_r)_{r\leq t+s}$, reversed the graphical representation with respect to the time axis at the time point $t+s$ and fixed $A$ as the initial set of infected vertices for the dual process.

Now let $(\widecheck{\bfC}_{u}^{A,s/2,t+s})_{u\leq t+s/2}$ be a process coupled to $\widehat{\bfC}^{A,B,t+s}$ by using the same time-reversed infection arrows and recovery symbols, but the background at time $s/2$ 
is reset and independently drawn according to the law $\pi$, i.e.~we use $(\widecheck{\bfB}^{s/2}_r )_{r\geq s/2}$ instead of $(\bfB^{B}_r)_{r\geq s/2}$. Again see Figure~\ref{fig:RestartedDual} for a illustration.
\begin{figure}[t]
	\centering 
	\subfigure[Illustration of the graphical representation of $(\bfC_u,\bfB_u)_{u\leq t+s}$ and the dual $(\widehat{\bfC}^{t+s}_u)_{u\leq t+s}$. ]{\label{fig:RestartedDual:a}\includegraphics[width=68.5mm]{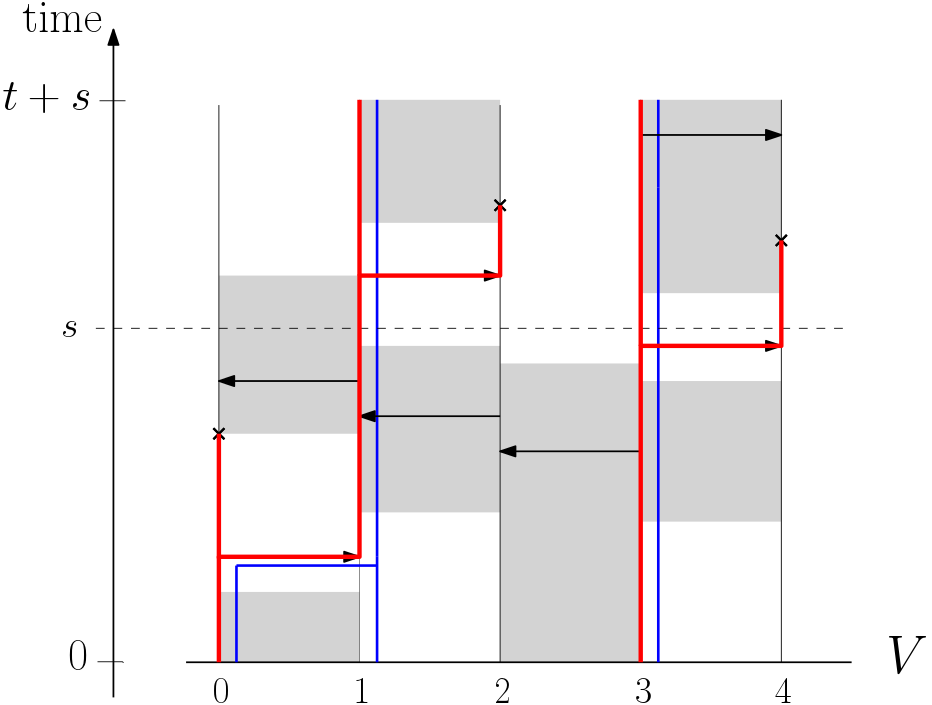}}\hfill
	\subfigure[Illustration of the graphical representation of $(\widecheck{\bfB}^{s/2}_{u})_{s/2\leq u \leq t}$ and $(\widecheck{\bfC}^{s/2,t+s}_u)_{u\leq t+s/2}$. ]{\label{fig:RestartedDual:b}\includegraphics[width=68.5mm]{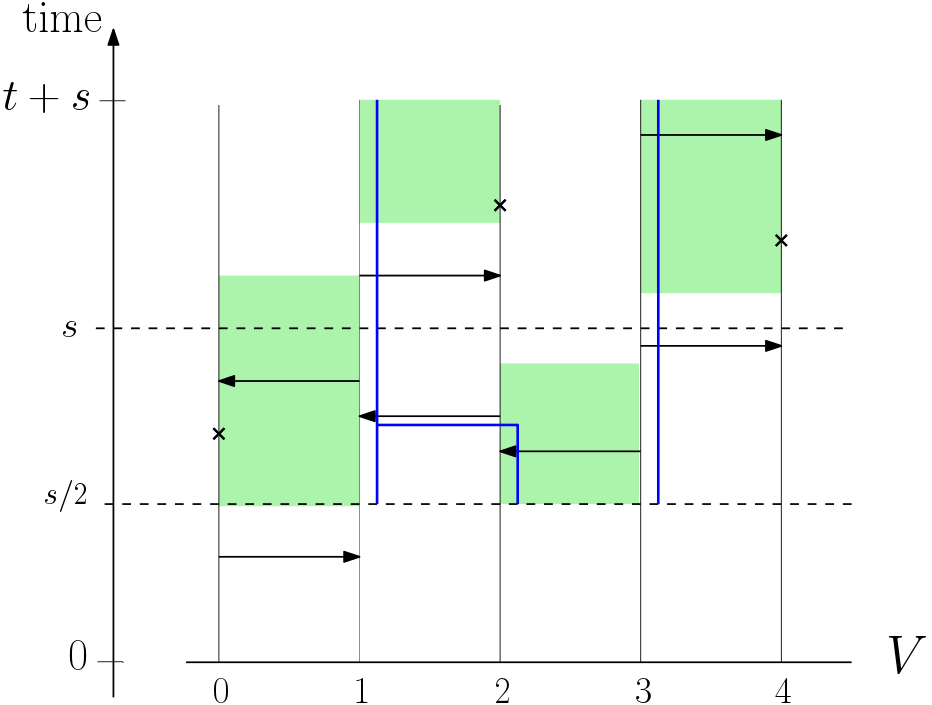}}
	\caption{As usual the arrows and crosses denote the infection and recovery symbols. The grey area illustrate the closed edges according to $\bfB$ (left picture) and the green areas the closed edges according to $\widecheck{\bfB}$ (right picture). The red lines are the infection paths forward in time, i.e.~$\bfC$, and the blue lines the infection path backward in time, i.e.~$\widehat{\bfC}$ in the left and $\widecheck{\bfC}$ on the right.}
	\label{fig:RestartedDual}
\end{figure}
\begin{lemma}\label{ControlDualProcess}
	Let $t>0$, $A\subset V$ be finite and $B\subset E$. Then for every $\varepsilon>0$ there exists an $S>0$ such that for all $s>S$,
	\begin{equation*}
		\Pw(\widehat{\bfC}_{u}^{A,B,t+s}=\widecheck{\bfC}_{u}^{A,s/2,t+s}\,\, \forall u\leq t)>1-\varepsilon.
	\end{equation*}
\end{lemma}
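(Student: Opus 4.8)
The plan is to reduce the statement to Lemma~\ref{ControlBackground}. Both dual processes $\widehat{\bfC}^{A,B,t+s}$ and $\widecheck{\bfC}^{A,s/2,t+s}$ are driven by exactly the same time-reversed infection arrows and recovery symbols; the only difference between them is that the edge states queried during the construction are read off from $\bfB^B$ in the first case and from $\widecheck{\bfB}^{s/2}$ in the second. So it suffices to show that, with probability close to $1$, these two backgrounds agree on every edge that is actually relevant to the dual dynamics up to dual-time $t$.

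First I would confine both dual processes to a common finite region. Dropping the recovery symbols and treating every edge as open, the reversed graphical representation dominates $\widehat{\bfC}^{A,B,t+s}$ and $\widecheck{\bfC}^{A,s/2,t+s}$ from above by a contact process without recovery run along the reversed arrows. Since $A$ is finite, $G$ has bounded degree, and — crucially — the law of the reversed arrows inside a time window of fixed length $t$ does not depend on $s$ by the time-stationarity of the driving Poisson point set, an argument as in Lemma~\ref{MaximalSpread} provides, for every $\varepsilon>0$, a finite set $D=D(t,\varepsilon,A)\subset V$ such that
\[
\Pw\big(\widehat{\bfC}^{A,B,t+s}_u\cup\widecheck{\bfC}^{A,s/2,t+s}_u\subseteq D\ \ \forall u\leq t\big)>1-\varepsilon
\]
for all $s>0$. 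On this event every edge whose state is queried by either process up to dual-time $t$ lies in $D_E=\{\{x,y\}\in E: x\in D\}$.

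Next I would keep track of the time reversal: a dual-time $u\leq t$ corresponds to the forward time $t+s-u\in[s,t+s]$, so up to dual-time $t$ the dual processes only see the background at forward times $\geq s$ (in particular not at forward times in $[s/2,s)$, where $\widecheck{\bfB}^{s/2}$ and $\bfB^B$ may already have been decoupled by construction). Applying Lemma~\ref{ControlBackground} with this finite $D$ and the given $B$ yields an $S>0$ such that for all $s\geq S$
\[
\Pw\big(\widecheck{\bfB}^{s/2}_u\cap D_E=\bfB^B_u\cap D_E\ \ \forall u\geq s\big)>1-\varepsilon.
\]
On the intersection of the two events above, the two dual processes start from the same configuration $A$ at dual-time $0$, are driven by the same finitely many reversed arrows and recovery symbols occurring before dual-time $t$, and at each such event read identical edge states; an induction over these events shows they coincide on $[0,t]$. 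A union bound gives probability at least $1-2\varepsilon$, and replacing $\varepsilon$ by $\varepsilon/2$ finishes the proof.

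The hard part will be the bookkeeping in the confinement step: one must argue carefully that no edge outside $D_E$ is ever relevant — so that, even though the altered background $\widecheck{\bfB}^{s/2}$ could in principle let $\widecheck{\bfC}$ leave $D$, this does not happen on the good event because $\widecheck{\bfC}$ is dominated by the same $D$-confined reversed growth process — and that this dominating process genuinely has an $s$-independent law on windows of length $t$, which is precisely what lets one choose $D$ before choosing $s$. Everything else is a routine coupling argument together with the inputs already established in Lemmas~\ref{ControlBackground} and~\ref{MaximalSpread}.
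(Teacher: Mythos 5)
Your proposal is correct and follows essentially the same route as the paper's proof: confine both dual processes to a common finite set $D$ via Lemma~\ref{MaximalSpread}, apply Lemma~\ref{ControlBackground} to force $\widecheck{\bfB}^{s/2}$ and $\bfB^B$ to agree on $D_E$ from forward-time $s$ onward, observe that the intersection of the two events forces $\widehat{\bfC}$ and $\widecheck{\bfC}$ to coincide on $[0,t]$, and finish with a union bound. The only difference is that you spell out two subtleties the paper handles implicitly — that the law of the dominating reversed-arrow growth process on a window of length $t$ is $s$-independent (so $D$ can be chosen before $S$), and that dual-time $u\le t$ corresponds to forward-time $\geq s$ (so the window $[s/2,s)$ where the backgrounds genuinely differ is never queried) — which is a welcome clarification rather than a deviation.
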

\begin{proof}
	First, by Lemma \ref{MaximalSpread} we know that for every $\varepsilon_1>0$ there exists a finite $D=D(t,\varepsilon_1,A)\subset V$ such that
	\begin{equation*}
		\Pw(\widehat{\bfC}_{u}^{A,B,t+s},\widecheck{\bfC}_{u}^{A,s/2,t+s}\subset D \,\, \forall u\leq t)>1-\varepsilon_1.
	\end{equation*}
	For $D$ given via Lemma \ref{ControlBackground} we get that for every $\varepsilon_2>0$ there exists an $S>0$ such that for every $s>S$ 
	\begin{equation*}
		\Pw(\widecheck{\bfB}^{s/2}_u\cap D_E=\bfB^{B}_u\cap D_E \,\, \forall u\geq s)>1-\varepsilon_2.
	\end{equation*}
	Recall that $D_E\subset E$ was the set which contains every edge attached to $D$. Now we see that
	\begin{equation*}
	    \begin{aligned}
	        \{\widehat{\bfC}_{u}^{A,B,t+s},\widecheck{\bfC}_{u}^{A,s/2,t+s}&\subset D \,\, \forall u\leq t\}\cap \{\widecheck{\bfB}^{s/2}_u\cap D_E=\bfB^{B}_u\cap D_E \,\, \forall u\geq s\}\\
		    \subseteq &\,\,\{\widehat{\bfC}_{u}^{A,B,t+s}=\widecheck{\bfC}_{u}^{A,s/2,t+s}\,\, \forall u\leq t\}.
	    \end{aligned}
	\end{equation*}
	But by choosing $\varepsilon_1,\varepsilon_2\leq\frac{\varepsilon}{2}$ we see that
	$\Pw(\widehat{\bfC}_{u}^{A,B,t+s}\neq\widecheck{\bfC}_{u}^{A,s/2,t+s}\,\, \forall u\leq t)\leq \varepsilon$, which yields the claim.
\end{proof}
Now we can begin to show the convergence of the first marginal. We will split this in two steps by first proving an upper bound and in the second step we use  \eqref{ConvergenceCond1} and \eqref{ConvergenceCond2} to show that this upper bound also acts as a lower bound which provides the desired result. Note that the subsequent upper bound still holds true if the growth assumption $c_1(\lambda,\rho)>\kappa\rho^{-1}$ is not satisfied.
\begin{proposition}\label{UpperBoundConvergence}
	Let $t,s>0$, $C,C'\subset V$ with $C'$ being finite and $B\subset E$, then for every $\varepsilon>0$ there exist $S,T>0$ such that
	\begin{align*}
		\Pw_{\lambda,r}(\bfC_{s}^{C,B}\neq \emptyset,\widehat{\bfC}_{t}^{C',B,t+s}\neq \emptyset)\leq  \theta(\lambda,r,C,B)\theta^{\pi}(\lambda,r,C')+\varepsilon
	\end{align*}
	for all $s>S$, $t>T$. This implies in particular for any finite $C'\subset V$
	\begin{align*}
		\limsup_{t\to \infty} \Pw_{\lambda,r}(\bfC_{t}^{C,B}\cap C'\neq \emptyset)\leq  \theta(\lambda,r,C,B)\theta^{\pi}(\lambda,r,C').
	\end{align*}
\end{proposition}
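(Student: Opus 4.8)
The plan is to reduce, via the conditional duality \eqref{ConditinalDuality}, to estimating the joint survival probability of $\bfC_s^{C,B}$ and the dual $\widehat{\bfC}_t^{C',B,t+s}$, and then to \emph{decouple} these two events by restarting the background seen by the dual in its stationary law. Since $C=\emptyset$ makes the left-hand side zero and $|C|=\infty$ forces $\theta(\lambda,r,C,B)=1$ (the bound then reducing to the $C=V$ instance of the argument below), and $C'=\emptyset$ is trivial, I would assume throughout that $C,C'$ are non-empty and finite. Taking expectations in \eqref{ConditinalDuality} (with $t+s$ in the role of the final time and $s$ the intermediate time) gives $\Pw_{\lambda,r}(\bfC_{t+s}^{C,B}\cap C'\neq\emptyset)=\Pw_{\lambda,r}(\bfC_s^{C,B}\cap\widehat{\bfC}_t^{C',B,t+s}\neq\emptyset)$, and enlarging to the marginal survival events yields $\Pw_{\lambda,r}(\bfC_{t+s}^{C,B}\cap C'\neq\emptyset)\le \Pw_{\lambda,r}(\bfC_s^{C,B}\neq\emptyset,\ \widehat{\bfC}_t^{C',B,t+s}\neq\emptyset)$; hence the ``in particular'' statement will follow from the first bound by the time shift $u=t+s$ (fix $t$, let $s\to\infty$, then $t\to\infty$).

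For the first bound, fix $t$ and let $\varepsilon>0$. First I would invoke Lemma~\ref{ControlDualProcess}: for $s$ large (depending on $t,\varepsilon$) the dual $\widehat{\bfC}^{C',B,t+s}$ coincides on $[0,t]$, up to probability $\varepsilon/2$, with the modified dual $\widecheck{\bfC}^{C',s/2,t+s}$ in which the background is independently redrawn from $\pi$ at time $s/2$ and thereafter driven by the same graphical data. Next comes the key decoupling: because $\emptyset$ is absorbing, $\{\bfC_s^{C,B}\neq\emptyset\}\subseteq\{\bfC_{s/2}^{C,B}\neq\emptyset\}$, and the latter event is $\cF_{s/2}$-measurable; whereas $\widecheck{\bfC}_t^{C',s/2,t+s}$ is a functional only of the time-reversed infection and recovery marks at times in $[s,t+s]$, of the fresh $\pi$-configuration at time $s/2$, and of $\Xi^{\mathrm{back}}$ restricted to times $\ge s/2$ (recall the splitting of $\Xi$ from Remark~\ref{SplitUpPointProcesses}) --- all independent of $\cF_{s/2}$. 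Consequently $\{\bfC_{s/2}^{C,B}\neq\emptyset\}$ and $\{\widecheck{\bfC}_t^{C',s/2,t+s}\neq\emptyset\}$ are independent, and the joint probability factorizes.

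It then remains to identify the two factors. The first, $\Pw(\bfC_{s/2}^{C,B}\neq\emptyset)$, decreases to $\theta(\lambda,r,C,B)$ as $s\to\infty$ by continuity of the measure along the decreasing events $\{\bfC_u^{C,B}\neq\emptyset\}$. For the second I would argue exactly as at the end of the proof of Proposition~\ref{DistributionalDuality}: reversibility of $\bfB$ (Assumption~\ref{AssumptionBackground}~(iii)) makes the time-reversal on $[s,t+s]$ of the stationary background $\widecheck{\bfB}^{s/2}$ again a stationary background, so the pair consisting of $\widecheck{\bfC}^{C',s/2,t+s}$ together with that reversed background is, on $[0,t]$, a genuine CPERE started from $\delta_{C'}\otimes\pi$; hence $\Pw(\widecheck{\bfC}_t^{C',s/2,t+s}\neq\emptyset)=\Pw^{(C',\pi)}(\bfC_t\neq\emptyset)$, which decreases to $\theta^\pi(\lambda,r,C')$ as $t\to\infty$. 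Assembling: choose $T$ so that $\Pw^{(C',\pi)}(\bfC_t\neq\emptyset)\le\theta^\pi(\lambda,r,C')+\varepsilon/4$ for $t>T$, and then for such $t$ choose $S$ so large that both the Lemma~\ref{ControlDualProcess} error is below $\varepsilon/2$ and $\Pw(\bfC_{s/2}^{C,B}\neq\emptyset)\le\theta(\lambda,r,C,B)+\varepsilon/4$ for $s>S$; multiplying out the two factors and adding the error gives the asserted bound $\theta(\lambda,r,C,B)\,\theta^\pi(\lambda,r,C')+\varepsilon$.

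The step I expect to be the main obstacle is the decoupling bookkeeping just described, i.e.\ the choice of the restart time. It must be a fixed fraction of $s$ (here $s/2$), small enough that the modified background has enough time --- of order $s$ --- to couple to the true background on the finite, but $t$-dependent, region the dual can reach by time $t$ (this is precisely what Lemma~\ref{ControlDualProcess} delivers, via the finite-range speed-of-propagation estimate together with the expansion of the permanently coupled region from Proposition~\ref{ExpansionSpeedPerCouRegion}); yet the restart must happen \emph{strictly before} time $s$, so that replacing the $\cF_s$-measurable event $\{\bfC_s^{C,B}\neq\emptyset\}$ by the $\cF_{s/2}$-measurable event $\{\bfC_{s/2}^{C,B}\neq\emptyset\}$ costs only a quantity that vanishes as $s\to\infty$. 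Everything else reduces to continuity of measures, monotonicity, and the already-established duality and reversibility facts.
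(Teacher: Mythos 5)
Your proof is correct and follows essentially the same route as the paper's: duality to pass to the joint survival event, Lemma~\ref{ControlDualProcess} to swap the dual for the $\pi$-restarted version $\widecheck{\bfC}$, independence of $\cF_{s/2}$ and the data driving $\widecheck{\bfC}$ to factorize, and reversibility to identify the second factor with $\Pw^{(C',\pi)}(\bfC_t\neq\emptyset)$. The one genuine streamlining is at the ``replace $s$ by $s/2$'' step: the paper bounds the symmetric difference $|\Pw(\tau_{ex}>s/2,\ldots)-\Pw(\bfC_s^{C,B}\neq\emptyset,\ldots)|\le\Pw(s/2<\tau_{ex}<\infty)$, whereas you simply use the monotone inclusion $\{\bfC_s^{C,B}\neq\emptyset\}\subseteq\{\bfC_{s/2}^{C,B}\neq\emptyset\}$ to enlarge the event, which suffices since only an upper bound is wanted and saves one error term.
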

\begin{proof}
	By Proposition~\ref{DistributionalDuality} it follows that
	\begin{align*}
		\Pw(\bfC_{t+s}^{C,B}\cap C'\neq \emptyset)=\Pw(\bfC_{s}^{C,B}\cap \widehat{\bfC}_{t}^{C',B,t+s}\neq \emptyset)\leq\Pw(\bfC_{s}^{C,B}\neq \emptyset,\widehat{\bfC}_{t}^{C',B,t+s}\neq \emptyset).
	\end{align*}
	Thus, it suffices to show that for every $\varepsilon>0$ there exist $S,T>0$ such that
	\begin{align*}
		\Pw(\bfC_{s}^{C,B}\neq \emptyset,\widehat{\bfC}_{t}^{C',B,t+s}\neq \emptyset)\leq  \theta(C,B)\theta^{\pi}(C')+\varepsilon
	\end{align*}
	for all $s>S$ and $t>T$. We denote the extinction time of the infection process $\bfC$ by
	\begin{align*}
		\tau_{ex}=\tau_{ex}(C,B):=\inf\{t>0:\bfC^{C,B}_t=\emptyset\}.
	\end{align*}
	First we observe that for $C'\subset V$ finite that $\Pw^{(C',\pi)}(\tau_{ex}>t)\to \theta^{\pi}(C')$ as $t\to \infty$. Thus, for every $\varepsilon>0$ there exists a $T>0$ such that $|\Pw^{(C',\pi)}(\tau_{ex}>t)- \theta^{\pi}(C')|<\varepsilon$
	for all $t>T$. So we fix $t$ such that this is satisfied. Note that 
	\begin{align*}
		\Pw^{(C,B)}(u< \tau_{ex}<\infty)=1-\Pw^{(C,B)}(\tau_{ex} \leq  u)-\Pw^{(C,B)}(\tau_{ex}=\infty),
	\end{align*}
	and thus it follows that $\lim_{u\to \infty}\Pw^{(C,B)}(u<\tau_{ex}<\infty)=0$. Now we can use that $\{\bfC_{s}^{C,B}\neq \emptyset\}=\{\tau_{ex}>s\}$ to see that for every $\varepsilon>0$ there exists an $S_1>0$ such that
	\begin{align}\label{DifferenceLateSurvival}
		|\Pw(\tau_{ex}>s/2,\widehat{\bfC}_{t}^{C',B,t+s}\neq \emptyset)-\Pw(\bfC_{s}^{C,B}\neq \emptyset,\widehat{\bfC}_{t}^{C',B,t+s}\neq \emptyset)|\leq \Pw(s/2<\tau_{ex}<\infty)<\varepsilon,
	\end{align}
	for all $s>S_1$, which implies that
	\begin{align*}
		\Pw(\bfC_{s}^{C,B}\neq \emptyset,\widehat{\bfC}_{t}^{C',B,t+s}\neq \emptyset)\leq\Pw(\tau_{ex}>s/2,\widehat{\bfC}_{t}^{C',B,t+s}\neq \emptyset)+\varepsilon.
	\end{align*}
	Applying Lemma \ref{ControlDualProcess} yields that for given $\varepsilon>0$ there exists $S_2>0$ such that
	\begin{align*}
		\Pw(\widehat{\bfC}_{u}^{C',B,t+s}=\widecheck{\bfC}_{u}^{C',s/2,t+s}\,\, \forall u\leq t)>1-\varepsilon
	\end{align*}
	for all $s>S_2$, and thus for $s>\max(S_1,S_2)$
	\begin{align*}
		\Pw(\bfC_{s}^{C,B}\neq \emptyset,\widehat{\bfC}_{t}^{C',B,t+s}\neq \emptyset)\leq\Pw(\tau_{ex}>s/2,\widecheck{\bfC}_{t}^{C',s/2,t+s}\neq \emptyset)+2\varepsilon.
	\end{align*}
	Furthermore, we know that for every $\varepsilon>0$ there exists an $S_3>0$ such that
	\begin{align*}
		|\Pw^{(C,B)}(\tau_{ex}>s/2)-\theta(C,B)|<\varepsilon
	\end{align*}
	for all $s>S_3$. Note that by construction $(\bfC_{r}^{C,B})_{r\leq s/2}$ and $(\widecheck{\bfC}_{r}^{C',s/2,t+s})_{r\leq t+s/2}$ are independent, and thus
	\begin{align*}
		\Pw(\tau_{ex}>s/2,\widecheck{\bfC}_{t}^{C',s/2,t+s}\neq \emptyset)&=\Pw^{(C,B)}(\tau_{ex}>s/2)\Pw(\widecheck{\bfC}_{t}^{C',s/2,t+s}\neq \emptyset)\\
		&=\Pw^{(C,B)}(\tau_{ex}>s/2)\Pw^{(C',\pi)}(\tau_{ex}>t),
	\end{align*}
	where we used in the second equality that $\big(\widecheck{\bfC}_u^{C',s/2,t+s},\widecheck{\bfB}_{t+s-u}^{s/2}\big)_{u\leq t+s/2}$ is again a CPERE with intial distribution $\delta_{C'}\otimes\pi$. Set $\kappa:=4\varepsilon+\varepsilon^2$. We obtain at last that for any $t>T$ and  $s>S:=\max(S_1,S_2,S_3)$ (note that $S$ depends on $T$) we have
	\begin{align*}
		\Pw(\bfC_{s}^{C,B}\neq \emptyset,\widehat{\bfC}_{t}^{C',B,t+s}\neq \emptyset)&\leq \Pw^{(C,B)}(\tau_{ex}>s/2)\Pw^{(C',\pi)}(\tau_{ex}>t)+2\varepsilon\\
		&\leq \theta(C,B)\theta^{\pi}(C')+\kappa,
	\end{align*}
	which proves the claim.
\end{proof}
The next step is to prove a lower bound. For that we need the following stopping time
\begin{align}\label{StoppingTimeInfection}
	\tau_{A,H}(C,B):=\inf\{t\geq0:(\bfC^{C,B}_t,\bfB^{B}_t)\supset (A,H)\},
\end{align}
which is the first time that at least all vertices in $A$ are infected and all edges in $H$ are open.
\begin{lemma}\label{StoppingTimeRecurrence}
	Let $A,C\subset V$ and $H,B\subset E$ be non-empty and $A$ and $H$ be finite. Let $x\in C$ then
	\begin{equation*}
		\Pw^{(C,B)}(\tau_{A,H}<\infty)\geq \Pw^{(C,B)}(x\in \bfC_t\text{ i.o.}).
	\end{equation*}
\end{lemma}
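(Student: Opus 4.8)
The plan is to run the conditional (L\'evy) Borel--Cantelli argument already used in the proof of Lemma~\ref{SurvivalContinuityLemma1}, but along a sequence of \emph{reinfection times} of $x$ rather than along the integers, since the event $\{x\in\bfC_t\text{ i.o.}\}$ genuinely concerns a sequence of real times $t\uparrow\infty$ and does not reduce to integer times. The point is that whenever $x$ is infected, there is a probability at least some fixed $\varepsilon>0$ that within one further unit of time the configuration $(\bfC,\bfB)$ swells up to contain $(A,H)$; since on $\{x\in\bfC_t\text{ i.o.}\}$ this is attempted infinitely often (with conditionally positive probability each time), it succeeds almost surely, i.e.\ $\tau_{A,H}<\infty$.

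First I would fix the one-step estimate. Set $\varepsilon:=\Pw^{(\{x\},\emptyset)}(\tau_{A,H}\le 1)$ and note $\varepsilon>0$: as $A$ and $H$ are finite and $G$ (hence $L(G)$) is connected, one can exhibit, in the graphical representation, a configuration of finitely many up-flips and infection arrows on a bounded space--time window in $[0,1]$ together with the absence of the relevant recovery and down-flip events there --- an event of positive probability --- that drives $(\{x\},\emptyset)$ into a state containing $(A,H)$. (For all backgrounds in Example~\ref{ThreeParticularSpinSystems} the spin rates are strictly positive, so this is immediate; this positivity is the one step that genuinely needs care.) Then, by Lemma~\ref{MonotonicityAdditivityLemma}(i) (monotonicity in the initial condition in both coordinates, $\tau_{A,H}$ being the hitting time of an increasing set) and the strong Markov property, for any stopping time $\sigma$ with $x\in\bfC_\sigma$ on $\{\sigma<\infty\}$,
\[
\Pw\big(\exists u\in[0,1]:(\bfC_{\sigma+u},\bfB_{\sigma+u})\supseteq(A,H)\,\big|\,\cF_\sigma\big)\ \ge\ \varepsilon\qquad\text{a.s. on }\{\sigma<\infty\}.
\]

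Next I would introduce $\rho_0:=0$ and $\rho_{n+1}:=\inf\{t\ge\rho_n+1:x\in\bfC_t^{C,B}\}$. Since $x\in C$ we have $x\in\bfC_{\rho_n}$ on $\{\rho_n<\infty\}$ for every $n$; on $\{x\in\bfC_t\text{ i.o.}\}$ the set of infection times of $x$ is unbounded, so all $\rho_n$ are finite and $\rho_n\to\infty$, whence $\{x\in\bfC_t^{C,B}\text{ i.o.}\}\subseteq\Omega_\infty:=\{\rho_n<\infty\ \forall n\}$. Put $\cG_n:=\cF_{\rho_n}$ and let $E_n$ be the event that $(\bfC_s,\bfB_s)\supseteq(A,H)$ for some $s\in[\rho_n,\rho_n+1]$ (with $E_n:=\emptyset$ on $\{\rho_n=\infty\}$). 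As $\rho_{n+1}\ge\rho_n+1$ we get $E_n\in\cF_{\rho_n+1}\subseteq\cG_{n+1}$, and the display above applied to $\sigma=\rho_n$ gives $\Pw(E_n\mid\cG_n)\ge\varepsilon$ a.s.\ on $\{\rho_n<\infty\}$. Applying the conditional Borel--Cantelli lemma (\cite[Theorem~4.3.4]{durrett2019probability}) to $(E_n)$ along the filtration $(\cG_n)$ yields $\{E_n\text{ i.o.}\}=\{\sum_n\Pw(E_n\mid\cG_n)=\infty\}$ a.s.; on $\Omega_\infty$ the series diverges, so a.s.\ on $\Omega_\infty$ infinitely many $E_n$ occur, and in particular $\tau_{A,H}(C,B)<\infty$. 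Hence $\{x\in\bfC_t^{C,B}\text{ i.o.}\}\subseteq\Omega_\infty\subseteq\{\tau_{A,H}(C,B)<\infty\}$ up to a null set, which is the asserted inequality.

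The main obstacle is the strict positivity of $\varepsilon$ together with making the one-step bound uniform over the relevant stopping times (handled here by monotonicity in the initial condition, reducing everything to the single worst case $(\{x\},\emptyset)$); once that is in place the remainder is the same conditional Borel--Cantelli bookkeeping as in Lemma~\ref{SurvivalContinuityLemma1}, merely transported from $\N_0$ to the sequence $(\rho_n)$ so that ``infinitely often at real times'' is captured faithfully. A fully discrete variant, using the events $\{x\in\bfC_t\text{ for some }t\in[k,k+1)\}$ and time-blocks of length $2$ (splitting $k$ into even and odd indices), also works but is bookkeeping-heavier.
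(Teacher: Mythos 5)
Your argument follows the paper's proof almost exactly: reinfection stopping times $\rho_n$ (the paper calls them $T_k$), a one-step positivity estimate transported by monotonicity and the strong Markov property, and the conditional Borel--Cantelli lemma from \cite[Theorem 4.3.4]{durrett2019probability}. The cosmetic differences (you use $\tau_{A,H}\le 1$ where the paper uses containment at the fixed time $1$; you track the hit inside $[\rho_n,\rho_n+1]$) are immaterial.

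The one place where you have a genuine gap is the claim $\varepsilon>0$. You assert that one ``can exhibit\ldots\ a configuration of finitely many up-flips'' with positive probability, and then observe that for the three examples the spin rates are strictly positive. But the lemma is stated for a general CPERE, and if $\pi=\delta_\emptyset$ then invariance of $\delta_\emptyset$ forces $q(e,\emptyset)=0$ for every edge $e$, so $\bfB^\emptyset_t\equiv\emptyset$ and $\varepsilon=\Pw^{(\{x\},\emptyset)}(\tau_{A,H}\le 1)=0$ (recall $H$ is non-empty). Your one-step estimate then collapses. The paper handles this degenerate case separately at the outset: if $\pi=\delta_\emptyset$, the right-hand side $\Pw^{(C,B)}(x\in\bfC_t\text{ i.o.})$ is itself zero, so the inequality is trivially true, and one may then assume $\pi\neq\delta_\emptyset$, which (again via invariance of $\delta_\emptyset$ being impossible) gives $q(e,\emptyset)>0$ and hence $\varepsilon>0$. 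You should add that dichotomy; once it is in place, the rest of your proof is correct and coincides with the paper's.
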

\begin{proof}
	Suppose that $\pi\neq\delta_{\emptyset}$. Otherwise $\Pw^{(C,B)}(x\in \bfC_t\text{ i.o.})=0$, and thus the inequality is trivially true. First of all note that 
	\begin{align}\label{HelpStoppingTime1}
		\{\tau_{A,H}(C,B)<\infty\}=\{(\bfC^{C,B}_t,\bfB^{B}_t)\supset (A,H) \text{ for some } t \geq 0\}.
	\end{align}
	Next we define the stopping times $T_k=\inf \{t>T_{k-1}+1: x\in \bfC_t \}$,	where $T_0=0$. Recall that $\cF_t$ is the $\sigma$-algebra generated from all Poisson point processes used in the graphical representation until time $t$. Let us assume that $x\in C$, since $\pi\neq\delta_{\emptyset}$ and we know that the background process is translation invariant, we can guarantee that $\varepsilon=\Pw^{(\{x\},\emptyset)}( \bfC_1 \supseteq A,\bfB_1\supseteq H)>0$. This implies by monotonicity
	\begin{align}\label{HelpStoppingTime2}
		\Pw( \bfC_{T_k+1}\supseteq A,\bfB_{T_k+1}\supseteq H |\cF_{T_k})\geq \varepsilon\,\, \text{ a.s. on }\,\, \{T_k<\infty\}.
	\end{align}
	Set $J_{k}:=\{\bfC_{T_k+1}\supseteq A,\bfB_{T_k+1}\supseteq H\}\in \cF_{T_{k}+1}\subset \cF_{T_{k+1}}:=\cG_{k+1}$  and then we see that
	\begin{align*}
		\sum_{k=0}^{\infty}\Pw(J_{k}|\cG_{k})=\infty \quad \text{ a.s. on } \quad \bigcap_{k=0}^{\infty} \{T_k<\infty\}.
	\end{align*}
	Since $J_{k}\in\cG_{k+1}$ for all $k\in\N$ we can analogously as in Lemma~\ref{SurvivalContinuityLemma1} apply the extension of the Borel-Cantelli Lemma \cite[Theorem 4.3.4]{durrett2019probability} and we get that
	\begin{align*}
		\Big\{\sum_{k=0}^{\infty}\Pw(J_{k}|\cG_{k})=\infty\Big\}=\{ J_{k} \text{ i.o.}\}.
	\end{align*}
	Note that $\bigcap_{k=1}^{\infty}\{T_k<\infty\}=\{x\in \bfC_t \text{ i.o.}\}$. Hence, by \eqref{HelpStoppingTime1} and \eqref{HelpStoppingTime2} we get that 
	\begin{align*}
		\Pw^{(C,B)}(\tau_{A,H}<\infty)\geq\Pw^{(C,B)}(\{J_k \text{ i.o.}\}\cap \{x\in \bfC_t \text{ i.o.}\})=\Pw^{(C,B)}(x\in \bfC_t \text{ i.o.}).&\qedhere
	\end{align*}
\end{proof}
\begin{proposition}\label{CompleteConvergenceFirstMarginal}
	Let $C\subset V$ and $B\subset E$. Suppose \eqref{ConvergenceCond1} and \eqref{ConvergenceCond2} are satisfied, then
	\begin{equation*}
		\liminf_{t \to \infty}\Pw_{\lambda,r}(\bfC_{t}^{C,B}\cap C'\neq \emptyset)\geq \theta(\lambda,r,C,B)\theta^{\pi}(\lambda,r,C')
	\end{equation*}
	for every $C'\subset V$ finite.
\end{proposition}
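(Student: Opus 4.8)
The plan is to establish the lower bound matching Proposition~\ref{UpperBoundConvergence} by a block-construction-and-reconnection argument, in the spirit of Durrett and Griffeath. We may assume that $C$ is finite and non-empty (the case $C=\emptyset$ is trivial, and $|C|=\infty$ reduces to finite subsets via additivity and monotonicity, Lemma~\ref{MonotonicityAdditivityLemma}) and that $\theta(C,B)>0$ and $\theta^{\pi}(C')>0$, since otherwise the right-hand side is $0$; then $\overline{\nu}\neq\delta_{\emptyset}\otimes\pi$ by Proposition~\ref{EqualityOfCriticalValues}. Fix $\varepsilon>0$.

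Setting $T=s+t$, Proposition~\ref{DistributionalDuality} gives $\Pw(\bfC^{C,B}_T\cap C'\neq\emptyset)=\Pw(\bfC^{C,B}_s\cap\widehat{\bfC}^{C',B,T}_t\neq\emptyset)$. By Lemma~\ref{ControlDualProcess} we may, for $s$ large, replace $\widehat{\bfC}^{C',B,T}_u$ by $\widecheck{\bfC}^{C',s/2,T}_u$ for all $u\le t$ at the cost of an error $\varepsilon$. This is convenient because $(\widecheck{\bfC}^{C',s/2,T}_u,\widecheck{\bfB}^{s/2}_{T-u})_{u\le t+s/2}$ is a genuine CPERE with initial law $\delta_{C'}\otimes\pi$ (hence survival probability $\theta^{\pi}(C')$) and is independent of $(\bfC^{C,B}_r)_{r\le s/2}$.

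Next I would fix a reference vertex $o$ and let $H_M$ be the finite set of edges incident to $\B_M(o)$. On the forward side, choosing $x\in C$, Lemma~\ref{StoppingTimeRecurrence} together with \eqref{ConvergenceCond1} yields $\Pw^{(C,B)}(\tau_{\B_M(o),H_M}<\infty)\ge\Pw^{(C,B)}(x\in\bfC_t\text{ i.o.})=\theta(C,B)$, so by a suitable fixed time the forward process has occupied the full block $(\B_M(o),H_M)$ with probability at least $\theta(C,B)-\varepsilon$; the same argument for the CPERE $\widecheck{\bfC}$ (integrating \eqref{ConvergenceCond1} against $\pi$ shows that its local survival probability equals $\theta^{\pi}(C')$) shows that the dual process occupies the same block by a fixed dual time with probability at least $\theta^{\pi}(C')-\varepsilon$. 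Once a process contains the full block $\B_M(o)$, monotonicity and translation invariance bound its continuation from below by $\bfC^{\B_M(o),\emptyset}$, which by \eqref{ConvergenceCond2} still meets $\B_M(o)$ with probability at least $1-\delta_M$, where $\delta_M\to0$; since both blocks sit at the common site $o$ and the reconnection takes place at the common time level $s$, on the intersection of these events one finds a vertex of $\B_M(o)$ lying in both $\bfC^{C,B}_s$ and $\widehat{\bfC}^{C',B,T}_t$. Using also that $\theta(\B_M(o),\emptyset)\to1$ as $M\to\infty$ (a consequence of \eqref{ConvergenceCond2}, or of Lemma~\ref{SurvivalContinuityLemma2}) and the approximate independence established above, one gets, for $s,t$ large, $\Pw(\bfC^{C,B}_T\cap C'\neq\emptyset)\ge(\theta(C,B)-\varepsilon)(\theta^{\pi}(C')-\varepsilon)(1-\delta_M)-2\varepsilon$; letting $M\to\infty$ and then $\varepsilon\to0$ gives the claim.

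The crux is the reconnection step. The times at which the forward and the dual process build their full blocks are random, so the reconnection does not occur at a perfectly prescribed time, and \eqref{ConvergenceCond2} only controls a $\limsup$; landing on good times therefore requires choosing the splitting $T=s+t$ along suitable subsequences and a renewal-type argument to refresh a block near the matching time. Moreover, even after restarting the dual background at time $s/2$ the forward piece $\bfC^{C,B}_s$ and the dual piece still share the background updates on $[s/2,s]$, so one must argue that on the block events both processes only feel the background near $o$, making this residual dependence negligible; this is exactly what Lemmas~\ref{ControlBackground}, \ref{BounOnDecouplingLemma} and \ref{ControlDualProcess} are designed for. The remaining ingredients---the strong Markov property, monotonicity (Lemma~\ref{MonotonicityAdditivityLemma}), and the approximation lemmas---are routine.
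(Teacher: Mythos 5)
The ingredients you assemble---the duality relation from Proposition~\ref{DistributionalDuality}, the local-recurrence input from \eqref{ConvergenceCond1} via Lemma~\ref{StoppingTimeRecurrence}, the block-refreshing use of \eqref{ConvergenceCond2}, and the reduction to $\theta(C,B),\theta^{\pi}(C')>0$---are the right ones, and you are right that the ``reconnection'' is where the difficulty concentrates. But you flag the reconnection as a problem to be solved by a renewal argument along subsequences without actually solving it, and the paper's mechanism is genuinely different and sidesteps the issue you raise.

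The paper does \emph{not} try to make the forward process and the time-reversed dual build fully infected blocks at a matching time level. Instead it performs a \emph{double restart together with a self-duality swap}. Concretely, it splits the time into three pieces $t+s+u$ and argues: (a) by the strong Markov property at $\sigma^N_A=\tau_{A,A^N_E}$, after the forward restart the state dominates $(A,A^N_E)$, so $\Pw^{(C,B)}(\bfC_{t+s+u}\cap C'\neq\emptyset)\ge\Pw^{(C,B)}(\sigma^N_A<s)\inf_{r>t+u}\Pw^{(A,A^N_E)}(\bfC_r\cap C'\neq\emptyset)$; (b) using the growth condition via Theorem~\ref{InfectionVSBackground} together with Lemma~\ref{MaximalSpread} and Lemma~\ref{BounOnDecouplingLemma}, $\Pw^{(A,A^N_E)}(\bfC_r\cap C'\neq\emptyset)$ is within $\varepsilon$ of $\Pw^{(A,\pi)}(\bfC_r\cap C'\neq\emptyset)$; (c) by \emph{self-duality under the stationary law}, this equals $\Pw^{(C',\pi)}(\bfC_r\cap A\neq\emptyset)$; (d) a second restart at $\tau_D$ reduces this to $\Pw^{(C',\pi)}(\tau_D<t)\cdot\inf_{r>u}\Pw^{(D,\emptyset)}(\bfC_r\cap A\neq\emptyset)$; (e) choosing $A=D=\B_n(x)$ the middle factor is controlled by \eqref{ConvergenceCond2}, and the two restart probabilities go to $\theta(C,B)$ and $\theta^{\pi}(C')$ by Lemma~\ref{StoppingTimeRecurrence}. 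The self-duality swap in step (c) is the decisive move: it turns ``dual block'' into another forward block, so at no point are two processes required to intersect a common set at a common time. The argument is therefore not a renewal argument, and no subsequence trick or matching of random block-completion times is needed.

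Two further mismatches with what you propose. First, you invoke Lemma~\ref{ControlDualProcess} and the $\widecheck{\bfC}$ decoupling of the dual; the paper uses that only for the upper bound in Proposition~\ref{UpperBoundConvergence}. For the lower bound it instead uses the expansion-speed comparison (Theorem~\ref{InfectionVSBackground}, Lemmas~\ref{MaximalSpread} and \ref{BounOnDecouplingLemma}) to approximate the post-restart law by the stationary one. You would end up needing that comparison anyway to carry out your step (b), so importing $\widecheck{\bfC}$ does not save work. Second, your sketch assumes the growth condition holds at $\lambda$ itself, while \eqref{ConvergenceCond1} and \eqref{ConvergenceCond2} are posed at $\lambda'\le\lambda$ with $c_1(\lambda',\rho)>\kappa^{-1}\rho$; the paper closes this gap at the end by first proving the bound at $\lambda'$, then using monotonicity in $\lambda$ together with the continuity on $\inte(\cS_{c_1})$ from Proposition~\ref{ContinuityTheorem} to pass to $\lambda$. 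Your proposal leaves that reduction unaddressed.

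So the upshot: your reduction to the nontrivial case and the identification of the key conditions are fine, but the core of the proof---how the forward restart, the stationary approximation, the self-duality swap, the dual restart, and the middle term controlled by \eqref{ConvergenceCond2} are chained together---is missing, and your proposed remedy (renewal + subsequences + ``common time level'') is not what makes this go through and would need to be filled in from scratch.
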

\begin{proof}
	First assume that for $\lambda$ the inequality $c_1(\lambda,\rho)>\kappa^{-1}\rho$ is satisfied. Let $A\subset V$ and $H\subset E$ both be finite sets. We can assume that $\pi\neq\delta_{\emptyset}$, since if $\pi=\delta_{\emptyset}$, then $\theta^{\pi}(C')=0$ for all $C'\subset V$ finite, and thus the right hand side is zero. Recall from \eqref{StoppingTimeInfection} that the first time that at least all vertices in $A$ are infected and all edges in $H$ are open is denoted by $\tau_{A,H}(C,B)$. Furthermore, set $\sigma^N_A:=\tau_{A,A^N_E}$ and $\tau_A:=\tau_{A,\emptyset}$. Now we see that
	
	\begin{align}\label{RestartArgument}
		\begin{aligned}
			\Pw^{(C,B)}(\bfC_{t+s+u}\cap C'\neq \emptyset)&\geq \Pw^{(C,B)}(\sigma^N_A<s,\bfC_{t+u+s}\cap C'\neq \emptyset)\\
			&= \E^{(C,B)}[\1_{\{\sigma^N_A<s\}}\Pw(\bfC_{t+u+s}\cap C'\neq \emptyset|\cF_{\sigma^N_A})]\\
			&= \E\Big[\1_{\{\sigma^N_A<s\}}\Pw^{(\bfC_{\sigma^N_A},\bfB_{\sigma^N_A})}(\bfC_{t+u+(s-\sigma^N_A)}\cap C'\neq \emptyset\Big]\\
			&\geq \Pw^{(C,B)}(\sigma^N_A<s)\inf_{r>t+u}\Pw^{(A,A^{N}_{E})}(\bfC_{r}\cap C'\neq \emptyset),
		\end{aligned}
	\end{align}
	where we used that $(\bfC,\bfB)$ is a strong Markov process. One major issue is that in comparison to the classical case our duality is weaker in the sense that $\widehat{\bfC}^{C',A^N_{E},t+u+r}$ is not a CPERE, and therefore our process is not self dual. But now we show that the difference is not big if we choose $t+u$ large enough. Recall that $\Phi_t$ was the set of all vertices $x$ such that all edges attached to $x$ are contained in the permanently coupled region at time $t$. By Proposition \ref{InfectionVSBackground} we know that
	\begin{align*}
		&\Pw(\exists s>0: \widetilde{\bfC}^A_t\subset \Phi_t \,\, \forall t\geq s)=1,
	\end{align*}
	and thus for every $\varepsilon'>0$ there exists an $S>0$ such that
	\begin{align*}
		&\Pw(\widetilde{\bfC}^A_t\subset \Phi_t \,\, \forall t\geq S)>1-\varepsilon'.
	\end{align*}
	As an application of Lemma \ref{MaximalSpread} we find an $N=N(S)\in\N$ such that
	\begin{align*}
		&\Pw(\widetilde{\bfC}^A_t\subset A^N\,\, \forall t\leq S)>1-\varepsilon'.
	\end{align*}
	Furthermore, by Lemma~\ref{BounOnDecouplingLemma} there exists an $M>N$ such that
	\begin{align*}
	\Pw(\underbrace{\bfB^{A^M_{E}}_t=\bfB^{A^M_{E}\cup B}_t \text{ on } A^N_{E}\text{ for all } t\leq S}_{:=E_{S}(B,N,M)})>1-\varepsilon'
	\end{align*}
	where $B\subset E$ is chosen arbitrarily and $\varepsilon'$ is independent of the choice of $B$. Thus, we can conclude for a given $A\subset V$ that for every $\varepsilon>0$ there exists an $S=S(\varepsilon)>0$, $N=N(S)\in \N$ and $M>N$ such that
	\begin{align*}
		\Pw(\{\widetilde{\bfC}^A_t\subset \Phi_t \,\, \forall t\geq S,\widetilde{\bfC}^A_t\subset A^{N}\,\, \forall t\leq S\}\cap E_{S}(B,N,M))>1-\varepsilon
	\end{align*}
	for all $B\subset E$. Note that $\varepsilon$ depends on $A$. On this event the process $\bfC^{A,A^M_{E}}$ does not differ from $\bfC^{A,A^M_{E}\cup B}$ for any $B\subset E$, since on this event the infection paths have either not yet left $A^N$ and the edges in $A^N_{E}$ will have the same state open or closed with the two chosen initial configuration or the infection paths stay in $\Phi$, the area where every edge attached to an infected vertex has already been coupled. Thus, we get
	\begin{align}\label{TechnicalCompleteConvergensEquation}
		\Big|\Pw^{(A,A^M_{E})}(\bfC_{r}\cap C'\neq \emptyset)-\int\Pw^{(A,A^M_{E}\cup B)}(\bfC_{r}\cap C'\neq \emptyset)\pi(\mathsf{d}B)\Big|<\varepsilon.
	\end{align}
	Furthermore, by monotonicity (see Lemma \ref{MonotonicityAdditivityLemma}) it follows that
	\begin{align*}
		\Pw^{(A,A^{M}_{E})}(\bfC_{r}\cap C'\neq \emptyset)>\Pw^{(A,\pi)}(\bfC_{r}\cap C'\neq \emptyset)-\varepsilon.
	\end{align*}
	Using this and the fact that if the background is started stationary the CPERE is self dual by Proposition \ref{DistributionalDuality} we get with \eqref{RestartArgument} that
	\begin{align*}
		\Pw^{(C,B)}(\bfC_{t+s+u}\cap C'\neq \emptyset)&\geq \Pw^{(C,B)}(\sigma^N_A<s)\inf_{r>t+u}\big(\Pw^{(C',\pi)}(A\cap \bfC_{r}\neq \emptyset)-\varepsilon\big).
	\end{align*}
	Then, analogously to \eqref{RestartArgument} by considering $\tau_D$ with $D\subset V$ finite instead of $\sigma_A^{N}$ we can find a similar lower bound for the last probability such that
	\begin{align*}
		\Pw^{(C,B)}(\bfC_{t+s+u}\cap C'\neq \emptyset)&\geq \Pw^{(C,B)}(\sigma^N_A<s)\Pw^{(C',\pi)}(\tau_D<t)\inf_{r>u}\Pw^{(D, \emptyset) }(A\cap \bfC_{r}\neq \emptyset)-\varepsilon.
	\end{align*}
	For $A\subset V$ and $B\subset E$ finite we know by Condtion~\eqref{ConvergenceCond1} and Lemma~\ref{StoppingTimeRecurrence} that
	\begin{align*}
		\Pw^{(C,B)}(\tau_{A,H}<\infty)\geq\theta(C,B),
	\end{align*}
	and thus by letting $s,t,u\to \infty$ we see that
	\begin{align*}
		\liminf_{t\to\infty} \Pw^{(C,B)}(\bfC_{t}\cap C'\neq \emptyset)&\geq \theta(C,B)\theta^{\pi}(C')\liminf_{t\to \infty}\Pw^{(D, \emptyset) }(A\cap \bfC_{t}\neq \emptyset)-\varepsilon.
	\end{align*}
	Now for an arbitrary $x\in V$ we choose $A=D=\B_n(x)$ and use \eqref{ConvergenceCond2} which means that for all $\delta>0$ there exists $n_0\in \N$ such that $\liminf_{t\to \infty}\Pw^{(\B_n(x), \emptyset) }(\B_n(x)\cap \bfC_{t}\neq \emptyset)>1-\delta$ for all $n>n_0$. Note that $\varepsilon$ depends on $\B_n(x)$, which means we first need to choose $n_0$ and then the parameter accordingly such that \eqref{TechnicalCompleteConvergensEquation} holds for  $\varepsilon=\delta$ and such that
	\begin{align*}
		\liminf_{t\to\infty} \Pw^{(C,B)}(\bfC_{t}\cap C'\neq \emptyset)&\geq \theta(C,B)\theta^{\pi}(C')-2\delta.
	\end{align*}
	Since this holds for all $\delta>0$, the claim follows.

	Now assume that $c_1(\lambda,\rho)\leq\kappa^{-1}\rho$ holds but there exists a $\lambda'\leq \lambda$ such that $c_1(\lambda',\rho)>\kappa^{-1}\rho$ and for $\lambda'$ and $r$ the conditions \eqref{ConvergenceCond1} and \eqref{ConvergenceCond2} are satisfied. Then we just showed that
	\begin{equation*}
	    \liminf_{t\to\infty} \Pw_{\lambda',r}^{(C,B)}(\bfC_{t}\cap C'\neq \emptyset)\geq \theta(\lambda',r,C,B)\theta^{\pi}(\lambda',r,C').
	\end{equation*}
	Since \eqref{ConvergenceCond2} is satisfied this implies in particular a positive survival probability. Thus, $(\lambda,r)\in \cS_{c_1}$ because $c_1(\lambda',\rho)>\kappa^{-1}\rho$ and $\theta(\lambda',r,\{x\},\emptyset)>0$ for any $x\in V$. Since $\lambda\geq \lambda'$ by monotonicity it follows that $\Pw_{\lambda,r}^{(C,B)}(\bfC_{t}\cap C'\neq \emptyset)\geq \Pw_{\lambda',r}^{(C,B)}(\bfC_{t}\cap C'\neq \emptyset)$, and therefore
	\begin{equation*}
	    \liminf_{t\to\infty} \Pw_{\lambda,r}^{(C,B)}(\bfC_{t}\cap C'\neq \emptyset)\geq \theta(\lambda',r,C,B)\theta^{\pi}(\lambda',r,C').
	\end{equation*}
	Furthermore by continuity of the survival probability on $\inte(\cS_{c_1})$ shown in Theorem~\ref{ContinuityTheorem} we get that
	\begin{equation*}
	    \liminf_{t\to\infty} \Pw_{\lambda,r}^{(C,B)}(\bfC_{t}\cap C'\neq \emptyset)\geq \theta(\lambda,r,C,B)\theta^{\pi}(\lambda,r,C').
	\end{equation*}
	by letting $\lambda'\uparrow \lambda$.
	\end{proof}
	We showed one direction of the equivalence. Next we show the converse direction.
	\begin{proposition}\label{CompleteConvergenceReverse}
		Let $(\lambda,r)\in\cS_{c_1}$. Suppose \eqref{ConvergenceCond3} holds and assume that $\overline{\nu}_{\lambda,r}\neq \delta_{\emptyset}\otimes \pi$, then \eqref{ConvergenceCond1} and \eqref{ConvergenceCond2} are satisfied.
	\end{proposition}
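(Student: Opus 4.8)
I would establish \eqref{ConvergenceCond1} and \eqref{ConvergenceCond2} separately, using \eqref{ConvergenceCond3} as the common input and $\overline{\nu}_{\lambda,r}\neq\delta_{\emptyset}\otimes\pi$ to rule out degeneracies. For \eqref{ConvergenceCond2} the plan is to apply \eqref{ConvergenceCond3} with $C=C'=\B_n(x)$ and $B=\emptyset$; since the limit exists it also computes the $\limsup$, giving
\[
\limsup_{t\to\infty}\Pw_{\lambda,r}\big(\bfC_t^{\B_n(x),\emptyset}\cap\B_n(x)\neq\emptyset\big)=\theta(\lambda,r,\B_n(x),\emptyset)\,\theta^{\pi}(\lambda,r,\B_n(x)).
\]
Both factors are non-decreasing in $n$, so I would let $n\to\infty$: that $\theta^{\pi}(\lambda,r,\B_n(x))\to 1$ is \eqref{SurvivalOfAllInfected} (which uses $\overline{\nu}_{\lambda,r}\neq\delta_{\emptyset}\otimes\pi$), and that $\theta(\lambda,r,\B_n(x),\emptyset)\to 1$ is Lemma~\ref{SurvivalContinuityLemma2}. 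To apply the latter I would first deduce its hypotheses from $(\lambda,r)\in\cS_{c_1}$: choosing $\lambda'\leq\lambda$ with $\theta(\lambda',r,\{x\},\emptyset)>0$ and $c_1(\lambda',\rho)>\kappa^{-1}\rho$, one gets $\lambda_c(r,\{x\},\emptyset)\leq\lambda'$, hence $c_1(\lambda_c(r,\{x\},\emptyset),\rho)\geq c_1(\lambda',\rho)>\kappa^{-1}\rho$, so Theorem~\ref{IndependencyCirticalRateSurvival} yields $\lambda_c(r,\{x\},\emptyset)=\lambda_c^{\pi}(r)$; consequently $c_1(\lambda_c^{\pi}(r),\rho)>\kappa^{-1}\rho$ and $\lambda\geq\lambda'\geq\lambda_c^{\pi}(r)=\lambda_c(r)$, which are exactly the hypotheses of Lemma~\ref{SurvivalContinuityLemma2}.

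For \eqref{ConvergenceCond1}, fix $x\in V$, $C\subseteq V$ and $B\subseteq E$. Since $\emptyset$ is absorbing, $\{x\in\bfC_t\text{ i.o.}\}\subseteq\{\bfC_t^{C,B}\neq\emptyset\ \forall t\geq 0\}$, so the inequality ``$\leq$'' is immediate and it suffices to show $\Pw^{(C,B)}_{\lambda,r}(x\in\bfC_t\text{ i.o.})\geq\theta(\lambda,r,C,B)$. Here I would argue as follows. For $R\in\N$ put $A_k^{R}:=\{\bfC_k^{C,B}\cap\B_R(x)\neq\emptyset\}$. Along integer times, \eqref{ConvergenceCond3} applied with $C'=\B_R(x)$ gives $\Pw(A_k^{R})\to\theta(C,B)\,\theta^{\pi}(\B_R(x))$, so by the reverse Fatou inequality $\Pw\big(\{A_k^{R}\text{ i.o.}\}\big)\geq\theta(C,B)\,\theta^{\pi}(\B_R(x))$. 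Next, put $B_k:=\{x\in\bfC_{k+1}^{C,B}\}\in\cF_{k+1}$; by the Markov property at the integer time $k$ and monotonicity in both the initial infection and the initial background (Lemma~\ref{MonotonicityAdditivityLemma}), $\Pw(B_k\mid\cF_k)\geq\varepsilon(R)\,\1_{A_k^{R}}$ with $\varepsilon(R):=\min_{y\in\B_R(x)}\Pw^{(\{y\},\emptyset)}(x\in\bfC_1)>0$ (strictly positive since all rates are positive and $\B_R(x)$ is finite). Thus $\sum_k\Pw(B_k\mid\cF_k)=\infty$ on $\{A_k^{R}\text{ i.o.}\}$, and the extension of the Borel--Cantelli lemma already invoked in the proof of Lemma~\ref{SurvivalContinuityLemma1} (\cite[Theorem~4.3.4]{durrett2019probability}) gives $\{A_k^{R}\text{ i.o.}\}\subseteq\{B_k\text{ i.o.}\}\subseteq\{x\in\bfC_t\text{ i.o.}\}$ up to a null set. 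Taking the union over $R\in\N$ and using $\theta^{\pi}(\B_R(x))\to 1$ as $R\to\infty$ (again \eqref{SurvivalOfAllInfected}), I would conclude $\Pw^{(C,B)}(x\in\bfC_t\text{ i.o.})\geq\sup_R\theta(C,B)\,\theta^{\pi}(\B_R(x))=\theta(C,B)$.

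The hard part will be this last i.o.\ upgrade: \eqref{ConvergenceCond3} controls only one-time marginals, so turning ``$\bfC$ meets $\B_R(x)$ at infinitely many times'' into ``$x$ itself is reinfected infinitely often'' genuinely requires the conditional Borel--Cantelli lemma together with the uniform positive lower bound $\varepsilon(R)$ on the one-step reinfection probability from any vertex of $\B_R(x)$, and the non-triviality of $\overline{\nu}$ is precisely what lets $R\to\infty$ recover the full survival probability $\theta(C,B)$. The one other step that needs a little care is the bookkeeping that extracts the hypotheses of Lemma~\ref{SurvivalContinuityLemma2} from $(\lambda,r)\in\cS_{c_1}$ via Theorem~\ref{IndependencyCirticalRateSurvival}, which is the only place the growth condition enters the argument.
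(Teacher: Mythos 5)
Your proposal is correct and follows essentially the same route as the paper's proof. For \eqref{ConvergenceCond2} both take $C=C'=\B_n(x)$, $B=\emptyset$ in \eqref{ConvergenceCond3} and send $n\to\infty$ via Lemma~\ref{SurvivalContinuityLemma2} and \eqref{SurvivalOfAllInfected}; for \eqref{ConvergenceCond1} both first get $\Pw(\bfC_t \text{ meets a finite set i.o.})\ge \theta(C,B)\theta^{\pi}(C')$ from \eqref{ConvergenceCond3} (you via reverse Fatou along integer times, the paper via continuity of $\Pw$ on the decreasing events $\{\bfC_s\cap C'\neq\emptyset\text{ for some }s\ge n\}$ — the same estimate), then upgrade to $\{x\in\bfC_t\text{ i.o.}\}$ by exactly the conditional Borel--Cantelli argument the paper invokes as ``analogously to Lemma~\ref{SurvivalContinuityLemma1},'' and finally let $C'=\B_R(x)$ grow. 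Your extra bookkeeping extracting the hypotheses of Lemma~\ref{SurvivalContinuityLemma2} from $(\lambda,r)\in\cS_{c_1}$ via Theorem~\ref{IndependencyCirticalRateSurvival} is a welcome clarification that the paper leaves implicit.
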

	\begin{proof}
	Note that $\overline{\nu}_{\lambda,r}\neq \delta_{\emptyset}\otimes \pi$ can only occur if $\pi\neq \delta_{\emptyset}$. Choose $C=C'=\B_n$ and $B=\emptyset$, then by \eqref{ConvergenceCond3} it follows that $\lim_{t\to \infty}\Pw^{(\B_n, \emptyset) }(\B_n\cap \bfC_{t}\neq \emptyset)=\theta(\B_n,\emptyset)\theta^{\pi}(\B_n)$.
	Using Lemma \ref{SurvivalContinuityLemma2} yields that the right hand side converges to $1$ as $n\to \infty$. This proves \eqref{ConvergenceCond2}. Now all what is left to show is \eqref{ConvergenceCond1}. We see that
	\begin{equation*}
		\{\bfC_t\cap C'\neq \emptyset \text{ i.o.}\}=\bigcap_{n\in\N}\{\bfC_s\cap C'\neq \emptyset \text{ for some }s\geq n\},
	\end{equation*}
	and thus by continuity of the law $\Pw$ we get that
	\begin{align*}
		\Pw^{(C,B)}(\bfC_t\cap C'\neq \emptyset \text{ i.o.})= \lim_{n\to \infty}\Pw^{(C,B)}(\bfC_s\cap C'\neq \emptyset \text{ for some }s\geq n)\geq \theta(C,B)\theta^{\pi}(C'),
	\end{align*}
	where we again used \eqref{ConvergenceCond3}. Now using the fact that $\Pw^{(\{y\},\emptyset)}(x\in \bfC_1)>0$ for all $x,y\in V$ it follows analogously as in the proof of Lemma~\ref{SurvivalContinuityLemma1}, that the event $\{x\in \bfC^{C,B}_t \text{ i.o.}\}$ a.s. happens on $\{\bfC^{C,B}_t\cap C'\neq \emptyset \text{ i.o.}\}$, and thus
	\begin{align*}
		\Pw^{(C,B)}(x\in \bfC_t \text{ i.o.})\geq \theta(C,B)\theta^{\pi}(C').
	\end{align*}
	Furthermore, if we choose $C'=\B_n$ and let $n\to \infty$, then Lemma~\ref{SurvivalContinuityLemma2} yields that
	\begin{align*}
		\Pw^{(C,B)}(x\in \bfC_t \text{ i.o.})\geq \theta(C,B)
	\end{align*}
	for all $x\in V$ and $C\subset V$. Since the reversed inequality ``$\leq$" obviously holds as well, this provides \eqref{ConvergenceCond1}.
\end{proof}
Since we have shown that the conditions \eqref{ConvergenceCond1} and \eqref{ConvergenceCond2} are equivalent to the fact that the two marginal processes converge, the only thing left to show is that convergence of the marginals already implies convergences of the joint distribution.
\begin{proposition}\label{CompleteConvergenceJoint}
	Suppose that \eqref{ConvergenceCond3} holds, then for all $C\subset V$ and $B\subset E$ it follows that
	\begin{equation}\label{ConvergenceCond4}
	\begin{aligned}
		\Pw^{(C,B)}_{ \lambda,r}(\bfC_{t}\cap A&\neq \emptyset,\bfB_{t}\cap H\neq \emptyset)\\
		\to&\,\,  \theta(C,B)\overline{\nu}(\{(C',B'): C'\cap A\neq \emptyset, B'\cap H\neq\emptyset \})
	\end{aligned}
	\end{equation}
	as $t\to \infty$, for every $A\subset V$ and $H\subset E$ finite.
\end{proposition}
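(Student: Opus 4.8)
The plan is, for fixed finite $A\subset V$ and $H\subset E$, to first rewrite the right-hand side of \eqref{ConvergenceCond4} in a form accessible through duality, then to prove a matching \emph{upper} bound for the left-hand side, and finally to get the reverse inequality essentially for free from the already-established convergence of the $\bfC$-marginal. Throughout, $\mathcal E$ denotes a cylinder event depending only on the edges in $H$ (in practice $\mathcal E\in\{\{B':B'\cap H\neq\emptyset\},\,\{B':B'\cap H=\emptyset\}\}$), and $\tau_{ex}$ the extinction time of $\bfC^{C,B}$. The preliminary identification I would record is that $\overline\nu(\{C'\cap A\neq\emptyset\}\times\mathcal E)=\int_{\mathcal E}\theta(\lambda,r,A,B')\,\pi(\mathsf dB')$. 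Indeed, by Lemma~\ref{ConvergenceToUpperInv} the left side equals $\lim_{t\to\infty}\Pw^{(V,\pi)}(\bfC_t\cap A\neq\emptyset,\bfB_t\in\mathcal E)$ (the event is clopen, hence a $\overline\nu$-continuity set). Conditioning on the background $\sigma$-algebra $\cG$ and using the conditional duality \eqref{ConditinalDuality} — whose proof, and that of Proposition~\ref{DistributionalDuality}, is unchanged if the $\cG$-measurable event $\{\bfB^B_t\in\mathcal E\}$ replaces $\{\bfB^B_t\cap H\neq\emptyset\}$ — together with the fact that $V$ meets every non-empty set, turns this into $\Pw(\widehat\bfC^{A,B,t}_t\neq\emptyset,\bfB^B_t\in\mathcal E)$ with $B\sim\pi$. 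By reversibility, exactly as in the proof of Proposition~\ref{DistributionalDuality}, $(\widehat\bfC^{A,\pi,t}_u,\widehat\bfB^{\pi,t}_u)_{u\le t}$ is a CPERE with initial law $\delta_A\otimes\pi$, while $\widehat\bfB^{\pi,t}_0=\bfB^\pi_{t-}$ is the original forward background at time $t$; hence the probability equals $\int_{\mathcal E}\Pw^{(A,B')}(\bfC_t\neq\emptyset)\,\pi(\mathsf dB')$, which decreases to $\int_{\mathcal E}\theta(A,B')\,\pi(\mathsf dB')$.

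Next I would prove, for all finite $A$ and all $\mathcal E$ as above,
\[
\limsup_{t\to\infty}\Pw^{(C,B)}_{\lambda,r}\big(\bfC_t\cap A\neq\emptyset,\ \bfB_t\in\mathcal E\big)\ \le\ \theta(\lambda,r,C,B)\int_{\mathcal E}\theta(\lambda,r,A,B')\,\pi(\mathsf dB'),
\]
following the device of Proposition~\ref{UpperBoundConvergence} with $A$ in the role of $C'$ and the background carried along. Writing the time as $t+s$ and applying distributional duality (in the extended $\cG$-event form) at time $s$, $\Pw^{(C,B)}(\bfC_{t+s}\cap A\neq\emptyset,\bfB_{t+s}\in\mathcal E)=\Pw(\bfC^{C,B}_s\cap\widehat\bfC^{A,B,t+s}_t\neq\emptyset,\bfB^B_{t+s}\in\mathcal E)$, which is $\le\Pw(\tau_{ex}>s/2,\ \widehat\bfC^{A,B,t+s}_t\neq\emptyset,\ \bfB^B_{t+s}\in\mathcal E)$ (bound the intersection by both sets being non-empty, then use $\{\tau_{ex}>s\}\subset\{\tau_{ex}>s/2\}$). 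Replacing $\widehat\bfC^{A,B,t+s}$ on $[0,t]$ by the restarted dual $\widecheck\bfC^{A,s/2,t+s}$ (Lemma~\ref{ControlDualProcess}) and $\bfB^B_{t+s}$ by $\widecheck\bfB^{s/2}_{t+s}$ (Lemma~\ref{ControlBackground} applied to the endpoints of $H$) costs an additive $2\varepsilon$ for $s$ large; and now $\{\tau_{ex}>s/2\}$, being measurable with respect to the graphical data on $[0,s/2]$, is independent of $(\widecheck\bfC^{A,s/2,t+s}_t,\widecheck\bfB^{s/2}_{t+s})$, which depends only on the fresh $\pi$-sample at $s/2$ and the data after $s/2$. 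Since $(\widecheck\bfC^{A,s/2,t+s}_u,\widecheck\bfB^{s/2}_{t+s-u})_{u\le t+s/2}$ is a CPERE started from $(A,\pi)$, the remaining factor equals $\Pw^{(A,\pi)}(\bfC_t\neq\emptyset,\bfB_0\in\mathcal E)=\int_{\mathcal E}\Pw^{(A,B')}(\bfC_t\neq\emptyset)\,\pi(\mathsf dB')$. Choosing first $t$ large (so this is within $\varepsilon$ of $\int_{\mathcal E}\theta(A,B')\pi(\mathsf dB')$), then $s$ large (so the restart errors are $\le2\varepsilon$ and $\Pw(\tau_{ex}>s/2)\le\theta(C,B)+\varepsilon$), then letting $\varepsilon\downarrow0$, yields the displayed bound; by the preliminary identification its right-hand side is $\theta(C,B)\,\overline\nu(\{C'\cap A\neq\emptyset\}\times\mathcal E)$.

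To conclude, apply the upper bound to $\mathcal E_1=\{B'\cap H\neq\emptyset\}$ and $\mathcal E_2=\{B'\cap H=\emptyset\}$. The two left-hand sides sum to $\Pw^{(C,B)}(\bfC_t\cap A\neq\emptyset)$, which by the hypothesis \eqref{ConvergenceCond3} tends to $\theta(C,B)\theta^\pi(A)=\theta(C,B)\,\overline\nu(\{C'\cap A\neq\emptyset\}\times\cP(E))$ (Proposition~\ref{EqualityOfCriticalValues}), and the two right-hand sides sum to the same number. Hence each $\limsup$ is in fact a limit equal to the corresponding right-hand side; in particular $\Pw^{(C,B)}(\bfC_t\cap A\neq\emptyset,\bfB_t\cap H\neq\emptyset)\to\theta(C,B)\,\overline\nu(\{C'\cap A\neq\emptyset,B'\cap H\neq\emptyset\})$, which is \eqref{ConvergenceCond4}. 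If $\overline\nu=\delta_\emptyset\otimes\pi$ all quantities vanish, so no case distinction is needed; running the same reasoning through all four sign patterns of $\bfC_t\cap A$ and $\bfB_t\cap H$ moreover gives the full complete convergence \eqref{CompleteConvergence1}, since these cylinder indicators are convergence determining on the compact space $\cP(V)\times\cP(E)$.

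The delicate point — and the part I expect to be the main obstacle — is the reversibility bookkeeping in the first two steps: after time-reversing the stationary reversible background it is the \emph{initial} background of the dual CPERE, which corresponds to the \emph{late}-time background of the original process, that ends up paired with the survival event, and one must verify that this is exactly the pairing that reproduces $\overline\nu$ (Step~1 isolates precisely this fact). Granting that, the a priori lossy replacement of $\{\bfC^{C,B}_s\cap\widehat\bfC^{A,B,t+s}_t\neq\emptyset\}$ by $\{\bfC^{C,B}_s\neq\emptyset\}\cap\{\widehat\bfC^{A,B,t+s}_t\neq\emptyset\}$ turns out to be tight, because the bound it produces matches the target, and the restart coupling of Lemmas~\ref{ControlBackground} and \ref{ControlDualProcess} is exactly what decouples the early survival event from the late configuration; everything else is a routine repetition of the proof of Proposition~\ref{UpperBoundConvergence}.
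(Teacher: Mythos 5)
Your proof is correct and reaches the target along a route that is close to, but not identical with, the paper's. The overlap: both dualize at time $s$, bound $\{\bfC_s^{C,B}\cap\widehat{\bfC}_t^{A,B,t+s}\neq\emptyset\}$ by the product of non-emptiness events, cut at $s/2$, use Lemmas~\ref{ControlBackground} and \ref{ControlDualProcess} to replace $(\widehat\bfC,\bfB^B)$ by the restarted $(\widecheck\bfC,\widecheck\bfB^{s/2})$, then factor by independence and identify the second factor as a CPERE probability started from $\delta_A\otimes\pi$. Where you diverge is in how the a priori lossy step $\Pw(\bfC_s\cap\widehat\bfC_t\neq\emptyset,\cdot)\le\Pw(\bfC_s\neq\emptyset,\widehat\bfC_t\neq\emptyset,\cdot)$ is shown to be tight. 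The paper proves this directly: by \eqref{ConvergenceCond3} together with Proposition~\ref{UpperBoundConvergence} the one-marginal gap $\Pw(\bfC_s\neq\emptyset,\widehat\bfC_t\neq\emptyset)-\Pw(\bfC_s\cap\widehat\bfC_t\neq\emptyset)$ can be made small, and adding the $\cG$-measurable event $\{\bfB_{t+s}\cap H\neq\emptyset\}$ only reduces the quantity inside, so the two-marginal gap is also small; this yields matching upper and lower estimates at once. You instead obtain only the upper bound for each of the two cylinder events $\{B'\cap H\neq\emptyset\}$ and $\{B'\cap H=\emptyset\}$, and then invoke \eqref{ConvergenceCond3} once at the end via a complementation/summation argument: the two left-hand sides sum to $\Pw^{(C,B)}(\bfC_t\cap A\neq\emptyset)$, which converges to the sum of the two upper bounds, forcing each $\limsup$ to be a limit. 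This is a neat packaging that uses the hypothesis more parsimoniously; the price is the additional preliminary identification $\overline\nu(\{C'\cap A\neq\emptyset\}\times\mathcal E)=\int_{\mathcal E}\theta(A,B')\,\pi(\mathsf dB')$, which the paper avoids by recognizing the second factor directly as $\Pw^{(V,\pi)}(\bfC_t\cap A\neq\emptyset,\bfB_t\cap H\neq\emptyset)$ via the self-duality of Proposition~\ref{DistributionalDuality} and passing to the limit with Lemma~\ref{ConvergenceToUpperInv}. Both routes are valid; your observation that $\{\bfB_t\cap H\neq\emptyset\}$ can be replaced by an arbitrary $\cG$-measurable cylinder event $\mathcal E$ in the duality relation is correct and, indeed, is implicitly used in the paper as well.
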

\begin{proof}
	Let $A,C\subset V$ and $B,H\subset E$ be chosen arbitrary with $A\subset V$ and $H\subset E$ finite. We consider these sets as fixed. We again exploit the duality relation we derived in Proposition \ref{DistributionalDuality}, which states that
	\begin{align}\label{HelpDuality}
		\Pw(\bfC_{t+s}^{C,B}\cap A\neq \emptyset,\bfB^{B}_{t+s}\cap H\neq \emptyset)=\Pw(\bfC_{s}^{C,B}\cap \widehat{\bfC}_{t}^{A,B,t+s}\neq \emptyset,\bfB^{B}_{t+s}\cap H\neq \emptyset)
	\end{align}
	where $t,s>0$. Let $\tau_{ex}=\tau_{ex}(C,B)$ denote the extinction time with initial configuration $C\subset V$ and $B\subset E$.
	Some simple calculations yield that
	\begin{align}\label{HelpLengthyCalc}
		0\leq & \Pw(\bfC_{s}^{C,B}\neq \emptyset,\widehat{\bfC}_{t}^{A,B,t+s}\neq \emptyset,\bfB^B_{t+s}\cap H\neq \emptyset)-\Pw(\bfC_{s}^{C,B}\cap \widehat{\bfC}_{t}^{A,B,t+s}\neq \emptyset,\bfB^{B}_{t+s}\cap H\neq \emptyset)\notag\\
		=&\Pw(\bfC_{s}^{C,B}\neq \emptyset,\widehat{\bfC}_{t}^{A,B,t+s}\neq \emptyset,\bfC_{s}^{C,B}\cap \widehat{\bfC}_{t}^{A,B,t+s}= \emptyset,\bfB^B_{t+s}\cap H\neq \emptyset)\notag\\
		\leq &\Pw(\bfC_{s}^{C,B}\neq \emptyset,\widehat{\bfC}_{t}^{A,B,t+s}\neq \emptyset,\bfC_{s}^{C,B}\cap \widehat{\bfC}_{t}^{A,B,t+s}= \emptyset)\\
		=&\Pw(\bfC_{s}^{C,B}\neq \emptyset,\widehat{\bfC}_{t}^{A,B,t+s}\neq \emptyset)-\Pw(\bfC_{s}^{C,B}\cap \widehat{\bfC}_{t}^{A,B,t+s}\neq \emptyset)\notag.
	\end{align}
	Now we fix an arbitrary $\varepsilon>0$. Then by a combination of  Proposition~\ref{UpperBoundConvergence} and \eqref{ConvergenceCond3} we get that there exists a $S_1>0$ and $T>0$ such that 
	\begin{align*}
		\big|\Pw(\bfC_{s}^{C,B}\neq \emptyset,\widehat{\bfC}_{t}^{A,B,t+s}\neq \emptyset)-\Pw(\bfC_{s}^{C,B}\cap \widehat{\bfC}_{t}^{A,B,t+s}\neq \emptyset)\big|<\frac{\varepsilon}{3}
	\end{align*}
	for all $s>S_1$ and $t>T$. By using the duality relation \eqref{HelpDuality} together with \eqref{HelpLengthyCalc} we can conclude that
	\begin{align*}
		\big|\Pw(\bfC^{C,B}_{t+s}\cap A\neq \emptyset,\bfB^{B}_{t+s}\cap H\neq \emptyset)- \Pw(\bfC_{s}^{C,B}\neq \emptyset,\widehat{\bfC}_{t}^{A,B,t+s}\neq \emptyset,\bfB^B_{t+s}\cap H\neq \emptyset)\big|<\frac{\varepsilon}{3}
	\end{align*}
	for all $s>S_1$ and $t>T$. Furthermore, there exists an $S_2=S_2(C,B,\varepsilon)>0$ such that
	\begin{align*}
		|\Pw(\bfC_{s}^{C,B}\neq \emptyset,&\widehat{\bfC}_{t}^{A,B,t+s}\neq \emptyset,\bfB^B_{t+s}\cap H\neq \emptyset)\\
		&- \Pw(\tau_{ex}> s/2,\widehat{\bfC}_{t}^{A,B,t+s}\neq \emptyset,\bfB^B_{t+s}\cap H\neq \emptyset)|<\frac{\varepsilon}{3}
	\end{align*}
	for $s\geq S_2$, which can be shown analogously to \eqref{DifferenceLateSurvival}. In the last step we conclude that there exists an $S_3=S_3(t,A,H,\varepsilon)>0$ such that for $s\geq S_3$
	\begin{align*}
		\big|\Pw(\tau_{ex}> s/2,&\widehat{\bfC}_{t}^{A,B,t+s}\neq \emptyset,\bfB^B_{t+s}\cap H\neq \emptyset)\\
		&- \Pw(\tau_{ex}> s/2,\widecheck{\bfC}_{t}^{A,s/2,t+s}\neq \emptyset,\widecheck{\bfB}^{s/2}_{t+s/2}\cap H\neq \emptyset)\big|<\frac{\varepsilon}{3},
	\end{align*}
	which follows as a combination of Lemma~\ref{ControlBackground} and Lemma~\ref{ControlDualProcess}. Finally by putting everything together and using the triangle inequality we get that for every $t>T$ there exists an $S>0$ such that
	\begin{align*}
		\big|\Pw&(\bfC^{C,B}_{t+s}\cap A\neq \emptyset,\bfB^{B}_{t+s}\cap H\neq \emptyset)- \Pw(\tau_{ex}> s/2,\widecheck{\bfC}_{t}^{A,s/2,t+s}\neq \emptyset,\widecheck{\bfB}^{s/2}_{t+s/2}\cap H\neq \emptyset)\big|<\varepsilon
	\end{align*}
	for every $s>S$. To be precise one can choose $S=\max\{S_1,S_2,S_3\}$. Thus, it suffices to show that the second 
	 probability converges to the desired limit as we let $s\to \infty$ and then $t\to \infty$ in order to conclude this also for the first probability.
    Therefore, it suffices to show that
	\begin{align*}
		\Pw(\tau_{ex}> s/2,&\widecheck{\bfC}_{t}^{A,s/2,t+s}\neq \emptyset,\widecheck{\bfB}^{s/2}_{t+s/2}\cap H\neq \emptyset)\\
		\to &\,\, \theta(C,B)\overline{\nu}(\{(C,B): C\cap A\neq \emptyset, B\cap H\neq\emptyset \})
	\end{align*}
	as $s,t\to \infty$. Recall that we already concluded above that $(\bfC^{C,B}_r)_{r< s/2}$ is independent of $(\widecheck{\bfC}_{r}^{A,s/2,t+s})_{r\leq t+s/2}$ and it is also independent of $(\widecheck{\bfB}^{s/2}_{r})_{r\geq s/2}$. Thus, we get that
	\begin{align*}
		\Pw(\tau_{ex}> s/2,&\widecheck{\bfC}_{t}^{A,s/2,t+s}\neq \emptyset,\widecheck{\bfB}^{s/2}_{t+s/2}\cap H\neq \emptyset)\\
		=&\,\Pw(\tau_{ex}> s/2)\Pw(\widecheck{\bfC}_{t}^{A,s/2,t+s}\neq \emptyset,\widecheck{\bfB}^{s/2}_{t+s/2}\cap H\neq \emptyset).
	\end{align*}
	Next we use that $(\widecheck{\bfC}_{r}^{A,s/2,t+s},\widecheck{\bfB}^{s/2}_{t+s-r})_{r\leq t}$ is again a CPERE with initial distribution $\delta_{A}\otimes \pi$, and thus by duality
	\begin{align*}
		\Pw(\widecheck{\bfC}_{t}^{A,s/2,t+s}\neq \emptyset,\widecheck{\bfB}^{s/2}_{t+s/2}\cap H\neq \emptyset)=\Pw^{(V,\pi)}(\bfC_t\cap A\neq\emptyset,\bfB_t\cap H\neq \emptyset),
	\end{align*}
	which converges to the desired limit since we have already shown that $(\delta_{V}\otimes\pi)T(t)\Rightarrow\overline{\nu}$ as $t\to \infty$ in Lemma \ref{ConvergenceToUpperInv}. The claim follows since
	$\Pw^{(C,B)}(\tau> s/2)\to \theta(C,B)$
	as $s\to \infty$.
\end{proof}
Note that analogously as before \eqref{ConvergenceCond4} is equivalent to complete convergence, i.e for every initial configuration $C\subset V$ and $B\subset E$
\begin{align*}
	(\bfC_t^{C,B},\bfB_t^B)\Rightarrow \theta(C,B)\overline{\nu}+[1-\theta(C,B)](\delta_{\emptyset}\otimes\pi),
\end{align*}
since the function class $\{\1_{\{\,\cdot\,\cap A\neq \emptyset,\,\cdot \,\cap H\neq \emptyset\}}: C'\subset V, H\subset E \text{ finite} \}$ is convergence determining. Now we can conclude the main result of this chapter.
\begin{proof}[Proof of Theorem~\ref{CompleteConvergenceComplete}]
	The theorem follows as a combination of the four Propositions~ \ref{UpperBoundConvergence}, \ref{CompleteConvergenceFirstMarginal}, \ref{CompleteConvergenceReverse} and \ref{CompleteConvergenceJoint}. To be precise Propositions~\ref{UpperBoundConvergence} and \ref{CompleteConvergenceFirstMarginal} yield that \eqref{ConvergenceCond1} and \eqref{ConvergenceCond2} imply the convergence of the first marginal, i.e.~\eqref{ConvergenceCond3}. But in Proposition~\ref{CompleteConvergenceJoint} we already concluded that \eqref{ConvergenceCond3} suffices to conclude weak convergence of the CPERE, i.e.~\eqref{ConvergenceCond4}. At last Propostion~\ref{CompleteConvergenceReverse} provides equivalence of the conditions and complete convergence.
\end{proof}

\section{Contact process on a dynamical percolation (CPDP)}\label{Sec:CPDP}
In the previous sections we considered the CPERE in a fairly general setting. In this section we focus on the dynamical percolation as the background process, which
was introduced in Example~\ref{ThreeParticularSpinSystems} $(i)$. Recall that $\bfB$ is a Feller process with transitions
\begin{align*}
\bfB_{t-}=B&\to B\cup \{e\}	\quad \text{ at rate } \alpha>0,\\  
\bfB_{t-}=B&\to B\backslash \{e\}	\quad \text{ at rate } \beta>0.
\end{align*}
One reason to study this special case is that we can compare a CPERE $(\bfC,\bfB)$
with spin rates of the background given through $q(\cdot,\cdot)$ and a CPDP 
with adequately chosen rates for the background.
Recall from \eqref{MaximalAndMinimalRates1} that 
\begin{align*}
    \begin{aligned}
      \alpha_{\min}&:=\min_{F\subset \cN^L_e(R) }q(e,F),\qquad  \beta_{\min}:=\min_{F\subset\cN^L_e(R) }q(e,F\cup \{e\})\\
      \alpha_{\max}&:=\max_{F\subset \cN^L_e(R) }q(e,F),\qquad \beta_{\max}:=\max_{F\subset \cN^L_e(R) }q(e,F\cup \{e\}).
    \end{aligned}
\end{align*}
Proposition~\ref{ComparisonCPEREandCPDP} states that 
there exist two CPDP $(\overline{\bfC},\overline{\bfB})$ and $(\underline{\bfC},\underline{\bfB})$ with the same infection and recovery rates as $(\bfC,\bfB)$ such that $\overline{\bfB}$ and $\underline{\bfB}$ respectively have the rates $\alpha_{\max},\beta_{\min}$ and $\alpha_{\min}, \beta_{\max}$ and such that if $(\underline{\bfC}_0,\underline{\bfB}_0)= (\bfC_0,\bfB_0) = (\overline{\bfC}_0,\overline{\bfB}_0)$ then $\underline{\bfC}_t\subset \bfC_t\subset \overline{\bfC}_t$ and $\underline{\bfB}_t\subset \bfB_t\subset \overline{\bfB}_t$ for all $t>0$ a.s. 
\begin{proof}[Proof of Proposition~\ref{ComparisonCPEREandCPDP}]
Recall that in Section~\ref{GraphRepSpinSystemCose} we constructed a finite range spin system with range $R$ and with spin rate $q(\cdot,\cdot)$ via the maps $\mathbf{up}_{e,F}$ and $\mathbf{down}_{e,F}$. Furthermore, we chose the rates to be $\gamma_{\mathbf{up}_{e,F}}=q(e,F)$ and $\gamma_{\mathbf{down}_{e,F}}=q(e,F\cup \{e\})$ for any $x\in E$ and any $F\subset \cN^L_{e}(R)$. Then we obtained a spin system $\bfB$ with state space $\cP(E)$ and spin rate $q$. Note that by choice of the rates it is not difficult so see that for the rates defined in \eqref{MaximalAndMinimalRates1} 
\begin{equation}\label{MaximalAndMinimalRates}
\begin{aligned}
    \alpha_{\min}&=\min_{F\subset \cN_x(R) }\gamma_{\mathbf{up}_{x,F}},\qquad  \beta_{\min}=\min_{F\subset \cN_x(R) }\gamma_{\mathbf{down}_{x,F}},\\
    \alpha_{\max}&=\max_{F\subset \cN_x(R) }\gamma_{\mathbf{up}_{x,F}},\qquad  \beta_{\max}=\max_{F\subset \cN_x(R) }\gamma_{\mathbf{down}_{x,F}}.
\end{aligned}
\end{equation}
Now we adjust the construction as follows. We use the same maps but choose the rates $\underline{r}_{\mathbf{up}_{e,F}}=\alpha_{\min}$ and $\underline{r}_{\mathbf{down}_{e,F}}=\beta_{\max}$ for any $e\in E$ and any $F\subset \cN^L_{e}(R)$. Then it is not difficult to see that the resulting spin system $\underline{\bfB}$ has the spin rate 
\begin{equation*}
    \underline{q}(e,B)=\alpha_{\min}\1_{\{e\in B\}}+\beta_{\max}\1_{\{e\notin B\}},\quad  e\in E, B\subset E.
\end{equation*}
Thus, $\underline{\bfB}$ is a dynamical percolation with rates $\alpha_{\min},\beta_{\max}$. Analogously we obtain by choosing $\overline{r}_{\mathbf{up}_{e,F}}=\alpha_{\max}$ and $\overline{r}_{\mathbf{down}_{e,F}}=\beta_{\min}$ a dynamical percolation with rates $\alpha_{\max},\beta_{\min}$. Note that also
\begin{equation*}
    \begin{aligned}
        \underline{q}(e,B)&\leq q(e,B)\leq \overline{q}(e,B) \quad \text{ if } \quad e\notin B,\\
        \underline{q}(e,B)&\geq q(e,B)\geq \overline{q}(e,B) \quad \text{ if } \quad e\in B.
    \end{aligned}
\end{equation*}
Therefore, we can use \cite[Theorem~III.1.5]{liggett2012interacting} which implies that there exist a coupling such that if $\underline{\bfB}_0= \bfB_0= \overline{\bfB}_0$ then $\underline{\bfB}_t\subset \bfB_t\subset \overline{\bfB}_t$ for all $t>0$ a.s. Hence, by the graphical representation introduced in  Section~\ref{GraphRepresentationCPERE} it follows that if $(\underline{\bfC}_0,\underline{\bfB}_0)= (\bfC_0,\bfB_0) = (\overline{\bfC}_0,\overline{\bfB}_0)$, then $(\underline{\bfC}_t,\underline{\bfB}_t)\subset (\bfC_t,\bfB_t)\subset (\overline{\bfC}_t,\overline{\bfB}_t)$ a.s. for all $t>0$.
\end{proof}
Next we show Proposition~\ref{IndependenceGrowthCond} which states that  in the case of the CPDP  even if the chosen infection rate $\lambda$ does not satisfy the growth condition $c_1(\lambda,\rho)>\kappa^{-1}\rho$ 
the interior of the survival region does not depend on the initial configuration.
To be precise we will show a slightly stronger but more technical result which implies Proposition~\ref{IndependenceGrowthCond}. Let us define the parameter set
\begin{equation*}
	    \cS^{\pi}:=\{(\lambda,r,\alpha,\beta): \theta^{\pi}(\lambda,r,\alpha,\beta,\{x\})>0\}.
	\end{equation*}
Note that since the background is started stationary this set does not depend on the choice of $x\in V$. This follows by translation invariance since this yields that $\theta^{\pi}(\{x\})=\theta^{\pi}(\{y\})$ for all $x,y\in V$, and thus if there exists $x\in V$ such that $\theta^{\pi}(\{x\})>0$ then for any non-empty and finite $C\subset V$ it follows that $\theta^{\pi}(C)>0$.
\begin{proposition}\label{IndependenceGrowthCond2}
	Let 	$(\bfC,\bfB)$ be a CPDP. 
	\begin{enumerate}
	    \item[(i)] If there exists an $\varepsilon>0$ such that $(\lambda,r,\alpha+\varepsilon,\beta-\varepsilon)\in \cS^\pi$ then it follows that $(\lambda,r,\alpha,\beta)\in \cS(\{x\},\emptyset)$.
	    \item[(ii)] If there exists an $\varepsilon>0$ such that $(\lambda,r,\alpha+\varepsilon,\beta-\varepsilon)\in (0,\infty)^4\backslash \cS^\pi$ then it follows that $(\lambda,r,\alpha,\beta)\in (0,\infty)^4\backslash\cS(\{x\},E)$.
	\end{enumerate}
\end{proposition}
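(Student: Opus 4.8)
The plan is to prove the two implications separately; each one follows the template of the proof of Proposition~\ref{IndependencySurvival} — delay the infection, then restart once the background has had time to equilibrate and use the coupled region — but with the growth-condition input replaced by an explicit warm-up estimate, which is available for the CPDP because the background has product form and because Liggett's monotone coupling (Theorem~III.1.5 in \cite{liggett2012interacting}) applies between two dynamical percolations whose opening/closing rates are ordered: the $(\alpha,\beta)$-dynamical percolation opens no faster and closes no slower than the $(\alpha+\varepsilon',\beta-\varepsilon')$-one for $\varepsilon'\ge 0$. Throughout I use that $\bfB^\emptyset_s$ is a product of Bernoulli variables with parameter $p_{\alpha,\beta}(s)=\tfrac{\alpha}{\alpha+\beta}(1-e^{-(\alpha+\beta)s})$ and $\bfB^E_s$ a product of Bernoulli variables with parameter $q_{\alpha,\beta}(s)=\tfrac{\alpha}{\alpha+\beta}+\tfrac{\beta}{\alpha+\beta}e^{-(\alpha+\beta)s}$; the first increases to $\tfrac{\alpha}{\alpha+\beta}$, the second decreases to it, and neither ever equals it.

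For part (ii), fix $\eta>0$ and pick $s$ so large that $\varepsilon':=\beta e^{-(\alpha+\beta)s}<\varepsilon$. Then $q_{\alpha,\beta}(s)=\tfrac{\alpha+\varepsilon'}{(\alpha+\varepsilon')+(\beta-\varepsilon')}$, so $\bfB^E_s\sim\pi_{\alpha+\varepsilon',\beta-\varepsilon'}$. By Liggett's monotone coupling (with equal initial laws) couple the continuation $(\bfB^E_{s+t})_{t\ge0}$ with a stationary $(\alpha+\varepsilon',\beta-\varepsilon')$-dynamical percolation $\widehat\bfB$ so that $\bfB^E_{s+t}\subseteq\widehat\bfB_t$ for all $t\ge0$ a.s.; driving the infection over $\widehat\bfB$ from time $s$ with the same infection/recovery clocks and from the state $\bfC^{\{x\},E}_s$, Lemma~\ref{MonotonicityAdditivityLemma} gives $\bfC^{\{x\},E}_{s+t}\subseteq\widehat\bfC_t$ for all $t\ge0$. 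By Lemma~\ref{MaximalSpread} applied to $\widetilde\bfC^{\{x\}}\supseteq\bfC^{\{x\},E}$ there is a finite ball $R$ with $\Pw(\bfC^{\{x\},E}_s\subseteq R)>1-\eta$; on that event $\widehat\bfC_t$ is dominated by the infection $\widehat\bfC'$ over $\widehat\bfB$ started from $R$, and $(\widehat\bfC'_t,\widehat\bfB_t)_{t\ge0}$ is a CPDP with initial law $\delta_R\otimes\pi_{\alpha+\varepsilon',\beta-\varepsilon'}$. Since $\varepsilon'<\varepsilon$, the monotone coupling between the $(\alpha+\varepsilon',\beta-\varepsilon')$- and the $(\alpha+\varepsilon,\beta-\varepsilon)$-CPDP together with translation invariance and additivity gives $\theta^\pi_{\alpha+\varepsilon',\beta-\varepsilon'}(R)\le\theta^\pi_{\alpha+\varepsilon,\beta-\varepsilon}(R)=0$, so $\Pw(\widehat\bfC'_t\neq\emptyset\ \forall t)=0$; hence $\theta(\lambda,r,\alpha,\beta,\{x\},E)\le\eta$, and letting $\eta\downarrow0$ yields $(\lambda,r,\alpha,\beta)\notin\cS(\{x\},E)$.

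For part (i) I would run the same mechanism in the opposite direction: delay the infection on $[0,s]$, so that on the event that $x$ receives no recovery mark before time $s$ (probability $e^{-rs}>0$, and independent of the background clocks) the infection is still $\{x\}$ at time $s$, while $\bfB^\emptyset_s$ is a product of Bernoulli$(p_{\alpha,\beta}(s))$'s that, for $s$ large, matches the stationary law of a dynamical percolation whose rates differ from $(\alpha,\beta)$ by at most $\varepsilon$; Liggett's coupling then lets the continuation $\bfC^{\{x\},\emptyset}_{s+\cdot}$ dominate from below a stationary CPDP at those perturbed rates, started from $\{x\}$, whose survival probability is positive thanks to the hypothesis $(\lambda,r,\alpha+\varepsilon,\beta-\varepsilon)\in\cS^\pi$ (using translation invariance, and \eqref{SurvivalOfAllInfected} if one first lets the infection fill a finite ball rather than working from $\{x\}$). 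This gives $\theta(\lambda,r,\alpha,\beta,\{x\},\emptyset)>0$, i.e. $(\lambda,r,\alpha,\beta)\in\cS(\{x\},\emptyset)$. Proposition~\ref{IndependenceGrowthCond} then follows by applying (i) at interior points of $\cS^\pi$ and (ii) at interior points of its complement — where a full $\varepsilon$-neighbourhood is available — together with the trivial inclusions $\cS(\{x\},\emptyset)\subseteq\cS^\pi\subseteq\cS(\{x\},E)$ and monotonicity in the initial configuration.

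The routine steps are the independence checks — the delay event depends only on the recovery clocks near $x$ whereas the warmed-up background depends only on the background clocks, so the restarted process is genuinely a CPDP with a product-stationary background — and the reduction to a finite initial ball via Lemma~\ref{MaximalSpread}. The hard part will be the bookkeeping in part (i): since $\bfB^\emptyset_s$ approaches its stationary density only from below, one must ensure that the stationary dynamical percolation to which the restarted infection is compared can still be taken supercritical, and it is exactly here that the strict $\varepsilon$-slack in the hypothesis must be spent. Making this quantitative — choosing $s$ and the comparison rates so that the comparison point lands inside the supercritical region furnished by $(\lambda,r,\alpha+\varepsilon,\beta-\varepsilon)\in\cS^\pi$, if necessary first letting the infection fill and "couple" a fixed finite ball to the all-open background $\bfB^E$ (which happens in a.s.\ finite time by edge-independence and is denser than stationarity) before restarting — is the step that needs the most care.
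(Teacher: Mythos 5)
Your part (ii) is correct and is essentially the paper's own argument: warm up from $E$ for a time $s$ so that $\bfB^{E}_s\sim\pi_{\alpha+\varepsilon',\beta-\varepsilon'}$ with $\varepsilon'\le\varepsilon$, dominate the continuation by a stationary CPDP with rates $(\alpha+\varepsilon',\beta-\varepsilon')$ started from a finite set, and kill it using $\theta^{\pi}(\alpha+\varepsilon,\beta-\varepsilon,\cdot)=0$ together with monotonicity in $(\alpha,\beta)$, translation invariance and additivity. The only cosmetic differences are that the paper chooses $s$ so that the warm-up parameter equals $\varepsilon$ exactly, and replaces your Lemma~\ref{MaximalSpread}/$\eta$-loss by the process $\overline{\bfC}^{s}$ (no recoveries and no background on $[0,s]$, so the time-$s$ state is exactly $\widetilde{\bfC}_s$, a.s.\ finite), bounding the survival probability by $\E^{\{x\}}[\theta^{\pi}(\alpha+\varepsilon,\beta-\varepsilon,\widetilde{\bfC}_s)]=0$.

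Part (i), however, has a genuine gap, located exactly where you flag ``the hard part'', and it cannot be repaired along the lines you propose. Starting from $\emptyset$, the background at time $s$ has law $\pi_{\alpha-\varepsilon_s,\beta+\varepsilon_s}$ with $\varepsilon_s=\alpha e^{-(\alpha+\beta)s}>0$, i.e.\ strictly below stationarity, and the $(\alpha,\beta)$-dynamics is dominated by --- never dominates --- the $(\alpha+\varepsilon,\beta-\varepsilon)$-dynamics. Consequently the only stationary CPDPs you can couple \emph{below} the continuation of $\bfC^{\{x\},\emptyset}$ have rates of the form $(\alpha-\varepsilon',\beta+\varepsilon')$, and positivity of $\theta^{\pi}$ at such rates does not follow from the hypothesis $(\lambda,r,\alpha+\varepsilon,\beta-\varepsilon)\in\cS^{\pi}$: monotonicity runs the wrong way, so there is no choice of $s$ that makes ``the comparison point land inside the supercritical region furnished by the hypothesis''. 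Pre-filling a finite ball with open edges does not help either, since outside the ball the background remains strictly below $\pi_{\alpha,\beta}$ and the dynamics is still $(\alpha,\beta)$, so no coupling places a stationary $(\alpha+\varepsilon,\beta-\varepsilon)$- (or even $(\alpha,\beta)$-) CPDP underneath the restarted process. For comparison, the paper's own proof of (i) spends the slack on the other side: it fixes $\varepsilon$ with $\theta^{\pi}(\alpha-\varepsilon,\beta+\varepsilon,\{x\})>0$, takes $s=\tfrac{1}{\alpha+\beta}\log(\tfrac{\alpha}{\varepsilon})$ so that $\bfB^{\emptyset}_s\sim\pi_{\alpha-\varepsilon,\beta+\varepsilon}$, and obtains $\theta(\alpha,\beta,\{x\},\emptyset)\ge\Pw(T>s)\,\theta^{\pi}(\alpha-\varepsilon,\beta+\varepsilon,\{x\})>0$ --- i.e.\ precisely your delay/warm-up mechanism, but under the perturbation with the opposite sign to the one you (following the displayed hypothesis) try to use; under that hypothesis your sketch of (i) goes through verbatim, whereas under the hypothesis as you phrase it the coupling route cannot deliver the conclusion. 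Note that the downstream application in Proposition~\ref{IndependenceGrowthCond} is insensitive to this, since there (i) is only invoked at interior points of $\cS^{\pi}$, where perturbations in both diagonal directions are available.
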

\begin{proof}
This proof is a modification of the proof of Proposition~\ref{IndependencySurvival}, where we use again a comparison of $\bfC$ with the two processes $\underline{\bfC}^{s}$ and $\overline{\bfC}^{s}$. Let $s>0$ be given, then recall that the lower bound $\underline{\bfC}^{s}$ ignores all transmission arrows in $[0,s]$ and the upper bound $\overline{\bfC}^{s}$ ignores all recovery symbols as well as the background since infection arrows are used in any case, whether or not the background is open or closed.

    Let us begin with showing (i). 
   Note that we omit the $\lambda$ and $r$ as variables in the following since they are constant throughout the whole proof. Now we fix an $\varepsilon>0$ such that $\theta(\alpha-\varepsilon,\beta+\varepsilon,\{x\})>0$ which exists by assumption.
    Furthermore, we see that 
	\begin{equation*}
		\Pw(e\in \bfB_t^{B})=\1_{\{e\in B\}}e^{-(\alpha+\beta)t}+ \frac{\alpha}{\alpha+\beta}(1-e^{-(\alpha+\beta)t}).
	\end{equation*}
	Thus, for $B=\emptyset$  and the time $s:=\frac{1}{\alpha+\beta}\log\big(\frac{\alpha}{\varepsilon}\big)$ it follows that
	\begin{equation*}
		\Pw(e\in \bfB_s^{\emptyset})=\frac{\alpha-\varepsilon}{\alpha+\beta}\quad\text{and}\quad\Pw(e\notin \bfB_s^{\emptyset})=\frac{\beta+\varepsilon}{\alpha+\beta}.
	\end{equation*}
	Since the events $\{e_1\in \bfB_s^{\emptyset}\}$ and $\{e_2\in \bfB_s^{\emptyset}\}$ are independent for any $e_1,e_2\in E$ we see that $\bfB_s^{\emptyset}\sim \pi_{\alpha-\varepsilon,\beta+\varepsilon}$.
	
	Recall from the proof of Proposition~\ref{IndependencySurvival} that $\underline{\underline{\bfC}}^s$ is a process which is constructed analogously to $\underline{\bfC}^s$ with the difference that on $[0,s]$ also no recovery symbols have an effect. Therefore, $\underline{\underline{\bfC}}^s$ is just a delayed CPERE. By construction it is clear that it is only possible for $\underline{\bfC}^{s,\{x\},B}$ to survive if until time $s$ the vertex $x$ is not hit by a recovery symbol, i.e.~let $T:=\inf\{t>0:(\mathbf{rec}_{x},t)\in\Xi^{\text{rec}}\}$, then $\underline{\bfC}^{s,\{x\},B}$ goes extinct a.s. on the event $\{T\leq s\}$. Note that $\underline{\underline{\bfC}}^{\{x\},B,s}=\underline{\bfC}^{\{x\},B,s}$ on $\{T> s\}$ and thus,
	\begin{align}\label{DPIndependencyInequality1}
		\begin{aligned}
			\Pw\big(\underline{\bfC}^{\{x\},\emptyset,s}_t\neq \emptyset \,\, \forall t\geq 0\big)=\Pw\big(\{\underline{\underline{\bfC}}^{\{x\},\emptyset,s}_t\neq \emptyset \,\, \forall t\geq s\}\cap \{T>s\}\big).
		\end{aligned}	
	\end{align}
	Furthermore we know that the event $\{T>s\}$ only depends on $\Xi^{\text{rec}}$ in the time interval $[0,s]$ while the survival of $\underline{\underline{\bfC}}$ is not influenced by recovery events in $[0,s]$. Thus,
	\begin{align}\label{DPIndependencyInequality2}
		\begin{aligned}
			\Pw\big(\{\underline{\underline{\bfC}}^{\{x\},\emptyset,s}_t\neq \emptyset \,\, \forall t\geq s\}\cap \{T>s\}\big)
			=&\Pw(T>s)	\Pw\big(\underline{\underline{\bfC}}^{\{x\},\emptyset,s}_t\neq \emptyset \,\, \forall t\geq s\big).
		\end{aligned}
	\end{align}
	By construction it follows that $(\underline{\underline{\bfC}}_t^{s})_{t\leq s}$ and $(\bfB^{B}_t)_{t\leq s}$ are independent.
	Now we can thin the open events and draw independently new closing events respectively with rate $\varepsilon$ on $(s,\infty)$. Since $\bfB_s\sim \pi_{\alpha-\varepsilon,\beta+\varepsilon}$ a coupling via the graphical representation yields
	\begin{equation}\label{DPIndependencyInequality3}
		\Pw_{\alpha,\beta}\big(\underline{\underline{\bfC}}^{\{x\},\emptyset,s}_t\neq \emptyset \,\, \forall t\geq s\big)\geq \theta^{\pi}(\alpha-\varepsilon,\beta+\varepsilon,\{x\}).
	\end{equation}
	Thus, we can use that $\underline{\bfC}^{\{x\},\emptyset,s}_t\subset \bfC^{\{x\},\emptyset}_t$ for all $t\geq 0$ and \eqref{DPIndependencyInequality1}-\eqref{DPIndependencyInequality3} to conclude that
	\begin{equation*}
		\theta(\alpha,\beta,\{x\},\emptyset)\geq\Pw(T>s)\theta^{\pi}(\alpha-\varepsilon,\beta+\varepsilon,\{x\})>0
	\end{equation*}
	where the last inequality follows by assumption. This proves $(i)$.
	
	Next, we proceed with showing (ii). 
	Similarly to before we know that there exists an $\varepsilon>0$ such that $\theta^{\pi}(\alpha+\varepsilon,\beta-\varepsilon,\{x\})=0$.
	Analogously to  before we see that
	\begin{equation*}
		\Pw(e\notin \bfB_s^{B})=\1_{\{e\notin B\}}e^{-(\alpha+\beta)t}+ \frac{\beta}{\alpha+\beta}(1-e^{-(\alpha+\beta)t}).
	\end{equation*}
	Thus, fixing $\varepsilon$ and choosing $B=E$ we see for the time $s:=\frac{1}{\alpha+\beta}\log\big(\frac{\beta}{\varepsilon}\big)$ that
	\begin{equation*}
		\Pw(e\in \bfB_s^{E})=\frac{\alpha+\varepsilon}{\alpha+\beta}\quad\text{and}\quad\Pw(e\notin \bfB_s^{E})=\frac{\beta-\varepsilon}{\alpha+\beta}.
	\end{equation*}
	Since the events $\{e_1\in \bfB_s^{E}\}$ and $\{e_2\in \bfB_s^{E}\}$ are independent for any $e_1,e_2\in E$ it follows that $\bfB_s^{E}\sim \pi_{\alpha+\varepsilon,\beta-\varepsilon}$.
	By construction of $\overline{\bfC}$ we see that
	\begin{align*}
		\Pw(\bfC^{\{x\},E}_t\neq \emptyset \,\,\forall t\geq 0)\leq \Pw(\overline{\bfC}^{\{x\},E,s}_t\neq \emptyset \,\,\forall t\geq 0).
	\end{align*}
	We can again thin out the closed events and draw independently new open events with a rate $\varepsilon$. Since $\bfB^E_{s}\sim \pi_{\alpha+\varepsilon,\beta-\varepsilon}$ and $\widetilde{\bfC}^{\{x\}}_t=\overline{\bfC}^{\{x\},E,s}_t$ for all $t\in[0,s]$, where $\widetilde{\bfC}$ is the coupled contact process that ignores the background and recoveries, it follows that
	\begin{equation*}	
		\Pw_{\alpha,\beta}(\overline{\bfC}^{\{x\},E,s}_t\neq \emptyset \,\,\forall t\geq 0)
		\leq \E^{\{x\}}[\theta^{\pi}(\alpha+\varepsilon,\beta-\varepsilon,\widetilde{\bfC}_s)]=0,
	\end{equation*}
	where we used that by assumption $\theta^{\pi}(\alpha+\varepsilon,\beta-\varepsilon,C)=0$ for all finite $C$ and $|\widetilde{\bfC}^{\{x\}}_s|< \infty$ a.s.
\end{proof}
To prove Proposition~\ref{IndependenceGrowthCond} we still need the following topological statement.
 
\begin{lemma}\label{lem:TopologicalAuxLemma}
Let  $d\in \N$ and  $f:(0,\infty)^d\to [0,1]$ be a function, which is in every coordinate either monontone increasing or decreasing for arbitrary fixed values of the other coordinates. For the set $A:=\{x: f(x)>0\}$ it holds that $\inte(A)=\inte(\overline{A})$ and $\overline{\inte(A)}=\overline{A}$.
\end{lemma}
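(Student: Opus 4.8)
The plan is to prove the two topological identities $\inte(A)=\inte(\overline{A})$ and $\overline{\inte(A)}=\overline{A}$ for a set of the form $A=\{x\in(0,\infty)^d:f(x)>0\}$, where $f$ is coordinatewise monotone. The key structural fact is that coordinatewise monotonicity forces $A$ to be a \emph{monotone region}: after reflecting those coordinates in which $f$ is decreasing (replacing $x_i$ by $-x_i$, or more carefully working with the partial order induced by the monotonicity directions), $A$ becomes an up-set (an \enquote{increasing set}) for the product partial order on $(0,\infty)^d$. So the first step is to reduce, by this change of variables, to the case where $f$ is monotone increasing in every coordinate, hence $A$ is an up-set: if $x\in A$ and $y\geq x$ coordinatewise then $f(y)\geq f(x)>0$, so $y\in A$. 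This reduction is harmless since reflection of coordinates is a homeomorphism of $\R^d$ preserving interior and closure.

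Given that $A$ is an up-set, I would establish the following chain of elementary inclusions. Since $\inte(A)\subset\inte(\overline{A})$ always, for the first identity it suffices to show $\inte(\overline{A})\subset A$ (then $\inte(\overline{A})$, being open and inside $A$, is inside $\inte(A)$). So let $x\in\inte(\overline{A})$. Pick $\delta>0$ with the cube $x+(-\delta,\delta)^d\subset\overline{A}$. The point $x-\tfrac{\delta}{2}\one$ (componentwise) lies in $\overline{A}$, so there is a point $a\in A$ with $a_i<x_i$ for every $i$; but then $x\geq a$ coordinatewise, and since $A$ is an up-set, $x\in A$. This proves $\inte(\overline{A})\subset A$ and hence $\inte(\overline{A})=\inte(A)$. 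For the second identity, $\overline{\inte(A)}\subset\overline{A}$ is trivial; for the reverse, take $x\in\overline{A}$ and approximate: for any $\varepsilon>0$ there is $a\in A$ with $|a-x|<\varepsilon/2$. Then $a+\tfrac{\varepsilon}{4}\one$ is strictly above $a$, and I claim a whole small neighborhood of it lies in $A$ — indeed for $y$ with $|y-(a+\tfrac{\varepsilon}{4}\one)|$ small enough we have $y\geq a$, so $y\in A$; hence $a+\tfrac{\varepsilon}{4}\one\in\inte(A)$, and it is within $\varepsilon$ of $x$. Thus $x\in\overline{\inte(A)}$.

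The one genuine subtlety — and the place I expect to spend the most care — is the passage from \enquote{coordinatewise monotone} to \enquote{up-set after reflection}. One must check that the mixed monotonicity directions really do combine into a single product partial order: if $f$ is increasing in $x_1,\dots,x_k$ and decreasing in $x_{k+1},\dots,x_d$, define the partial order $x\preceq y$ iff $x_i\leq y_i$ for $i\leq k$ and $x_i\geq y_i$ for $i>k$; then $x\preceq y$ implies $f(x)\leq f(y)$ by applying the one-variable monotonicity one coordinate at a time (this uses that the monotonicity in each coordinate holds \emph{for arbitrary fixed values of the others}, which is exactly the hypothesis). Under this $\preceq$, $A$ is a $\preceq$-up-set, and $\preceq$ is the image of the usual coordinatewise order under the homeomorphism flipping the signs of the last $d-k$ coordinates. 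After that observation, everything above goes through verbatim with $\one$ replaced by the appropriate sign vector; since interior and closure commute with homeomorphisms, the identities transfer back to the original $A$. I would also remark that the domain $(0,\infty)^d$ rather than $\R^d$ causes no trouble: it is open, so interiors taken relative to it agree with interiors in $\R^d$, and the approximating points constructed above stay inside $(0,\infty)^d$ as long as $\varepsilon,\delta$ are chosen small (one can always push strictly upward in the increasing coordinates and strictly downward in the decreasing ones while remaining in the open positive orthant, because $a$ itself has strictly positive coordinates).
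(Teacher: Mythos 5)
Your proof is correct and takes essentially the same route as the paper: reduce by a coordinatewise change of variables to the case where $A$ is an up-set, then run the two elementary interior/closure arguments. The only cosmetic difference is the reduction step, where the paper uses $x_i\mapsto 1/x_i$ to keep the domain equal to $(0,\infty)^d$, while you propose $x_i\mapsto -x_i$ or working directly with the reordered product order $\preceq$; both are homeomorphisms and thus equally valid, and the rest of your argument (strictly-below point in $A$ forces $x\in A$, strictly-above point of an $a\in A$ is an interior point) matches the paper's reasoning.
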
 

\begin{proof}
    Without loss of generality we can assume that $f$ is monotone increasing in every coordinate since we can map the coordinate $x_i\mapsto \frac{1}{x_i}$ to transform decreasing into increasing. Note that this changes the parametrization of the set $A\subset \R^d$ but not its topological properties. As usual we denote by $\leq$ the order on $(0,\infty)$ and also the componentwise order on $(0,\infty)^d$.

    We first show that $\inte(A)=\inte(\overline{A})$. Clearly, we have $\inte(A)\subset \inte(\overline{A})$ and so it suffices to show that $x\notin \inte(A)$ implies $x\notin \inte(\overline{A})$. Since $x \notin \inte(A)$ we have for every $z\in(0,\infty)^d$ with $z_i< x_i$ for all $i\leq d$  that $f(z)=0$. Because if we had $f(z)>0$, it would follow that $f(y)>0$ for every $y\geq z$ by monotonicity. This would imply that there exists a $\delta>0$ 
    such that $f(y)>0$ for all $y\in B_{\delta}(x)$ 
    and so $x\in \inte(A)$, which is a contradiction.
    On the other hand we can conclude that there exists a $\delta'>0$ such that $B_{\delta'}(z)\subset A^{c}$ and thus $z \notin \overline{A}$. Furthermore, by this argument we can see that $B_{\epsilon}(x)\cap \big(\overline{A}\big)^{c}\neq \emptyset$ for every $\varepsilon>0$, and thus $x\notin \inte(\overline{A})$.

    Next we show that $\overline{\inte(A)}=\overline{A}$. Again, one inclusion is obvious and for the other inclusion $\overline{A} \subset \overline{\inte(A)}$ it suffices to show $A \subset \overline{\inte(A)}$.
    For every $x\in A$ by definition it holds that $f(x)>0$. By monotonicity we have for all $z \geq x$ that $f(z)>0$. Furthermore, for every $z\in(0,\infty)^d$ with $z_i> x_i$ for all $i\leq d$ there exists a $\delta>0$ such that for every $y\in B_{\delta}(z)$ it holds that $y_i> x_i$ for all $i\leq d$. This implies that $B_{\delta}(z)\subset A$, and therefore we can conclude that $z\in \inte(A)$. But this means that for every $\varepsilon>0$ it must hold that $B_\varepsilon(x)\cap \inte(A)\neq \emptyset$. By definition of the closure it follows that $x\in \overline{\inte(A)}$.
\end{proof}
Now we have all preliminary results which we need to show Proposition~\ref{IndependenceGrowthCond}, i.e.\ $\inte(\cS(C,E))=\inte(\cS(\{x\},\emptyset))$ and $\overline{\cS(C,E)}=\overline{\cS(\{x\},\emptyset)}$ for any finite and non-empty  $C\subset V$.

\begin{proof}[Proof of Proposition~\ref{IndependenceGrowthCond}]
	By definition we know that $\cS(\{x\},\emptyset)\subset\cS^{\pi}$. Furthermore, if $(\lambda,r,\alpha,\beta)\in \inte(\cS^{\pi})$, then in particular there must exists an $\varepsilon>0$ such that $(\lambda,r,\alpha+\delta,\beta-\delta)\in \cS^{\pi}$ for all $\delta\in(-\varepsilon,\varepsilon)$. Thus, we know by Proposition~\ref{IndependenceGrowthCond2} $(i)$ that $(\lambda,r,\alpha,\beta)\in \cS(\{x\},\emptyset)$ must hold. But this implies that $\inte(\cS(\{x\},\emptyset))=\inte(\cS^{\pi})$.
	
	Similarly, we know that $\cS^{\pi}\subset\cS(\{x\},E)$, and therefore we also know that
	\begin{equation*}
	    (0,\infty)^4\backslash\overline{\cS(\{x\},E)}\subset (0,\infty)^4\backslash\overline{\cS^{\pi}}.
	\end{equation*}
	Both of these sets are open, and thus if $(\lambda,r,\alpha,\beta)\in (0,\infty)^4\backslash\overline{\cS^{\pi}}$, then there must exist an $\varepsilon>0$ such that $(\lambda,r,\alpha+\delta,\beta-\delta)\in (0,\infty)^4\backslash\overline{\cS^{\pi}}$ for all $\delta\in(-\varepsilon,\varepsilon)$. Hence, by Proposition~\ref{IndependenceGrowthCond2} $(ii)$ it follows that $(\lambda,r,\alpha,\beta)\in (0,\infty)^4\backslash\overline{\cS(\{x\},E)}$ which then yields
	\begin{equation*}
	    (0,\infty)^4\backslash\overline{\cS(\{x\},E)}=(0,\infty)^4\backslash\overline{\cS^{\pi}},
	\end{equation*}
	and thus $\overline{\cS^{\pi}}=\overline{\cS(\{x\},E)}$.
	
	Now, for any $C$ finite and non-empty and any $B \subset E$ the function $(\lambda,r,\alpha,\beta) \mapsto \theta\big(\lambda,r,\alpha,\beta,C,B\big)$ satisfies the conditions of  Lemma~\ref{lem:TopologicalAuxLemma}  and since
	\begin{equation*}
        \cS(C,B):=\{(\lambda,r,\alpha,\beta)\in (0,\infty)^4:\theta\big(\lambda,r,\alpha,\beta,C,B\big)>0\}
    \end{equation*}
   this then yields that 
      $\inte(\cS(C,B))=\inte(\overline{\cS(C,B)})$ and $\overline{\cS(C,B)}=\overline{\inte(\cS(C,B))}$. 
      Similarly, we obtain  $\inte(\cS^{\pi})=\inte(\overline{\cS^{\pi}})$ and $\overline{\cS^{\pi}}=\overline{\inte(\cS^{\pi})}$.
      Taken together we obtain from $\inte(\cS(\{x\},\emptyset))=\inte(\cS^{\pi})$ also
      $\overline{\cS(\{x\},\emptyset)}=\overline{\cS^{\pi}}$ and from $\overline{\cS^{\pi}}=\overline{\cS(\{x\},E)}$ also $\inte(\cS^{\pi})=\inte(\cS(\{x\},E))$.
      Finally, since we have  for any $C$ finite and non-empty and any $B \subset E$ that 
      \begin{equation*}
      \cS(\{x\},\emptyset)= \cS(C,\emptyset)\subset \cS(C,B) \subset \cS(C,E) = \cS(\{x\},E)
      \end{equation*}
      and the interior and the closure of the left and right hand side agree (and are equal to 
      $\inte(\cS^{\pi})$ and $\overline{\cS^{\pi}}$ respectively) and the claim follows.
\end{proof}

\subsection{CPDP on the $d$-dimensional integer lattice}\label{InvolvingBlockContruction}
In this subsection we focus solely on the CPDP on the $d$-dimensional lattice with nearest neighbour structure, i.e.~$V=\Z^d$ and $E=\{\{x,y\}\subset \Z^d: ||x-y||_1=1\}$, where $||\cdot||_1$ denotes the $1$-norm.

Our next goal is to adapt a block construction which was initially developed by Bezuidenhout and Grimmett \cite{bezuidenhout1990critical} for the classical contact process. This construction will allow us to show that the CPDP dies out at criticality and that complete convergence is satisfied. In essence, the construction allows us to couple the CPDP with a oriented percolation on a macroscopic grid in such a way that the percolation model survives if and only if the infection process of the CPDP survives.

The block construction of \cite{bezuidenhout1990critical} is quite sophisticated and technical. In order to adapt the original version to our case only minor changes are needed. One of the main differences is the additional background process $\bfB$. But it is not difficult to incorporate this additional feature. As in the original construction we just need to be careful to restart the process in an appropriate configuration at the beginning of a block. Thus, we only give a broad description and sketch the proofs of the major results. For a more detailed description of the procedure for the CPDP based on exposition in \cite[Part I.2]{liggett2013stochastic} we refer to \cite{seiler2021}. 
We should also mention that we are not the first to adapt these techniques to a variation of the contact process. 
In a setting similar to ours this has already been done, for example by Reminik \cite{remenik2008contact} and Steif and Warfheimer \cite{steif2007critical} for a contact process with varying recovery rates and by Deshayes \cite{deshayes2014contact} for a contact process with ageing.

The idea of the construction
is to formulate finite space-time conditions which are equivalent to survival of the process. Thus, for an arbitrary but fixed $L\in\N$ we first introduce a truncated version $(\prescript{}{L}\bfC,\prescript{}{L}\bfB)$ of the CPDP on a finite space-time box 
\begin{align*}
	V_L:=[-L,L]^d\cap \Z^d \text{ and } E_L:= \{e: e\cap V_L\in E\}.
\end{align*}
This process can again be defined via a graphical representation by only considering recovery symbols on vertices $x\in V_L$ 
and infection events which emanate from a vertex $x\in(-L,L)^d\cap \Z^d$. Therefore, we also only need to consider flip events influencing edges in $E_L$. For ease of notation we 
 abbreviate $[-n,n]^d\cap \Z^d$ by $[-n,n]^d$ and in particular write $\bfC^{[-n,n]^d,B}_t$ for $\bfC^{[-n,n]^d\cap \Z^d,B}_t$  in the following.

Now we are ready to formulate the above mentioned conditions on the finite space-time box $[-L,L]^d\times[0,T+1]$, where $T>0$. For that we need to consider the events
\begin{align}
	\cA_1=\cA_1(n,L,T):=\big\{\prescript{}{L+n}\bfC^{[-n,n]^{d},\emptyset}_{T+1}\supset & x+[-n,n]^{d} \text{ for some } x\in[0,L)^d \label{SpaceTimeCondition1}\big\},\\
	\cA_2=\cA_2(n,L,T):=\big\{\prescript{}{L+2n}\bfC^{[-n,n]^{d},\emptyset}_{t+1}\supset & x+[-n,n]^{d} \text{ for some } 0\leq t < T\notag\\
	&\phantom{AAAAAAA}\text{ and } x\in\{L+n\}\times[0,L)^{d-1}\big\}.\label{SpaceTimeCondition2}
\end{align}
In words, the event $\cA_1$ states that if we start the truncated CPDP in the initial configuration $([-n,n]^{d},\emptyset)$ we find a spatially shifted version of the box $[-n,n]^{d}$ at the top of a bigger space-time box $[-(L+n),L+n]^d\times[0,T+1]$. On the other hand, the event $\cA_2$ states that we  find a spatially shifted version of the box $[-n,n]^{d}$ at the "right" boundary (in direction of the first coordinate) of the bigger space-time box. In broad terms one could say that these events guarantee that throughout this big space-time box the infection survives at least as "strongly" as it started. We illustrate the cross section of these events (in the direction of the first coordinate axis) in \autoref{fig:ManySites}. The main difference to the original construction is that we also need to pay attention to the state of the background process. We consider it to be in the most unfavorable state from the perspective of the infection, which is the empty configuration $\emptyset$.
\begin{figure}[t]
	\centering
	\includegraphics[width=140mm]{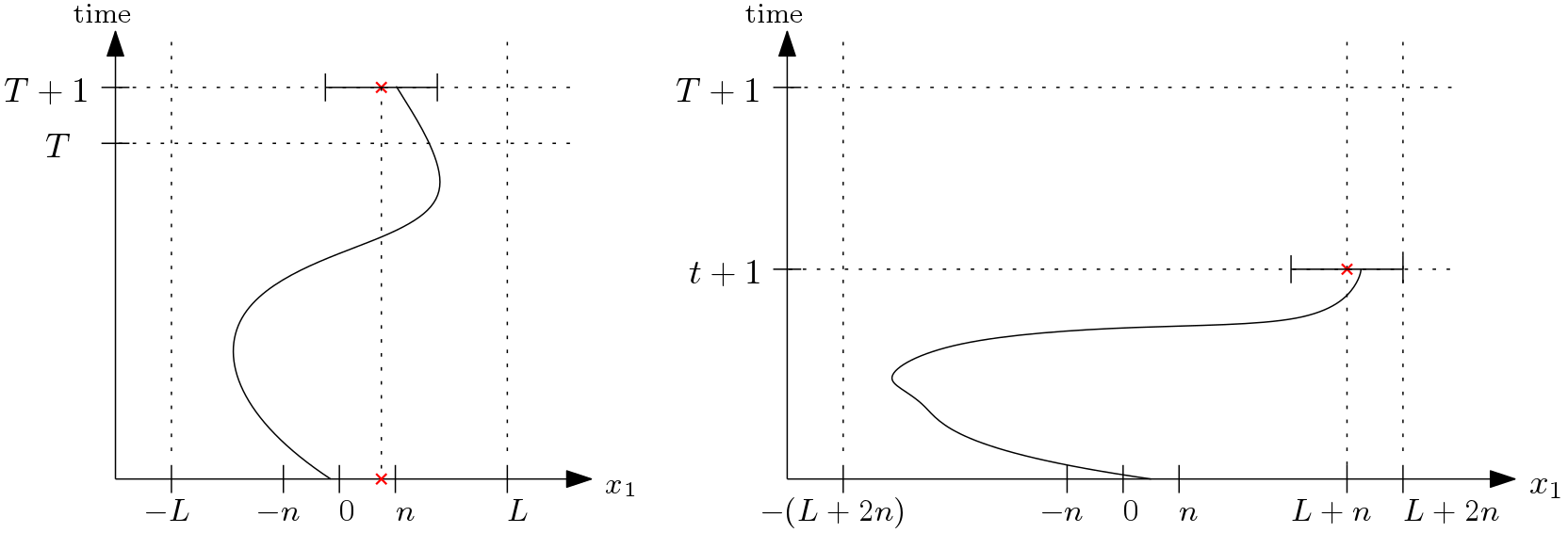}
	\caption{Illustration of the events in \eqref{SpaceTimeCondition1} and \eqref{SpaceTimeCondition2}}
	\label{fig:ManySites}
\end{figure}
\begin{remark}
    In case of the CPDP we know that the states of any pair of edges $e\neq e'\in E$ are independent of each other, and therefore it does not matter if we only consider the restricted background process $\prescript{}{L}\bfB$ or $\bfB$ to define the truncated infection process $\prescript{}{L}\bfC$.
    
    This is not the case if $\bfB$ is an arbitrary spin system. If we do not restrict the background $\bfB$, then shifts of the events $\cA_1$ and $\cA_2$ which are spatially disjoint with respect to the graphical representation are no longer independent of $\cA_1$ or $\cA_2$. But this is crucial for the comparison argument with an oriented percolation which we use later on. On the other hand if we want to first restrict the process to $\prescript{}{L}\bfB$ which is only defined on the box $[-L,L]^d$ we need to fix a boundary condition. Thus, the question is how to choose a good boundary condition? It is also not clear at all if a number of technical results used in the construction still hold true in this case.    
\end{remark}

The finite space-time condition which we impose states that we can choose the parameters $n,L$ and $T$ in such a way that these events happen with "high" probability.
\begin{condition}\label{SpaceTimeCondition}
	For all $\varepsilon>0$ there exist $n,L\geq 1$ and $T>0$ such that
	\begin{align*}
		\Pw(\cA_1)>1-\varepsilon \quad \text{ and } \quad \Pw(\cA_2)>1-\varepsilon.
	\end{align*}
\end{condition}
Now first one shows that if survival is possible then these conditions are satisfied.
For this we need some further notation. For $L\in \N$ and $T\geq 0$ we set
\begin{align*}
S(L,T)&:=\{(x,t)\in \Z^d\times [0,T]: ||x||_{\infty}=L\} \text{ and }\\
S_{+}(L,T)&:=\{(x,t)\in \Z^d\times [0,T]: x_1=L, x_i\geq 0 \text{ for } 2\leq i \leq d \}.
\end{align*}
The set $S(L,T)$ is the union of all lateral faces of the space-time box $[-L,L]^d\times [0,T]$ and $S_{+}(L,T)$ is the intersection of one particular lateral face with the first orthant.

Next we fix $C\subset (-L,L)^d\cap \Z^d$. We want to consider all points in $S(L,T)$ and respectively in $S_{+}(L,T)$, which can be reached from $C$ through an $\emptyset$-infection path, i.e.~an infection path which starts with the background in state $\emptyset$. Let us define by $N_{\emptyset}^{C}(L,T)$ the maximal number of points in any $D\subset S(L,T)\cap \prescript{}{L}\bfC^{C,\emptyset}$ and respectively by $N^C_{+,\emptyset}(L,T)$ the maximal number of points in any subset $D\subset S_{+}(L,T)\cap \prescript{}{L}\bfC^{C,\emptyset}$, where $D$ has the property that every two points $(x,t_1),(x, t_2)\in D$ with the same spatial coordinate  satisfy $|t_2-t_1|\geq 1$.
\begin{theorem}\label{CPDPSurvivalImplyCond}
	Suppose $\theta(\lambda,r,\alpha,\beta,\{\zero\},\emptyset)>0$, then Condition~\ref{SpaceTimeCondition} is satisfied.
\end{theorem}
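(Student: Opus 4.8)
The plan is to adapt the block construction of Bezuidenhout and Grimmett \cite{bezuidenhout1990critical} for the classical contact process, in the streamlined form of \cite[Part~I.2]{liggett2013stochastic} and along the lines of \cite{seiler2021}, to the CPDP. The only new feature is the background process, and the special structure of the dynamical percolation is exactly what makes the adaptation essentially routine: the edge states evolve as independent Feller processes, so two shifts of the events $\cA_1$ or $\cA_2$ that are supported on disjoint space--time regions of the graphical representation remain independent, which is precisely the property exploited in the classical argument. As a preliminary reduction, note that $\Z^d$ has subexponential growth, so the growth condition $c_1(\lambda,\rho)>\kappa^{-1}\rho$ holds trivially; by Proposition~\ref{IndependencySurvival} the hypothesis $\theta(\lambda,r,\alpha,\beta,\{\zero\},\emptyset)>0$ then forces $\theta(\lambda,r,\alpha,\beta,\{x\},B)>0$ for every $B\subset E$, hence $\theta^{\pi}(\lambda,r,\alpha,\beta,\{x\})>0$, so that $\overline{\nu}\neq\delta_{\emptyset}\otimes\pi$ by Proposition~\ref{EqualityOfCriticalValues}; combining \eqref{SurvivalOfAllInfected} with Lemma~\ref{ApproximationLemma} (again free on $\Z^d$) yields $\theta_n:=\theta(\lambda,r,\alpha,\beta,[-n,n]^d,\emptyset)\to1$ as $n\to\infty$, i.e.\ the conclusion of Lemma~\ref{SurvivalContinuityLemma2}, which therefore remains valid at the boundary $\lambda=\lambda_c(r)$ as well. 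Given $\varepsilon>0$ we fix $n=n(\varepsilon)$ with $\theta_n>1-\varepsilon/4$ and from now on always start the infection from $[-n,n]^d$ with the background in the least favourable state $\emptyset$.

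The heart of the proof is a boundary--crossing estimate: for every $k\in\N$ there are $L,T$ large such that with probability at least $\theta_n-\varepsilon/4$ the truncated infection $\prescript{}{L+2n}\bfC^{[-n,n]^{d},\emptyset}$ meets the region $S_{+}(L,T)$ at $\ge k$ time--separated points, i.e.\ $N^{[-n,n]^{d}}_{+,\emptyset}(L,T)\ge k$, and analogously for the top face of the box. This is proved exactly as for the classical contact process, in three layers. First, a process confined to a finite vertex set dies out almost surely, so on the event of survival the infinite infection started from $[-n,n]^{d}$ must leave every box $[-L,L]^{d}$; iterating this together with additivity shows that on survival it does so at arbitrarily many time--separated instants, and by the lattice symmetry of the CPDP (and of the pair of initial configurations $[-n,n]^{d},\emptyset$) one routes these crossings onto the single face $S_{+}$, the combinatorial factor lost in the symmetrization being recovered by a renewal — each time the infection re--establishes a translated copy of $[-n,n]^{d}$ it again survives with probability $\ge\theta_n$ and contributes further crossings. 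Second, the passage from the infinite process to the truncated process $\prescript{}{L+2n}\bfC$ is harmless because on any bounded space--time region the two agree with probability tending to $1$ as the truncation level grows, which follows from the finite--speed estimate in Lemma~\ref{MaximalSpread}. Third, given the crossings, the events $\cA_1,\cA_2$ are produced by opening an independent ``restart window'' of bounded space--time extent around each crossing point: since a single infected site covers a translated box $x+[-n,n]^{d}$ within one unit of time with some probability $p=p(n)>0$ (the positive constant from the proof of Lemma~\ref{SurvivalContinuityLemma1}), and since the windows can be taken disjoint because the crossings are time--separated and the restart boxes stay inside $[-(L+2n),L+2n]^{d}$, at least one window succeeds with probability $\ge 1-(1-p)^{k}$. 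Choosing $k$ with $(1-p)^{k}<\varepsilon/4$ and then $L,T$ large yields $\Pw(\cA_2)>1-\varepsilon$; the event $\cA_1$ is obtained in the same way, now growing a box at time $T+1$ from a crossing of the top face lying in the orthant $[0,L)^{d}$.

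The main obstacle is the faithful transcription of the second and third layers with the background present — in particular, verifying that at \emph{each} restart the relevant edges can be taken to start in the configuration $\emptyset$ (which is exactly why $\prescript{}{L}\bfC$ is always launched from $\emptyset$), that the restart windows are genuinely independent, and that the renewal/symmetrization delivers the crossings on the \emph{designated} face $S_{+}$ with probability close to $\theta_n$ rather than only to the symmetry--reduced constant. For the CPDP this all goes through because the per--edge independence of the dynamical percolation keeps disjoint windows independent and prevents the states of distinct edges from interacting; for a general finite--range spin system it fails, which is precisely the point flagged in the remark following \eqref{SpaceTimeCondition2}. Beyond this, the remaining work — ordering the scales $n\ll k\ll L\ll T$, checking that the restart boxes fit inside the enlarged truncation boxes $[-(L+n),L+n]^{d}$ and $[-(L+2n),L+2n]^{d}$ of \eqref{SpaceTimeCondition1} and \eqref{SpaceTimeCondition2}, and assembling the two estimates into Condition~\ref{SpaceTimeCondition} — is routine and essentially identical to \cite{bezuidenhout1990critical,liggett2013stochastic} and \cite{seiler2021}.
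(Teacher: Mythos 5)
Your overall strategy — reduce to $\theta_n\to1$ via Lemma~\ref{SurvivalContinuityLemma2}, show that on survival the truncated process produces many time-separated boundary crossings, and then convert crossings into the events $\cA_1,\cA_2$ via disjoint one-unit restart windows, exploiting the per-edge independence of the dynamical percolation throughout — is the same block-construction route the paper takes (following Bezuidenhout--Grimmett and Liggett's exposition). Your first-paragraph reduction, the truncation step via Lemma~\ref{MaximalSpread}, and the restart-window step match the paper's proof in spirit.

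However, there is a genuine gap in what you call the first layer. You need to pass from "many crossings on the \emph{whole} boundary $S(L,T)$" to "many crossings on the \emph{single} face/orthant $S_+(L,T)$" with probability close to $\theta_n$, and you propose to absorb the combinatorial factor $d\,2^d$ by a vague "renewal" argument in which each re-established box survives again with probability $\ge\theta_n$ and contributes more crossings. This does not give what you need: conditional on survival you can indeed iterate to get arbitrarily many crossings \emph{somewhere}, but routing them onto the designated face $S_+$ is not a renewal statement, and the renewals are neither independent nor do they reset the spatial symmetry (the restart location is wherever the previous crossing landed, not a canonical point). The paper instead uses the positive-correlation (FKG/Harris) inequality: the events $\{N^{[-n,n]^d}_{\emptyset}$ restricted to orthant $i$ is $\le M\}$ are decreasing and, by lattice symmetry, identically distributed, so
\begin{equation*}
\Pw\big(N^{[-n,n]^d}_{+,\emptyset}(L,T)\leq M\big)\;\leq\;\Pw\big(N^{[-n,n]^d}_{\emptyset}(L,T)\leq Md2^d\big)^{1/(d2^d)},
\end{equation*}
and similarly for the top face with exponent $2^{-d}$. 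This inequality is what turns a small failure probability for the total count into a small failure probability for the designated face; your proposal has no substitute for it. Without it you cannot conclude $\Pw(\cA_2)>1-\varepsilon$ (nor $\Pw(\cA_1)>1-\varepsilon$), because the per-face crossing count is not controlled. To repair the proof, replace the "renewal" symmetrization in layer 1 with this FKG step, exactly as in \cite{bezuidenhout1990critical} and \cite[Part~I.2]{liggett2013stochastic}; the rest of your outline then goes through.
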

\begin{proof} We only sketch this proof and refer for details to 
\cite[Theorem~6.1.7]{seiler2021}. The proof is basically split in two parts. First one shows that with high probability we have sufficiently many infected vertices at the top face and at lateral faces of the space-time box $S(L,T)$.
Lemma~\ref{SurvivalContinuityLemma2} is fundamental for this proof. This lemma states that for a given $0<\delta<1$ there exists an $n$ such that
\begin{equation*}
	\Pw(\bfC^{[-n,n]^d,\emptyset}_t\neq\emptyset\,\,\forall t\geq 0)\geq 1-\delta^2.
\end{equation*}
In words this means that we can push the extinction probability below any given bound by choosing $n$ large enough. Next we use that for any $n,N\geq 1$
\begin{equation*}
	\lim_{t\to \infty}\lim_{L\to \infty}\Pw(|\prescript{}{L}\bfC^{[-n,n]^d,\emptyset}_t|\geq 2^dN)=\Pw(\bfC^{[-n,n]^d,\emptyset}_t\neq \emptyset \,\, \forall t\geq 0)
\end{equation*}
(see \cite[Proposition~6.1.3]{seiler2021}), which means that if a contact process survives, then it does so with infinitely many vertices being infected. This fact can be used to construct two strictly increasing sequences $(T_k)_{k\geq 0}$ and $(L_k)_{k\geq 0}$ such that $T_k,L_k\uparrow \infty$ and
\begin{equation}\label{EqualityForTheTwoSequences1}
	\Pw(|\prescript{}{L_k}\bfC^{[-n,n]^d,\emptyset}_{T_k}|> 2^dN)=1-\delta
\end{equation}
for all $k\geq 0$ and with \cite[Lemma~6.1.5]{seiler2021} one can conclude that for some $M \geq 1$ also
\begin{equation}\label{EqualityForTheTwoSequences2}		
	\Pw\big(N^{[-n,n]^d}_{\emptyset}(L_k,T_k)\leq  Md2^d\big)< 2\delta
\end{equation}
for all $k\geq 0$. Note that we chose the constant $2^dN$ since by positive correlation one can conclude that
\begin{align*}
	\Pw\big(|\prescript{}{L}\bfC^{[-n,n]^d,\emptyset}_T\cap [0,L)^d|\leq N\big)&\leq \big(\Pw\big(|\prescript{}{L}\bfC^{[-n,n]^d,\emptyset}_T|\leq 2^dN)\big)^{2^{-d}},\\
		\Pw\big(|N^{[-n,n]^d}_{+,\emptyset}(L,T)|\leq M\big)&\leq \big(\Pw\big(|N^{[-n,n]^d}_{\emptyset}(L,T)|\leq Md2^d\big)\big)^{d^{-1}2^{-d}}
\end{align*}
(see \cite[Proposition~6.1.4 \& 6.1.6]{seiler2021}). This can be used together with \eqref{EqualityForTheTwoSequences1} and \eqref{EqualityForTheTwoSequences2} to conclude that
	\begin{align}
	&\Pw(|\prescript{}{L}\bfC^{[-n,n]^d,\emptyset }_T\cap [0,L)^d|> N)\geq 1-\big(\Pw(|\prescript{}{L}\bfC^{[-n,n]^d,\emptyset}_T|\leq 2^dN)\big)^{2^{-d}}=1-\delta^{2^{-d}}, \label{CrucialLowerBounds1}\\
	&\Pw(|N^{[-n,n]^d}_{+,\emptyset}\!(L,T)|> M)\geq 1-\big(\Pw(|N^{[-n,n]^d}_{\emptyset}\!(L,T)|\leq Md2^d)\big)^{\frac{1}{d2^{d}}}>1-( 2\delta)^{\frac{1}{d2^{d}}}\label{CrucialLowerBounds2}.
\end{align}
Let us focus on the first part of Condition~\ref{SpaceTimeCondition}. We now know by \eqref{CrucialLowerBounds1} that with high probability we have more than $N$ infected vertices at the top of the space-time box. Now the key idea is that if $N$ is large enough at least one of theses vertices $x$ will infect the spatially shifted box $x+[-n,n]^d$ after a time step of length $1$. Of course these events are highly correlated. Our way of dealing with this is to choose $N$ large enough such that we still find a sufficiently large number of vertices which are additionally far enough apart such that we can disregard this correlation:  
Given $\delta$ and $n$ choose an $N'$ such that
\begin{equation}\label{ManyPointsInfectSurely}
	\Big(1-\Pw\big(\prescript{}{n+1}\bfC^{\{\zero\},\emptyset}_{1}\supset [-n,n]^d\big)\Big)^{N'}<\delta.
\end{equation}
Furthermore, we then choose $N$ large enough such that for any configuration $A\subset \Z^d$ with $|A|> N$ there exists a $D\subset A$ with $|D|> N'$ and $||x-y||_{\infty}\geq 2n+1$ for all $x,y \in D$ with  $x\neq y$. In words, $N$ needs to be large enough such that any set $A$ which contains $N$ or more vertices will contain at least $N'$ vertices that are all a distance of $2n+1$ apart.
Let us now consider for such a subset $D$ and $T>0$ the event
\begin{align*}
	W^T_D=\{&\exists x\in D \text{ such that there are } \emptyset\text{-infection paths from } (x,T)  \text{ to every }\\& (y,T+1) \text{ with } y \in (x+[-n,n]^d)
	\text{ that stay in } (x+[-n,n]^d)\times(T,T+1]\}.
\end{align*}
Since we restricted the $\emptyset-$infection paths which lead from $(x,T)$ to every point in $(x+[-n,n]^d )\times \{T+1\}$ to stay in $(x+[-n,n]^d)\times (T,T+1]$ and we assumed that every two vertices $x,y\in D$ are $2n+1$ apart we get that
\begin{equation}\label{technicalInequality}
	\Pw\big(W^T_{D}\big)>1-\Big(1-\Pw(\prescript{}{n+1}\bfC^{\{\zero\},\emptyset}_{1}\supset [-n,n]^d)\Big)^{N'}>1-\delta.
\end{equation}
Note that $W^T_{D}$ only uses the graphical representation between $T$ and $T+1$, and thus it is independent of $\cF_T$, i.e.~$\Pw\big(W^T_{D}|\cF_T\big)=	\Pw\big(W^T_{D}\big)$. It is not difficult to see that
\begin{align}\label{technicalInclusion}
	&\{|\prescript{}{L}\bfC^{[-n,n]^d,\emptyset }_T\cap [0,L)^d|> N,\prescript{}{L}\bfC^{[-n,n]^d,\emptyset }_T\cap [0,L)^d=A\}\cap W^T_{D(A)}\notag\\
	\subset &\{\prescript{}{L+n}\bfC^{[-n,n]^d,\emptyset }_{T+1}\supseteq x+[-n,n]^d \text{ for some } x\in[0,L)^d\}.
\end{align}
where $D(A)$ is a subset of $A$ containing at least $N'$ elements, which are all spaced a distance $2n+1$ apart and also that
\begin{align}\label{technicalInclusion2}
	\begin{aligned}
		&\bigcup_{A\subseteq [0,L)^d}\{|\prescript{}{L}\bfC^{[-n,n]^d,\emptyset}_T\cap [0,L)^d|> N,\prescript{}{L}\bfC^{[-n,n]^d,\emptyset}_T\cap [0,L)^d=A\}\\
		=&\{|\prescript{}{L}\bfC^{[-n,n]^d,\emptyset}_T\cap [0,L)^d|> N\}.
	\end{aligned}
\end{align}
Thus, \eqref{CrucialLowerBounds1} and \eqref{technicalInequality}-\eqref{technicalInclusion2} yield together that
\begin{align*}
	&\Pw\big( \prescript{}{L+n}\bfC^{[-n,n]^d,\emptyset}_{T+1}\supset x+[-n,n]^d \text{ for some } x\in[0,L)^d\big)\\ \geq &\Pw\big(|\prescript{}{L}\bfC^{[-n,n]^d,\emptyset}_T\cap [0,L)^d|> N\big) (1-\delta)\geq (1-\delta^{2^{-d}})(1-\delta).
\end{align*}
Choosing $\delta$ accordingly yields the first part of Condition~\eqref{SpaceTimeCondition}. 

The second inequality can be concluded in a similar fashion since \eqref{CrucialLowerBounds2} and translation invariance yield that we get with high probability more than $M$ infected space-time points $(x,t)$ on an arbitrary but fixed intersection of a lateral face of $S(L,T)$ and an arbitrary orthant. Recall that $S_{+}(L,T)$ was the intersection of the lateral face in direction of the first coordinate $x_1$ and the first orthant.

Similarly to before we can choose $M$ large enough such that for any finite set $F\subset\Z^d\times \R_+$ which contains more than $M$ space-time points such that if the spatial coordinate of two points agree then their time coordinates are at least a distance of $1$ apart, we find a subset which contains at least $M'$ many space-time points which are all $2n+1$ apart in spatial distance. Again with high probability at least one point $(x,t)$ infects via an $\emptyset-$infection path the spatially shifted space-time box $x+[0,2n]\times [-n,n]^{d-1}\times \{t+1\}$. By a similar strategy as before we can deal with the dependency and get
\begin{align*}
	\Pw(\prescript{}{L+2n}\bfC^{[-n,n]^d,\emptyset}_{t+1}&\supseteq x+[-n,n]^d \text{ for some } 0\leq t < T\text{ and } x\in\{L+n\}\times[0,L)^{d-1})\\
	&>(1-\delta)(1-( 2\delta)^{d^{-1}2^{-d}}).
\end{align*}
Again choose $\delta$ accordingly such that we get the second inequality of Condition~\eqref{SpaceTimeCondition}.
\end{proof}

Now it remains to show that Condition~\ref{SpaceTimeCondition} implies survival of the CPDP. The strategy is to use Condition~\ref{SpaceTimeCondition} to define so-called "good blocks" and with that to construct an oriented percolation model on a macroscopic lattice which is coupled to the CPDP in the sense that if the percolation model survives it implies that also the CPDP survives.

The first step is to combine \eqref{SpaceTimeCondition1} and \eqref{SpaceTimeCondition2} into one, since this is more convenient for the construction. We consider the event
\begin{align}\label{SpaceTimeCondition3}
	\begin{aligned}
		\cA_3=\cA_3(n,L,T):=\big\{\prescript{}{2L+2n}\bfC^{[-n,n]^{d},\emptyset}_{t}&\supset x+[-n,n]^{d} \text{ for some } T\leq t < 2T \\
		&\text{ and } x\in[L+n,2L+n]\times[0,2L)^{d-1}\big\}.
	\end{aligned}	
\end{align}
Similarly to before we illustrate in \autoref{fig:SpaceTime} the cross section in direction of the first coordinate of the event in \eqref{SpaceTimeCondition3}.
\begin{figure}[ht]
	\centering
	\includegraphics[width=110mm]{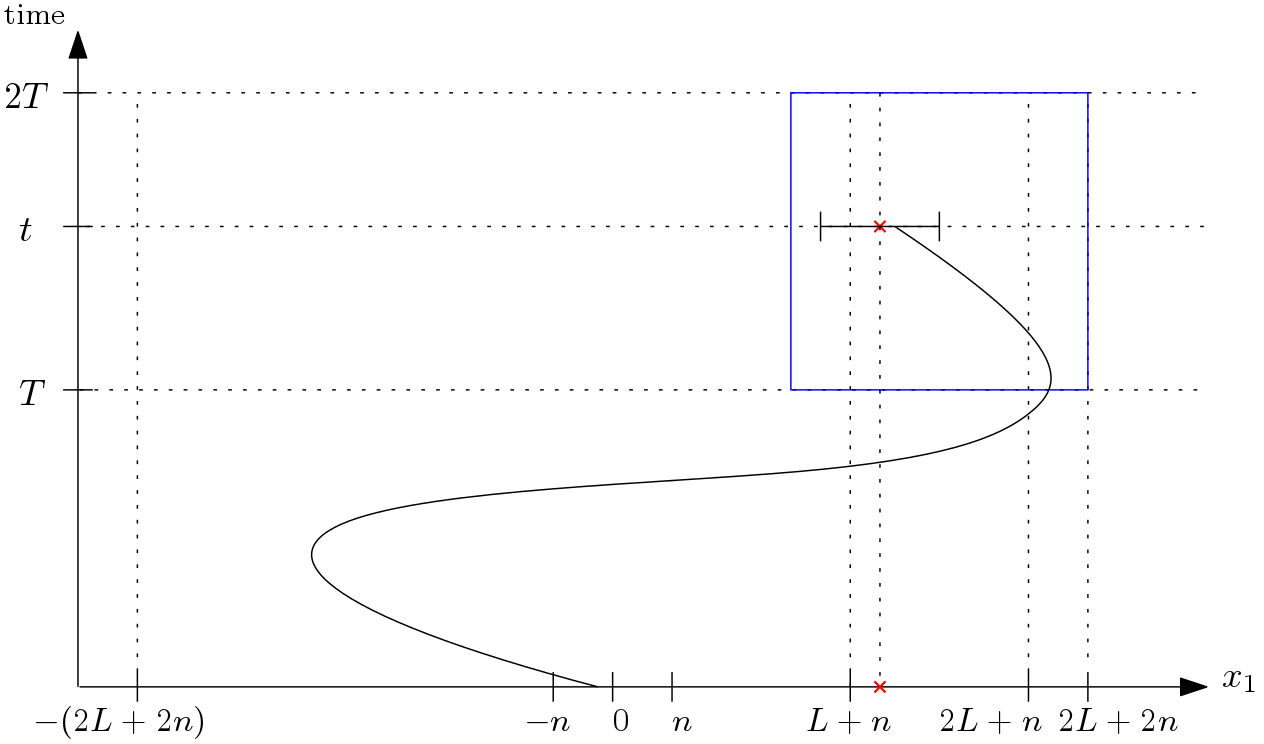}
	\caption{Illustration of the event $\cA_3$ defined in \eqref{SpaceTimeCondition3}. The blue space-time box shows the area where the infected space box of length $2n$ will be contained.}
	\label{fig:SpaceTime}
\end{figure}
This event states that we start with a space box $[-n,n]^d$ of infected vertices in the worst possible background configuration, then we find again such an infected space box at some later time shifted at least by $L+n$ and at most by $2L+n$ to the right along the first spatial coordinate.

Let us emphasize again that it is important to consider the background $\bfB$ to be started in the empty configuration $\emptyset$ since one of the key ideas is, roughly speaking, that we will stack appropriately shifted versions of $\cA_3$ so that at the transition from one block to the next we need to ensure that the background process is again restarted in the same configuration.
\begin{proposition}\label{KeyProperty1}
	Suppose Condition \ref{SpaceTimeCondition} holds. Then for every $\varepsilon>0$ there are choices of $n,L,T$ such that $\Pw(\cA_3)>1-\varepsilon$.
\end{proposition}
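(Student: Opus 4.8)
The plan is to obtain $\cA_3$ by stacking a translated copy of the event $\cA_2$ on top of a copy of $\cA_1$, glued together with the strong Markov property, translation invariance, and monotonicity of the truncated CPDP in the initial infection, the initial background and the truncation region. Fix $\varepsilon>0$. By Condition~\ref{SpaceTimeCondition} choose $n,L\geq 1$ and $T_0>0$ with $\Pw(\cA_1(n,L,T_0))>1-\varepsilon/2$ and $\Pw(\cA_2(n,L,T_0))>1-\varepsilon/2$, and set $T:=T_0+2$; I claim $\Pw(\cA_3(n,L,T))>1-\varepsilon$.

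On $\cA_1(n,L,T_0)$ pick (measurably) a point $x_1\in[0,L)^d$ with $\prescript{}{L+n}\bfC^{[-n,n]^d,\emptyset}_{T_0+1}\supseteq x_1+[-n,n]^d$; since $V_{L+n}\subseteq V_{2L+2n}$, monotonicity in the truncation region upgrades this to $\prescript{}{2L+2n}\bfC^{[-n,n]^d,\emptyset}_{T_0+1}\supseteq x_1+[-n,n]^d$. Now I restart at time $T_0+1$. First replace the background $\bfB^{\emptyset}_{T_0+1}$ by the smaller configuration $\emptyset$, which only shrinks the infection by monotonicity in the initial background (Lemma~\ref{MonotonicityAdditivityLemma}), and then shrink the truncation box from $V_{2L+2n}$ to $x_1+V_{L+2n}\subseteq V_{2L+2n}$; by the strong Markov property this produces, for all $s\geq 0$,
\begin{equation*}
\prescript{}{2L+2n}\bfC^{[-n,n]^d,\emptyset}_{T_0+1+s}\ \supseteq\ \prescript{}{x_1+V_{L+2n}}\widetilde{\bfC}^{\,x_1+[-n,n]^d,\emptyset}_{s},
\end{equation*}
where $\widetilde{\bfC}$ is a truncated CPDP driven only by the Poisson points after time $T_0+1$, hence independent of $\cF_{T_0+1}$. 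By translation invariance, conditionally on $\cF_{T_0+1}$ the probability that the $x_1$-translate of the event defining $\cA_2(n,L,T_0)$ holds for $\widetilde{\bfC}$ — call this event $\widetilde{\cA}_2$ — equals $\Pw(\cA_2(n,L,T_0))$ a.s.\ on $\cA_1$. Hence
\begin{equation*}
\Pw(\cA_1\cap\widetilde{\cA}_2)=\E\bigl[\1_{\cA_1}\,\Pw(\widetilde{\cA}_2\mid\cF_{T_0+1})\bigr]\geq \Pw(\cA_1)\bigl(1-\tfrac{\varepsilon}{2}\bigr)>\bigl(1-\tfrac{\varepsilon}{2}\bigr)^2>1-\varepsilon.
\end{equation*}

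It then remains to verify $\cA_1\cap\widetilde{\cA}_2\subseteq\cA_3(n,L,T)$. On that intersection there are $t\in[0,T_0)$ and $y\in\{L+n\}\times[0,L)^{d-1}$ so that $(x_1+y)+[-n,n]^d$ is infected for $\widetilde{\bfC}$ at time $t+1$, hence, by the display above, for $\prescript{}{2L+2n}\bfC^{[-n,n]^d,\emptyset}$ at global time $\tau:=(T_0+1)+(t+1)=T_0+t+2$. Since $\tau\in[T_0+2,2T_0+2)\subseteq[T_0+2,2(T_0+2))=[T,2T)$ the time constraint of \eqref{SpaceTimeCondition3} holds; and the spatial shift $x_1+y$ has first coordinate in $[0,L)+\{L+n\}=[L+n,2L+n)\subseteq[L+n,2L+n]$ and the remaining coordinates in $[0,L)+[0,L)=[0,2L)$, while a short check gives $(x_1+y)+[-n,n]^d\subseteq x_1+V_{L+2n}\subseteq V_{2L+2n}$; so $\cA_3(n,L,T)$ occurs, which finishes the argument once combined with the probability bound above.

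The step I expect to cost the most care is the restart: one must check that resetting the background to $\emptyset$ and contracting the truncation box to $x_1+V_{L+2n}$ really does give a lower bound that, conditionally on $\cF_{T_0+1}$, is a genuine spatial translate of the truncated CPDP appearing in $\cA_2$, with all translated space boxes contained in $V_{2L+2n}$ and all time windows nesting as required. Each ingredient is an instance of monotonicity (in initial infection, initial background, truncation region) read off from the graphical representation and Lemma~\ref{MonotonicityAdditivityLemma}, but they have to be assembled together with the strong Markov property and translation invariance in a way that keeps $\widetilde{\cA}_2$ independent of $\cF_{T_0+1}$.
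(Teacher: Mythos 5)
Your proof is correct and uses the mirror image of the paper's stacking order. You apply $\cA_1$ first and then a shifted copy of $\cA_2$, restarting at the deterministic time $T_0+1$; the paper (see the proof sketch and Figure~\ref{fig:SpaceTime2}) applies $\cA_2$ first and then a shifted $\cA_1$, restarting at the random time $\tau\in[1,T+1]$ at which a lateral box appears. Your order buys a mild simplification: since $T_0+1$ is deterministic, the ordinary Markov property suffices (your appeal to the \emph{strong} Markov property is overkill, though not wrong), whereas the paper's order genuinely uses the strong Markov property at a stopping time. The geometric bookkeeping all checks out: $x_1+V_{L+2n}\subseteq V_{2L+2n}$ for $x_1\in[0,L)^d$, the resulting time window $[T_0+2,2T_0+2)\subseteq[T,2T)$ with $T:=T_0+2$, and the spatial constraints $[L+n,2L+n)\times[0,2L)^{d-1}$ are satisfied, and the factorization $\Pw(\cA_1\cap\widetilde{\cA}_2)=\Pw(\cA_1)\Pw(\cA_2)$ holds since $\cA_1\in\cF_{T_0+1}$, $x_1$ is $\cF_{T_0+1}$-measurable, and the restarted process is driven by independent Poisson points. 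Two minor exposition points: first, you reuse the symbol $\widetilde{\bfC}$ for the restarted truncated CPDP, but throughout the paper $\widetilde{\bfC}$ denotes the Richardson (no-recovery) process, so a different symbol would avoid a clash; second, monotonicity of the truncated process in the truncation box is an additional graphical-representation fact not literally covered by Lemma~\ref{MonotonicityAdditivityLemma}, which only addresses monotonicity in the initial configuration and the rates — it is of course true and easy, but deserves to be stated on its own.
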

\begin{proof}
	We briefly describe the broad idea, illustrated also in Figure~\ref{fig:SpaceTime2}, and refer to \cite[Proposition~6.2.1]{seiler2021} for the proof details. 
	Broadly speaking, if for the CPDP the event $\cA_2$ occurs then there exists a point $(x,\tau)$
	with $1\leq \tau\leq T+1$ and $x\in\{L+n\}\times[0,L)^{d-1}$ such that
	\begin{equation*}
	    \prescript{}{L+2n}\bfC^{[-n,n]^{d},\emptyset}_{\tau}\supset  x+[-n,n]^{d}.
	\end{equation*}
	 Then  	 we restart the CPDP at time $\tau$ in the state $(x+[-n,n]^{d},\emptyset)$. If for this restarted process an appropriately spatially shifted version of the event $\cA_1$ occurs then it can be shown that this implies that for the original CPDP the event $\cA_3$ must have occurred. Now, due to 
	 Condition \ref{SpaceTimeCondition}  both of these events happen with high probability.
\end{proof}
\begin{figure}[t]
	\centering
	\includegraphics[width=110mm]{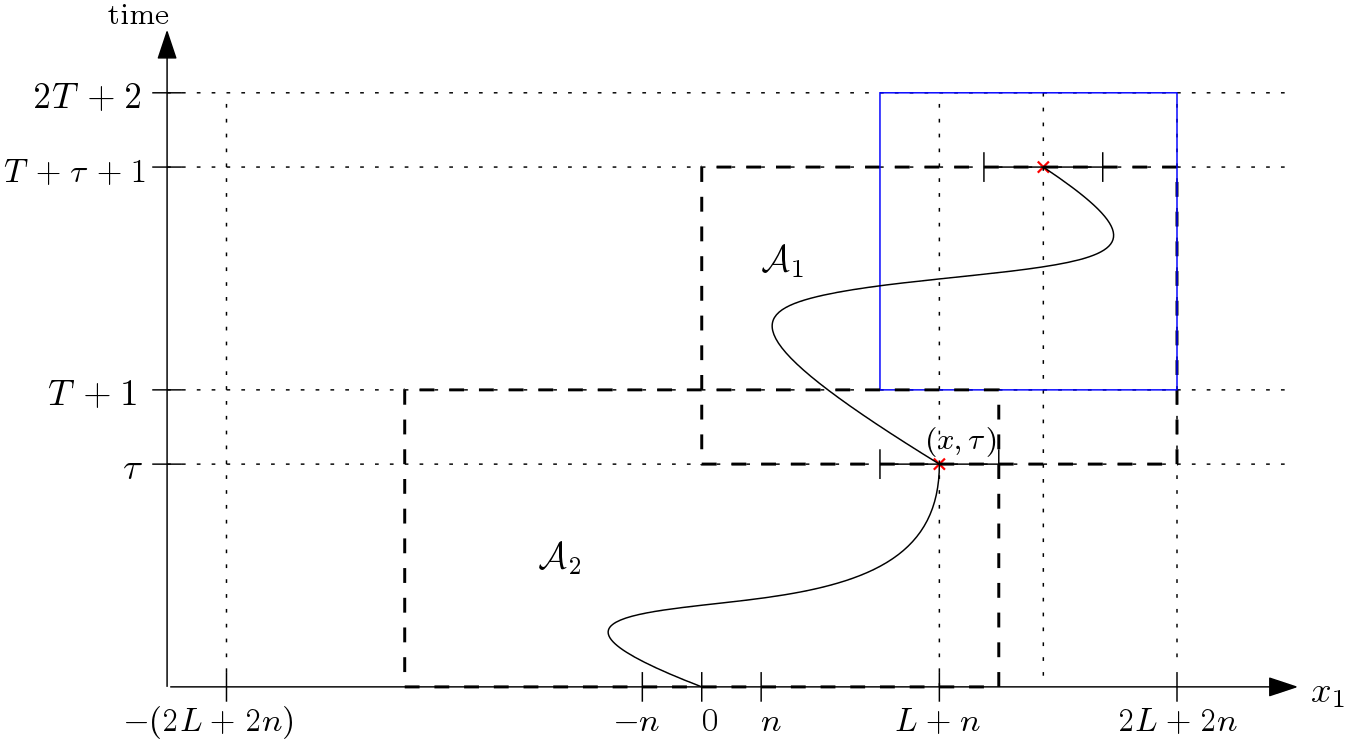}
	\caption{Here it is illustrated how $\cA_3$ is constructed by first using $\cA_2$ and then $\cA_1$, where we restart the process with only a copy of $[-n,n]^d$ being infected and the background in the state $\emptyset$.}
	\label{fig:SpaceTime2}
\end{figure}
Now we use the event $\cA_3$ and Proposition~\ref{KeyProperty1}, to successively define what in our case good blocks are and show that they occur with sufficiently high probability. Let us set
\begin{align*}
	\cD_{j,k}:=[-(1-2j)a,(1+2j)a]\times [-a,a]^{d-1}\times[5kb,(5k+1)b],
\end{align*}
where $j,k\in\Z$ and $a,b>0$.

\begin{proposition}\label{ConstructionOfBlocks}
	Suppose Condition \ref{SpaceTimeCondition} holds. Then for every $\varepsilon>0$ there are choices of $n,a,b$ with $n<a$ such that if $(x,s)\in \cD_{j,k}$,
	\begin{align*}
		\Pw\big(&\exists (y,t) \in \cD_{j+1,k+1} \text{ s.t. there are }\notag 
		\emptyset\text{-infection} 		\text{ paths that stay in } \\&
		([-5a,5a] +2ja) \times [-5a,5a]^{d-1}\times[0,6b] 	\text{ and lead from } \\ & (x,s)+([-n,n]^{d}\times \{0\}) \text{ to every point in } (y,t)+([-n,n]^{d}\times \{0\}) \big)>1-\varepsilon.
	\end{align*}
\end{proposition}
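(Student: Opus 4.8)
The plan is to deduce this directly from Proposition~\ref{KeyProperty1}, which already carries all the probabilistic content; the rest is bookkeeping, rephrasing the event $\cA_3$ from \eqref{SpaceTimeCondition3} in the block coordinates. So, given $\varepsilon>0$, I would first invoke Proposition~\ref{KeyProperty1} to fix $n,L,T$ with $\Pw(\cA_3(n,L,T))>1-\varepsilon$, and then choose $a$ and $b$ as sufficiently large multiples of $L+n$ and of $T$, respectively. The constraints that $a$ and $b$ must satisfy are: $n<a$; the spatial truncation box $[-(2L+2n),2L+2n]^d$ underlying the process in $\cA_3$, translated so as to be centred at any spatial point of $\cD_{j,k}$, should lie inside $([-5a,5a]+2ja)\times[-5a,5a]^{d-1}$; the time horizon $2T$ of $\cA_3$, started at any time in $[5kb,(5k+1)b]$, should finish inside the time slab of $\cD_{j+1,k+1}$; and the terminal copy of the box $[-n,n]^d$ produced by $\cA_3$ should land spatially inside $\cD_{j+1,k+1}$ (using a reflected copy of $\cA_3$ for any transversal coordinate that must decrease). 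These are finitely many inequalities, linear in the ratios $a/(L+n)$ and $b/T$, and they are arranged exactly as in the classical Bezuidenhout--Grimmett block construction~\cite{bezuidenhout1990critical}; see also \cite[Part~I.2]{liggett2013stochastic} and \cite[Proposition~6.2.2]{seiler2021}.

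With $n,a,b$ fixed, I would then fix $(x,s)\in\cD_{j,k}$ and note that the family of $\emptyset$-infection paths issuing from $(x,s)+([-n,n]^d\times\{0\})$ is precisely the one generated by the CPDP restarted at time $s$ from the configuration $(x+[-n,n]^d,\emptyset)$, since an $\emptyset$-infection path is by definition computed with the background reset to $\emptyset$ at its initial time. By the time-homogeneity and spatial translation invariance of the CPDP---and because for dynamical percolation the truncated background and $\bfB$ agree on the relevant edges (see the remark following Condition~\ref{SpaceTimeCondition})---this restarted process has, up to the space-time translation by $(x,s)$, the law of the process appearing in $\cA_3(n,L,T)$. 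On the (translated) event $\cA_3$, which depends only on the graphical data inside the translated confinement box over the time interval $[s,s+2T]$ and on which the connecting infection paths stay inside that box, there are a time $\tau\in[s+T,s+2T)$ and a point $y$ with $y-x\in[L+n,2L+n]\times[0,2L)^{d-1}$ such that there are $\emptyset$-infection paths from $(x,s)+([-n,n]^d\times\{0\})$ to every point of $(y,\tau)+([-n,n]^d\times\{0\})$ that stay in that box. By the choice of $a,b$ this box is contained in the prescribed confinement region $([-5a,5a]+2ja)\times[-5a,5a]^{d-1}\times[0,6b]$ and $(y,\tau)\in\cD_{j+1,k+1}$, so the event in the proposition contains the translated event $\cA_3$ and hence has probability exceeding $1-\varepsilon$; since $\cD_{j,k}$ is bounded, the same $n,a,b$ work uniformly in $(x,s)\in\cD_{j,k}$.

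The main obstacle is the geometric bookkeeping of the first step: choosing $a$ and $b$ so that, \emph{uniformly} over the starting point $(x,s)\in\cD_{j,k}$, a translated (and where needed reflected) copy of $\cA_3$ nests correctly inside the prescribed space-time region and has its terminal box inside $\cD_{j+1,k+1}$. This is precisely why the blocks $\cD_{j,k}=[-(1-2j)a,(1+2j)a]\times[-a,a]^{d-1}\times[5kb,(5k+1)b]$ are defined with those particular affine factors and with consecutive time slabs five units of $b$ apart, leaving room for the horizon $2T$ of $\cA_3$; the verification is identical to the classical one, and I would refer to \cite[Proposition~6.2.2]{seiler2021} for the details in the present setting. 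The only point specific to the CPDP, namely the treatment of the background at a block interface, is straightforward: the background is restarted in the worst-case configuration $\emptyset$, and, since the edges of a dynamical percolation evolve independently, the restarted block events attached to spatially disjoint blocks remain independent, which is what the subsequent comparison with oriented percolation requires.
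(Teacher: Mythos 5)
Your proposal contains a genuine gap: you try to reach $\cD_{j+1,k+1}$ from $\cD_{j,k}$ with a \emph{single} translated (and possibly reflected) copy of the event $\cA_3$, but this is impossible, and the impossibility is structural rather than a matter of choosing $a,b$ badly. Look at the time coordinate. The event $\cA_3$, started at time $s$, produces a hitting time $\tau\in[s+T,s+2T)$. For your scheme to work you would need
\[
[5kb,(5k+1)b]+[T,2T)\subseteq[(5k+5)b,(5k+6)b],
\]
that is, $5kb+T\geq(5k+5)b$ and $(5k+1)b+2T\leq(5k+6)b$, i.e.\ $T\geq 5b$ \emph{and} $2T\leq 5b$. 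These are incompatible for every $b>0$, no matter how $a,b$ are chosen relative to $L+n$ and $T$; even restricting the starting time to the bottom face $s=5kb$ still forces $T\geq 5b$ and $2T\leq 6b$, again a contradiction. A similar but less clean tension arises for the first spatial coordinate: $\cD_{j+1,k+1}$ is displaced by roughly $2a$, while one application of $\cA_3$ advances the first coordinate by at most $2L+n$, so if $a$ is a ``sufficiently large multiple of $L+n$'' the terminal box lands well short. In short, the ``finitely many linear inequalities'' you set up are infeasible, not merely intricate.

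What the paper actually does, and what you need, is to \emph{iterate} the event $\cA_3$ --- the paper states five to ten times, with $a=2L+n$ and $b=2T$. At each step one uses the strong Markov property of the CPDP to restart the process from the fully infected shifted hypercube and the \emph{background reset to $\emptyset$} (this is exactly why $\cA_3$ is formulated for an $\emptyset$-infection path, and why it is stated over a finite confinement box so that the stacked events remain independent after a further step). The spatial shift of each step is in $[L+n,2L+n]$ along a chosen axis, and reflections are used at steps where a transverse coordinate threatens to leave $[-(2L+n),2L+n]$, so that after the chain of steps the centre ends up with first coordinate in $[2L+n,3(2L+n)]$ and the other coordinates controlled, while the total elapsed time is in $[5b,6b]$. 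The iteration is not mere bookkeeping: it is the mechanism by which the mismatch between the single-step scales $(L+n,T)$ and the block scales $(a,b)$ is bridged, and your proposal omits it entirely. Your remark about restarting the background in $\emptyset$ is the right instinct, but it has to be applied \emph{inside} the block at every intermediate restart, not only ``at a block interface.''
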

\begin{proof}
    The idea here is that one can apply Proposition~\ref{KeyProperty1} multiple times since we can stack appropriately spatially shifted and if necessary reflected versions of the event $\cA_3$ repeatedly to move the centre $(x,s)$ of the initially fully infected hypercube $x+[-n,n]^d$, where $x\in[-(2L+n),2L+n]^d$, in five to ten steps to a new centre $(y,t)$ with $y\in[2L+n,3(2L+n)]\times [-(2L+n),(2L+n)]^{d-1}$. Note that in every step we use the strong Markov property to restart the background process in the empty configuration. 
    
	We visualized the procedure in Figure~\ref{fig:BuildingBlocks}. In Figure~\ref{fig:BuildingBlocks} (a) we illustrated how we are able, with an appropriate spatial shift, to move the center of the hypercube in direction of a given coordinate axis such that it is contained in $[2L+n,3(2L+n)]$. Furthermore, in Figure~\ref{fig:BuildingBlocks} (b) it is illustrated how we ensure that via reflection over the remaining coordinate axes we keep the remaining coordinates in $[-(2L+n),(2L+n)]^{d-1}$. See \cite[Proposition~6.2.2]{seiler2021} for the detailed proof.
\end{proof}
\begin{figure}[t]
	\centering 
	\subfigure[\textbf{First coordinate:} Start in $(x,s)$ use \eqref{SpaceTimeCondition3} to find infected box around $(z,r)$ restart at this point and use successively \eqref{SpaceTimeCondition3}.]{\label{fig:BuildingBlocks:a}\includegraphics[width=76.5mm]{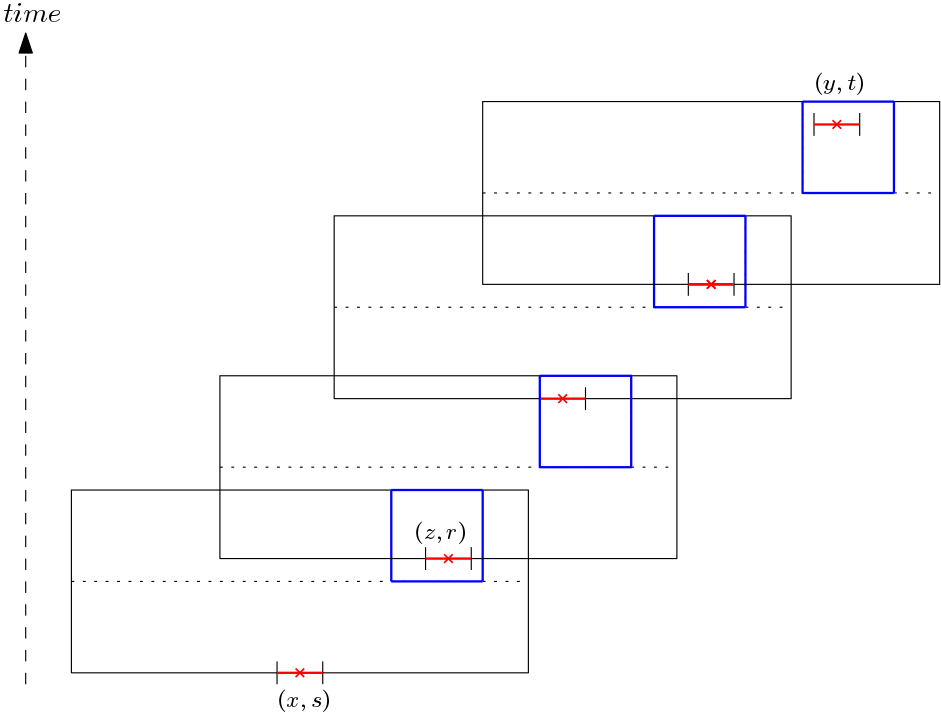}}\hfill
	\subfigure[\textbf{All other coordinates:} Assume $x_i=0$, while using successively \eqref{SpaceTimeCondition3} reflect along the coordinate plane if $i$-th coordinate changes its sign. Note that after achieving $x_1\in\text{[$a,3a$]} $ we apply this strategy to the first coordinate as well until $t\in \text{[$5b,6b$]}$  ]{\label{fig:BuildingBlocks:b}\includegraphics[width=58.5mm]{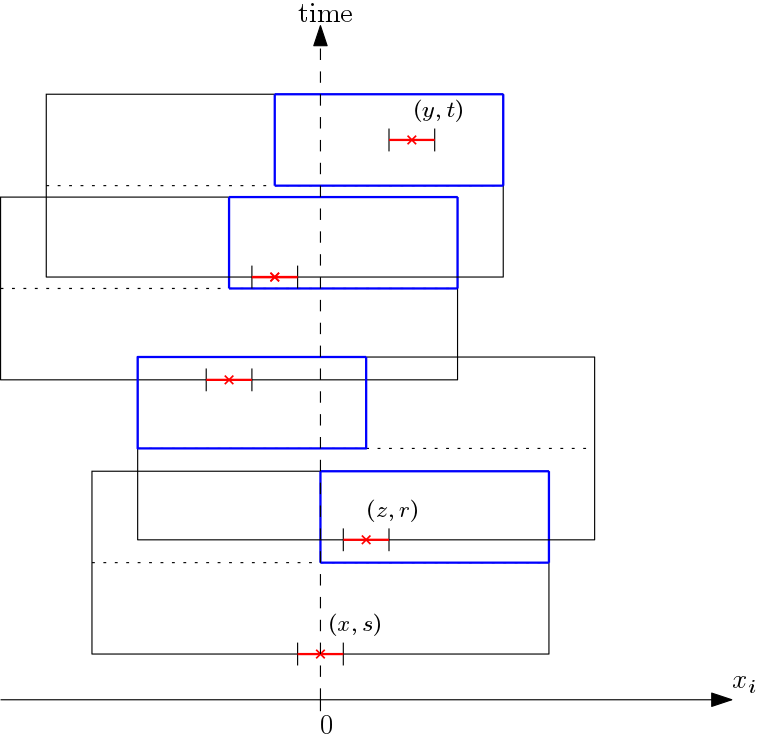}}
	\caption{Visualization of the construction in Proposition \ref{ConstructionOfBlocks}}
	\label{fig:BuildingBlocks}
\end{figure}
Now we define good block events so that we can construct  a suitable coupled oriented percolation model on the ``macroscopic" lattice $\{(j,k)\in \Z\times \N_0: j+k \text{ even}\}$. We identify the points $(j,k)$ with the space-time boxes
\begin{equation*}
	\cS_{j,k}:=[a(12j-1),a(12j+1)]\times[-a,a]^{d-1} \times [30kb,(30k+1)b]=\cD_{6j,6k}.
\end{equation*}
Broadly speaking, we declare $(j,k)$ to be open if we find an appropriate space-time shift of $[-n, n]^{d}\times \{0\}$ in $\cS_{j,k}$, which is completely infected. For $a,b>0$ as in Proposition~\ref{ConstructionOfBlocks} let $w(j,k):=((12ja,0,\dots,0),30kb)\in \Z^{d}\times \N_0$ and set
\begin{equation*}
\cM^{\pm}(j,k):=\Big(\bigcup_{l=0}^{6} \,\,([-5a,5a]\pm 2la)\times[-5a,5a]^{d-1}\times[5l,(5l+1)b]\Big) +w(j,k).
\end{equation*}
See the solid boxes in Figure~\ref{fig:Macro} for an illustration. At last we now formulate the events which we call "good blocks" $\cB^{\pm}=\cB^{\pm}(j,k,(x,s))$, where $j,k\in \Z$ and $(x,s)\in \cS_{j,k}$. 
We set
\begin{align}\label{MacroscopBuildBlock}
	\cB^{\pm}:=\left\{\begin{aligned}
	&\exists (y,t) \in \cS_{j\pm1,k+1}
	\text{ and there are } \emptyset-\text{infection}
	\text{ paths }\\
	&\text{ contained in }\cM^{\pm}(j,k)\text{ which lead from } (x+[-n,n]^{d})\times \{s\} \\
	&\text{ to every point in } (y+[-n,n]^{d})\times \{t\}.
	\end{aligned}\right\}.
\end{align}
For these events, similarly to Proposition $\ref{ConstructionOfBlocks}$, the following lemma holds.
\begin{lemma}\label{BuildBlock}
	Suppose Condition~\ref{SpaceTimeCondition} holds. Then for every $\varepsilon>0$ there are choices of $n,a,b$ with $n<a$ such that if $(x,s)\in \cS_{j,k}$ then $\Pw(\cB^{\pm})>1-\varepsilon$ where $(j,k)\in \Z\times\N_0$.
\end{lemma}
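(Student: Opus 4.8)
The plan is to deduce the lemma by iterating Proposition~\ref{ConstructionOfBlocks} a bounded number of times. Recall that $\cS_{j,k}=\cD_{6j,6k}$, and hence $\cS_{j\pm1,k+1}=\cD_{6(j\pm1),6(k+1)}=\cD_{6j\pm6,\,6k+6}$; so passing from $\cS_{j,k}$ to $\cS_{j+1,k+1}$ (resp.\ $\cS_{j-1,k+1}$) means increasing the first index of the underlying $\cD$-boxes by $6$ and the time index by $6$ (resp.\ decreasing the first index by $6$ and increasing the time index by $6$). A single ``good step'' of Proposition~\ref{ConstructionOfBlocks} increases both indices of a $\cD$-box by $1$; moreover, since the CPDP on $\Z^d$ is invariant under the reflection $(x_1,\dots,x_d)\mapsto(-x_1,\dots,x_d)$, which maps $\cD_{j,k}$ to $\cD_{-j,k}$, there is a reflected version of Proposition~\ref{ConstructionOfBlocks} with the same probability bound that decreases the first index by $1$ and increases the time index by $1$. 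Thus six consecutive unreflected good steps realise $\cB^{+}$ and six consecutive reflected good steps realise $\cB^{-}$.

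Concretely, fix $\varepsilon>0$, choose $\varepsilon'>0$ with $(1-\varepsilon')^{6}>1-\varepsilon$, and apply Proposition~\ref{ConstructionOfBlocks} with $\varepsilon'$ in place of $\varepsilon$ to obtain $n,a,b$ with $n<a$. Starting from $(x,s)\in\cS_{j,k}=\cD_{6j,6k}$, let $G_{1}$ be the event of Proposition~\ref{ConstructionOfBlocks} for this starting point: there is $(y_{1},s_{1})\in\cD_{6j+1,6k+1}$ reached from $(x+[-n,n]^{d})\times\{s\}$ by $\emptyset$-infection paths confined to a tube $U_{1}:=\big([-5a,5a]+12ja\big)\times[-5a,5a]^{d-1}\times I_{1}$, with $I_{1}$ the corresponding time slab of width $6b$. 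On $G_{1}$ the infection contains $y_{1}+[-n,n]^{d}$ at the random time $s_{1}$; we restart there with empty background and let $G_{2}$ be the analogous event for the restarted process, landing in $\cD_{6j+2,6k+2}$ with paths confined to the next tube $U_{2}$; iterating gives $G_{1},\dots,G_{6}$ whose endpoint lies in $\cD_{6j+6,6k+6}=\cS_{j+1,k+1}$. By the very definition of $\cM^{+}(j,k)$ the tubes $U_{1},\dots,U_{6}$ are (up to the terminal piece) exactly the sets indexed by $l$ there, so $U_{1}\cup\cdots\cup U_{6}\subseteq\cM^{+}(j,k)$ after matching the shifts $12ja$, $2la$ and the overlapping time slabs. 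Concatenating the six path systems then shows $\bigcap_{l=1}^{6}G_{l}\subseteq\cB^{+}$, whence $\Pw(\cB^{+})\ge\Pw\big(\bigcap_{l=1}^{6}G_{l}\big)$.

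It remains to lower-bound $\Pw\big(\bigcap_{l=1}^{6}G_{l}\big)$, and this is where the restarts enter. Each $s_{l}$ is a stopping time for the filtration $(\cF_{t})$ generated by the Poisson points, and on $G_{1}\cap\cdots\cap G_{l}$ the configuration at $s_{l}$ dominates $(y_{l}+[-n,n]^{d},\emptyset)$. By monotonicity and attractiveness of the background (Lemma~\ref{MonotonicityAdditivityLemma}~(i)), every $\emptyset$-infection path of the CPDP restarted at time $s_{l}$ from $(y_{l}+[-n,n]^{d},\emptyset)$ is also present in the original graphical representation, so the success of step $l+1$ for the restarted process yields the path statement entering $G_{l+1}$; and by the strong Markov property together with translation invariance, this restarted process has, conditionally on $\cF_{s_{l}}$, the law of a CPDP from $([-n,n]^{d},\emptyset)$ shifted by $(y_{l},s_{l})$. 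Hence $\Pw(G_{l+1}\mid\cF_{s_{l}})>1-\varepsilon'$ a.s.\ on $G_{1}\cap\cdots\cap G_{l}$. Peeling off the conditional expectations one at a time gives $\Pw\big(\bigcap_{l=1}^{6}G_{l}\big)\ge(1-\varepsilon')^{6}>1-\varepsilon$, and the same argument with the reflected steps yields $\Pw(\cB^{-})>1-\varepsilon$.

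The main obstacle is the bookkeeping around the restarts rather than any new idea: one must check that the $s_{l}$ are genuine stopping times and that the strong Markov property is applied to the right $\sigma$-algebras, that the attractive coupling really does realise the $\emptyset$-infection paths of each restarted process inside the single fixed graphical representation (so that $\bigcap_{l}G_{l}\subseteq\cB^{\pm}$ holds as stated), and that the six tubes $U_{1},\dots,U_{6}$ are nested in $\cM^{\pm}(j,k)$ with exactly the right spatial shifts and overlapping time slabs — an affine-geometry verification where an error of size $2a$ would break the containment. As with the other results of this subsection, I would present only the reduction to six applications of Proposition~\ref{ConstructionOfBlocks} in detail and refer to \cite{seiler2021} for the remaining verifications.
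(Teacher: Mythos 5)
Your proof is correct and matches the paper's approach: the published proof is a one‑paragraph sketch that says precisely to stack six spatially shifted (and, for $\cB^{-}$, reflected) copies of the event from Proposition~\ref{ConstructionOfBlocks} and defers the bookkeeping to [seiler2021]. Your index check $\cS_{j,k}=\cD_{6j,6k}$, so that six applications of Proposition~\ref{ConstructionOfBlocks} (each advancing both $\cD$-indices by one) land in $\cS_{j\pm1,k+1}$, the choice of $\varepsilon'$ with $(1-\varepsilon')^{6}>1-\varepsilon$, the restart via the strong Markov property at the intermediate stopping times $s_{l}$, and the reflection argument for $\cB^{-}$ are exactly the ingredients the paper intends.
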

\begin{proof}
	This is a direct consequence of Proposition~$\ref{ConstructionOfBlocks}$ in the sense that we can again stack spatially shifted versions of the events formulated in Proposition~$\ref{ConstructionOfBlocks}$ in an appropriate manner to achieve this result.
	Note that  for $\cB^{-}$ the statement of Proposition~\ref{ConstructionOfBlocks} for a reflected version over the first coordinate axis of the considered event is needed. This also holds true, i.e.~we reflect the whole construction in the direction of the first coordinate at $(2ja,0,\dots,0)\in \Z^d$  such that at the end $(y,t)\in \cD_{j-1,k+1}$.
	For more details see \cite[Lemma~6.2.4]{seiler2021}. 
\end{proof}
Note that the boxes $\cB^{\pm}$ only depend on a finite sector of the graphical representation and only overlap with the adjacent boxes (see \autoref{fig:Macro}).
At first this last step seems a bit redundant, since we could very well work with the events defined in Proposition \ref{ConstructionOfBlocks}, but with this additional step we made the dependency between the respective events clearer. Now we are ready to prove the main theorem of this section.
\begin{theorem}\label{ComparisonWithOrientedPercolation}
	Suppose Condition \ref{SpaceTimeCondition} holds. Then for every $q<1$ there are choices of $n,a,b$ such that if the initial configurations $W_0\subset 2\Z$ and $\bfC_0=C$ satisfy 
	\begin{align}\label{CompCondition1}
		j\in W_0 \Rightarrow C\supset x+[-n,n]^d \text{ for some } x\in[a(12j-1),a(12j+1)]\times[-a,a]^{d-1}
	\end{align}
	then $(\bfC_t,\bfB_t)_{t\geq 0}$ can be coupled with an oriented site percolation $(W_k)_{k\geq 0}$ with parameter $q$ such that
	\begin{equation}\label{CompCondition2}
		j\in W_k \Rightarrow \bfC_t\supset x+[-n,n]^d \text{ for some } (x,t)\in \cS_{j,k}
	\end{equation}
	In particular this implies that the CPDP survives.
\end{theorem}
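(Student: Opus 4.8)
The plan is to run the Bezuidenhout--Grimmett renormalisation scheme that the preceding lemmas have been set up for: build the oriented percolation $(W_k)_{k\geq0}$ recursively out of the good block events $\cB^{\pm}$ from Lemma~\ref{BuildBlock}, and then compare the resulting dependent percolation with a supercritical Bernoulli oriented percolation. Concretely, given $q<1$ I would first set $\varepsilon:=1-q$ and take the scales $n<a$ and $b$ (and hence the blocks $\cS_{j,k}$, the regions $\cM^{\pm}(j,k)$ and the events $\cB^{\pm}$) to be those furnished by Lemma~\ref{BuildBlock}, so that every block event has probability $>q$.

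Next I would describe the inductive coupling. If $W_0\subset 2\Z$ and $C=\bfC_0$ satisfy \eqref{CompCondition1}, then for each $j\in W_0$ there is a witness point $(x^{(j)}_0,0)\in\cS_{j,0}$ with $\bfC_0\supset x^{(j)}_0+[-n,n]^d$. Given $W_k$ together with witness points $\{(x^{(j)}_k,s^{(j)}_k)\}_{j\in W_k}$, I would declare the oriented edge from $(j,k)$ to $(j\pm1,k+1)$ \emph{open} precisely when the event $\cB^{\pm}(j,k,(x^{(j)}_k,s^{(j)}_k))$ in \eqref{MacroscopBuildBlock} occurs; in that case the associated $\emptyset$-infection paths produce a box $y+[-n,n]^d$ fully infected at some time with $(y,t)\in\cS_{j\pm1,k+1}$, so I put $j\pm1$ into $W_{k+1}$ and, if a site has several open incoming edges, keep the witness inherited from (say) the smallest parent. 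Two points make this legitimate and should be spelled out: (i) since $\cB^{\pm}$ is phrased via $\emptyset$-infection paths, the infection process being tracked is the CPDP in which the background is \emph{reset} to $\emptyset$ at the start of each block, and by monotonicity in the initial condition (Lemma~\ref{MonotonicityAdditivityLemma}) this is dominated by the genuine $(\bfC_t,\bfB_t)$, so \eqref{CompCondition2} holds as stated; (ii) because $s^{(j)}_k$ is a random time, the step from level $k$ to level $k+1$ must be made rigorous by the strong Markov property at the relevant stopping times, exactly as in the proofs of Proposition~\ref{KeyProperty1} and Proposition~\ref{ConstructionOfBlocks}, after which Lemma~\ref{BuildBlock} gives $\Pw(\cB^{\pm}\mid\text{past})>q$ uniformly in the past and in the witness position.

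I would then handle the dependence. Each $\cB^{\pm}(j,k,\cdot)$ is measurable with respect to the restriction of the graphical representation (recovery crosses, infection arrows, and---because for the CPDP distinct edges flip independently, cf.\ the Remark in this section---the edge updates) to the space-time region $\cM^{\pm}(j,k)$, and by construction these regions overlap only those of macroscopically neighbouring sites; hence the field of block indicators is $m$-dependent for one fixed finite $m$, with each marginal $\geq q$. Invoking the standard domination result for finite-dependent oriented percolation (as in \cite{bezuidenhout1990critical}, see also \cite[Part~I]{liggett2013stochastic}), for $\varepsilon$ small enough---i.e.\ $q$ close enough to $1$, which we may arrange---$(W_k)_{k\geq0}$ stochastically dominates a Bernoulli oriented site percolation whose parameter tends to $1$ as $q\to1$ and is therefore supercritical. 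Since \eqref{CompCondition1} forces $W_0\neq\emptyset$, the event $\{W_k\neq\emptyset\ \forall k\}$ has positive probability, and on it \eqref{CompCondition2} yields $\bfC_t\neq\emptyset$ for all $t\geq0$, giving survival of the CPDP.

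The hard part will be precisely this dependence bookkeeping: checking that, despite the witness positions being random and the regions $\cM^{\pm}(j,k)$ overlapping, the conditional lower bound $\Pw(\cB^{\pm}\mid\text{past})>q$ genuinely survives conditioning on everything already consumed by the construction, and that the overlaps stay within a fixed finite range so that a single $m$ works uniformly in $(j,k)$. Both rely on the exact choice of the blocks $\cS_{j,k}=\cD_{6j,6k}$, the factor-$30$ time spacing, and the reset-to-$\emptyset$ device; everything else is a faithful transcription of the Bezuidenhout--Grimmett argument already assembled in Lemmas~\ref{BuildBlock}--\ref{ConstructionOfBlocks} and Proposition~\ref{KeyProperty1}.
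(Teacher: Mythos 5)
Your proposal is correct and follows essentially the same route as the paper: build the auxiliary percolation field inductively from the block events $\cB^{\pm}$ of Lemma~\ref{BuildBlock}, using witness points and the strong Markov property at the (random) witness times, observe that the resulting indicators form a finite-range-dependent family because the regions $\cM^{\pm}(j,k)$ overlap only between macroscopically adjacent sites, and then invoke the standard domination of a $k$-dependent oriented percolation by a Bernoulli one (in the paper, \cite[Theorem~B26]{liggett2013stochastic}). The only minor deviation is that you phrase things in terms of open macroscopic edges and generic $m$-dependence, whereas the paper records the site indicators $(X_j(k),Y_j(k))$ explicitly and notes the field is $1$-dependent; the mathematical content is the same.
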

\begin{figure}[t]
	\centering
	\includegraphics[width=140mm]{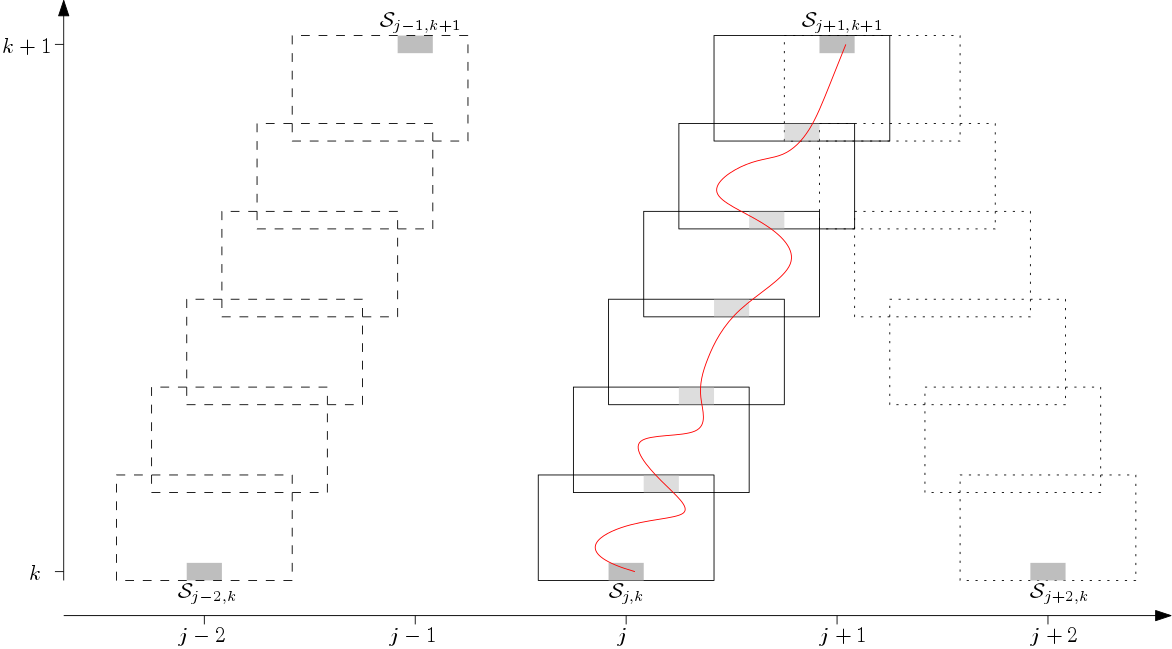}
	\caption{Here we see a visualization of the space-time boxes $\cB^{\pm}$ (defined in \eqref{MacroscopBuildBlock}), where the solid line visualizes the box $\cB^{+}(j,k,\cdot)$. We also see that $\cB^{+}(j,k,\cdot)$ only overlaps with $\cB^{-}(j,k,\cdot)$ and $\cB^{-}(j+2,k,\cdot)$, where the dotted lines visualizes $\cB^{-}(j+2,k,\cdot)$ and the dashed $\cB^{+}(j-2,k,\cdot)$.}
	\label{fig:Macro}
\end{figure}
\begin{proof}
    The idea is that we construct our percolation model recursively with the help of Lemma \ref{BuildBlock}. Thus, let for an arbitrary $\varepsilon>0$ the numbers $n,a,b$ be chosen as in the Lemma \ref{BuildBlock}.
	Note that since the events we use are not independent we need to use a comparison of independent and locally dependent Bernoulli random variables to obtain an independent oriented site percolation in a second step.
	
	We now construct random variables  $(X_{j}(k),Y_{j}(k))$ with $k\geq 0$ and $j\in \Z$: The variables $X_{j}(k)$ will either be $1$ if there exists an $(x,s)\in S_{j,k}$ such that $(x,s)+([-n,n]^d\times\{0\})$ is infected and otherwise $0$. Additionally if such a point exists we set $Y_{j}(k)=(x,s)$ and if not $Y_{j}(k)=\dagger$, where $\dagger$ is a designated state such that the state space of these random variables is $\{0,1\}\times \big(\Z^d\times [0,\infty)\big)\cup\{\dagger\}$.
	
	Without loss of generality we  assume that $W_0=\{0\}$. By assumption \eqref{CompCondition1} there exists an $x_0$ such that $(x_0,0)\in \cS_{0,0}$ and $ x_0+[-n,n]^d$ is initially infected. We set $(X_{0}(0),Y_{0}(0))=(1,(x_0,0))$ and $(X_{j}(0),Y_{j}(k))=(0,\dagger)$ for all $j\neq 0$. Now with respect to $k$ we recursively construct these random variables. Suppose that $(X_{j}(k),Y_{j}(k))_{j\in \Z}$ are defined for all $k\leq m$, then we proceed with the step $m\to m+1$.
	\begin{enumerate}
		\item If $X_{j-1}(m)=0$ and $X_{j+1}(m)=0$ then we set $(X_{j}(m+1),Y_{j}(m+1))=(0,\dagger)$
		\item We set $X_{j}(m+1)=1$ if either $X_{j-1}(m)=1$ and the event $\cB^{+}(j-1,m,Y_{j-1}(m))$ occurs or $X_{j+1}(m)=1$ and $\cB^{-}(j+1,m,Y_{j+1}(m))$ occurs.	Again the events $\cB^{+}(j-1,m,Y_{j-1}(m))$ and $\cB^{-}(j+1,m,Y_{j+1}(m))$ only guarantee existence of a point $(y,t)\in S_{j,m+1}$ such that $(z,r)+([-n,n]^d\times\{0\})$ is completely infected, but there might exist more than one. We let $Y_{j}(m+1)$ be the smallest space-time point $(y,t)$ in the sense that we take the earliest with respect to time and, if that does not yield a unique point, we minimize according to an arbitrary but beforehand specified order on $\Z^d$. 
	\end{enumerate}
	\vspace{-1mm}
	By this construction for fixed $k\geq 0$ the set $\{j: X_j(k)=1\}$ obviously satisfies \eqref{CompCondition2}. Next let $\cG_m$ be the $\sigma$-algebra generated from all $(X_j(k),Y_j(k))_{j\in \Z}$ with $k\leq m$. By the choice of $n,a,b$ made at the beginning of the proof we see that
	\begin{align*}
	\Pw(X_{j}(m+1)=1|\cG_m)>1-\varepsilon \quad \text{ on } \quad \{X_{j}(m)=1 \text{ or } X_{j-1}(m)=1 \}.
	\end{align*}
	We use now a standard result concerning $k$-dependent Bernoulli random variables. Since $\cB^{\pm}$ only overlap with their adjacent boxes, by construction the $(X_{j}(m+1))_{j\in \Z}$ are conditional on $\cG_m$ a $1$-dependent family of Bernoulli variables. By \cite[Theorem~B26]{liggett2013stochastic} we find a family of independent Bernoulli variables such that we can define an oriented site percolation $(W_k)_{k\geq 0}$  with parameter $q:=(1-\varepsilon^{-3})^2$ which satisfies \eqref{CompCondition1} and \eqref{CompCondition2}. Since $\varepsilon$ was arbitrary this finishes the proof.
\end{proof}

\subsection{Consequences of the percolation comparison}\label{ConsequenceOfComparison}
In this section we can finally reap the benefits of all the work we have done so far in Subsection~\ref{InvolvingBlockContruction} for the CPDP on $\Z^d$. First we prove that at criticality, survival is not possible and as direct consequence we gain continuity of the survival probability.  
Then, we use Theorem~\ref{ComparisonWithOrientedPercolation} to show that for the CPDP the two conditions \eqref{ConvergenceCond1.1} and \eqref{ConvergenceCond2.1} are satisfied if the survival probability is positive such that by Theorem~\ref{CompleteConvergenceComplete} it follows that complete convergence for the CPDP holds. 
\subsubsection{Extinction at criticality and continuity of the survival probability}
Here, we show extinction at criticality (Theorem~\ref{NoSurvivalAtCriticality}) and continuity of the survival probability (Proposition~\ref{JointContiuityOfCPDP}). 
Recall that we defined in \eqref{SurvivalRegionCPDP} the survival region as \begin{equation*}
    \cS:=\{(\lambda,r,\alpha,\beta)\in (0,\infty)^2:\theta\big(\lambda,r,\alpha,\beta,\{\zero\},\emptyset\big)>0\}.
\end{equation*}
In case of the CPDP we have two additional parameters $\alpha$ and $\beta$ for which we can easily deduce similar monotonicity and continuity properties as for the infection and recovery rate $\lambda$ and $r$ in Section~\ref{BasicProperties}.
\begin{lemma}[Monotonicity with respect to the background]\label{MonotonicityOfCPDPLemma}
	Let $(\bfC,\bfB)$ be a CPDP with parameters $\lambda,r,\alpha,\beta$. Let $\widehat{\alpha}\geq \alpha$, then there exists a CPDP $(\widehat{\bfC},\widehat{\bfB})$ with parameter $\lambda,r,\widehat{\alpha},\beta$ and the same initial configuration such that $\bfC_t\subseteq  \widehat{\bfC}_t$ and $\bfB_t\subseteq  \widehat{\bfB}_t$ for all $t\geq0$. In words $\bfC$ is monotone increasing in $\alpha$. On the other hand $\bfC$ is monotone decreasing in $\beta$.
\end{lemma}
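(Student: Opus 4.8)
The plan is to argue directly via the graphical representation of Section~\ref{GraphRepresentationCPERE}, in complete analogy with the proof of Lemma~\ref{MonotonicityAdditivityLemma}(ii). Recall that a CPDP is driven by four independent Poisson point processes: the infection arrows $\Xi^{\text{inf}}$, the recovery crosses $\Xi^{\text{rec}}$, the edge-opening events (maps $\mathbf{birth}_e$, each of rate $\alpha$) and the edge-closing events (maps $\mathbf{death}_e$, each of rate $\beta$). All of the statements are then coupling statements on this common probability space.

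For the claim about $\alpha$, fix $(\bfC,\bfB)$ with parameters $\lambda,r,\alpha,\beta$ built from these processes, and let $\Xi'$ be an independent Poisson point process on $\{\mathbf{birth}_e:e\in E\}\times\R$ with intensity $(\widehat{\alpha}-\alpha)$ per edge. Superposing $\Xi'$ onto the opening events while keeping $\Xi^{\text{inf}}$, $\Xi^{\text{rec}}$ and the closing events unchanged yields, by the superposition theorem for Poisson processes, a driving family whose opening events have rate $\widehat{\alpha}$; call $(\widehat{\bfC},\widehat{\bfB})$ the CPDP defined from it with the same initial configuration $(\bfC_0,\bfB_0)$. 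Well-posedness of $(\widehat{\bfC},\widehat{\bfB})$ and the identification of its parameters as $\lambda,r,\widehat{\alpha},\beta$ follow from \eqref{totalrb}. Next I would prove $\bfB_t\subseteq\widehat{\bfB}_t$ for all $t\ge0$ by induction over the a.s.\ locally finite sequence of background events: the initial states coincide; a common $\mathbf{birth}_e$ event can only enlarge both sides; a common $\mathbf{death}_e$ event removes the same edge from both, and removing an edge from the smaller set cannot destroy the inclusion; and an extra opening event from $\Xi'$ only affects $\widehat{\bfB}$. Given $\bfB_t\subseteq\widehat{\bfB}_t$ for all $t$, and since both infection processes use the same $\Xi^{\text{inf}}$ and $\Xi^{\text{rec}}$, every infection path admissible for $\bfC$ (each of its arrows traversed while the corresponding edge is open in $\bfB$) is admissible for $\widehat{\bfC}$ as well, whence $\bfC_t\subseteq\widehat{\bfC}_t$ for all $t\ge0$ — this is exactly the monotonicity argument already used for the CPERE in Lemma~\ref{MonotonicityAdditivityLemma}(i).

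The statement for $\beta$ is symmetric: given $\widehat{\beta}\le\beta$, I would instead start from the CPDP $(\widehat{\bfC},\widehat{\bfB})$ with parameters $\lambda,r,\alpha,\widehat{\beta}$ and obtain $(\bfC,\bfB)$ from it by superposing an independent family of $\mathbf{death}_e$ events of rate $\beta-\widehat{\beta}$. Then $(\bfC,\bfB)$ has closing rate $\beta$, and since $\bfB$ has the same birth events as $\widehat{\bfB}$ but strictly more death events, the same induction gives $\bfB_t\subseteq\widehat{\bfB}_t$ for all $t$, and the infection-path comparison gives $\bfC_t\subseteq\widehat{\bfC}_t$.

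There is no real obstacle here; the only points needing a little care are (i) verifying that the superposed point process genuinely produces dynamical percolation with the claimed opening (resp.\ closing) rate, which is immediate from independence, the superposition theorem, and \eqref{totalrb}; and (ii) the inductive step for the background inclusion when an edge already lies in $\widehat{\bfB}_t\setminus\bfB_t$, where one simply checks case by case that $\mathbf{birth}_e$ and $\mathbf{death}_e$ maps preserve the inclusion. Alternatively one could invoke \cite[Theorem~III.1.5]{liggett2012interacting} as in the proof of Proposition~\ref{ComparisonCPEREandCPDP}, since the flip rates of dynamical percolation with $(\widehat{\alpha},\beta)$ dominate those with $(\alpha,\beta)$ in the required sense, but the direct graphical construction above is shorter and fits the rest of the paper.
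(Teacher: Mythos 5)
Your proof is correct and follows the same strategy as the paper: superpose an independent Poisson family of $\mathbf{birth}_e$ events at rate $\widehat{\alpha}-\alpha$ onto the graphical representation, conclude $\bfB_t\subseteq\widehat{\bfB}_t$, and then $\bfC_t\subseteq\widehat{\bfC}_t$ by the usual infection-path comparison, with the $\beta$-case handled symmetrically. The paper's proof is just a more terse statement of the same coupling, explicitly invoking the analogy with Lemma~\ref{MonotonicityAdditivityLemma}.
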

\begin{proof}
	This follows with an analogous coupling as in the proof of Lemma~\ref{MonotonicityAdditivityLemma}. Since if we consider $\widehat{\alpha}\geq \alpha$
	then we can define $(\widehat{\bfC},\widehat{\bfB})$ via $\Xi_{\lambda,r}$ and add additional independent opening events with rate $(\widehat{\alpha}-\alpha)$. Thus we get a CPDP $(\widehat{\bfC},\widehat{\bfB})$ which is coupled to $(\bfC,\bfB)$ such that $\bfC_t\subset \widehat{\bfC}_t$ and $\bfB_t\subset \widehat{\bfB}_t$ for all $t\geq0$. The monotonicity in $\beta$ follows analogously.
\end{proof}
\begin{remark}
	Obviously $\pi_{\alpha,\beta}\preceq \pi_{\widehat{\alpha},\beta}$ if $\alpha\leq \widehat{\alpha}$. Thus, if we consider the stationary case, i.e. $\bfC_0=C\subset V$ and $\bfB_0\sim \pi_{\alpha,\beta}$, then there exists an CPDP $(\widehat{\bfC},\widehat{\bfB})$ with parameter $\lambda,r,\widehat{\alpha},\beta$ and $\bfC_0=C\subset V$ and $\widehat{\bfB}_0\sim \pi_{\widehat{\alpha},\beta}$ such that $\bfC_t\subseteq  \widehat{\bfC}_t$ and $\bfB_t\subseteq  \widehat{\bfB}_t$ for all $t\geq0$. This follows by first coupling the initial state of the background with Strassen's Theorem such that $\bfB_0\subseteq  \widehat{\bfB}_0$ and then using Lemma~\ref{MonotonicityOfCPDPLemma}.
\end{remark}
\begin{lemma}[Continuity for finite times and finite initial infections]\label{ContinuityLemma2}
	Let $(\bfC,\bfB)$ be a CPDP with initial configuration $C\subset  V$ with $|C|<\infty$. Also let $\cA\subset D_{\cP(V)}([0,t])$ for $t\geq 0$.
	\begin{enumerate}
		\item[(i)] The maps $
		\alpha\mapsto \Pw^{(C,B)}_{\lambda,r,\alpha,\beta}\big((\bfC_s)_{s\leq t}\in\cA\big) \text{ and } \beta\mapsto \Pw^{(C,B)}_{\lambda,r,\alpha,\beta}\big((\bfC_s)_{s\leq t}\in\cA\big)$
		are continuous. 
		\item[(ii)] If $\bfB_{0}\sim \pi_{\alpha,\beta}$, then $\alpha \mapsto \Pw^{(C,\pi_{\alpha,\beta})}_{\lambda,r,\alpha,\beta}\big((\bfC_s)_{s\leq t}\in\cA\big) \text{ and } \beta \mapsto \Pw^{(C,\pi_{\alpha,\beta})}_{\lambda,r,\alpha,\beta}\big((\bfC_s)_{s\leq t}\in\cA\big)$
		are continuous.
	\end{enumerate}
\end{lemma}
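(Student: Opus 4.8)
The plan is to adapt the coupling argument used for Lemma~\ref{ContinuityLemma} (continuity in $\lambda$ and $r$) to the two background parameters $\alpha$ and $\beta$, and then, for part (ii), to additionally couple the stationary initial backgrounds.

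\emph{Part (i).} Fix $\lambda,r,\beta>0$, a finite $C\subset V$, $B\subset E$, $t\geq 0$ and $\cA\subset D_{\cP(V)}([0,t])$, and let $\widehat{\alpha}\geq\alpha$. As in the proof of Lemma~\ref{MonotonicityOfCPDPLemma} I would build a CPDP $(\widehat{\bfC},\widehat{\bfB})$ with parameters $\lambda,r,\widehat{\alpha},\beta$ on the same probability space as $(\bfC,\bfB)$, by keeping $\Xi^{\inf}$, $\Xi^{\text{rec}}$ and the $\mathbf{birth}$/$\mathbf{death}$ marks defining $\bfB$, and adding an \emph{independent} Poisson set $\Xi'$ of $\mathbf{birth}_e$ maps with rate $\widehat{\alpha}-\alpha$ per edge, so that $\bfC_s\subset\widehat{\bfC}_s$ and $\bfB_s\subset\widehat{\bfB}_s$ for all $s$. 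It then suffices to show
\[
\Pw\big(\bfC_s\neq\widehat{\bfC}_s\text{ for some }s\leq t\big)\longrightarrow 0\qquad\text{as }\widehat{\alpha}\downarrow\alpha,
\]
since the difference of the two probabilities $\Pw^{(C,B)}_{\lambda,r,\cdot,\beta}\big((\bfC_s)_{s\le t}\in\cA\big)$ at $\widehat\alpha$ and at $\alpha$ is bounded by this quantity; continuity from the left ($\widehat\alpha\uparrow\alpha$) follows symmetrically by instead adding extra $\mathbf{birth}$ maps to the process with the smaller rate, and the statements for $\beta$ follow in the same way with extra $\mathbf{death}$ maps.

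To control the right-hand side, recall that both $\bfC$ and $\widehat{\bfC}$ are dominated by the background-free contact process without recovery $\widetilde{\bfC}^C$, which uses only the \emph{shared} arrows of $\Xi^{\inf}$; in particular $\bigcup_{s\leq t}\widehat{\bfC}_s\subset\widetilde{\bfH}_t:=\bigcup_{s\leq t}\widetilde{\bfC}^C_s$, and $\widetilde{\bfH}_t$ is a.s.\ finite. Let $\cE_t$ be the (a.s.\ finite) set of edges with an endpoint in $\widetilde{\bfH}_t$. Every infection path of $\bfC$ or $\widehat{\bfC}$ up to time $t$ uses only edges of $\cE_t$, and on the event $G$ that $\Xi'$ places no mark on $\cE_t\times[0,t]$ the backgrounds $\bfB$ and $\widehat{\bfB}$ coincide on $\cE_t$ throughout $[0,t]$ (same dynamics, same initial state $B$), whence $\bfC_s=\widehat{\bfC}_s$ for all $s\leq t$. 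Since $\Xi'$ is independent of $\Xi^{\inf}$ and hence of $\cE_t$,
\[
\Pw(G^c)=\E\big[1-e^{-|\cE_t|(\widehat\alpha-\alpha)t}\big]\xrightarrow[\widehat\alpha\downarrow\alpha]{}0
\]
by dominated convergence, which gives the claim.

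\emph{Part (ii).} Now $\bfB_0\sim\pi_{\alpha,\beta}$, i.e.\ the edges are i.i.d.\ open with probability $p(\alpha,\beta):=\alpha/(\alpha+\beta)$, and $(\widehat\alpha,\widehat\beta)\to(\alpha,\beta)$. I would couple the initial configurations by attaching to each edge $e$ an independent uniform variable $U_e$ on $[0,1]$, declaring $e\in\bfB_0$ iff $U_e\le p(\alpha,\beta)$ and $e\in\widehat{\bfB}_0$ iff $U_e\le p(\widehat\alpha,\widehat\beta)$, so that each edge lies in $\bfB_0\symdiff\widehat{\bfB}_0$ independently with probability $\delta:=|p(\alpha,\beta)-p(\widehat\alpha,\widehat\beta)|\to 0$. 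Combining this with the dynamic coupling of part (i) (extra $\mathbf{birth}$ or $\mathbf{death}$ marks to pass from $(\alpha,\beta)$ to $(\widehat\alpha,\widehat\beta)$), the two infection processes can differ on $[0,t]$ only if some extra mark falls on $\cE_t\times[0,t]$ or $\bfB_0,\widehat{\bfB}_0$ disagree somewhere on $\cE_t$; as $\cE_t$ is a.s.\ finite and independent of both the extra marks and of $(U_e)_{e\in E}$, the probability of this union is at most $\E\big[1-(1-\delta)^{|\cE_t|}e^{-|\cE_t|(|\widehat\alpha-\alpha|+|\widehat\beta-\beta|)t}\big]\to 0$, and the desired continuity follows as before.

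\emph{Main obstacle.} The only delicate point is the usual one for such graphical-representation arguments: the random region $\cE_t$ that is relevant up to time $t$ depends on part of the randomness, so one must ensure that the perturbing randomness (the extra Poisson marks, and in (ii) the variables $U_e$) is independent of $\cE_t$. Using the a priori dominating process $\widetilde{\bfC}^C$, built solely from the shared arrow process $\Xi^{\inf}$ and a.s.\ finite, resolves this, after which the estimate is an elementary dominated-convergence computation.
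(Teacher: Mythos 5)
Your proposal is correct and follows essentially the same route as the paper: couple the two CPDPs through the shared graphical representation with an extra independent Poisson set of marks, observe that both infection processes are dominated by the background-free contact process $\widetilde{\bfC}^{C}$ built from $\Xi^{\inf}$ alone (so that the a.s.\ finite relevant edge set $\cE_t$ is independent of the perturbing randomness), and conclude by dominated convergence; part (ii) adds the standard product-coupling of the i.i.d.\ initial edge states, exactly as the paper indicates. You have merely made explicit what the paper leaves as a terse ``analogous to Lemma~\ref{ContinuityLemma}'' sketch.
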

\begin{proof}
	\begin{enumerate}
		\item[(i)]  The proof for $\alpha$ and $\beta$ is similar to the proof of Lemma~\ref{ContinuityLemma}.
		To show the continuity in $\alpha$ we can again construct a CPDP $(\widehat{\bfC},\widehat{\bfB})$ with rates $\tilde{\alpha}$ and $\beta$ with $\tilde{\alpha}> \alpha$ such that these processes are coupled in the way that $\bfC_0=\widehat{\bfC}_0$, $\bfB_0=\widehat{\bfB}_0$, $\bfC_t\subseteq\widehat{\bfC}_t$ and $\bfB_t\subseteq\widehat{\bfB}_t$ for all $t\geq 0$. Then, it again suffices to show that as $\widehat{\alpha}\to \alpha$ it follows that $\Pw(\bfC_s\neq \widehat{\bfC}_s \text{ for some } s\leq t)\to 0$ to conclude right continuity. Left continuity follows via an analogous procedure. The continuity in $\beta$ follows analogously.
		\item[(ii)] Here the difference to (i) is that the the invariant law $\pi$ is the initial state of the background and thus we additionally need to ensure that the initial state of the process $\widehat{\bfB}$ also converges. This is not difficult to show since all edges are independent. \qedhere
	\end{enumerate}
\end{proof}
We are finally ready to show that survival is impossible at criticality for the CPDP on $\Z^d$.  
\begin{proof}[Proof of Theorem~\ref{NoSurvivalAtCriticality}]
	We first note that the good blocks $\cB^{\pm}$ defined in \eqref{MacroscopBuildBlock} only depend on a bounded section of the graphical representation, so by Lemma~\ref{ContinuityLemma} and Lemma~\ref{ContinuityLemma2} we get that $\Pw_{ \lambda,r,\alpha,\beta}(\cB^{\pm})$ is continuous seen as a function of any of the four parameters. Let us take as usual the infection parameter $\lambda$ as an example.
	By Proposition \ref{ConstructionOfBlocks} we know that for every $\varepsilon>0$ we can find $a,b,n$ such that $\Pw_{ \lambda}(\cB^{\pm})>1-\varepsilon$ Then because of continuity there must exist a $\lambda'<\lambda$ such that $\Pw_{ \lambda'}(\cB^{\pm})>1-\varepsilon$ as well and hence by Theorem \ref{ComparisonWithOrientedPercolation} it follows that the CPDP also survives with infection rate $\lambda'$. This proves the claim.
\end{proof}
Recall that we call a function $f:\R^d\subset U\to \R$ separately continuous if it is continuous in each coordinate separately. In comparison to that one calls $f$ jointly continuous if it is continuous with respect to the Euclidean topology on $\R^d$.
\begin{proposition}\label{ContinuityCPDP}
	Let $C\subset V$ with $C$ finite and non-empty and $B\subset E$. 
	\begin{enumerate}
		\item[(i)] The survival probability $\theta(\lambda,r,\alpha,\beta,C,B)$ is separately right continuous seen as a function in $(\lambda,r,\alpha,\beta)$ on $(0,\infty)^4$.
		\item[(ii)] The survival probability $\theta(\lambda,r,\alpha,\beta,C,B)$ is separately left continuous seen as a function in $(\lambda,r,\alpha,\beta)$ on $\inte(\cS)$.
	\end{enumerate}
\end{proposition}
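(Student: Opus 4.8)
The plan is to prove (i) and (ii) as the CPDP analogues of Proposition~\ref{RightContinuityOfSurv} and Proposition~\ref{LeftContinuityOfSurv}, the only genuinely new ingredient being that the two background parameters $\alpha,\beta$ have to be handled on the same footing as $\lambda$ and $r$. We may assume $C$ is finite and non-empty, the other cases being trivial. Two facts specific to the present setting will be used throughout: on $\Z^d$ we have $\rho=0$, so the growth condition $c_1(\lambda,\rho)>\kappa^{-1}\rho$ holds automatically for every $\lambda$; and for each fixed pair $(\alpha,\beta)$ the CPDP is a CPERE whose background satisfies Assumption~\ref{AssumptionBackground}, so Lemma~\ref{SurvivalContinuityLemma1}, Lemma~\ref{SurvivalContinuityLemma2} and Theorem~\ref{InfectionVSBackground} apply to it without any restriction.

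For (i) I would fix three of the four parameters and run the argument of Proposition~\ref{RightContinuityOfSurv}. By Lemma~\ref{ContinuityLemma} (for $\lambda,r$) and Lemma~\ref{ContinuityLemma2}~(i) (for $\alpha,\beta$) the map sending the remaining parameter to $\Pw(\bfC_t\neq\emptyset)$ is continuous for every finite $t$; it is monotone in that parameter --- increasing for $\lambda$ and $\alpha$, decreasing for $r$ and $\beta$ --- and, since $\emptyset$ is absorbing, it decreases to $\theta$ as $t\to\infty$. Lemma~\ref{RightContinuity} then yields right continuity of $\theta$ in $\lambda$ and $\alpha$ and left continuity in $r$ and $\beta$, on all of $(0,\infty)^4$; under the harmless reparametrisation $r\mapsto 1/r$, $\beta\mapsto 1/\beta$ this is exactly the one-sided continuity claimed.

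For (ii) I would imitate the proof of Proposition~\ref{LeftContinuityOfSurv}. Fix a point of $\inte(\cS)$ and choose one parameter to vary, say $\lambda$ (the cases of $r,\alpha,\beta$ being identical after reflections and the above reparametrisation). Since $\theta$ is coordinatewise monotone, $\cS$ is an up-set in $\lambda$, and the chosen point lies in the \emph{interior} of $\cS$; hence there is a value $\lambda''$ strictly between the critical $\lambda$ for this slice and the given $\lambda$ with $\theta(\lambda'',\dots,\{x\},\emptyset)>0$. Letting $\tau_n:=\inf\{t\ge 0:\exists x,\ \bfC_t\supseteq\B_n(x)\}$ and $D_{n,t}=\{\tau_n<t\}$, the strong Markov property together with monotonicity in the initial configuration gives, for $\lambda''<\lambda'<\lambda$,
\[
\theta(\lambda')\ \ge\ \Pw_{\lambda'}(D_{n,t})\,\theta(\lambda'',\B_n(x),\emptyset).
\]
Now send $\lambda'\uparrow\lambda$, using continuity of $\lambda'\mapsto\Pw_{\lambda'}(D_{n,t})$ from Lemma~\ref{ContinuityLemma}; then $t\to\infty$, using Lemma~\ref{SurvivalContinuityLemma1}; then $n\to\infty$, using Lemma~\ref{SurvivalContinuityLemma2} (valid since $\rho=0$ and $\lambda''$ is supercritical for the CPDP with these $\alpha,\beta$). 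One obtains $\theta(\lambda-,C,B)\ge\theta(\lambda,C,B)$, hence left continuity; for $r,\alpha,\beta$ one replaces Lemma~\ref{ContinuityLemma} by Lemma~\ref{ContinuityLemma2}~(i) for the finite-time continuity. Combined with (i), this yields full separate continuity on $\inte(\cS)$, which together with Theorem~\ref{NoSurvivalAtCriticality} is what will later feed into the joint continuity of Proposition~\ref{JointContiuityOfCPDP}.

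I do not expect a deep obstacle: the substance is checking that the renewal/restart step of Proposition~\ref{LeftContinuityOfSurv} is insensitive to which parameter is varied --- it is, because the restart always puts the background into $\emptyset$ and only monotonicity in the initial configuration is used --- and verifying that Lemmas~\ref{SurvivalContinuityLemma1} and \ref{SurvivalContinuityLemma2}, stated for a general CPERE, transfer to the CPDP for an arbitrary fixed $(\alpha,\beta)$. The one point that needs a little care is the topological claim that membership in $\inte(\cS)\subset\R^4$ really does supply the one-dimensional room towards criticality needed to choose $\lambda''$ (respectively $r''$, $\alpha''$, $\beta''$); this follows from the coordinatewise monotonicity of $\theta$ in the same way as Lemma~\ref{lem:TopologicalAuxLemma}.
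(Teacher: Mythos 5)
Your proposal is correct and follows essentially the same route as the paper: the paper's proof of Proposition~\ref{ContinuityCPDP} simply cites Propositions~\ref{RightContinuityOfSurv} and~\ref{LeftContinuityOfSurv} for $\lambda,r$ and says the $\alpha,\beta$ cases go "by the same approach," which is exactly what you carry out, replacing Lemma~\ref{ContinuityLemma} by Lemma~\ref{ContinuityLemma2}~(i) for the finite-time continuity and noting that $\rho=0$ on $\Z^d$ makes $\cS_{c_1}=\cS$. You supply the details the paper leaves implicit, including the check that Lemmas~\ref{SurvivalContinuityLemma1} and~\ref{SurvivalContinuityLemma2} apply for each fixed $(\alpha,\beta)$, and your argument matches the intended one.
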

\begin{proof}
	We already shown right and left continuity in $\lambda$ and $r$ on the respective parameter sets  in Proposition~\ref{RightContinuityOfSurv} and \ref{LeftContinuityOfSurv}. Right and left continuity in $\alpha$ and $\beta$ can be shown with the same approach.
\end{proof}
We have seen that Corollary~\ref{NoSurvivalAtCriticality} states that the infection process $\bfC$ cannot survive at criticality. As a consequence of this fact we can conclude that the survival probability is jointly continuous with respect to its parameters $(\lambda,r,\alpha,\beta)$. 
\begin{proof}[Proof of Proposition~\ref{JointContiuityOfCPDP}]
	Proposition~\ref{ContinuityCPDP} shows that the survival probability is separately left continuous seen as a function in the four parameters $(\lambda,r, \alpha,\beta)$ on $\inte(\cS)$ and is separately right continuous on $(0,\infty)^4$. Now let us again prove continuity for $\lambda\mapsto\theta(\lambda,r,\alpha,\beta,C,B)$. The proof is analogous for the remaining three parameter. By Proposition~\ref{ContinuityCPDP} it is clear that the function is everywhere continuous expect at criticality. Now obviously in case of $\lambda$ the left limit at criticality exists, since we come from the subcritical parameter region where the survival probability is constant $0$. By Theorem \ref{NoSurvivalAtCriticality} we know that the CPDP a.s. goes extinct at criticality, which means that the survival probability is $0$. But with that we have shown that the left limit and the right limit at the critical value are the same since $\lambda\mapsto\theta(\lambda,r,\alpha,\beta,C,B)$ is right continuous on $(0,\infty)$ by Proposition~\ref{ContinuityCPDP} and thus, the function is continuous.
	
	Now we know that the survival probability is separately continuous seen as a function of the four parameters. But the function is also monotone in each coordinate, so we can use \cite[Proposition~2]{kruse1969joint}, which states that if a function is continuous and monotone in each coordinate, then it is jointly continuous.
\end{proof}
\subsubsection{Complete Convergence of the CPDP}
In this section we show complete convergence of the CPDP on $\Z^d$ and thus Theorem~\ref{CPDPCompleteConvergenceCPERE}. We start by showing that the second condition \eqref{ConvergenceCond2.1} holds.
\begin{proposition}\label{SecondCondition}
Let $(\bfC,\bfB)$ be the CPDP on $\Z^d$
	and $(\lambda,r,\alpha,\beta)\in \cS$. Then for every $x\in \Z^d$,
	\begin{align*}
	\lim_{n\to \infty}\liminf_{t\to\infty}\Pw_{\lambda,r,\alpha,\beta}^{(\B_n(x),\emptyset)}(\bfC_{t}\cap \B_n(x)\neq \emptyset)=1.
	\end{align*}
\end{proposition}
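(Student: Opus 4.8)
The plan is to combine the percolation comparison of Subsection~\ref{InvolvingBlockContruction} with a classical density estimate for supercritical oriented percolation. Throughout I take $x=\zero$ (the general case follows by translation invariance) and write $p_n(t):=\Pw^{(\B_n(\zero),\emptyset)}_{\lambda,r,\alpha,\beta}(\bfC_t\cap\B_n(\zero)\neq\emptyset)$. First I record a monotonicity reduction: by Lemma~\ref{MonotonicityAdditivityLemma}$(i)$ the infection process is monotone in its initial configuration, so for $m\geq n$ we have $\bfC_t^{\B_m(\zero),\emptyset}\cap\B_m(\zero)\supseteq\bfC_t^{\B_n(\zero),\emptyset}\cap\B_n(\zero)$ and hence $p_m(t)\geq p_n(t)$; thus $n\mapsto\liminf_{t\to\infty}p_n(t)$ is non-decreasing and it suffices to exhibit, for each $\delta>0$, \emph{some} $n$ with $\liminf_{t\to\infty}p_n(t)\geq1-\delta$. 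Since $(\lambda,r,\alpha,\beta)\in\cS$ gives $\theta(\lambda,r,\alpha,\beta,\{\zero\},\emptyset)>0$, Theorem~\ref{CPDPSurvivalImplyCond} yields Condition~\ref{SpaceTimeCondition}, so Theorem~\ref{ComparisonWithOrientedPercolation} is available for every $q<1$.

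So fix $\delta>0$. I would first choose $q<1$ so close to $1$ that one-dimensional oriented site percolation with parameter $q$, started from a single site, survives with probability larger than $1-\delta/3$ — the standard contour (Peierls) estimate — and then fix the associated $n_0,a,b$ (with $n_0<a$) from Theorem~\ref{ComparisonWithOrientedPercolation}. Running the CPDP from $\bfC_0=\B_n(\zero)$, $\bfB_0=\emptyset$ and observing that $\B_n(\zero)\supseteq[-n_0,n_0]^{d}\cap\Z^d$ for $n$ large, Theorem~\ref{ComparisonWithOrientedPercolation} couples $(\bfC_t,\bfB_t)$ with an oriented percolation $(W_k)_{k\geq0}$ of parameter $q$ started from $W_0=\{0\}$ so that $j\in W_k$ implies $\bfC_t\supset y+[-n_0,n_0]^{d}$ for some $(y,t)\in\cS_{j,k}$. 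For this fixed supercritical $q$, the complete convergence theorem for supercritical oriented percolation gives that for every finite $F\subset2\Z$ the probability $\Pw(W_k^{\{0\}}\cap F\neq\emptyset)$ converges, as $k\to\infty$, to the survival probability times the probability of hitting $F$ under the upper invariant law of the percolation, and the latter tends to $1$ as $F\uparrow2\Z$; hence I can fix $J,k_0\in\N$ with $\Pw(W_k^{\{0\}}\cap([-J,J]\cap2\Z)\neq\emptyset)>1-2\delta/3$ for all $k\geq k_0$. Finally I would take $n$ large enough that, in addition, the spatial projection of every good-block region $\cM^{\pm}(j,k)$ with $|j|\leq J$ lies inside $\B_n(\zero)$; these projections are contained in a fixed cube of side-length of order $a(J+1)$, not depending on $k$.

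The crux is to localise in real time. Given a large $t$, let $k=k(t)$ be the macroscopic level whose block slabs $\cM^{\pm}(\cdot,k)$, together with those of level $k-1$, straddle $t$; by the definition \eqref{MacroscopBuildBlock} of $\cM^{\pm}(j,k)$ — whose time coordinate runs from $30kb$ up to roughly $30(k+1)b$ — these slabs cover $[0,\infty)$ as $k$ ranges over $\N_0$. On the event $\{W_k\cap([-J,J]\cap2\Z)\neq\emptyset\}$, of probability $>1-2\delta/3$ once $k\geq k_0$, let $j^{*}$ be the smallest wet site in $[-J,J]$; then $j^{*}\in W_k$ forces $X_{j^{*}}(k)=1$ in the construction of Theorem~\ref{ComparisonWithOrientedPercolation}, and by Lemma~\ref{BuildBlock}, applied conditionally on $\cG_k$ exactly as in that recursion, the block event $\cB^{+}(j^{*},k,Y_{j^{*}}(k))$ — or, for $t$ lying in the early portion of the window, the analogous incoming block event attached to level $k-1$ — occurs with conditional probability $>1-\delta/3$. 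On such a block event there is an $\emptyset$-infection path contained in $\cM^{\pm}(j^{*},k)$; being a path it occupies a vertex at every intermediate time, in particular at time $t$ a vertex of $\bfC_t$ lying in the spatial projection of $\cM^{\pm}(j^{*},k)$, which is inside $\B_n(\zero)$. Intersecting the two events, $p_n(t)\geq1-\delta$ for all $t$ with $k(t)\geq k_0$, so $\liminf_{t\to\infty}p_n(t)\geq1-\delta$, and letting $\delta\downarrow0$ finishes the proof.

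The hard part will be precisely this real-time localisation: the percolation comparison controls the infection only at the discrete macroscopic levels $30kb$, while the assertion is about all real times $t\to\infty$, so one must check carefully that the slabs $\cM^{\pm}(j,k)$ built for consecutive good blocks overlap in time so as to tile $[0,\infty)$, and that an $\emptyset$-infection path confined to one of them genuinely meets $\B_n(\zero)$ at \emph{every} intermediate time — keeping straight the geometry of $\cM^{\pm}$ and which block event to invoke for a given $t$ is the delicate bookkeeping. A secondary technical point is that the complete-convergence input must be applied to the genuinely independent oriented percolation produced after comparing the $1$-dependent good-block field to an independent one via \cite[Theorem~B26]{liggett2013stochastic}, and not to the $1$-dependent field itself.
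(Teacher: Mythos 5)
Your proposal is correct and rests on the same two pillars as the paper's argument: the block comparison of Theorem~\ref{ComparisonWithOrientedPercolation}, and a density (complete-convergence) estimate for supercritical one-dimensional oriented percolation, turned into a statement at every real time via the observation that the certifying $\emptyset$-infection paths are trapped inside the finitely many active slabs $\cM^{\pm}(j,k)$. The tactical differences are worth noting. The paper starts the coupled percolation from the large macroscopic block $W_0=\{-m,\dots,m\}$ (with matching initial infection $D_m$), keeps $q$ fixed, and lets $m\to\infty$; \cite[Theorem~2]{durrett1987stochastic} then sends the right side of \eqref{help3} to $1$ because both the extinction probability from a macroscopic block of size $m$ and the upper-invariant miss probability of that block vanish as $m\to\infty$. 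You instead start from the single site $W_0=\{0\}$ and push $q\to1$, trading the large-block estimate for a Peierls bound on single-site survival; both are valid, but the paper's choice is cheaper since enlarging the starting block is exactly the freedom handed to you by the outer $\lim_{n\to\infty}$, whereas yours requires reselecting the entire block construction $(n_0,a,b)$ each time $\delta$ shrinks. You also treat all $d\geq1$ uniformly (the slabs $\cM^{\pm}(j,k)$ are spatially bounded in all directions except the first), whereas the paper defers $d\geq2$ to Liggett and only runs the percolation comparison for $d=1$. Finally, a small slip in the real-time step: the incoming block event from level $k-1$ is $\cG_k$-measurable, so on $\{j^{*}\in W_k\}$ (equivalently $X_{j^{*}}(k)=1$) it holds with conditional probability $1$, not merely $>1-\delta/3$; and the outgoing conditional-probability step can be bypassed entirely by noting that $\{W_{k+1}\cap[-J,J]\cap2\Z\neq\emptyset\}$ already certifies, by the construction, an $\emptyset$-infection path whose time slices cover $[30kb+b,30(k+1)b]$ and whose spatial range is in $\cM^{\pm}(j',k)$ for some $|j'|\leq J+1$, hence in $\B_n(\zero)$. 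Thus for $t$ in that window one has $\Pw^{(\B_n(\zero),\emptyset)}(\bfC_t\cap\B_n(\zero)\neq\emptyset)\geq\Pw^{\{0\}}(W_{k+1}\cap[-J,J]\cap2\Z\neq\emptyset)$ directly, which is the paper's inequality \eqref{help3} in a different dressing, without spending an extra $\delta/3$. Your closing caveat, that the density estimate must be applied to the independent percolation built after Liggett's $k$-dependence comparison rather than to the $1$-dependent good-block field, is correct and essential.
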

\begin{proof}
	By translation invariance it suffices to prove the claim for $x=\zero$.
	For $d\geq 2$ the claim follows analogously as in the second part of the proof of \cite[Theorem 2.27]{liggett2013stochastic}. Hence, we only need to consider $d=1$.
	
	Again by Theorem \ref{ComparisonWithOrientedPercolation} for every $0<q<1$ there exist $n,a,b$ such that an oriented site percolation $(W_k)_{k\geq	0}$ with parameter $q$ exists, which satisfies \eqref{CompCondition1} and \eqref{CompCondition2}. Now let us consider the set $D_m=(-15am-2,15am+1)$. By construction of the oriented site percolation in the proof of Theorem \ref{ComparisonWithOrientedPercolation} (see \autoref{fig:Macro} for a visualization) it follows that for $m>0$,
	\begin{align}\label{help3}
	\liminf_{t\to\infty}\Pw^{(D_m,\emptyset)}(\bfC_{t}\cap D_m	\neq\emptyset) \geq\liminf_{k\to\infty}\Pw^{\{-m,\dots, m\}}(W_k\cap \{-m,\dots, m\}\neq \emptyset),
	\end{align}
	since the infection is always contained in the blocks $\cB^{\pm}$. Now by \cite[Theorem 2]{durrett1987stochastic} it follow that the right-hand side in \eqref{help3} converges to $1$ as $m\to \infty$. 
\end{proof}
Now it is left to prove that \eqref{ConvergenceCond1.1} holds true. We split the proof of this condition in two parts. First we show with Theorem~\ref{ComparisonWithOrientedPercolation} that a positive survival probability already implies that the probability that a single vertex is infinitely often infected is positive as well. 
\begin{proposition}\label{FirstCondition1} Let $(\bfC,\bfB)$ be a CPERE on $\Z^d$ and $(\lambda,r,\alpha,\beta)\in \cS$, then $\Pw^{(C,B)}_{\lambda,r,\alpha,\beta}(x\in \bfC_t \text{ i.o.})>0$ for all $x\in V$ and all non-empty $C\subset V$ and $B\subset E$.
\end{proposition}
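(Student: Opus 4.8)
Throughout write $(\bfC,\bfB)$ for the CPDP on $\Z^d$ with parameters $(\lambda,r,\alpha,\beta)\in\cS$, so that $\theta(\lambda,r,\alpha,\beta,\{\zero\},\emptyset)>0$. The plan is to read the statement off Proposition~\ref{SecondCondition}, which already establishes condition \eqref{ConvergenceCond2.1}, combined with the conditional Borel--Cantelli lemma and monotonicity; the passage from \eqref{ConvergenceCond2.1}-type information to infinitely-often statements is entirely analogous to the arguments in Lemma~\ref{SurvivalContinuityLemma1} and Lemma~\ref{StoppingTimeRecurrence}. First, since $(\lambda,r,\alpha,\beta)\in\cS$, Proposition~\ref{SecondCondition} applies, so for each fixed $x\in\Z^d$ there is an $n=n(x)\in\N$ with
\[
\liminf_{t\to\infty}\Pw^{(\B_n(x),\emptyset)}_{\lambda,r,\alpha,\beta}\big(\bfC_t\cap\B_n(x)\neq\emptyset\big)>\tfrac12 .
\]
Passing to integer times (a liminf over $\N$ dominates the liminf over $[0,\infty)$) and applying the reverse Fatou inequality, the event $E:=\{\bfC_m\cap\B_n(x)\neq\emptyset\text{ for infinitely many }m\in\N\}$ satisfies $\Pw^{(\B_n(x),\emptyset)}(E)\ge\tfrac12>0$.

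The next step is to convert $E$ into $\{x\in\bfC_t\text{ i.o.}\}$. Set $\delta_0:=\min_{y\in\B_n(x)}\Pw^{(\{y\},\emptyset)}(x\in\bfC_1)$; this is strictly positive because $\Z^d$ is connected and $\alpha>0$, so with positive probability the edges along a shortest $y$-$x$ path open up for long enough within one unit of time for the infection to reach $x$, and because the minimum is over the finite set $\B_n(x)$. Choosing an $\cF_m$-measurable vertex in $\bfC_m\cap\B_n(x)$ and using the Markov property together with monotonicity in the initial condition (Lemma~\ref{MonotonicityAdditivityLemma}) gives $\Pw(x\in\bfC_{m+1}\mid\cF_m)\ge\delta_0$ a.s.\ on $\{\bfC_m\cap\B_n(x)\neq\emptyset\}$. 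Hence, with $G_m:=\{x\in\bfC_{m+1}\}\in\cF_{m+1}$, on $E$ we have $\sum_m\Pw(G_m\mid\cF_m)\ge\delta_0\sum_m\1_{\{\bfC_m\cap\B_n(x)\neq\emptyset\}}=\infty$, so the extension of the Borel--Cantelli lemma \cite[Theorem~4.3.4]{durrett2019probability} yields $E\subseteq\{G_m\text{ i.o.}\}\subseteq\{x\in\bfC_t\text{ i.o.}\}$ up to a null set, and therefore $\Pw^{(\B_n(x),\emptyset)}(x\in\bfC_t\text{ i.o.})\ge\tfrac12>0$.

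Finally I would bootstrap to an arbitrary non-empty initial configuration $(C,B)$. Pick $y\in C$; since $\Z^d$ is connected and $\alpha>0$ there is $s>0$ with $p:=\Pw^{(\{y\},\emptyset)}(\bfC_s\supseteq\B_n(x))>0$ (let the infection cover $\B_n(x)$ and then keep it infected over a short time interval). Conditioning at time $s$ and using the strong Markov property together with monotonicity in both coordinates of the initial state gives $\Pw^{(\{y\},\emptyset)}(x\in\bfC_t\text{ i.o.})\ge p\cdot\Pw^{(\B_n(x),\emptyset)}(x\in\bfC_t\text{ i.o.})>0$, and one more application of monotonicity ($\{y\}\subseteq C$, $\emptyset\subseteq B$) gives $\Pw^{(C,B)}_{\lambda,r,\alpha,\beta}(x\in\bfC_t\text{ i.o.})>0$.

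The genuine input is Proposition~\ref{SecondCondition}; the remaining steps duplicate arguments from Section~\ref{CompleteConvergenceChapter}, the only mildly delicate points being the measurability bookkeeping in the conditional Borel--Cantelli step and the reduction to integer times before invoking reverse Fatou. If one instead wished to argue directly from Theorem~\ref{ComparisonWithOrientedPercolation} (as the text seems to intend), the main obstacle would be to show that a supercritical oriented site percolation started from a single site returns to the origin column at infinitely many levels with positive probability — a standard but non-trivial ``non-emptiness of the interior'' fact for two-dimensional oriented percolation, which is precisely what Proposition~\ref{SecondCondition} packages away.
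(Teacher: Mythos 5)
Your proof is correct, but it takes a genuinely different route from the paper's. The paper goes directly from Theorem~\ref{ComparisonWithOrientedPercolation}: it couples the CPDP with a supercritical oriented site percolation $(W_k)$ with parameter $q$ close to $1$ and then invokes the classical fact that such a percolation started from a single site visits the origin column infinitely often with positive probability, after which a generalized Borel--Cantelli argument completes the proof. You instead treat Proposition~\ref{SecondCondition} as the oriented-percolation black box (which is legitimate: its proof appears earlier in the paper and depends only on Theorem~\ref{ComparisonWithOrientedPercolation} and \cite[Theorem~2]{durrett1987stochastic}, so there is no circularity). From Proposition~\ref{SecondCondition} you obtain, for a fixed $x$ and large enough $n$, that $\liminf_t\Pw^{(\B_n(x),\emptyset)}(\bfC_t\cap\B_n(x)\neq\emptyset)>1/2$; passing to integer times and applying reverse Fatou gives positive probability that $\bfC_m\cap\B_n(x)\neq\emptyset$ infinitely often; the conditional Borel--Cantelli step (exactly in the spirit of Lemma~\ref{SurvivalContinuityLemma1} and Lemma~\ref{StoppingTimeRecurrence}) upgrades this to $\Pw^{(\B_n(x),\emptyset)}(x\in\bfC_t\text{ i.o.})>0$, using that $\delta_0=\min_{y\in\B_n(x)}\Pw^{(\{y\},\emptyset)}(x\in\bfC_1)>0$ and that monotonicity in \emph{both} the infection coordinate and the (arbitrary, $\cF_m$-measurable) background coordinate gives a uniform lower bound on $\Pw(x\in\bfC_{m+1}\mid\cF_m)$. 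The bootstrap to arbitrary $(C,B)$ by the Markov property plus monotonicity matches the paper's reduction to $(\{\zero\},\emptyset)$. What each approach buys: the paper's route is the most direct given the block construction, but it must separately appeal to the ``return to the origin column i.o.'' fact for oriented percolation; your route is more economical in its external input since it re-uses a proposition already in hand, at the cost of a slightly longer chain of soft arguments (reverse Fatou, measurable-selection, conditional Borel--Cantelli). Both ultimately rest on the same block construction, and both are valid.
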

\begin{proof}
We briefly summarize the proof strategy which  is again analogous to that of the classical contact process with the difference 
that one needs to choose the background appropriately when the strong Markov property is used to restart the process. For a detailed proof see \cite[Proposition~6.3.6]{seiler2021}. 

First, one observes that it suffices to show
	$\Pw^{(\zero,\emptyset)}_{\lambda,r,\alpha,\beta}(\zero\in \bfC_t \text{ i.o.})>0.$
Then the rough idea is that by Theorem \ref{ComparisonWithOrientedPercolation} we can choose an oriented site percolation $(W_k)_{k\geq0}$ which satisfies \eqref{CompCondition1} and \eqref{CompCondition2} with parameter $0<q<1$ large enough such that $\Pw^{\{0\}}(0\in W_{2k} \text{ i.o.} )>0$, i.e.~$0$ is hit infinitely often. That the latter is true for large enough $q$ is a well-known fact for oriented percolation. But this means that by \eqref{CompCondition1} and \eqref{CompCondition2} there is a positive probability for the event
\begin{equation*}
 	\bfC_t^{[-n,n]^d,\emptyset}\supset x+[-n,n]^d \cap \Z^d \text{ for some } (x,t)\in \cS_{0,k}
\end{equation*}
for infinitely many $k$. This means that with positive probability we have infinitely many times $t_k$ such that at this time a vertex $x_k$ exist such that the box $x+[-n,n]^d\cap \Z^d$ is fully infected. Furthermore we know that there exists a uniform bound on the distance of all $x_k$ to $\zero$ since all $(t_k,x_k)$ are contained in $\cS_{0,k}$. Thus, using a generalized version of Borel-Cantelli's Lemma we obtain the claim.
\end{proof}
Next we show (for a general CPERE on $\Z^d$) that if we have a positive probability that a single vertex is infinitely often infected we can already conclude that \eqref{ConvergenceCond1.1} holds. 
\begin{proposition}\label{FirstCondition2}
	Let $(\bfC,\bfB)$ be a CPERE on $\Z^d$.
	Suppose that $\Pw^{(C,B)}_{\lambda,r}(x\in \bfC_t \text{ i.o.})>0$ for all $x\in V$, all non-empty $C\subset V$ and all $B\subset E$. Then we have
	\begin{equation*}
		\Pw^{(C,B)}_{\lambda,r}(x\in \bfC_t \text{ i.o.})=\theta(\lambda,r,C,B).
	\end{equation*}
\end{proposition}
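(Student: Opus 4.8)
The easy inequality $\Pw^{(C,B)}_{\lambda,r}(x\in\bfC_t\text{ i.o.})\le\theta(\lambda,r,C,B)$ is immediate: since $\emptyset$ is absorbing for $\bfC$, on the event of extinction one has $x\notin\bfC_t$ for all large $t$, so $\{x\in\bfC_t\text{ i.o.}\}\subseteq\{\bfC_t\ne\emptyset\ \forall t\}$. The whole content is the reverse inequality, i.e.\ that on survival $x$ is almost surely hit infinitely often. From now on I suppress the fixed parameters $\lambda,r$ and write $A_x:=\{x\in\bfC_t\text{ i.o.}\}$. The structural fact I will use repeatedly is that $A_x$ is insensitive to initial segments of the trajectory, $A_x=\{x\in\bfC_{s+t}\text{ i.o.\ in }t\ge0\}$, so that by the strong Markov property of $(\bfC,\bfB)$ (Section~\ref{BasicProperties}) one has $\Pw(A_x\mid\cF_\sigma)=\Pw^{(\bfC_\sigma,\bfB_\sigma)}(x\in\bfC_t\text{ i.o.})$ on $\{\bfC_\sigma\ne\emptyset\}$ for any stopping time $\sigma$.

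The plan is a two-step restart argument. Note that a direct restart at integer times applied to $\Pw(A_x\mid\cF_k)=\Pw^{(\bfC_k,\bfB_k)}(x\in\bfC_t\text{ i.o.})$ does not work, since the only lower bound available from the hypothesis, $\Pw^{(\{y\},\emptyset)}(x\in\bfC_t\text{ i.o.})>0$, is not uniform in $y$: a single infected vertex may sit arbitrarily far from $x$. So first I would prove that \emph{some} vertex is hit infinitely often on survival. Set $p:=\Pw^{(\{\zero\},\emptyset)}(\zero\in\bfC_t\text{ i.o.})>0$, which by translation invariance of the CPERE (valid here because $\bfB_0=\emptyset$ is translation invariant) equals $\Pw^{(\{y\},\emptyset)}(y\in\bfC_t\text{ i.o.})$ for every $y\in\Z^d$. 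For $k\in\N$ let $Y_k$ be the lexicographically smallest element of $\bfC_k$ when $\bfC_k\ne\emptyset$, let $G_k:=\{Y_k\in\bfC_t\text{ i.o.}\}$ in that case and $G_k:=\emptyset$ otherwise. Restarting at time $k$, using Lemma~\ref{MonotonicityAdditivityLemma} (monotonicity with $\{Y_k\}\subseteq\bfC_k$ and $\emptyset\subseteq\bfB_k$) and then translation invariance — here the tracked vertex $Y_k$ moves together with the configuration, which is exactly why the bound is uniform — gives $\Pw(G_k\mid\cF_k)\ge p$ on $\{\bfC_k\ne\emptyset\}$. Hence $M:=\bigcup_k G_k\in\cF_\infty$ satisfies $\Pw(M\mid\cF_k)\ge p$ on $\{\bfC_k\ne\emptyset\}$ for all $k$, so running the bounded martingale $\Pw(M\mid\cF_k)$ to its a.s.\ limit $\1_M$ forces $\1_M=1$ on $\{\bfC_t\ne\emptyset\ \forall t\}$. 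Since $M\subseteq\bigcup_{z\in\Z^d}\{z\in\bfC_t\text{ i.o.}\}$ and $\Z^d$ is countable, this gives $\{\bfC_t\ne\emptyset\ \forall t\}\subseteq\bigcup_{z\in\Z^d}\{z\in\bfC_t\text{ i.o.}\}$ up to a null set.

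The second step transfers ``$z$ hit i.o.'' to ``$x$ hit i.o.'' for each fixed $z\in\Z^d$ separately, which is where no uniformity is required. Fix $z$ and put $p_z:=\Pw^{(\{z\},\emptyset)}(x\in\bfC_t\text{ i.o.})>0$ (the hypothesis for this particular $z$). On $\{z\in\bfC_t\text{ i.o.}\}$ the stopping times $T_0:=0$, $T_{i+1}:=\inf\{t\ge T_i+1:z\in\bfC_t\}$ are all finite and increase to $\infty$. Optional sampling of the bounded c\`adl\`ag martingale $N_t:=\Pw(A_x\mid\cF_t)$ at $T_i$, together with the strong Markov property and Lemma~\ref{MonotonicityAdditivityLemma} (now $\{z\}\subseteq\bfC_{T_i}$), yields $N_{T_i}\ge p_z$ on $\{T_i<\infty\}$; since $N_t\to N_\infty=\1_{A_x}$ a.s.\ as $t\to\infty$ and $T_i\to\infty$ on $\{z\in\bfC_t\text{ i.o.}\}$, letting $i\to\infty$ forces $\1_{A_x}=1$ there. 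Hence $\{z\in\bfC_t\text{ i.o.}\}\subseteq A_x$ up to a null set, for every $z$. Combining the two steps, $\{\bfC_t\ne\emptyset\ \forall t\}\subseteq\bigcup_{z}\{z\in\bfC_t\text{ i.o.}\}\subseteq A_x$ up to a null set, i.e.\ $\theta(\lambda,r,C,B)\le\Pw^{(C,B)}_{\lambda,r}(A_x)$, which together with the easy inequality is the claim. The case $|C|=\infty$ (where $\theta\equiv1$) is covered by the same argument, and $C=\emptyset$ is trivial.

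The main obstacle is precisely the non-uniformity flagged above: survival only guarantees that the current configuration is nonempty, not that it is large or close to $x$, so one cannot conclude anything about the fixed target $x$ in a single restart. Splitting off the auxiliary statement ``some vertex is hit i.o.\ on survival'' — proved with a \emph{moving} target, where translation invariance restores uniformity — and only afterwards transferring to the fixed $x$ along the infinitely many recurrence times of one source vertex $z$ (for which $p_z>0$ suffices) is the device that makes everything go through. I expect the remaining points — measurability of the $G_k$ and of $M$ with respect to $\cF_\infty$, applicability of optional sampling at the $T_i$ (arguing on $\{T_i<\infty\}$), and the martingale-limit steps — to be entirely routine.
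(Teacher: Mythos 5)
Your proof is correct, but the route is meaningfully different from the paper's. The paper's argument (sketched in the text, detailed in \cite[Proposition~6.3.6]{seiler2021}) is a single-restart computation: fix the target $\zero$, set $A=\{\zero\in\bfC_t\text{ i.o.}\}$, and for any $x\in\bfC_s$ lower-bound $\Pw(A\mid\cF_s)$ by $\Pw^{(\{x\},\emptyset)}(\zero\in\bfC_t\text{ for some }t\geq 0)\cdot\Pw^{(\{\zero\},\emptyset)}(A)$. The first factor is then rewritten as $\Pw^{(\{\zero\},\emptyset)}(x\in\bfC_t\text{ for some }t\geq 0)$ using the reflection symmetry of $\Z^d$ together with translation invariance, and bounded below by $\Pw^{(\{\zero\},\emptyset)}(A)$ via a Borel--Cantelli argument in the spirit of Lemma~\ref{StoppingTimeRecurrence}; this gives $\Pw(A\mid\cF_s)\geq\big(\Pw^{(\{\zero\},\emptyset)}(A)\big)^2>0$ on $\{\bfC_s\ne\emptyset\}$, which is fed into the martingale limit. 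Your two-stage decomposition avoids the reflection step entirely: you first show that \emph{some} vertex is hit infinitely often on survival by tracking a \emph{moving} target $Y_k\in\bfC_k$, where translation invariance gives a lower bound independent of the target's location, and only then transfer to the fixed $x$ along the recurrence times of a single source $z$, for which $p_z=\Pw^{(\{z\},\emptyset)}(x\in\bfC_t\text{ i.o.})>0$ suffices with no uniformity needed. This buys generality: no appeal to the point symmetry of $\Z^d$ (nor to distance transitivity), so your argument works verbatim on any transitive graph, which is exactly the restriction Remark~\ref{rem:complconvextensionproof} flags for the paper's version. The cost is an extra countable union over source vertices $z$, which is harmless. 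One small fix if you want the $|C|=\infty$ case to go through literally: $\bfC_k$ may then be infinite so a lexicographic minimum need not exist; take $Y_k$ to be an element of $\bfC_k$ nearest $\zero$ (lexicographic tiebreak), which always exists. Otherwise both arguments share the same machinery — strong Markov restart, monotone coupling down to the worst-case pair $(\{y\},\emptyset)$, translation invariance, and bounded-martingale convergence using $A_x\in\cF_\infty$.
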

\begin{proof}
Again there are only minor adjustments needed for the proof compared to how this is proven for the classical contact process. Therefore, we only present a proof sketch here. For the details we refer to \cite[Proposition~6.3.6]{seiler2021}.
	
	First, we observe that $\{\zero \in \bfC_t \text{ i.o.}\}\subset \{\bfC_t\neq\emptyset \,\, \forall t\geq 0\}$. Thus, to show the claim we need to show the converse inclusion. We set $A:=\{\zero\in \bfC_t \text{ i.o.}\}$, then one can show by using monotonicity and the strong Markov property the inequality 
	\begin{equation*}
		\Pw(A|\cF_s)=\Pw^{(\bfC_s,\bfB_s)}(A)\geq \Pw^{(\{x\},\emptyset)}(\zero\in \bfC_t \text{ for some } t\geq 0)\Pw^{(\{\zero\},\emptyset)}(A)\1_{\{x\in \bfC_s\}}.
	\end{equation*}
	Since $\Pw^{(\{\zero\},\emptyset)}(x\in\bfC_t)>0$ for any $x\in \Z^d$ it follows by symmetry properties of $\Z^d$ and translation invariance that
	\begin{equation*}
        \Pw^{(\{x\},\emptyset)}(\zero\in \bfC_t \text{ for some } t\geq 0)=\Pw^{(\{\zero\},\emptyset)}(x\in \bfC_t \text{ for some } t\geq 0)\geq \Pw^{(\{\zero\},\emptyset)}(A),
    \end{equation*}
    and thus we get that
    \begin{equation*}
		\Pw(A|\cF_s)\geq \big(\Pw^{(\{\zero\},\emptyset)}(A)\big)^2\1_{\{x\in \bfC_s\}}.
	\end{equation*}
    Now by applying the martingale convergence theorem it follows that $\Pw(A|\cF_s)\to \1_A$ as $s\to \infty$ since $A$ is an element of the tail $\sigma$-algebra. But this implies that 
    \begin{equation*}
        \{\bfC_t\neq \emptyset \,\, \forall t\geq 0 \}\subset \{\zero\in \bfC_t \text{ i.o.}\}
    \end{equation*}
    almost surely. Thus, it follows that the sets must be almost sure equal, which provides the claim.
	\qedhere
\end{proof}
Finally we are able prove that complete convergence holds for the CPDP on the whole parameter set $(0,\infty)^{4}$.
\begin{proof}[Proof of Theorem~\ref{CPDPCompleteConvergence}]
	Suppose that $(\lambda,r,\alpha,\beta)\in \cS$. Then by  Proposition~\ref{SecondCondition}, Proposition~\ref{FirstCondition1} and Proposition~\ref{FirstCondition2} we know that \eqref{ConvergenceCond1.1} and \eqref{ConvergenceCond2.1} are satisfied, and thus by Theorem~\ref{CompleteConvergenceComplete} it follows that
	\begin{equation*}
	(\bfC^{C,B}_t,\bfB^{B}_t)\Rightarrow (1-\theta(C,B))(\delta_{\emptyset}\otimes\pi)+\theta(C,B)\overline{\nu} \quad \text{ as } t\to\infty.
	\end{equation*}
	On the other hand if $(\lambda,r,\alpha,\beta)\in \cS^c$, then by  Proposition~\ref{EqualityOfCriticalValues} it follows that $\overline{\nu}=\delta_{\emptyset}\otimes\pi$. Thus, it follows that $(\bfC^{V,E}_t,\bfB^{E}_t)\Rightarrow \delta_{\emptyset}\otimes\pi$ as $t\to\infty$.	By monotonicity shown in Lemma~\ref{MonotonicityAdditivityLemma} we then know that $(\bfC^{C,B}_t,\bfB^{B}_t)\Rightarrow \delta_{\emptyset}\otimes\pi$ as $t\to \infty$ for all $C\subset V$ and $B\subset E$, which proves the claim.
\end{proof}
We conclude this section by showing that for a general CPERE on $\Z^d$ 
complete convergence holds on a suitable subset of its survival region,
namely on the survival region of a suitably chosen CPDP, which lies "below" the CPERE. We use the subscript DP in order to distinguish between a CPERE and a CPDP.
\begin{proof}[Proof of Theorem~\ref{CPDPCompleteConvergenceCPERE}]
	Let $(\bfC,\bfB)$ be a CPERE
	whose background process has spin rate $q(\cdot,\cdot)$. Recall from \eqref{MaximalAndMinimalRates1} the rates $\alpha_{\min}:=\min_{F\subset \cN^{L}_e }q(e,F)$ and $\beta_{\max}:=\max_{F\subset \cN^{L}_e }q(e,F\cup \{e\})$. 
	By Proposition~\ref{ComparisonCPEREandCPDP} there exists a CPDP $(\underline{\bfC},\underline{\bfB})$ with rates $\alpha_{\min}$, $\beta_{\max}$  and the same initial configuration as $(\bfC,\bfB)$, i.e.~$\bfC_0=\underline{\bfC}_0$ and $\bfB_0=\underline{\bfB}_0$, such that $\underline{\bfC}_t\subset \bfC_t$ and $\underline{\bfB}_t\subset \bfB_t$ for all $t\geq 0$. This implies that
	\begin{equation}\label{StrongSurvCPERE}
		\Pw(x\in \underline{\bfC}_t \text{ i.o.})\leq \Pw(x\in \bfC_t \text{ i.o.})
	\end{equation}
	By assumption $\theta_{\text{DP}}(\lambda,r,\alpha_{\min},\beta_{\max},\{\zero\},\emptyset)>0$, and thus Proposition~\ref{FirstCondition1} and \eqref{StrongSurvCPERE} imply that $\Pw^{(C,B)}_{\lambda,r}(x\in \bfC_t \text{ i.o.})>0$ for any finite and non-empty set $C\subset\Z^d$ and any $B\subset E$. Furthermore, by Proposition~\ref{FirstCondition2} it follows that the first condition \eqref{ConvergenceCond1.1} holds. Due to the fact that  $\underline{\bfC}_t\subset \bfC_t$ and $\underline{\bfB}_t\subset \bfB_t$ for all $t\geq 0$ and Proposition~\ref{SecondCondition} one also obtains that \eqref{ConvergenceCond2.1} is satisfied. Since we assumed that $(i)$-$(iii)$ of Assumption~\ref{AssumptionBackground} are satisfied  Theorem~\ref{CompleteConvergenceComplete} now implies that if $\theta_{\text{DP}}(\lambda,r,\alpha_{\min},\beta_{\max},\{\zero\},\emptyset)>0$, then 
	\begin{equation*}
		(\bfC^{C,B},\bfB^{B})\Rightarrow(1-\theta(\lambda,r,C,B))(\delta_{\emptyset}\otimes\pi)+\theta(\lambda,r,C,B)\overline{\nu}
	\end{equation*}
	for all $C\subset V$ and all $B\subset E$.
\end{proof}
\begin{remark}
\label{rem:complconvextensionproof}
    Proposition~\ref{FirstCondition2} can  easily be extended to a broader class of graphs. We only need to assume that the graph $G$ is distance transitive, which is a stronger version of transitivity, see Remark \ref{ComparsisonRemark1}, since this property together with translation invariance provides us with the following symmetry
    \begin{equation*}
        \Pw^{(\{x\},\emptyset)}(\zero\in \bfC_t \text{ for some } t\geq 0)=\Pw^{(\{\zero\},\emptyset)}(x\in \bfC_t \text{ for some } t\geq 0),
    \end{equation*}
    which one needs in Proposition~\ref{FirstCondition2} to show $\Pw^{(C,B)}_{\lambda,r}(x\in \bfC_t \text{ i.o.})=\theta(\lambda,r,C,B)$.
    Therefore, as in the proof of Theorem~\ref{CPDPCompleteConvergenceCPERE} we can compare a CPERE on a distance transitive graph $G$ with a CPDP on the $1$-dimensional integer lattice to conclude complete convergence for a certain choice of parameters, as described in Remark~\ref{ComparsisonRemark1}.
\end{remark}

\vspace{1em}
\noindent
\textbf{Acknowledgments.} We would like to thank the anonymous referees for carefully reading the manuscript and for giving us many useful suggestions. This helped us greatly to improve the manuscript. We would like to especially thank one of the referees for pointing out an error in the proof of Proposition~\ref{IndependenceGrowthCond}. 
Fixing the error even allowed us to slightly strengthen the statement.

\end{document}